\tikzset{
  on each segment/.style={
    decorate,
    decoration={
      show path construction,
      moveto code={},
      lineto code={
        \path [#1]
        (\tikzinputsegmentfirst) -- (\tikzinputsegmentlast);
      },
      curveto code={
        \path [#1] (\tikzinputsegmentfirst)
        .. controls
        (\tikzinputsegmentsupporta) and (\tikzinputsegmentsupportb)
        ..
        (\tikzinputsegmentlast);
      },
      closepath code={
        \path [#1]
        (\tikzinputsegmentfirst) -- (\tikzinputsegmentlast);
      },
    },
  },
  mid arrow/.style={postaction={decorate,decoration={
        markings,
        mark=at position .5 with {\arrow[#1]{Stealth[scale=1.2]}}
      }}},
}
\DeclareMathOperator{\re}{{\mathrm{Re}}}
\DeclareMathOperator{\im}{{\mathrm{Im}}}
\renewcommand{\Dot}{\mathbf {Dot}}
\def\P{\mathbb{P}}
\def\E{\mathbb{E}}
\def\IE{\mathbb{IE}}
\def\Z{\mathbb{Z}}
\def\R{\mathbb{R}}
\def\N{\mathbb{N}}
\def\11{{\mathbf{1}}}
\newcommand{\be}{\begin{equation}}
\newcommand{\ee}{\end{equation}}
\newcommand{\ii}{\mathrm{i}}
\newcommand{\dd}{\mathrm{d}}
\newcommand{\e}{{\varepsilon}}
\newcommand{\cK}{\mathcal K}
\newcommand{\cL}{\mathcal L}
\newcommand{\sig}{\sigma}
\newcommand{\bsig}{{\boldsymbol{\sigma}}}
\newcommand{\ba}{{\mathbf{a}}}
\newcommand{\bal}{{\boldsymbol{\alpha}}}
\newcommand{\bbeta}{{\boldsymbol{\beta}}}
\newcommand{\bfb}{{\mathbf{b}}}
\newcommand{\cut}{\mathcal{G}}
\newcommand{\cutL}{(\mathcal{G}_L)}
\newcommand{\cutR}{(\mathcal{G}_R)}
\newcommand{\lenk}{l_{\mathcal K}}
\DeclareMathOperator{\TSP}{{\mathbf T}}
\newcommand{\BA}{\mathrm{BA}}
\newcommand{\alt}{\mathrm{alt}}
\newcommand{\fd}{{\mathfrak d}}
\newcommand{\bu}{{\bf{u}}}
\newcommand{\bv}{{\bf{v}}}
\newcommand{\bw}{{\bf{w}}}
\newcommand{\bigGamma}{\Gamma}
\newcommand{\ord}{{\mathsf{ord}}}
\newcommand{\size}{{\mathsf{size}}}
\newcommand{\Mol}{\mathcal{M}}
\newcommand{\new}{\mathrm{new}}
\newcommand{\aux}{\mathrm{aux}}
\newcommand{\ngh}{\mathrm{ng}}
\newcommand{\C}{\mathbb C}
\renewcommand{\bar}{\overline}
\newcommand{\wt}{\widetilde}
\newcommand{\wh}{\widehat}
\newcommand{\al}{\alpha}
\newcommand{\qqq}[1]{\llbracket{#1}\rrbracket}
\newcommand{\br}[1]{\left[{#1}\right]}
\newcommand{\p}[1]{\left({#1}\right)}
\newcommand{\abs}[1]{\left|{#1}\right|}
\newcommand{\sB}{{\mathsf{B}}}
\newcommand{\fP}{\mathfrak{P}}
\newcommand{\bpi}{{\boldsymbol{\pi}}}
\newcommand{\LK}{{(\mathsf{B})}}
\newcommand{\Zn}{{\mathbb Z}_L^d}
\newcommand{\ZL}{{\mathbb Z}_{WL}^d}
\newcommand{\Gc}{{\mathring G}}
\newcommand{\bD}{{\mathbf{D}}}
\newcommand{\cT}{\mathcal T}
\newcommand{\cE}{\mathcal E}
\newcommand{\wT}{\widetilde{\mathcal T}}
\newcommand{\zTheta}{\mathring \Theta}
\newcommand{\dthn}{\boldsymbol{\chi}}
\newcommand{\fa}{{\mathfrak a}}
\newcommand{\fc}{{\mathfrak c}}
\newcommand{\Err}{{\mathcal Err}}
\DeclareMathOperator{\OO}{O}
\DeclareMathOperator{\oo}{o}
\newcommand{\bpsi}{\boldsymbol{\psi}}
\newcommand{\qll}[1]{[\![{#1}]\!]}
\newcommand{\rep}[1]{({#1})}
\newcommand{\too}{\to \to}
\newcommand{\ilambda}{g}
\DeclareMathOperator{\diag}{diag}
\DeclareMathOperator{\tr}{Tr}
\DeclareMathOperator{\var}{Var}
\newcommand{\nc}{\normalcolor}
\newcommand{\cal}{\mathcal}
\newcommand{\fn}{{n}}
\newcommand{\sT}{{\mathsf T}}
\newcommand{\contract}{\text{loop-contraction}}
\newcommand{\Contract}{\text{Loop-contraction}}
\theoremstyle{plain} 
\newtheorem{theorem}{Theorem}[section]
\newtheorem*{theorem*}{Theorem}
\newtheorem{lemma}[theorem]{Lemma}
\newtheorem*{lemma*}{Lemma}
\newtheorem*{corollary*}{Corollary}
\newtheorem*{proposition*}{Proposition}
\newtheorem{claim}[theorem]{Claim}
\newtheorem*{claim*}{Claim}
\newtheorem{definition}[theorem]{Definition}  
\newtheorem*{definition*}{Definition}         
\theoremstyle{remark}
\newtheorem{example}[theorem]{Example}
\newtheorem*{example*}{Example}
\newtheorem{remark}[theorem]{Remark}
\newtheorem*{remark*}{Remark}
\newtheorem*{remarks*}{Remarks}
\newtheorem{strategy}[theorem]{Strategy}
 \numberwithin{equation}{section}
\crefname{theorem}{theorem}{theorems}
\crefname{lemma}{lemma}{lemmas}
\crefname{definition}{definition}{definitions}
\crefname{proposition}{proposition}{propositions}
\crefname{corollary}{corollary}{corollaries}
\crefname{claim}{claim}{claim}
\crefname{example}{example}{Example}
\crefname{remark}{remark}{remark}
\def\@setthanks{\vspace{-\baselineskip}\def\thanks##1{\@par##1\@addpunct.}\thankses}
\let\old@setauthors\@setauthors
\def\@setauthors{%
  \begingroup
    \let\MakeUppercase\relax
    \old@setauthors
  \endgroup
}
\title{Delocalization of Non-Mean-Field Random Matrices \\ in Dimensions $d\ge 3$}
\author{\small Sofiia Dubova$^\star$} 
\author{Fan Yang$^\dagger$}
\author{Horng-Tzer Yau$^\ddagger$}
\author{Jun Yin$^\S$}
\thanks{$^\star$Department of Mathematics, Northwestern University, \href{mailto:sdubova@northwestern.edu}{sdubova@northwestern.edu}}
\thanks{$^\dagger$Yau Mathematical Sciences Center, Tsinghua University, \href{mailto:fyangmath@mail.tsinghua.edu.cn}{fyangmath@mail.tsinghua.edu.cn}}
\thanks{$^\ddagger$Department of Mathematics, Harvard University, \href{mailto:htyau@math.harvard.edu}{htyau@math.harvard.edu}}
\thanks{$^\S$Department of Mathematics, University of California, Los Angeles, \href{mailto:jyin@math.ucla.edu}{jyin@math.ucla.edu}}
\begin{document}

\begin{abstract}

\small 
We study $N \times N$ random band matrices $H = (H_{xy})$ with mean-zero complex Gaussian entries, where $x,y$ lie on the discrete torus \smash{$(\mathbb{Z} / \sqrt[d]{N} \mathbb{Z})^d$} in dimensions $d \ge 3$. The variance profile satisfies $\mathbb{E}|H_{xy}|^2 = S_{xy}$, with $S_{xy} = 0$ whenever the distance between $x$ and $y$ exceeds a bandwidth parameter $W$. We prove that if $W \geq N^{\mathfrak{c}}$ for some constant $\mathfrak{c} > 0$, then in the large-$N$ limit, bulk eigenvectors are delocalized, quantum unique ergodicity (QUE) holds, and the local bulk eigenvalue statistics are universal. Our proof is based on the tree approximation of the loop hierarchy \cite{YY_25} and diagrammatic techniques developed in earlier works \cite{yang2021random, yang2021delocalization, yang2022delocalization, DY, DYYY25}.

Besides random band matrices, we also study two classical non-mean-field random matrix models: the Wegner orbital and the block Anderson models. Specifically, we consider Hermitian matrices $H = V + \ilambda \Psi$ on the same discrete torus \smash{$(\mathbb{Z} / \sqrt[d]{N} \mathbb{Z})^d$},  where $V$ is a random block potential consisting of i.i.d.~complex Gaussian diagonal blocks of size $W^d \times W^d$, and $\Psi$ encodes the interactions between neighboring blocks---random in the Wegner orbital model and deterministic in the block Anderson model. The parameter $\ilambda > 0$ represents the coupling strength between blocks. Assuming again that $W \geq N^{\mathfrak{c}}$, we establish delocalization of bulk eigenvectors, QUE, and bulk universality under the condition \smash{$W^{-d/2+\varepsilon}\le \ilambda \le \varepsilon^{-1}$} for any small constant $\varepsilon>0$. 
Combined with the localization results of~\cite{PelSchShaSod} for \smash{$\ilambda \ll W^{-d/2}$}, this identifies a localization--delocalization transition at the scale $\ilambda=W^{-d/2}$ in dimensions $d \ge 3$.
\end{abstract}

\maketitle
 
{
\hypersetup{linkcolor=black}
\tableofcontents
}

\section{Introduction}

The tight-binding model introduced by Anderson \cite{Anderson} describes electron transport in disordered semiconductors. It is formulated as a discrete random Schrödinger operator on $\mathbb{Z}^{d}$ of the form
\begin{align}\label{Anderson_orig}
	H = - \Delta + \lambda V,
\end{align}
where $\Delta$ denotes the graph Laplacian on $\mathbb{Z}^{d}$, $V$ is a random potential with i.i.d.~entries, and $\lambda>0$ is a coupling parameter representing the strength of disorder. In his seminal work \cite{Anderson}, Anderson predicted a transition from delocalized to localized behavior as the disorder strength $\lambda$ increases.

Localization in the one-dimensional (1D) Anderson model, valid for all $\lambda>0$, is well established; see, for example, \cite{GMP, KunzSou, Carmona1982_Duke, Damanik2002}. In dimensions $d \geq 2$, Anderson localization was first rigorously proved by Fröhlich and Spencer \cite{FroSpen_1983} using multi-scale analysis. An alternative approach was later developed by Aizenman and Molchanov \cite{Aizenman1993} based on the fractional moment method. Since then, a substantial body of work has deepened our understanding of Anderson localization; see, for example, \cite{FroSpen_1985, Carmona1987, SimonWolff, Aizenman1994, ASFH2001, Bourgain2005, Germinet2013, DingSmart2020, LiZhang2019}. 
In contrast, the existence of delocalized states in the finite-dimensional Anderson model remains unproven—both for arbitrary disorder strengths and in all finite dimensions. In infinite dimensions, however, Aizenman and Warzel \cite{aizenman2013resonant, Bethe_PRL} rigorously established the presence of a delocalized phase for the Anderson model on the Bethe lattice, an infinite regular tree. More recently, the mobility edge phenomenon has also been proved in this setting \cite{Bethe-Anderson}. The tree geometry differs fundamentally from Euclidean lattices, as tree graphs are loop-free.

To bridge the understanding between random Schrödinger operators and random matrix theory, the random band matrix model was introduced \cite{PhysRevLett.64.1851, PhysRevLett.64.5, PhysRevLett.66.986}. A $d$-dimensional random band matrix $H = (H_{xy})$ is an $N\times N$ Hermitian random matrix defined on the discrete torus \smash{$(\mathbb{Z}/\sqrt[d]{N}\mathbb{Z})^{d}$} (assuming \smash{$\sqrt[d]{N}\in \mathbb{N}$} for simplicity). Subject to Hermitian symmetry, the entries $H_{xy}$ are independent real or complex Gaussian variables with mean zero. The variance profile $S_{xy} = \mathbb{E}|H_{xy}|^{2}$ decays to zero when the distance between $x$ and $y$ exceeds a bandwidth parameter $W$, and satisfies the normalization $\sum_y S_{xy} \equiv 1$. 
It was conjectured in \cite{PhysRevLett.64.1851, PhysRevLett.64.5, PhysRevLett.66.986,PhysRevLett.67.2405} that this model exhibits a localization–delocalization transition, accompanied by a change in spectral statistics from Poisson to GOE/GUE (Gaussian Orthogonal/Unitary Ensemble) statistics, at a critical bandwidth $W_c(N)$. For eigenvalues in the bulk of the spectrum, i.e., $|E| \le 2 - \kappa$ with some fixed $\kappa > 0$, localization and Poisson statistics are expected when $W \ll W_c$, while delocalization and GOE/GUE statistics are expected when $W \gg W_c$, where
\begin{equation}\label{eqn:transition exponents}
	W_c =
	\begin{cases}
		\sqrt N, & d=1, \\
		\sqrt{ \log N}, & d=2, \\
		\OO(1), & d\ge 3.
	\end{cases}
\end{equation}
See \cite{PB_review, Spencer2, Spencer3, Spencer1} for further discussion of these conjectures.

The density of states for random band matrices in dimensions $d \ge 1$ was established in \cite{DisPinSpe2002}.
Delocalization of 1D band matrices under the assumption $W \geq N^{a}$, for various exponents $a > 1/2$, was proved in a series of works \cite{EK_band1, ErdKno2011, erdHos2013local, BaoErd2015, HeMa2018, bourgade2017universality, bourgade2019random, bourgade2020random, yang2021random, DY}. The full conjecture was recently resolved in \cite{YY_25}, which established both delocalization and universality of bulk eigenvalue statistics under the optimal condition $W \gg \sqrt N$. These results were later extended to non-Gaussian random band matrices in \cite{erdHos2025zigzag}. 
On the other hand, localization has been proved under the condition $W \ll N^{a}$ for various exponents $a < 1/2$ in \cite{Sch2009, PelSchShaSod,Cipolloni2024, Chen2022}, and under the sharp condition $W \ll \sqrt N$ in \cite{Localization1_2}. Therefore, the localization-delocalization transition at the critical bandwidth $W = \sqrt N$ is now rigorously established for 1D random band matrices.
In two dimensions, delocalization and bulk universality under the condition $W \ge N^{\varepsilon}$ (for arbitrarily small constant $\varepsilon > 0$) was proved in \cite{DYYY25}.

In this paper, we study band matrices $H$ defined on the $d$-dimensional torus $\mathbb{Z}_{WL}^d=\mathbb{Z}^d / ((WL) \cdot \mathbb{Z}^d)$ with $N=(WL)^d$ and $d \geq 3$. For simplicity, we assume that the torus $\mathbb{Z}_{WL}^d$ is partitioned into $d$-dimensional cubes of side length $W$, indexed by the smaller torus ${\mathbb Z}_{L}^d$. For $x, y\in {\mathbb Z}_{WL}^d$ belonging to blocks labeled by $a, b \in {\mathbb Z}_L^d$, the variance profile is given by
\begin{equation}\label{eq:variance-profile}
	S_{xy} = c_{a-b} W^{-d}  \mathbf{1}(\|a-b\|_1\le 1),
\end{equation}
where $(c_{a-b}\mathbf{1}(\|a-b\|_1\le 1):a, b \in {\mathbb Z}_L^d)$ is a symmetric doubly stochastic matrix. Our main results establish delocalization of bulk eigenvectors and universality of bulk eigenvalue statistics for the band matrix $H$ in dimensions $d \ge 3$, under the assumption $W \ge N^{\varepsilon}$ for any fixed $\varepsilon>0$. In higher dimensions $d \ge 7$, weaker forms of delocalization and bulk universality were previously obtained in~\cite{yang2021delocalization, yang2022delocalization, Xu:2024aa}.

Beyond standard band matrices, the block Anderson model provides an even closer analogue to the original Anderson model~\eqref{Anderson_orig}. This model, inspired by Wegner’s orbital model~\cite{Wegner1, Wegner2, Wegner3}, describes quantum particles with multiple internal degrees of freedom (e.g., orbitals or spin) moving in a disordered medium. Concretely, we consider the matrix $H = \Psi + \lambda V$, where $V=\diag(V_1,\ldots, V_{L^d})$ and the blocks $V_i$ are i.i.d.~$W^d\times W^d$ Gaussian random matrices. The matrix $\Psi$ encodes the hopping between neighboring blocks, and in the block Anderson model, we take $\Psi$ to be the block Laplacian with the diagonal removed.

It was shown in~\cite{PelSchShaSod} that, in all dimensions $d \ge 1$, the block Anderson model exhibits localization with localization length of order $\OO(W)$ whenever $\lambda \gg W^{d/2}$. In this paper, we prove that all results obtained for random band matrices extend to the block Anderson model, provided $c \le \lambda \le W^{d/2-c}$ for a small constant $c > 0$. Combined with the localization results of~\cite{PelSchShaSod}, this establishes the localization--delocalization transition for the block Anderson model in all dimensions $d \ge 3$. In dimensions $d\in\{1,2\}$, delocalization and bulk universality were previously proved in~\cite{RBSO1D}, while partial delocalization results for dimensions $d \ge 7$ were obtained in~\cite{yang2024Del}. 
However, for the physically most relevant case $d=3$, no rigorous results on delocalization were available for either random band matrices or the block Anderson model prior to this work. One reason why $d=3$ is the final dimension in which these conjectures are resolved lies in certain critical difficulties that are intrinsic to three dimensions, as we now explain.

Denote by $G(z)=(H-z)^{-1}$ the resolvent (Green's function) of the random band and block Anderson models, for $z\in \C$. 
It is believed that the size of the resolvent entries $|G_{0x}(z)|^2$ is approximated by the resolvent of a certain random walk on \smash{$\mathbb{Z}_{WL}^d$}, see e.g., \cite{erdos2013delocalization,yang2021delocalization}. 
In dimensions $d\in\{1,2\}$, the random-walk resolvent remains roughly constant (in magnitude, up to a logarithmic correction when $d=2$) below a certain cutoff scale. Hence, an $L^\infty$-bound on $G(z)$, together with control of this cutoff scale, suffices to estimate $G(z)$. 
Starting from $d=3$, however, $|G_{0x}|^2  \asymp |x|^{-d+2}$. This forces us to track the  \emph{pointwise estimate} of $G_{0x}$. 
At the other extreme, in high dimensions, the lattice $\mathbb Z^d$ increasingly resembles a Bethe lattice. Although this analogy is difficult to exploit rigorously, we note that $(|x|^{-d+2})^\alpha$ is integrable whenever $\alpha > d/(d-2) \to 1$ as $d\to \infty$. In other words, $|x|^{-d+2}$ becomes “almost integrable” for large $d$. This observation was crucially used in the proofs for $d\ge 7$ in \cite{yang2021delocalization, yang2022delocalization, Xu:2024aa}.  
In dimension $d=3$, we need to combine the \emph{tree-approximation} method for $d\in\{1,2\}$ \cite{YY_25, DYYY25} with the \emph{diagrammatic expansion} approach for $d\ge 7$ \cite{yang2021delocalization, yang2022delocalization, Xu:2024aa}.

\medskip

To illustrate the above points, we briefly explain the main ideas developed in this paper.
For simplicity, we focus on the random band matrix model with variance profile \eqref{eq:variance-profile}; the analysis for the block Anderson model is analogous, up to some additional technical details. 
We begin by recalling the matrix Brownian motion used in \cite{DY, YY_25, DYYY25}:
\[
\dd (H_{t})_{xy} = \sqrt{S_{xy}} \, \dd (\boldsymbol{B}_{t})_{xy}, \quad \text{with}\quad H_0 = 0,
\]
where \( (\boldsymbol{B}_{t})_{xy} \) are standard independent complex Brownian motions for \( x, y \in  \Z_{WL}^d\), subject to the Hermitian symmetry condition \( (\boldsymbol{B}_{t})_{xy} = \bar{(\boldsymbol{B}_{t})_{xy}} \).  Following \cite{10.1214/19-ECP278, Sooster2019, DY}, we consider the Green's function of \( H_t \) with a carefully chosen time-dependent spectral parameter \( z_t \) (see \eqref{eq:zt} below), defined by
\[
G_t := (H_t - z_t)^{-1},
\]
whose dynamics are naturally renormalized at leading order. 
We then define the $n$-\( G \)-loop observable $\mathcal{L}^{(n)}$ as an $n$-tensor:
\be\label{eq:nGloops}
{\cal L}^{(n)}_{a_1,\ldots, a_n} \equiv {\cal L}^{(n)}_{a_1,\ldots, a_n} (t):= \mathrm{Tr} \prod_{i=1}^n \p{G_t \cdot E_{a_i}}, \quad \text{for}\quad a_i\in \Z_L^d, \ \  1\le i\le n,
\ee
where each $E_a$ is a block-averaging matrix, defined by $(E_a)_{xy} = W^{-d}\delta_{xy}$ when $x, y$ belong to the $a$-th block, and $(E_a)_{xy}=0$ otherwise. In the proof, the $G$-loops considered in this paper involve combinations of $G_t$ and its Hermitian conjugate $G^\ast_t$; however, we omit this detail in the following heuristic discussion for simplicity.

The \( G \)-loops ${\cal L}^{(n)}(t)$ defined above satisfy a system of evolution equations known as the \emph{loop hierarchy}; see equation \eqref{eq:mainStoflow} below for its precise formulation. The dynamics of \( n \)-loops depend on \((n+1)\)-loops and a martingale term, whose quadratic variation involves \( (2n+2) \)-loops. Schematically, the loop hierarchy takes the form
\be\label{eq:introloop_hier}
\dd {\cal L}^{(n)} = ( {\cal L}  \ast {\cal L})^{(n)}\dd t + {\cal E}^{\Gc,(n)}\dd t + \dd {\cal E}^{M,(n)}, 
\ee
where each term on the right-hand side has the following interpretation.
\begin{itemize}
	\item The \emph{light-weight} term \( {\cal E}^{\Gc,(n)} \) depends on \( (n+1) \)-loops, typically of the form
	\be\label{eq:introLWtermS}
	W^d \sum_{u,v\in \Zn} S^{(\sB)}_{uv} \cdot \tr[\Gc_t E_u]\cdot {\cal L}^{(n+1)}_{a_1, \ldots, a_n, v}, \quad \text{with}\quad \Gc_t: = G_t-mI_N.
	\ee
	Here, $\tr[\Gc_t E_u]$ is referred to as a \emph{light-weight}, \smash{$S^{(\sB)}_{uv} = c_{u-v} \mathbf{1}(\|u-v\|_1\le 1)$} denotes the block variance profile corresponding to \eqref{eq:variance-profile}, and $m \equiv m_{\rm sc}$ is the Stieltjes transform of the semicircle law (see \eqref{eq:defmzsc}).
	
	\item The \emph{martingale} term \( \dd {\cal E}^{M,(n)} \) has a quadratic variation depending on  \( (2n+2) \)-loops.
	
	\item The \emph{convolution} term \( ({\cal L} \ast {\cal L})^{(n)} \) consists of sums of the form \( \mathcal{L}^{(p)} \diamond \mathcal{L}^{(q)} \) with \( p+q = n+2\) and \(p,q \ge 2 \), where
	\be\label{eq:diamond}
	({\cal L}^{(p)} \diamond {\cal L}^{(q)})_{a_1, \ldots, a_n} := W^d \sum_{u,v} S^{(\sB)}_{uv} \cdot {\cal L}^{(p)}_{a_1, \ldots, a_{p-1}, u} \cdot {\cal L}^{(q)}_{a_p, \ldots, a_n, v}.
	\ee
\end{itemize}
Although the formulation above is heuristic, it captures the essential structure of the loop hierarchy.

The loop equation \eqref{eq:introloop_hier} can be generalized to random band matrices without block structure. This issue has already been addressed in earlier works; see \cite{erdHos2025zigzag} for $d=1$ and \cite{Block_reduction} for dimensions $d\in\{1,2\}$. In fact, our method can be automatically extended to the case where the variance matrix takes the following form:
\begin{align*} 
	S_{xy} = \sum_{u,v \in \ZL} \chi(x - u) \wt S_{uv} \chi(v - y),
\end{align*} 	
where $\wt S$ is a symmetric doubly stochastic matrix and \smash{$\chi : \ZL \to [0,\infty)$} is a mollifier at scale $W^{1 - \e}$. In this case, let $F_u$ denote the diagonal mollifier matrices with entries $(F_{u})_{xy}= \delta_{xy} \chi(x - u)$  for $u, x, y \in \ZL.$ Then, for \smash{$(x_1, \ldots, x_\fn) \in (\ZL)^n$}, we define the $n$-$G$-loops \smash{$\cL^{(n)}_{x_1,\ldots, x_n}(t)$} in analogy with \eqref{eq:nGloops}, replacing the matrices $E_{a_i}$ with the mollifier matrices $F_{x_i}$. The dynamics of these loops satisfy equations of the same form as \eqref{eq:introloop_hier}, with \smash{$S^{\LK}$} in \eqref{eq:introLWtermS} and \eqref{eq:diamond} replaced by \smash{$\wt S$}. Consequently, all  our arguments extend to this setting with only minor notational modifications.
To streamline the presentation, we continue to work with the simpler block structure in \eqref{eq:variance-profile}, which allows us to focus on the core technical challenges in dimensions $d\ge 3$.

A key observation from \cite{YY_25, DYYY25} is that both the light-weight and martingale terms are small errors. Consequently, \( \mathcal{L}^{(n)} \) can be approximated by \( \mathcal{K}^{(n)} \), the solution of the deterministic \emph {convolution tree  equation}: 
\be\label{eq:Knequation}
\dd \mathcal{K}^{(n)} = (\mathcal{K} \ast \mathcal{K})^{(n)}\dd t.
\ee
For any fixed \( n \), the right-hand side of equation \eqref{eq:Knequation} depends only on \( \mathcal{K}^{(k)} \) with \( k \leq n \), forming a \emph{closed system} of equations for \( (\mathcal{K}^{(2)}, \ldots, \mathcal{K}^{(n)}) \) that can be solved inductively and admits explicit solutions, as discovered in \cite{YY_25}. Since these solutions have an explicit tree representation, we refer to \( \mathcal{K}^{(n)} \) as the \emph {tree approximation} to \( \mathcal{L}^{(n)} \). 
In particular, for \( n = 2 \), equation \eqref{eq:Knequation} has the solution \( \mathcal{K}^{(2)}(t) = W^{-d} \Theta_t\), where $\Theta_t$ denotes the propagator defined in Definition \ref{def_Theta}. 
This propagator can carry different charges. For simplicity, in the discussion below, we use the notation $\Theta_t$ exclusively for the most relevant $(+,-)$-charged propagator, denoted \smash{$\Theta_t^{(+,-)}$}. It can be viewed as the resolvent of a Laplacian on the torus $\Zn$,  describing the classical diffusion of a random walk. 
This propagator serves as the basis for establishing quantum diffusion, delocalization, and the universality of eigenvalue statistics---provided the approximation \( \mathcal{L}^{(2)} \approx \mathcal{K}^{(2)} \) can be justified in a sufficiently strong sense.

To rigorously justify the tree approximation, three main ingredients are required:
\begin{enumerate}
	\item[(1)] A strategy to manage the dependence of \( {\cal E}^{\Gc,(n)} \) and \( \dd {\cal E}^{M,(n)} \) on higher-order loops \( {\cal L}^{(k)} \) with \( k > n \).
	\item[(2)]  A stability theory for the perturbed equation \( \dd {\cal L}= ({\cal L} \ast {\cal L})\dd t + \text{errors} \).
	\item[(3)] Explicit quantitative bounds showing that  \( {\cal E}^{\Gc,(n)} \) and \( \dd {\cal E}^{M,(n)} \) are indeed small.
\end{enumerate} 
In the absence of additional structure in the hierarchy, issue (1) reflects a classical obstacle in many-body dynamics, reminiscent of the well-known BBGKY hierarchy. 
In our setting, however, a key observation is that higher-order loops appear only in the error terms. This feature enables the development of a general inductive and self-improving framework, first introduced in \cite{YY_25}, based on a bootstrap argument that applies across all dimensions. It should be noted that the smallness of the martingale and light-weight terms is a highly nontrivial fact. 
Without the correct perspective, these terms may seem much larger than the leading contribution.
Indeed, in the early development of the theory of random band matrices, researchers often encountered the seemingly impenetrable issue that the error terms in the so-called \emph{$T$-equation} diverge. While we do not elaborate on the historical developments here, we will demonstrate below that the light-weight term is, in fact, a genuinely small error.

For both issues (2) and (3), we need a suitable norm to control the error terms and develop a stability theory for the loop hierarchy.  
In dimensions \( d \in \{1,2\} \), the natural choice is the \( L^\infty \)-norm, justified by the fact that \( \Theta_t \) is approximately constant up to a cutoff.  
Surprisingly, the loop hierarchy remains stable in dimensions \( d \geq 3 \) under this norm. In dimensions $d\ge 3$, the propagator satisfies
\[
\Theta_{t,ab} \lesssim (|a-b|+1)^{-d+2} e^{- c|a-b|/\ell_t }
\]
for some constant \( c>0 \), where \( \ell_t  = \min(|1-t|^{-1/2},L)\). The tree approximation \( \mathcal{K}^{(n)}_{a_1, \ldots, a_n} \) is a complicated function of the propagator entries. Hence, the \( L^\infty \)-norm is a very crude norm for these functions, and one would not a priori expect stability in this norm.   
For clarity, in the following heuristic discussion, we assume \( \im z_t \ge L^{-2} \), so that \( \ell_t^{-2} = |1-t| \asymp \im z_t \). The key new ingredient enabling \( L^\infty \)-stability of the loop hierarchy in dimensions \( d \ge 3 \) is a class of \emph{$\contract$ inequalities} (Lemma \ref{ygdhmsgq}), which extend the classical Ward's identity:
\be\label{eq:Ward_intro1}
\sum_y |G_{xy}(z)|^2=\sum_y |G_{yx}(z)|^2 =  \im G_{xx}/ \im z ,
\ee
and apply to partial sums involving the absolute values of the $n$-\(G\)-loops.

However, the \( L^\infty \)-stability of the tree approximation remains incomplete without a sufficiently precise \emph{pointwise estimate} \smash{${\cal L}^{(2)}_{ab} \approx \mathcal{K}^{(2)}_{ab}$} for the 2-$G$-loops, that is,  
\be\label{eq:n=2approx}
\left|\left({\cal L}-\cK\right)^{(2)}_{ab} \right| \ll W^{-d} \Theta_{t,ab}, \quad  \forall a,b\in \Zn.
\ee
Such a strong pointwise control is essential because of the leading term $\cal L^{(2)}\diamond \cal L^{(n)}$ in the loop hierarchy \eqref{eq:introloop_hier}. Subtracting the corresponding term $\cal K^{(2)}\diamond \cal K^{(n)}$ from the convolution tree equation \eqref{eq:Knequation} produces two terms $\cal K^{(2)}\diamond (\cal L-\cal K)^{(n)}$ and $(\cal L-\cal K)^{(2)}\diamond (\cal L-\cal K)^{(n)}$. While it is intuitively clear that the latter term is negligible compared to the former, a rigorous justification requires establishing the stability estimate \eqref{eq:n=2approx}. 
This requires a significantly more delicate analysis of the equation \eqref{eq:introloop_hier} for $n=2$.  
One main difficulty is that this equation involves \( \mathcal{L}^{(n)} \) with \( n \geq 3 \), for which only \( L^\infty \)-bounds are available. Therefore, we need to develop a method to estimate the error terms in the 2-loop equation using only the \( L^\infty \)-bounds on the higher-order loops. 
We will discuss the details of the $L^\infty$-stability and $\contract$ inequalities in \Cref{Sec:Steps34}, and the proof of the pointwise stability estimate \eqref{eq:n=2approx} in \Cref{subsec:step2_pf}.

Issue (3) involves bounding the martingale term \( \dd {\cal E}^{M,(n)} \) and the light-weight term \( {\cal E}^{\Gc,(n)} \). In dimensions \( d \ge 3 \), estimating these terms presents a serious challenge. We will discuss the difficulties related to bounding the martingale term in \Cref{subsec:pf_lem: EMn2_N}. For now, we focus on estimating the light-weight term for 
\( n=2 \): 
\[
{\cal E}^{\Gc,(2)}_{ab} = W^d \sum_{u, v} S^{(\sB)}_{uv} \cdot \tr[\Gc_t E_{u}] \cdot \mathcal{L}^{(3)}_{a,b,v}, \quad\text{with}\quad 
{\cal L}^{(3)}_{a,b,v} = W^{-3d} \sum_{x \in [a], y \in [b], z \in [v]} (G_t)_{xy} (G_t)_{yz} (G_t)_{zx},
\]
where \( x \in [a] \) means that the lattice point \( x \) belongs to the block labeled by \( a \).  
We expect an averaged local law to hold in the form \smash{\( |\tr[\Gc_t E_{u}]| \leq W^{-c} \)} for some constant \( c > 0 \).  
Next, assuming the \emph{sharp entrywise local law} for $G_t$: 
\be\label{eq:aprioriG2} 
|(G_t)_{xy}- m \delta_{xy}| \lesssim W^{-d/2}
\big({\Theta_{t,ab}}\big)^{1/2}, \quad \forall x\in [a], \ y \in[b],
\ee 
we obtain the estimate  
\begin{align} 
	|{\cal E}^{\Gc,(2)}_{ab}|  
	&\lesssim W^{-c-d/2} \sum_{|v-a|\vee |v-b| \lesssim\ell_t} \p{\Theta_{t,ab} \Theta_{t,bv} \Theta_{t,va}}^{1/2} \nonumber  \\
	& \lesssim W^{-c-d/2} \big(\Theta_{t,ab}\big)^{1/2} \ell_t^2 = W^{-c-d/2} \big(\Theta_{t,ab}\big)^{1/2}\cdot (1 - t)^{-1}. 
	\label{eq:introGc}
\end{align}
Upon integrating in time, the singularity \( (1 - t)^{-1} \) produces only a harmless logarithmic factor. However, the resulting bound contains only \smash{\( (\Theta_{t,ab})^{1/2} \)} rather than the optimal \smash{\( \Theta_{t,ab} \)}. This gap cannot be compensated by the \( W^{-c} \) factor from the averaged local law, as \( c \) is at best \( c=d \).  
Without the extra factor \smash{\( (\Theta_{t,ab})^{1/2} \)}, the light-weight term would dominate the leading term and violate the approximation in \eqref{eq:n=2approx}. In the actual proof, the situation is even more delicate, since the estimate \eqref{eq:aprioriG2} is not available a priori.

To overcome this difficulty, we employ the \emph{diagrammatic methods} developed in \cite{yang2021delocalization, yang2022delocalization, yang2021random}.  
While this method is technically involved and was previously applied only in high dimensions \( d \geq 7 \), our objective here is more modest: we aim to generate an additional “\emph{long edge}”---that is, a resolvent entry \( G_{x'y'} \) with \( |x'-y'| \gtrsim W|a-b| \)---in order to recover the missing factor \smash{\( (\Theta_{t,ab})^{1/2} \)}.  
For this purpose, the diagrammatic expansions can be extended to all dimensions \( d \geq 3 \). 

Specifically, we apply Gaussian integration by parts to express the expectation of high moments \smash{\( \E |{\cal E}_{ab}^{\Gc,(2)}|^{2p}\)} as a sum of graphs, where each graph either contains \emph{\( 2p \) additional long edges} or contributes to a sufficiently small error.   
Roughly speaking, each application of Gaussian integration by parts produces a \( W^{-d/2} \) factor, unless it generates a long edge.  
To ensure that these \( W^{-d/2} \) factors compensate for the missing \smash{\( (\Theta_{t,ab})^{1/2} \)} when \( |a-b| \asymp L \), we must expand the graphs to a sufficiently high order $k$, chosen so that \( W^{-kd/2} \le (\Theta_{t,ab})^{p} \). 
As a result, the contributing graphs are extremely complicated. To bound them, we need to sum over their internal vertices in such a way that the final contribution retains at least the \( 2p \) additional long edges. 
We will use Ward’s identity \eqref{eq:Ward_intro1} as a key tool to control the summation over internal vertices, provided the order of the summations follows a carefully designed “nested order”. This nested ordering is a crucial component in our estimates of the graphs arising from the complicated diagrammatic expansions. See \Cref{sec:graphs_ideas} for a detailed discussion. 

In addition to the above considerations, two key ingredients continue to play a central role in our analysis: the \emph{sum-zero property} \cite{yang2021delocalization, yang2022delocalization,YY_25}, and the CLT mechanism for the fluctuation cancellation of resolvents, which was previously employed in the \( d = 2 \) analysis \cite{DYYY25}. Our final objective is to incorporate all these components—diagrammatic tools, the sum-zero property, fluctuation cancellation, and the stability of the tree approximation—into the flow-based framework.  

In summary, our proof relies on a flow-based analysis of the tree approximation to the $G$-loop hierarchy, combining \emph{sharp max-norm estimates} for higher-order $(\cL-\cK)^{(n)}$-loops with $n\ge 3$ and a \emph{precise pointwise estimate} for the $(\cL-\cK)^{(2)}$-loops.
It is perhaps surprising that this seemingly ``simple" strategy succeeds without tracking the precise pointwise decay of higher-order $G$-loops. This is made possible by several key technical inputs, including \emph{$\contract$ inequalities} and the \emph{diagrammatic techniques}.
The main advantage of our approach is that it closes the analysis using only a minimal set of technical ingredients, thereby avoiding an unnecessarily complicated treatment in dimensions $d\ge 3$. A complete implementation of this strategy will be presented in the subsequent sections after we state the main results in \Cref{sec_model-results}.

\medskip
{\noindent \bf Notations.}
To facilitate the presentation, we introduce some necessary notations that will be used throughout this paper. We will use the set of natural numbers $\N=\{1,2,3,\ldots\}$ and the upper half complex plane $\C_+:=\{z\in \C:\im z>0\}$.  
In this paper, we are interested in the asymptotic regime with $N\to \infty$. When we refer to a constant, it will not depend on $N$ or $W$. Unless otherwise noted, we will use $C$, $D$ etc.~to denote large positive constants, whose values may change from line to line. Similarly, we will use $\e$, $\delta$, $\tau$, $c$, $\fc$, $\fd$ etc.~to denote small positive constants. 
For any two (possibly complex) sequences $\xi_N$ and $\zeta_N$ depending on $N$, $\xi_N = \OO(\zeta_N)$, $\zeta_N=\Omega(\xi_N)$, or $\xi_N \lesssim \zeta_N$ means that $|\xi_N| \le C|\zeta_N|$ for some constant $C>0$, whereas $\xi_N=\oo(\zeta_N)$ or $|\xi_N|\ll |\zeta_N|$ means that $|\xi_N| /|\zeta_N| \to 0$ as $N\to \infty$. We say that $\xi_N \asymp \zeta_N$ if $\xi_N = \OO(\zeta_N)$ and $\zeta_N = \OO(\xi_N)$. For any $\al,\beta\in\R$, we denote $\llbracket \al, \beta\rrbracket: = [\al,\beta]\cap {\mathbb Z}$, $\qqq{\al}:=\qqq{1,\al}$, $\al\vee \beta:=\max\{\al, \beta\}$, and $\al\wedge \beta:=\min\{\al, \beta\}$. 
Given a vector $\mathbf v$, $|\mathbf v|\equiv \|\mathbf v\|_2$ denotes the Euclidean norm and $\|\mathbf v\|_p$ denotes the $L^p$-norm. 
Given a matrix $\cal A = (\cal A_{ij})$, $\|\cal A\|$, $\|\cal A\|_{p\to p}$, and $\|\cal A\|_{\infty}\equiv \|\cal A\|_{\max}:=\max_{i,j}|\cal A_{ij}|$ denote the operator (i.e., $L^2\to L^2$) norm,  $L^p\to L^p$ norm (where we allow $p=\infty$), and maximum (i.e., $L^\infty$) norm, respectively. We will use $\cal A_{ij}$ and $ \cal A(i,j)$ interchangeably in this paper. 
We will use $I_n$ to denote an $n\times n$ identity matrix.

Given an event $\Xi$, let $\mathbf 1_\Xi$ or $\mathbf 1(\Xi)$ denote its indicator function. We will say an event $\Xi$ holds with high probability (w.h.p.) if for any constant $D>0$, $\mathbb P(\Xi)\ge 1- N^{-D}$ for large enough $N$. More generally, we say an event $\Omega$ holds $w.h.p.$ in $\Xi$ if for any constant $D>0$, $\P( \Xi\setminus \Omega)\le N^{-D}$ for large enough $N$. 
For clarity of presentation, we will use the following notion of stochastic domination introduced in \cite{Average_fluc}. 
Let \[\xi=\left(\xi^{(N)}(u):N\in\mathbb N, u\in U^{(N)}\right),\hskip 10pt \zeta=\left(\zeta^{(N)}(u):N\in\mathbb N, u\in U^{(N)}\right),\]
be two families of non-negative random variables, where $U^{(N)}$ is a possibly $N$-dependent parameter set. We say $\xi$ is stochastically dominated by $\zeta$, uniformly in $u$, if for any fixed (small) $\tau>0$ and (large) $D>0$, 
\be\label{stoch_domination}\mathbb P\bigg(\bigcup_{u\in U^{(N)}}\left\{\xi^{(N)}(u)>N^\tau\zeta^{(N)}(u)\right\}\bigg)\le N^{-D}\ee
for large enough $N\ge N_0(\tau, D)$, and we will use the notation $\xi\prec\zeta$. 
If for some complex family $\xi$ we have $|\xi|\prec\zeta$, then we will also write $\xi \prec \zeta$ or $\xi=\OO_\prec(\zeta)$. 
As a convention, for two \emph{deterministic} non-negative quantities $\xi$ and $\zeta$, we will write $\xi\prec\zeta$ if and only if $\xi\le N^\tau \zeta$ for any constant $\tau>0$.

\medskip
\noindent{\bf Acknowledgement.}   
We would like to thank Kevin Yang for fruitful discussions.

\section{The model and main results}\label{sec_model-results}

We focus in this paper on the classical block random band matrix model (i.e., the \emph{Wegner orbital model}) and the \emph{block Anderson model} with identity interactions. As noted in the introduction, these are the models studied in \cite{PelSchShaSod}, where localization of bulk eigenvectors was established under the condition $\lambda \gg W^{d/2}$. In contrast, the present work establishes a complementary result describing the delocalized regime of the Wegner orbital and block Anderson models when $\lambda \ll W^{d/2}$. 
Our approach can be readily extended to a much broader class of random band matrices and block Anderson models with general (non-identity) interactions---by combining the techniques developed here with those from \cite{RBSO1D} and the block reduction method of \cite{Block_reduction}. However, the models considered in this paper allow us to avoid inessential technical complications and to focus on the core challenges that arise in dimensions $d \ge 3$. Extensions to more general settings will be addressed in future work.
For clarity of presentation, we rescale our models by 
\be\label{def:ilambda}\ilambda=\lambda^{-1},\ee
and consider matrices of the form $H=V+\ilambda\Psi$. 
This rescaling has the advantage that, in the regime $\ilambda\ll 1$ (i.e., $\lambda\gg 1$), the density of states of $H$ may be viewed as a small perturbation of the semicircle law. 


Our models are defined on a $d$-dimensional discrete torus \(\ZL\subset\mathbb{Z}^d\) with $d\ge 3$, consisting of $N$ lattice points and side length $WL$:
\[\ZL:=\qll{ -(WL)/2+1 , (WL)/2}^d\quad \text{with}\quad N=(W\cdot L)^d.\] 
We partition $\ZL$ into $L^d$ disjoint blocks, indexed by \(\Zn:=\qll{ -L/2+1 , L/2}^d,\)
which we refer to as the \emph{block lattice}.  Each index $a=(a(1),\ldots, a(d))\in \Zn$ corresponds to a block 
\be\label{eq:blockIa}
[a] := \prod_{i=1}^d\left\llbracket (a(i)-1)W + 1, \; a(i)W \right\rrbracket, 
\ee
where $a(i)$ denotes the $i$-th coordinate of $a$ for $i\in\qqq{d}$.\footnote{In the above definitions, we implicitly assume that $L$ is even; the case of odd $L$ can be treated similarly by defining the block lattice as $ \Zn:=\qll{ -(L-1)/2 , (L-1)/2}^d$.}
We view both $\ZL$ and $\Zn$ as discrete $d$-dimension tori. We will denote the vertices of $\ZL$ by $x,y,\ldots,$ and denote those of $\Zn$ by $a,b,\ldots$ Given $x,y\in \ZL$ and $a,b\in \Zn$, we denote the periodic representatives of $x-y$ and $a-b$ by
$\rep{x-y}_{WL}$ and $\rep{a-b}_L$, respectively:  
\be\label{representativeL}\rep{x-y}_{WL}:= \left((x-y)+(WL)\Z^d\right)\cap \ZL,\quad \rep{a-b}_L:= \left((a-b)+L\Z^d\right)\cap \Z_L^d.\ee
For definiteness, we use the $L^\infty$-metric to define (periodic) distances on $\ZL$ and $\Zn$: 
\[ |x-y|\equiv \|\rep{x-y}_{WL}\|_{\infty},\quad \forall x,y\in \ZL,\quad \text{and}\quad |a-b|\equiv \|\rep{a-b}_L\|_{\infty},\quad \forall a,b\in \Zn.\]
We write $x\sim y$ if $x$ and $y$ are neighbors in $\ZL$, and similarly $a\sim b$ if $a$ and $b$ are neighbors in $\Zn$.

	\subsection{Main results for the random band matrix model}\label{subsec:main}
	Our \emph{random band matrix} (or referred to as the \emph{Wegner orbital model} following \cite{PelSchShaSod}) is defined by a complex Hermitian random block Hamiltonian \( H=(H_{xy}:x,y\in \ZL) \), where the entries $H_{xy}$ are independent (up to the Hermitian symmetry $H_{xy}=\overline{ H}_{yx}$) Gaussian random variables. More precisely, given a symmetric doubly stochastic variance matrix $S=(S_{xy}:x,y\in \ZL)$, the diagonal entries of $H$ are real Gaussian random variables, and the off-diagonal entries are complex Gaussian random variables, distributed as follows: 
	\be\label{bandcw0}
	H_{xy}\sim \mathcal{N}_{\R}(0, S_{xy}) \cdot \mathbf 1_{x=y} + \mathcal{N}_{\C}(0, S_{xy}) \cdot \mathbf 1_{x\ne y}.
	\ee
	For $x\in \cal [a]$ and $y\in \cal [b]$, the variance matrix $S\equiv S(\ilambda)$ is given by 
	\be\label{eq:variancematrix}
	S_{xy}\equiv\var (H_{xy}) := W^{-d}S^{(\sB)}_{ab}(\ilambda),\quad \text{with}\quad 
	S^{(\sB)}_{ab}(\ilambda):=\frac{1}{1+2d\ilambda^2} \mathbf 1_{a=b} + \frac{\ilambda^2}{1+2d\ilambda^2} \mathbf 1_{a\sim b},
	\ee
	where $\ilambda>0$ (recall \eqref{def:ilambda}) is a coupling parameter that quantifies the interaction strength between neighboring blocks, and \( S^{(\sB)}(\ilambda) \) denotes an \( L ^d\times L^d \) matrix. 
	Informally, the matrix $H$ consists of i.i.d.~GUE blocks on the diagonal and i.i.d.~Ginibre blocks (up to Hermitian symmetry and scaling) on the off-diagonal. 
	Note that when $\ilambda=1$, the present model reduces to the standard block random band matrices studied in \cite{YY_25, DYYY25}.

	Let \( \lambda_1 \leq \lambda_2 \leq \dots \leq \lambda_N \) denote the eigenvalues of \( H \). The corresponding normalized eigenvectors of $H$ are denote by \( \left( \boldsymbol{\psi}_k \right)_{k=1}^N \). 
	It is well-known that the empirical spectral measure $N^{-1}\sum_{k=1}^N \delta_{\lambda_k}$ converges almost surely to the Wigner semicircle law \cite{Wigner} with density 
	\(
	\rho_{\mathrm{sc}}(x) = \sqrt{(4 - x^2)_+}/{2\pi}.
	\)
	Define the Green's function (or resolvent) of the Hamiltonian $H$ as
	\be\label{def_Green}
	G(z):=(H-z)^{-1} ,  \quad z\in \C_+.
	\ee
	It is also known that as $N\to \infty$, $G(z)$ converges to the scalar matrix $m(z)I_N$ entrywise, where $m(z)$ denotes the Stieltjes transform of \( \rho_{\mathrm{sc}} \), defined by
	\be\label{eq:defmzsc}
	m(z)\equiv m_{\mathrm{sc}}(z):=\frac{-z+\sqrt{z^2-4}}{2}= \int_{\mathbb{R}} \frac{\rho_{\mathrm{sc}}(x)}{x - z} \dd x.
	\ee
	Moreover, we define the matrix\footnote{We introduce this notation to maintain consistency with the block Anderson model, where $M(z)$ is no longer a scalar matrix proportional to $m(z)$; see \eqref{self_m} and \eqref{def_G0}.}
	\be\label{eq:defMzsc} 
	M(z)\equiv M_N(z):=m(z) I_N. 
	\ee

	We now state the main results for the random band matrix model. 
	Our first main result establishes the delocalization of the bulk eigenvectors of $H$ in dimensions $d\ge 3$.  
	
	\begin{theorem}[Delocalizaiton]\label{MR:decol}
		Fix any dimension $d\ge 3$, and consider the random band matrix model defined above. Assume there exist constants $\fc,\fd>0$ such that 
		\begin{equation}\label{Main_DEL_COND}
			W \ge N^{ \mathfrak c} ,
		\end{equation}
		and that $\ilambda$ satisfies the condition 
		\be\label{eq:WO} W^{-d/2+\fd}\le \ilambda \le \fd^{-1}.\ee
		Then, for any small constants $\kappa,\tau>0$ and large constant $D>0$, the following delocalization estimate holds, provided $N$ is sufficiently large:
		\be\label{eq:psikLinfty}
		\mathbb{P}\left( \max_{k:|\lambda_k|\le 2-\kappa}\left\|\boldsymbol{\psi}_k\right\|_{\infty}^2 \le N^{-1+\tau}  \right) \ge 1-N^{-D}.
		\ee
	\end{theorem}
	
	Since the eigenvectors $\boldsymbol{\psi}_k$ are $L^2$-normalized, the $L^\infty$-bound in \eqref{eq:psikLinfty} implies that bulk eigenvectors have localization length at least $\Omega((WL)^{1-\e})$, for any small constant $\e>0$. The upper bound in condition \eqref{eq:WO} is not essential; we include it only for clarity of presentation, as our main interest is the small $\ilambda$ regime.\footnote{All results below remain valid for $\ilambda \gg 1$, provided $\ilambda$ in the relevant equations is replaced by $\ilambda \wedge 1$.} 
	The key aspect of condition \eqref{eq:WO} is the lower bound $\ilambda\ge W^{-d/2+\fd}$. Indeed, by the fractional moment method \cite{Aizenman1993}, it was shown in \cite{PelSchShaSod} that when $\ilambda\ll W^{-d/2}$, the eigenvectors of both the random band matrix model and the block Anderson model (defined in \Cref{sec:main_BA} below) are localized with localization length of order $\OO(W)$. Combined with this result, \Cref{MR:decol} demonstrates a localization–delocalization transition for the random band matrix model in the bulk spectrum as $\ilambda$ crosses the critical threshold $W^{-d/2}$.

	\Cref{MR:decol} follows immediately from the following sharp local law for the Green's function of $H$, defined as in \eqref{def_Green}. For simplicity of notation, given constants $\kappa,\e>0$, define the spectral domain 
	\be\label{eq:spectral_domain}
	\mathbf D_{\kappa,\e}:=\{z=\hat{E}+\ii\eta \in \C_+: |\hat{E}|\le 2-\kappa, N^{-1+\e}\le \eta\le 1\},
	\ee
	and the simplified notation 
	\be\label{eq:calBetaK}\cal B_{\eta,K}:= \frac{(\ilambda^{2}+\eta)^{-1}}{W^2(K+W)^{d-2}} + \frac{1}{N\eta},\quad \forall K\ge 0.\ee
	
	\begin{theorem}[Local semicircle law]\label{MR:locSC} 
		In the setting of Theorem \ref{MR:decol}, for any (small) constants $\kappa,\e, \tau>0$ and (large) constant $D>0$, the following events hold with probability $\ge 1-N^{-D}$ for large enough $N$: 
		\begin{align}\label{G_bound}
			&\bigcap_{z=\hat{E}+\ii\eta\in \mathbf D_{\kappa,\e}}\bigcap_{x,y\in \ZL} \left\{ |G_{xy}(z) - M_{xy}(z)|^2  \le W^\tau  \mathcal B_{\eta,|x-y|}\right\}\, , 
			\\
			\label{G_bound_ave}
			&\bigcap_{z=\hat{E}+\ii\eta\in \mathbf D_{\kappa,\e}}\bigg\{\max_{a\in\Zn} \Big|W^{-d}\sum_{x\in [a]}G_{xx}(z) - m(z) \Big| \le W^\tau  \mathcal B_{\eta,0}\bigg\}\, , 
		\end{align}
		where $G$ is defined in \eqref{def_Green}, $m(z)$ and $M(z)$ are defined in \eqref{eq:defmzsc} and \eqref{eq:defMzsc}, and $[a]$ is defined in \eqref{eq:blockIa}. 
	\end{theorem}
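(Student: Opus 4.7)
The plan is to prove Theorem \ref{MR:locSC} via the characteristic flow / loop hierarchy strategy outlined in the introduction. First I would introduce the matrix Brownian motion $\dd (H_t)_{xy}=\sqrt{S_{xy}}\,\dd(\boldsymbol B_t)_{xy}$ with $H_0=0$ and a time-dependent spectral parameter $z_t=\hat E+\ii\eta_t$ with decreasing $\eta_t$, so that the flow transports a trivial local law at $t=0$ (where $G_0\approx -z_0^{-1}I$ for large $\eta_0$) to a sharp local law at the final time $T$ corresponding to a given $z\in \mathbf D_{\kappa,\fd}$. The central observables are the $G$-loops
\[
{\cal L}^{(n)}_{a_1,\ldots,a_n}(t) = \tr\prod_{i=1}^n \p{G_t \cdot E_{a_i}}
\]
together with their $G^\ast$-variants, which satisfy the stochastic loop hierarchy \eqref{eq:introloop_hier}. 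Theorem \ref{MR:locSC} reduces to showing that, along this flow, ${\cal L}^{(2)}(t)$ stays close to the primitive solution ${\cal K}^{(2)}(t)=W^{-d}\Theta^{(\sB)}_t$ in a sharp pointwise sense; using the identification $W^{-d}\Theta^{(\sB)}_{t,ab}\asymp {\cal B}_{\eta,|x-y|}$ for $x\in{\cal I}_a,\ y\in{\cal I}_b$, such an estimate yields the block-averaged version of \eqref{G_bound} directly, and the diagonal averaged bound \eqref{G_bound_ave} follows from a parallel analysis of the $1$-loop analog, combined with the sum-zero property to cancel the $m$-correction.

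The core of the argument is a multi-level bootstrap in $n$ that controls all loops ${\cal L}^{(2)},\ldots,{\cal L}^{(n_0)}$ simultaneously in the $L^\infty$-norm on $\Zn^n$. At each level, the bootstrap hypothesis postulates that $\|{\cal L}^{(n)}-{\cal K}^{(n)}\|_\infty \le W^{-c}\|{\cal K}^{(n)}\|_\infty$ with adequate slack, and I would improve it by writing the evolution of the difference, linearizing around ${\cal K}$, and invoking both the stability of the primitive hierarchy (whose blow-up profile as $t\to 1$ is explicit and tractable in $d\ge 3$ from \cite{YY_25}) and the Ward-type inequalities of Lemma \ref{ygdhmsgq} to close the quadratic term in $L^\infty$. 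The martingale $\dd{\cal E}^{M,(n)}$ is controlled by Burkholder--Davis--Gundy, whose quadratic variation depends on $(2n+2)$-loops and is handled by invoking the bootstrap hypothesis at a higher level. The light-weight term ${\cal E}^{\Gc,(n)}$ is the principal difficulty in $d\ge 3$: as the calculation \eqref{eq:introGc} shows, naive power counting from the a priori bound \eqref{eq:aprioriG2} only produces $(\Theta^{(\sB)}_{t,ab})^{1/2}$ instead of the required $\Theta^{(\sB)}_{t,ab}$, and this gap cannot be closed merely by smallness of the local law exponent.

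The hard part, therefore, is the sharp estimate of ${\cal E}^{\Gc,(n)}$. I would bound the moments $\E|{\cal E}^{\Gc,(n)}_{a_1,\ldots,a_n}|^p$ by expanding them into a sum of graphs via the diagrammatic machinery of \cite{yang2021delocalization, yang2022delocalization, yang2021random}, arranging the expansion so that each graph contains $p$ additional ``long edges''---resolvent entries spanning a distance comparable to $W|a-b|$. By then summing over internal indices in a refined ordered-nested order, tailored so that a long edge is exhausted before the factor it decorates is summed away, each long edge contributes the missing $(\Theta^{(\sB)})^{1/2}$, while a sufficient reservoir of resolvent entries survives to permit Ward's identity and thus avoid the naive $\sum_v 1$-type loss. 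Combined with the sum-zero property of the self-energy of \cite{yang2021delocalization, yang2022delocalization} and the CLT-type fluctuation cancellation introduced in the $d=2$ analysis of \cite{DYYY25}, this produces a bound on ${\cal E}^{\Gc,(n)}$ comparable in size to ${\cal K}^{(n)}$, which closes the bootstrap.

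Once the $L^\infty$-control is in place at all levels, I would upgrade to the pointwise approximation ${\cal L}^{(2)}_{ab}\approx W^{-d}\Theta^{(\sB)}_{t,ab}$ by revisiting the $2$-loop equation and controlling the contributions of ${\cal L}^{(n)}$ for $n\ge 3$ using \emph{only} the $L^\infty$-bounds just proven, with the sharp ${\cal E}^{\Gc,(2)}$-estimate playing the decisive role; this step requires a careful organization of how higher loops enter so that the unavailable pointwise bounds on them are not needed. The resulting pointwise ${\cal L}^{(2)}$-bound immediately gives the block-averaged $|G_{xy}|^2$ estimate, and a final application of Ward's identity together with fluctuation averaging on $\im G$ upgrades it to the genuine entrywise bound \eqref{G_bound}; the corresponding treatment of the diagonal diagonal-block loop yields \eqref{G_bound_ave}. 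The main obstacle throughout is the sharp light-weight estimate in $d\ge 3$; once that is in hand, the remaining pieces are disciplined but by now standard within this series.
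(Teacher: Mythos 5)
Your proposal is correct and follows essentially the same route as the paper: the characteristic flow with $z_t$, the loop hierarchy, an inductive/bootstrap control of all $n$-loops in $L^\infty$ via Ward-type inequalities and the stability of the primitive hierarchy, the graphical long-edge extraction for the light-weight term, the sum-zero and CLT cancellations, and finally the pointwise upgrade of the $2$-loop which yields \eqref{G_bound} and \eqref{G_bound_ave} through the resolvent estimates of Lemma \ref{lem_GbEXP}. The only minor deviation is that the paper applies the moment/graphical long-edge argument only to $\mathcal{E}^{\Gc,(2)}$ (the higher-$n$ light-weight terms being handled by the generalized Ward inequality and the averaged local law), whereas you propose it for all $n$ --- this is harmless but more than is needed.
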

	
	\begin{proof}[\bf Proof of Theorems \ref{MR:decol}]
		Theorem \ref{MR:decol} follows directly from the entrywise local law \eqref{G_bound} via the bound 
		\be\label{eq:ukx}|\boldsymbol{\psi}_k(x)|^2 \le \eta\im G_{xx}(\lambda_k + \ii \eta),\quad \forall \eta>0\, .\ee 
		Applying the local law \eqref{G_bound} to $G_{xx}(\lambda_k+\ii\eta)$ with $\eta = N^{-1+\tau}$, we deduce that $\im G_{xx}(\lambda_k+\ii\eta)=\OO(1)$ with high probability. Combined with \eqref{eq:ukx}, this completes the proof.
	\end{proof}
	
	Under the assumptions of \Cref{MR:decol}, we can further establish a stronger \emph{quantum unique ergodicity} (QUE) estimate for the bulk eigenvectors of $H$, albeit at the cost of a slightly weaker probability bound. 
	Roughly speaking, the QUE estimates \eqref{Meq:QUE} and \eqref{Meq:QUE2} below indicate that every bulk eigenvector of $H$ is asymptotically uniformly distributed (in the sense of $L^2$-mass) across all scales larger than $W$. In particular, this implies that the localization length of every bulk eigenvector is of order $\Omega(L)$ with probability $1-\oo(1)$.
	
	\begin{theorem}[Quantum unique ergodicity]\label{MR:QUE} 
		In the setting of \Cref{MR:decol}, given $E\in [-2+\kappa,2-\kappa]$ and a constant $\e_0\in (0,\fd/2)$, define the interval  
		\be\label{tau} {\cal I}_E\equiv {\cal I}_E(\e_0):=\left\{x: |x-E|\le W^{-\e_0}\big({\ilambda W^{d/2}}/N\big)\right\}. 
		\ee
		For each $d\ge 3$ and constant $0<c<\e_0\wedge(\fd/5)$, the following estimate holds for any small constant $\tau>0$:
		\begin{equation}\label{Meq:QUE}
			\sup_{E: |E|\le 2-\kappa}  
			\max_{ a\in \Zn} \P\bigg(\max_{i,j:\lambda_i, \lambda_j \in {\cal I}_E} 
			\bigg| \sum_{x\in[a]}\overline \bpsi_i(x)\bpsi_j(x)-\frac{W^{d}}{N}\delta_{ij}  \bigg| \ge \frac{W^{d-c}}{N} \bigg) \le  W^{-(2\e_0)\wedge(2\fd/5)+2c+\tau} \, 
		\end{equation}
		provided $N$ is large enough. More generally, for any subset $A\subset \Zn$, we have
		\begin{equation}\label{Meq:QUE2}
			\sup_{E: |E|\le 2-\kappa} \mathbb{P}\bigg(\max_{k: \lambda_k\in {\cal I}_E} 
			\bigg|\sum_{a\in A}\sum_{x\in [a]}\left|\bu_k(x)\right |^2 -\frac{W^d}{N}|A|\bigg| \ge  \frac{W^{d-c}}{N}|A| \bigg) \le  W^{-(2\e_0)\wedge(2\fd/5)+2c+\tau} .
		\end{equation}  
	\end{theorem}
	
	As an important consequence of the above QUE estimates, and by employing the Green's function comparison argument developed in \cite{Xu:2024aa}, we can derive the following \emph{universality of the local bulk eigenvalue statistics} for our random band matrices. 
	Let $p_{H}(\lambda_1,\ldots, \lambda_{N})$ denote the joint symmetrized probability density of the (unordered) eigenvalues of $H$. For any $1\le n \le N$, define the $n$-point correlation function as
	$$
	p_{H}^{(n)}\left(\lambda_1, \cdots, \lambda_n\right)
	:=\int_{\R^{N-n}} p_{H}\left(\lambda_1, \cdots, \lambda_{N}\right) \mathrm{d} \lambda_{n+1} \cdots \mathrm{d} \lambda_{N}.
	$$
	Moreover, let $p_{\rm{GUE}}^{(n)}$ denote the corresponding $n$-point correlation function for an $N\times N$ GUE matrix.
	
	\begin{theorem}[Bulk universality]\label{Thm: B_Univ}
		In the setting of \Cref{MR:decol}, let $O \in C_c^{\infty}\left(\mathbb{R}^n\right)$ be an arbitrary smooth, compactly supported function. Then, for any $|E|\le 2-\kappa$ and fixed $n\in \N$, we have
		\begin{equation}\label{eq:universality}
			\lim_{N\to \infty}	\int_{\mathbb{R}^n} \mathrm{~d} \boldsymbol{\alpha}\; O(\boldsymbol{\alpha}) \left[p_{H}^{(n)}-p_{\rm{GUE}}^{(n)}\right]\left(E+\frac{\alpha_1}{N  }, \ldots, E+\frac{\alpha_n}{N}\right)   =0, 
		\end{equation} 
		where $\bal$ denotes $\boldsymbol{\alpha}=\left(\alpha_1, \ldots, \alpha_n\right)$.  
	\end{theorem}

	This bulk universality result shows that, for our random band matrices, the local eigenvalue gap statistics near any fixed bulk energy level $E$ asymptotically coincide with those of Wigner matrices. However, we note that the distribution of an individual bulk eigenvalue $\lambda_k$ of $H$ may differ significantly from that of Wigner matrices, as $\lambda_k$ can exhibit fluctuations that are much larger than $N^{-1}$.

	Similar to the one- and two-dimensional cases \cite{YY_25,DYYY25,erdHos2025zigzag,RBSO1D}, our random band matrix model in dimensions $d\ge 3$ also satisfies the \emph{quantum diffusion conjecture}, which serves as a key input for establishing the QUE estimates in \Cref{MR:QUE}. 
	To state it, we define the following $\Theta$-matrices:
	\be \label{def:Theta}
	\Theta^{(+,-)}(z)=\Theta^{(-,+)}(z): = \frac{1}{1-|m(z)|^2 S^{\LK}} ,\quad \Theta^{(+,+)}(z)=(\Theta^{(-,-)}(z))^*:= \frac{1}{1-m(z)^2 S^{\LK}} , 
	\ee
	where we recall the matrix $S^{\LK}$ defined in \eqref{eq:variancematrix} and $m(z)$ defined in \eqref{eq:defmzsc}.

	\begin{theorem}[Quantum diffusion]\label{MR:QuDiff}
		In the setting of Theorem \ref{MR:decol}, for any (small) constants $\kappa,\e, \tau>0$ and (large) constant $D>0$, the following events hold with probability $\ge 1-N^{-D}$ for all $a,b\in \Zn$ and for large enough $N$: 
		\begin{align}
			&\bigcap_{z=\hat{E}+\ii\eta\in \mathbf D_{\kappa,\e}}
			\bigg\{\bigg|\frac{1}{W^{2d}}
			\sum_{x\in[a],y\in[b]}|G_{xy}(z)|^2 - \frac{|m|^2\Theta_{ab}^{(+,-)}(z)}{W^d}\bigg| \le W^\tau \br{\p{(\mathcal B_{\eta,0})^{\frac 1 5} \mathcal B_{\eta,|x-y|}}\wedge (\mathcal B_{\eta,0})^{2}}\bigg\}\, , 
			\label{eq:diffu1}\\
			&\bigcap_{z=\hat{E}+\ii\eta\in \mathbf D_{\kappa,\e}}
			\bigg\{\bigg|\frac{1}{W^{2d}}
			\sum_{x\in[a],y\in[b]}\p{G_{xy}G_{yx}}(z) - \frac{m^2\Theta^{(+,+)}_{ab}(z)}{W^d} \bigg| \le W^\tau \br{\p{(\mathcal B_{\eta,0})^{\frac 1 5} \mathcal B_{\eta,|x-y|}}\wedge (\mathcal B_{\eta,0})^{2}}\bigg\}\, 
			\label{eq:diffu2}.
		\end{align}
		Moreover, for each $z=\hat{E}+\ii\eta\in \mathbf D_{\kappa,\e}$, the expectations $\E|G_{xy}(z)|^2$ and $\E\p{G_{xy}(z)G_{yx}(z)}$ satisfy the following bounds for any small constant $\tau>0$ and large enough $N$: 
		\begin{align}\label{Meq:QdS1}
			\max_{a,b }  \bigg|\frac{1}{W^{2d}}
			\sum_{x\in[a],y\in[b]}\E|G_{xy}(z)|^2 - \frac{|m|^2\Theta_{ab}^{(+,-)}(z)}{W^d} \bigg| \le W^\tau (\cal B_{\eta,0})^{2} \left(\p{\ilambda^2W^{d}}^{-1/ 5}+\cal B_{\eta,0}\right),   \\
			\label{Meq:QdS2}
			\max_{a,b }  \bigg|\frac{1}{W^{2d}}
			\sum_{x\in[a],y\in[b]}\E(G_{xy}G_{yx})(z) - \frac{m^2\Theta_{ab}^{(+,+)}(z)}{W^d}\bigg|\le W^\tau (\cal B_{\eta,0})^{2} \left(\p{\ilambda^2W^{d}}^{-1/ 5}+\cal B_{\eta,0}\right) .
		\end{align}
	\end{theorem}
	
	Note that when $\eta \le \ilambda^{2}/L^{d}$, we have 
	\be\label{eq:BetaK}
	\cal B_{\eta,K}\asymp (N\eta)^{-1} \ge (\ilambda^2W^d)^{-1}, \quad \forall 0\le K\lesssim L .
	\ee
	In this case, the expected estimates in \eqref{Meq:QdS1} and \eqref{Meq:QdS2} provide improvements over those in \eqref{eq:diffu1} and \eqref{eq:diffu2}, which in turn lead to the QUE estimates in \Cref{MR:QUE}.

	\begin{proof}[\bf Proof of Theorem \ref{MR:QUE}]  
		With the quantum diffusion estimates \eqref{Meq:QdS1} and \eqref{Meq:QdS2}, the proof of \Cref{MR:QUE} is similar to those for \cite[Theorem 2.5]{YY_25}, \cite[Theorem 2.4]{DYYY25}, and \cite[Theorem 2.2]{RBSO1D}. We now briefly outline the proof without giving all details. 
		
		Take $z=E+\ii\eta$ with $|E|\le 2-\kappa$ and $\eta=W^{-\e_0}({\ilambda W^{d/2}}/N)\ge W^{\fd-\e_0}/N$. With the spectral decomposition of $G(z)$ and the definition of $ {\cal I}_E$, we find that 
		\begin{align}\label{ssfa2}
			\E \sum_{i,j: \lambda_i,\lambda_j\in {\cal I}_E} \left|\bpsi_i^*\left(E_{a}-N^{-1}\right) \bpsi_j\right|^2 &\lesssim \eta^2 \E \tr \left[\operatorname{Im} G(z)\left(E_{a}-N^{-1}\right) \operatorname{Im} G(z)\left(E_{a}-N^{-1}\right)\right] \nonumber \\
			&=\frac{\eta^2}{L^{2d}}\sum_{b,b'\in \Zn}
			\mathbb{E}  
			\tr \left[\operatorname{Im} G(z)\left(E_{a}-E_{b}\right) \operatorname{Im} G(z)\left(E_{a}-E_{b'}\right)\right],
		\end{align}
		where we recall that $E_a$ is the block-averaging matrix restricted to the $a$-th block: $(E_a)_{xy} = W^{-d}\mathbf 1(x=y\in [a])$. Expanding $\im G$ as $\im G=(G-G^*)/(2\ii)$ and applying the QUE estimates \eqref{Meq:QdS1} and \eqref{Meq:QdS2}, we can bound the right-hand side of \eqref{ssfa2} by   
		\begin{align}\label{ssfa2_deter}
			W^\tau \eta^2 \p{\frac{1}{N\eta}}^2\left[\frac{1}{N\eta} + \frac{1}{(\ilambda^2W^d)^{1/5}}\right]+  \frac{C\eta^2}{W^d} \max_{\sig,\sig'\in \{+,-\}}\max_{a,b,b'}\left|\Theta^{(\sig,\sig')}_{ab}-\Theta^{(\sig,\sig')}_{ab'}\right|
		\end{align}
		for any small constant $\tau>0$ and a large constant $C>0$, where the first term comes from the application of \eqref{Meq:QdS1}--\eqref{eq:BetaK}. 
		Then, with the estimates \eqref{prop:ThfadC} and \eqref{prop:BD1} below for the $\Theta$-matrices, we get 
		$$
		\max_{\sig,\sig'\in \{+,-\}}\max_{a,b,b'}\left|\Theta^{(\sig,\sig')}_{ab}-\Theta^{(\sig,\sig')}_{ab'}\right|\prec \ilambda^{-2} .
		$$
		Plugging it into \eqref{ssfa2_deter} and further into \eqref{ssfa2} yields that 
		\begin{align*}
			\E \sum_{i,j: \lambda_i,\lambda_j\in {\cal I}_E} \left|\bpsi_i^*\left(E_{a}-N^{-1}\right) \bpsi_j\right|^2 &\lesssim {W^\tau}N^{-2}\cdot \p{W^{-\fd+\e_0}+W^{-2\fd/5}+W^{-2\e_0}}. 
		\end{align*}
		Finally, applying Markov's inequality concludes \eqref{Meq:QUE}. The proof of \eqref{Meq:QUE2} follows a similar argument, where we simply replace $E_{a}$ in \eqref{ssfa2} with $|A|^{-1}\sum_{a\in A} E_{a}$, after which all subsequent arguments remain valid.  
	\end{proof}
	
	
	\begin{proof}[\bf Proof of Theorem \ref{Thm: B_Univ}]
	Using \Cref{MR:decol}, \Cref{MR:locSC}, and the QUE estimate \eqref{Meq:QUE} as inputs, the proof of Theorem \ref{Thm: B_Univ} follows from the Green’s function comparison method developed in \cite{Xu:2024aa}. 
	The argument is essentially identical to that used for one-dimensional \cite[Theorem~2.6]{YY_25} and two-dimensional \cite[Theorem~2.6]{DYYY25} random band matrices. For instance, adapting the proof of \cite[Theorem~2.6]{DYYY25}, we only need to adjust certain parameters in the proofs of equations (2.22) and (2.23) therein.
	In that setting, given $y\in \ZL$, the ``bad'' event $\mathcal{B}\equiv \mathcal{B}(y)$ was defined by the existence of an index $\alpha$ such that $\lvert \lambda_\alpha - E\rvert \le N^{-1+\mathfrak{c}/6}$ and $\lvert M_{y,\alpha}\rvert \ge N^{-\mathfrak{c}/18}$ ($\fc$ is the constant in the assumption $W \ge N^{\mathfrak{c}}$ and $M_{y,\al}$ is defined below equation (2.24) of \cite{DYYY25}). In our setting, we modify the definition of $\cal B$ to
		\(
		\mathcal{B}(y)
		:= \bigl\{ \exists \alpha: \lvert \lambda_\alpha - E\rvert \le N^{-1} W^{\mathfrak{d}/3}, \lvert M_{y,\alpha}\rvert \ge W^{-\mathfrak{d}/6} \bigr\}.
		\)
		Applying \eqref{Meq:QUE} with $\e_0=\mathfrak{d}/3$ and $c=\mathfrak{d}/6$, we can deduce that
		\(
		\mathbb{P}(\mathcal{B}) \le W^{-\mathfrak{d}/15+\tau}.
		\)
		With these modified parameters and the new definition of $\cal B$, the remainder of the proof of \eqref{eq:universality} proceeds exactly as in the proof of \cite[Theorem~2.6]{DYYY25}. 
		Hence, we omit further details.
	\end{proof}

	\subsection{Main results for the block Anderson model}
	\label{sec:main_BA}
	
	
	We next define the block Anderson model. We begin by introducing the \emph{random block potential} $V$, an $N\times N$ complex Hermitian random block matrix whose diagonal blocks are i.i.d.~GUE matrices. In other words, $V$ can be regarded as a random band matrix whose entries are distributed according to \eqref{bandcw0}, with variance matrix $S=(S_{xy})$ given by: 
	\be\label{bandcwV}
	S_{xy}\equiv \var (H_{xy}):=W^{-d}S^{(\sB)}_{ab}(0)= W^{-d} {\bf 1}\left( a =b \right), \quad \text{for} \ \  x\in [a], \ y\in [b],
	\ee
	where \( S^{(\sB)}(0)=I_{L^d} \) denotes the matrix defined in \eqref{eq:variancematrix} with $\ilambda=0$. 
	Next, we define the \emph{block Anderson model} as a random block Schr{\"o}dinger operator of the form
	\be\label{eq:H_blocka}
	H \equiv H(\ilambda)=V+\ilambda \Psi ,\ee 
	where $\ilambda>0$ (recall \eqref{def:ilambda}) is a coupling parameter, and $\Psi$ represents the interaction Hamiltonian that introduces hopping between neighboring blocks. 
	For definiteness, we focus on the classical block Anderson model in which each block is the identity matrix: 
	\be\label{eq:Psi3D}
	\Psi|_{[a][b]}= \Psi^{\LK}\otimes
	I_{W^d},\quad \text{where}\quad  \Psi^{\LK}_{ab}=\mathbf 1(a\sim b).\ee 
	For the block Anderson model, we again denote its eigenvalues by \( \lambda_1 \leq \lambda_2 \leq \dots \leq \lambda_N \), and the corresponding normalized eigenvectors by \( \left( \boldsymbol{\psi}_k \right)_{k=1}^N \). 
	The Green's function $G(z)$ is defined analogously to \eqref{def_Green}. 
	\begin{remark}
		For simplicity, we have slightly abused notation by using the same symbols (such as $H$, $S$, $\lambda_i$, $\boldsymbol{\psi}_i$, and $G(z)$ defined above, as well as $m(z)$ and $M(z)$ defined below) for both the random band matrix model and the block Anderson model. When a distinction is needed, we will add a superscript $\BA$ to denote quantities associated with the block Anderson model.
	\end{remark}    
	
	It is well-known that as $N\to \infty$, the empirical spectral measure $N^{-1}\sum_{k=1}^N \delta_{\lambda_k}$ converges to a deterministic probability measure $\mu_{N}$, known as the free convolution of the semicircle law and the empirical measure of $\ilambda\Psi$. This measure has a continuous probability density $\rho_N(x)$ on $\R$ \cite{Biane}, with support $\mathrm{supp}(\mu_{N})=[-e_\ilambda, e_\ilambda]$, where $-e_\ilambda<0$ and $e_\ilambda>0$ denote the left and right spectral edges, respectively. 
	The Stieltjes transform $m(z)\equiv m(z,\ilambda)$ of the measure $\mu_{N}$ is defined as the unique solution to the following self-consistent equation
	\be\label{self_m}
	\frac{1}{N}\tr \frac{1}{\ilambda \Psi -z- m(z)} = m(z)
	\ee
	such that $\im m(z)>0$ for $z\in \C_+$. 
	In addition, we define the $N\times N$ matrix $M(z)\equiv M_N(z,\ilambda)$ and the $L^d\times L^d$ matrix \smash{$M^{\LK}(z)\equiv M^{\LK}_L(z,g)$} as
	\be\label{def_G0}
	M(z):= \frac{1}{\ilambda \Psi -z- m(z)} = M^{\LK}(z)\otimes I_{W^d},\quad M^{\LK}(z):=\frac{1}{\ilambda \Psi^{\LK} -z- m(z)}.
	\ee
	Note $M$ and $M^{\LK}$ are both (complex) symmetric matrices. In the random matrix theory literature (see e.g., \cite{LeeSchSteYau2015,knowles2017anisotropic,He2018,AEK_PTRF,EKS_Forum} for various settings of deformed Wigner-type matrices), the Green's function $G(z)$ is known to converge to $M(z)$ in the sense of local laws. 
	(However, existing convergence estimates in the literature are generally non-optimal in the non-mean-field setting $W^d\ll N$.)

	\begin{theorem}[Main results for the block Anderson model]\label{MR:decol_BA}
		Fix any dimension $d\ge 3$, and assume there exist constants $\fc,\fd>0$ such that \eqref{Main_DEL_COND} and \eqref{eq:WO} hold. 
		Then, for the block Anderson model defined above, the following results remain valid if, throughout their statements, we replace $2-\kappa$ with $e_\ilambda-\kappa$:
		\begin{itemize}
			\item The delocalization estimate \eqref{eq:psikLinfty} holds.
			
			\item The local laws \eqref{G_bound} and \eqref{G_bound_ave} hold for large enough $N$, where $G$ is defined in \eqref{def_Green}, and $m(z)$ and $M(z)$ are defined in \eqref{self_m} and \eqref{def_G0}, respectively. 
			
			\item The QUE estimates \eqref{Meq:QUE} and \eqref{Meq:QUE2} hold, and the bulk universality \eqref{eq:universality} holds. 
			
			\item Recall the variance matrix \(S\) from \eqref{bandcwV} and the matrix \(M\) from \eqref{def_G0}. Define the \(\Theta\)-matrices as  
			\be \label{def:Theta_BA}
			\qquad  \quad \Theta^{(+,-)}(z) := \big(1 - M^{(+,-)}(z) S^{\LK}\big)^{-1} , \ \ 
			\Theta^{(+,+)}(z) = (\Theta^{(-,-)}(z))^* := \big(1 - M^{(+,+)}(z) S^{\LK}\big)^{-1} , 
			\ee
			where \( S^{(\sB)}=I_{L^d} \), and the matrices $M^{(+,-)}=M^{(-,+)}$ and $M^{(+,+)}=(M^{(-,-)})^*$ are defined by\footnote{Note the definition \eqref{def:Theta_BA} is consistent with \eqref{def:Theta}, where \(M^{(+,-)} = |m|^2 I\) and \(M^{(+,+)} = m^2 I\).}   
			\[ M^{(+,-)}_{ab}=\frac{1}{W^d}\sum_{x\in[a],y\in[b]}|M_{xy}|^2=|M^{\LK}_{ab}|^2,\quad M^{(+,+)}_{ab}=\frac{1}{W^d}\sum_{x\in[a],y\in[b]}M_{xy}M_{yx}=(M^{\LK}_{ab})^2. \] 
			Then, the quantum diffusion estimates \eqref{eq:diffu1}--\eqref{Meq:QdS2} hold if we replace $|m|^2\Theta_{ab}^{(+,-)}$ and $m^2\Theta_{ab}^{(+,+)}$ with $ (\Theta^{(+,-)}M^{(+,-)})_{ab}$ and $(\Theta^{(+,+)}M^{(+,+)})_{ab}$, respectively.
		\end{itemize}
		
	\end{theorem}

	The lower bound $\ilambda\gg W^{-d/2}$ in \eqref{eq:WO} is sharp, as explained below \Cref{MR:decol}: when $\ilambda\ll W^{-d/2}$, the block Anderson model is localized, as proven in \cite{PelSchShaSod}. To establish \Cref{MR:decol_BA}, it is sufficient to prove the local laws \eqref{G_bound} and \eqref{G_bound_ave}, together with the quantum diffusion estimates \eqref{eq:diffu1}--\eqref{Meq:QdS2}, while the delocalization, QUE, and bulk universality can be derived as corollaries, as shown in \Cref{subsec:main}.

	\subsection{Stochastic flow and loop hierarchy}\label{sec:tools}
	
	The remainder of this paper is devoted to proving the local laws in \Cref{MR:locSC} and the quantum diffusion estimates in \Cref{MR:QuDiff} for both models. Before proceeding with the proofs, we first introduce some key tools and convenient notations for the remainder of this section, which will be utilized throughout the subsequent arguments. 
	First, we introduce the flow framework, which is the same as that employed for 1D and 2D random band matrices \cite{YY_25,DYYY25} except for a dimension-dependent scaling $W^{-d}$. Consider the following matrix Brownian motion: 
	\begin{align}\label{MBM}
		\dd (H_{t})_{xy}=\sqrt{S_{xy}}\dd (\boldsymbol{B}_{t})_{xy}, \ \ \forall x,y\in \ZL,\quad \text{where}\quad  H_{0}=\begin{cases}
			0, \ &\text{for band matrix}\\
			\ilambda\Psi, \ &\text{for block Anderson}
		\end{cases}.
	\end{align}
	Here, $(\boldsymbol{B}_{t})_{xy}$ are independent complex Brownian motions up to the Hermitian symmetry \smash{$(\boldsymbol{B}_{t})_{xy}=\overline {(\boldsymbol{B}_{t})_{yx}}$}, i.e., \smash{$t^{-1/2}\boldsymbol{B}_t$} is an $N\times N$ GUE whose entries have zero mean and unit variance; $S=(S_{xy})$ is the variance matrix defined in \eqref{eq:variancematrix} or \eqref{bandcwV}. 
	Following \cite{10.1214/19-ECP278, Sooster2019, DY}, we consider the Green's function of \( H_t \) with a carefully chosen time-dependent spectral parameter \( z_t \), whose dynamics are naturally renormalized at leading order.

	\begin{definition}[Flow framework]\label{def_flow}
		For any $E \in \mathbb R$ and $\ilambda>0$, we denote $m(E,\ilambda)\equiv m(E+\ii 0_+,\ilambda)$, as defined in \eqref{eq:defmzsc} and \eqref{self_m} for the random band matrix and block Anderson model, respectively. Based on this, we define the \emph{spectral parameter flow} $z_t$ by 
		\be\label{eq:zt}
		z_t(E,\ilambda) = E + (1-t) m(E,\ilambda),\quad \text{for}\ \ t\in [0, 1]. 
		\ee
		We refer to $E$ and $\ilambda$ as {\bf flow parameters}, which remain fixed throughout the flow. Let $z_t=E_t + \ii \eta_t$ with 
		\begin{align}\label{eta}
			E_t\equiv E_t(E,\ilambda)=E+(1-t)\re m(E,\ilambda),\quad \eta_t\equiv \eta_t(E,\ilambda) = (1-t)  \im m(E,\ilambda).
		\end{align}
		Furthermore, we define the matrix $M(E,\ilambda)$, as in \eqref{eq:defMzsc} and \eqref{def_G0} for the random band matrix and block Anderson model, respectively, with $z$ and $m(z)$ replaced by $E$ and $m(E,\ilambda)$.  
		Then, we denote Green's function of $H_{t}$ as $G_t(z,\ilambda):=(H_t(\ilambda)-z)^{-1}$, and define the resolvent flow as 
		\begin{align}\label{self_Gt}
			G_{t;E,\ilambda}\equiv G_t(z_t(E,\ilambda),\ilambda):=(H_{t}(\ilambda)-z_t{(E,\ilambda)})^{-1}.
		\end{align}
	\end{definition}
	
	\begin{remark}
		To explain the choice of the flow $z_t$ in \eqref{eq:zt}, let $M_t(z,\ilambda)\in \cal M_N(\C)$ (where $\cal M_N(\C)$ denotes the set of $N\times N$ complex matrices) be the unique solution to the matrix Dyson equation 
		\be\label{def_G0t}
		M_t(z,\ilambda):= \p{H_0 -z- t \cal S[M_t(z,\ilambda)]}^{-1},
		\ee
		such that $\im M_t$ is positive definite whenever \(z\in \C_+\). Here, the linear operator $\mathcal{S}$ is defined by
		\begin{align}\label{eq:opS}
			\mathcal{S}[X]_{xy}:=\delta_{xy}\sum_{y=1}^{N}S_{xy}X_{yy},\quad \text{for}\quad X\in \cal M_{N}(\C),
		\end{align}
		where $S_{xy}$ is given by \eqref{eq:variancematrix} and \eqref{bandcwV} for the respective models. 
		It is known that $M_t$ describes the deterministic limit of the Green’s function $(H_t-z)^{-1}$. 
		Moreover, since $\cal S[M(E,\ilambda)]=m(E,\ilambda)I_N$, one can check that \(M_t(z_t(E,\ilambda),\ilambda)\equiv M(E,\ilambda)\).  
		In other words, the deterministic limit of $G_t$ remains invariant under the evolution.  
	\end{remark}

	Given any target spectral parameter $z$, we are interested in the original resolvent $G(z)=(H-z)^{-1}$. For random band matrices, this can be achieved through the stochastic flow by carefully choosing the spectral parameter $E$. The corresponding flow for the block Anderson model will be presented later in \Cref{sec:ext-to-BA}.

	\begin{lemma}[Lemma 2.8 of \cite{YY_25}]\label{zztE}
		Fix any $z\in \mathbb C_+$ with $\im z\in (0, 1]$ and $|\re z|\le 2-\kappa$. We choose 
		\be\label{eq:t0E0}t_0\equiv t_0(z)=|m(z)|^2=\frac{\im m(z)}{\im m(z)+ \im z},\quad E\equiv E(z)=-2\frac{\re m(z)}{|m(z)|}\, .\ee
		Then, for the random band matrix model, we have  
		\begin{equation}\label{eq:zztE}
			\sqrt{t_0}m(E)=m(z), \quad     z_{t_0}(E) =\sqrt{t_0} z  , \quad G(z) \stackrel{d}{=} \sqrt{t_0} G_{t_0,E} ,
		\end{equation} 
		where ``$\stackrel{d}{=}$" means equality in distribution.
	\end{lemma}

	In the main proofs for random band matrices, we will fix a target spectral parameter $z=\hat{E}+\ii \eta\in \mathbf D_{\kappa,\e}$ for an arbitrarily small constant $\e>0$ (recall \eqref{eq:spectral_domain}). Accordingly, we choose the parameters $t_0$ and $E$ as specified in \eqref{eq:t0E0}. The second identity in \eqref{eta} implies that $1-t\asymp \eta_t$ uniformly in $t\in [0,t_0]$, i.e., during the flow from $t=0$ to $t_0$, the imaginary part $\eta_t$ decreases from $\eta_0\asymp 1$ to $\eta_{t_0}\asymp 1-t_0 \asymp \eta \ge N^{-1+\e}$.  
	For clarity, unless we want to emphasize their dependence on $E$ (or $g$), we will often omit this variable from various notations, such as $z_t(E,\ilambda)$, $E_t(E,\ilambda)$, $\eta_t(E,\ilambda)$, $m(E,\ilambda)$, $M(E,\ilambda)$, and most importantly, $G_{t;E,\ilambda}\equiv G_t$. Our focus will be on the dynamics of $G_{t}$ and the corresponding $G$-loops defined below.

	\begin{definition}[$G$-loop]\label{Def:G_loop}
		For $\sigma\in \{+,-\}$, we denote 
		\begin{equation}\nonumber
			G_{t}(\sigma):=\begin{cases}
				(H_t-z_t)^{-1}, \ \ \text{if} \ \  \sigma=+,\\
				(H_t-\bar z_t)^{-1}, \ \ \text{if} \ \ \sigma=-.
			\end{cases}
		\end{equation}
		In other words, we let $G_{t}(+)\equiv G_{t}$ and $G_{t}(-)\equiv G_{t}^*$. 
		For any $\fn\in \N$, fix indices $\bsig=(\sigma_1, \cdots \sigma_\fn)\in \{+,-\}^\fn$ and $\ba=(a_1, \ldots, a_\fn)\in (\Zn)^\fn$. We define the corresponding {\bf $\fn$-$G$-loop} by 
		\begin{equation}\label{Eq:defGLoop}
			{\cal L}^{(\fn)}_{t, \boldsymbol{\sigma}, \ba}= \tr \p{\prod_{i=1}^\fn \left(G_{t}(\sigma_i) E_{a_i}\right) }, \quad \text{where} \quad (E_{a_i})_{xy} = W^{-d}\mathbf 1(x=y\in [a_i]).
		\end{equation}
		Sometimes, we will also call $G$-loops as $\cL$-loops. Furthermore, we denote 
		\begin{equation}\label{def_mtzk}
			m (\sigma ):= \begin{cases}
				m(E,\ilambda),  &\text{if} \ \ \sigma  =+ \\
				\bar m(E,\ilambda),  &\text{if} \ \ \sigma = -
			\end{cases}, \quad M (\sigma ):= \begin{cases}
				M(E,\ilambda) ,  &\text{if} \ \ \sigma  =+ \\
				M(E,\ilambda)^*,  &\text{if} \ \ \sigma = -
			\end{cases}.
		\end{equation}
		Finally, we defined the \emph{centered resolvent} $\Gc$ as
		\begin{equation}\label{Eq:defwtG}
			\Gc_t(\sigma) := G_t(\sigma) - M(\sigma),\quad \forall t\in[0,1], \ \sig\in \{+,-\} .
		\end{equation}
		(Note that $\Gc_t(\sigma)$ was denoted by $\widetilde{G}_t(\sigma)$ in the preceding papers \cite{YY_25, DYYY25}; here we adopt a different notation for clarity.) 
		For any $a\in \Zn$, we refer to $\tr[\Gc_t(\sig) E_a]$ as a \emph{light-weight}.
	\end{definition}

	To describe the loop hierarchy for the $G$-loops, we introduce the following operations, following \cite{YY_25}. (For a graphical illustration of these operations, see also the diagrams in \cite[Definition 2.10]{YY_25}.)

	\begin{definition}\label{Def:oper_loop}
		For any fixed $n\in \N$, take an $n$-loop of the form \eqref{Eq:defGLoop}.

		\medskip
		
		\noindent 
		\emph{1.} For $k \in \qqq{\fn}$ and $a\in \Zn$, we define a ``cut-and-glue" operator ${\cut}^{(a)}_{k}$ as follows: ${\cut}^{(a)}_{k} \circ {\cal L}^{(n)}_{t, \boldsymbol{\sigma}, \mathbf{a}}$ is defined to be the loop obtained by replacing $G_t(\sigma_k)$ with $G_{t}(\sigma_k) E_a G_{t}(\sigma_k)$. In other words, it cuts the $k$-th edge $G_t(\sigma_k)$ and glues the two new ends with $E_{a}$ to get a new loop that is one unit longer. This operator can also be considered as an operator on $(\boldsymbol{\sigma},\ba)$, that is, 
		\begin{align*}{\cut}^{(a)}_{k} (\boldsymbol{\sigma}, \ba) =  \big( & (\sigma_1,\ldots, \sigma_{k-1}, \sigma_k,\sigma_k ,\sigma_{k+1},\ldots, \sigma_\fn ),\  ( a_1 ,\ldots,  a_{k-1} ,  a , a_k , a_{k+1} ,\ldots,  a_\fn  )\big).\end{align*}
		Hence, we will sometimes write ${\cut}^{(a)}_{k} \circ {\cal L}^{(\fn)}_{t, \boldsymbol{\sigma}, \ba}\equiv {\cal L}^{(\fn+1)}_{t, \;  {\cut}^{(a)}_{k} (\boldsymbol{\sigma}, \ba)}.$

		\noindent 
		\emph{2.}  For $k < l \in \qqq{\fn}$, we define another two types ``cut-and-glue" operators---${\cutL}^{(a)}_{k,l}$ from the left (``L") of $k$, and \smash{${\cutR}^{(a)}_{k,l}$} from the right (``R") of $k$---as follows. 
		In other words, these operators cut the $k$-th and $l$-th edges $G_t(\sigma_k)$ and $G_t(\sigma_l)$, and creates two chains: the left chain to the vertex $a_k$ is of length $(\fn+k-l+1)$ and contains the vertex $a_\fn$, while the right chain to the vertex $a_k$ is of length $(l-k+1)$ and does not contain the vertex $a_\fn$.  
		Then, \smash{${\cutL}^{(a)}_{k,l}$ (resp.~${\cutR}^{(a)}_{k,l}$)} gives an $(\fn+k-l+1)$-loop (resp.~$(l-k+1)$-loop) obtained by gluing the left chain (resp.~right chain) at the new vertex $a$.
		Again, we can also consider the two operators to be defined on the indices $(\boldsymbol{\sigma},\ba)$: 
		\begin{align*}
			&{\cutL}^{(a)}_{k,l} (\boldsymbol{\sigma}, \ba) = \left((\sigma_1,\ldots, \sigma_k,\sigma_l ,\ldots, \sigma_\fn ),(a_1,\ldots, a_{k-1}, a,a_l,\ldots, a_\fn )\right),\\
			&{\cutR}^{(a)}_{k,l} (\boldsymbol{\sigma}, \ba) = \left((\sigma_k,\ldots, \sigma_l),(a_k,\ldots, a_{l-1}, a)\right).
		\end{align*}
		Hence, we will sometimes write 
		${\cutL}^{(a)}_{k,l} \circ {\cal L}^{(\fn)}_{t, \boldsymbol{\sigma}, \ba}\equiv 
		{\cal L}^{(\fn+k-l+1)}_{t,   {\cutL}^{(a)}_{k,l} (\boldsymbol{\sigma}, \ba)}$ and $ {\cutR}^{(a)}_{k} \circ {\cal L}^{(\fn)}_{t, \boldsymbol{\sigma}, \ba}\equiv 
		{\cal L}^{(l-k+1)}_{t,  {\cutR}^{(a)}_{k,l} (\boldsymbol{\sigma}, \ba)}.$
	\end{definition}

	For $x,y\in \ZL$, we abbreviate $\partial_{xy}:=\partial_{(H_t)_{xy}}$. By It\^o's formula, we can derive the following SDE satisfied by the $G$-loops, called \emph{loop hierarchy}; see Lemma 2.11 of \cite{YY_25}.

	\begin{lemma}[Loop hierarchy] \label{lem:SE_basic}
		An $\fn$-$G$-loop satisfies the following SDE, called the \emph{loop hierarchy}:
		\begin{align}\label{eq:mainStoflow}
			\dd\mathcal{L}^{(n)}_{t, \boldsymbol{\sigma}, \ba} = \dd\mathcal{E}^{M,(n)}_{t, \boldsymbol{\sigma}, \mathbf{a}} + \mathcal{E}^{\Gc,(n)}_{t, \boldsymbol{\sigma}, \ba}\dd t +
			W^{d} \sum_{1 \le k < l \le n} \sum_{a, b} \left( {\cutL}^{(a)}_{k, l} \circ \mathcal{L}^{(n)}_{t, \boldsymbol{\sigma}, \ba} \right) S^{(\sB)}_{ab} \left( {\cutR}^{(b)}_{k, l} \circ \mathcal{L}^{(n)}_{t, \boldsymbol{\sigma}, \ba} \right) \dd t, 
		\end{align}
		where $S^{\LK}$ denotes $S^{\LK}(\ilambda)$ in \eqref{eq:variancematrix} (for the random band matrix model) or $S^{\LK}(0)=I_{L^d}$ (for the block Anderson model). Moreover, the martingale term \smash{$\dd\mathcal{E}^{M,(\fn)}_{t, \boldsymbol{\sigma}, \ba}$} and the light-weight term 
		\smash{$\mathcal{E}^{\Gc,(\fn)}_{t, \boldsymbol{\sigma}, \ba}$} are defined by 
		\begin{align} \label{def_Edif}
			\dd\mathcal{E}^{M,(\fn)}_{t, \boldsymbol{\sigma}, \ba} :  = & \sum_{x,y\in \ZL} 
			\left( \partial_{xy}  {\cal L}^{(\fn)}_{t, \boldsymbol{\sigma}, \ba}  \right)
			\cdot \sqrt{S _{xy}}
			\left(\dd \boldsymbol{B}_t\right)_{xy}, \\\label{def_EwtG}
			\mathcal{E}^{\Gc,(\fn)}_{t, \boldsymbol{\sigma}, \ba}: = &  {W}^d \sum_{k=1}^\fn \sum_{a, b\in \Zn} \; 
			\tr\p{ \Gc_t(\sigma_k) E_{a} }
			S^{\LK}_{ab} 
			\left( {\cut}^{(b)}_{k} \circ {\cal L}^{(\fn)}_{t, \boldsymbol{\sigma}, \ba} \right) . 
		\end{align}
		We emphasize that the superscript $(n)$ indicates the length of the $G$-loop on the left-hand side of the equation. For clarity and conciseness, we may omit this superscript when its value is clear from the context. 
	\end{lemma}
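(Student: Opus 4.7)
The proof is an application of It\^o's formula to the scalar observable $\mathcal{L}^{(n)}_{t,\bsig,\ba} = \langle \prod_{i=1}^n G_t(\sigma_i) E_{a_i}\rangle$, followed by a termwise identification with the right-hand side of \eqref{eq:mainStoflow}. It\^o's formula produces three contributions: (i) the martingale part of $dH_t$ acting linearly at each position; (ii) the drift in \eqref{eq:SDE_Gt} acting linearly at each position; and (iii) the cross-variations $d[G_t(\sigma_k), G_t(\sigma_l)]$ for $1\le k<l\le n$.

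Contribution (i) is immediate: since $\partial_{(H_t)_{xy}} G_t(\sigma) = -G_t(\sigma)\,e_{xy}\,G_t(\sigma)$ (with $e_{xy}$ the matrix unit at $(x,y)$), the linear-in-$dH_t$ part of $d\mathcal{L}^{(n)}$ is precisely $\sum_{x,y}(\partial_{xy}\mathcal{L}^{(n)})\cdot\sqrt{S_{xy}}(dB_t)_{xy} = d\mathcal{E}^{M,(n)}_{t,\bsig,\ba}$ from \eqref{def_Edif}. Contribution (ii) uses the doubly stochastic property, $\mathcal{S}[mI]=mI$, to rewrite $\mathcal{S}[G_t(\sigma)]-m(\sigma) = \mathcal{S}[\Gc_t(\sigma)]$; then the Kronecker factorization $S = S^{(\sB)}\otimes S_W$ from \eqref{eq;sproduct} gives, for $x\in\mathcal{I}_a$, $(\mathcal{S}[\Gc_t(\sigma)])_{xx} = \sum_b S^{(\sB)}_{ab}\langle \Gc_t(\sigma) E_b\rangle$, so as an operator
\[
\mathcal{S}[\Gc_t(\sigma)] \;=\; W^d\sum_{a,b\in\Zn} S^{(\sB)}_{ab}\,\langle\Gc_t(\sigma) E_b\rangle\,E_a .
\]
Inserted at position $k$ in the cyclic trace, this is exactly one summand of $\mathcal{E}^{\Gc,(n)}_{t,\bsig,\ba}$ by the definition of ${\cut}^{(a)}_{k}$ in \Cref{Def:oper_loop}; summing over $k\in\qqq{n}$ recovers \eqref{def_EwtG}.

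The third contribution is the one demanding care, and it is where I expect the main technical obstacle. Starting from $d[(H_t)_{xy},(H_t)_{uv}] = S_{xy}\delta_{yu}\delta_{xv}\,dt$ (a consequence of the Hermitian symmetry $(H_t)_{yx}=\overline{(H_t)_{xy}}$ and $\mathbb{E}|(H_t)_{xy}|^2=S_{xy}$), the matrix-entry covariation is
\[
d[G_t(\sigma_k)_{\alpha\beta},G_t(\sigma_l)_{\gamma\delta}] \;=\; \sum_{x,y} S_{xy}\,G_t(\sigma_k)_{\alpha x}G_t(\sigma_k)_{y\beta}\,G_t(\sigma_l)_{\gamma y}G_t(\sigma_l)_{x\delta}\,dt .
\]
When this is plugged into the cyclic trace at positions $k<l$, the pair of matrix units $e_{xy},e_{yx}$ collapses the $n$-cycle into a product of two diagonal entries via the elementary identity $\langle X e_{xy} Y e_{yx}\rangle = X_{xx}Y_{yy}$, applied after grouping the factors into the arc passing from position $l$ through $n,1,\ldots,k-1$ back to $k$, and the arc from $k$ through $k+1,\ldots,l-1$ to $l$. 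Summing $x\in\mathcal{I}_a,y\in\mathcal{I}_b$ with $S_{xy}=W^{-d}S^{(\sB)}_{ab}$ and converting each diagonal sum into a trace against $E_a,E_b$ supplies a factor $W^{2d}$, so the net prefactor is $W^{-d}\cdot W^{2d} = W^d$. The two resulting cyclic products are precisely ${\cutL}^{(a)}_{k,l}\circ\mathcal{L}^{(n)}_{t,\bsig,\ba}$ (the arc through $n$, of length $n+k-l+1$) and ${\cutR}^{(b)}_{k,l}\circ\mathcal{L}^{(n)}_{t,\bsig,\ba}$ (the arc not through $n$, of length $l-k+1$), matching \Cref{Def:oper_loop} and \Cref{fig:op_gLeft}. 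The only nontrivial step is the combinatorial verification that the labels $(\sigma_i,a_i)$ distribute onto the two arcs in exact agreement with the ${\cutL}/{\cutR}$ conventions, with the fresh vertices $a,b$ inserted at the cut points; once this is checked, assembling (i)--(iii) produces \eqref{eq:mainStoflow}.
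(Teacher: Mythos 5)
Your proposal is correct and follows exactly the route the paper intends (the paper states the lemma as a direct consequence of It\^o's formula applied to the resolvent flow \eqref{eq:SDE_Gt}, deferring the computation to the identical derivation in \cite{YY_25}): the linear $\dd H_t$ part gives \eqref{def_Edif}, the drift $G\,\mathcal{S}[\Gc]\,G$ combined with $S=S^{(\sB)}\otimes S_W$ gives \eqref{def_EwtG}, and the cross-variations at positions $k<l$ collapse the cycle into the two arcs via $\langle Y e_{xy} X e_{yx}\rangle = Y_{xx}X_{yy}$ with net prefactor $W^{-d}\cdot W^{2d}=W^d$, reproducing the $\cutL/\cutR$ term. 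Your bookkeeping of the arc lengths $n+k-l+1$ and $l-k+1$ and of the inserted vertices $a,b$ matches Definition \ref{Def:oper_loop}, so no gap remains.
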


	Note the right-hand side (RHS) of equation \eqref{eq:mainStoflow} involves $G$-loops of length larger than $\fn$, and hence represents a ``hierarchy" rather than a ``self-consistent equation" for the $G$-loops. This loop hierarchy is well-approximated by the $\cK$-loops, defined as follows.

	\begin{definition}[Tree approximation]\label{Def_Ktza}
		We define the $\cK$-loop of length 1 as: 
		$${\cal K}^{(1)}_{t,\sigma,a}=m(\sigma),\quad \forall t\in [0,1],\; \;\sigma\in \{+,-\},\;\; a\in \Zn.$$ 
		For $\fn\ge 2$, we define the function \smash{${\cal K}^{(\fn)}_{t, \boldsymbol{\sigma}, \ba}$} (of $t\in[0,1]$, $\bsig\in \{+,-\}^n$, and \smash{$\ba\in (\Zn)^\fn$}) to be the unique solution to the following system of equations, referred to as the \emph{\bf convolution tree equations}: 
		\begin{align}\label{pro_dyncalK}
			\partial_t{\cal K}^{(\fn)}_{t, \boldsymbol{\sigma}, \ba} 
			=  
			W^d \sum_{1\le k < l \le \fn} \sum_{a, b} \left( \cutL^{(a)}_{k, l} \circ \mathcal{K}^{(\fn)}_{t, \boldsymbol{\sigma}, \ba} \right) S^{\LK}_{ab} \left( \cutR^{(b)}_{k, l} \circ \mathcal{K}^{(\fn)}_{t, \boldsymbol{\sigma}, \ba} \right) ,
		\end{align}
		where the operators $\cutL$ and $\cutR$ act on ${\cal K}^{(\fn)}_{t, \boldsymbol{\sigma}, \ba}$ through the actions on indices: 
		\be\label{calGonIND}
		{\cutL}^{(a)}_{k,l}  \circ {\cal K}^{(\fn)}_{t, \boldsymbol{\sigma}, \ba} := {\cal K}^{(\fn+k-l+1)}_{t,  {\cutL}^{(a)}_{k,l}  (\boldsymbol{\sigma}, \ba)} , \ \  {\cutR}^{(b)}_{k,l}  \circ {\cal K}^{(\fn)}_{t, \boldsymbol{\sigma}, \ba} := {\cal K}^{(l-k+1)}_{t,  {\cutR}^{(b)}_{k,l}  (\boldsymbol{\sigma}, \ba)}.  
		\ee
		We impose the following initial condition at $t=0$: 
		\be\label{eq:initial_K}
		{\cal K}^{(k)}_{0, \boldsymbol{\sigma}, \ba} =  {\cal M}^{(k)}_{\boldsymbol{\sigma}, \ba} ,\quad \forall k\in \N, \ \bsig\in \{+,-\}^k,\ \ba\in (\Zn)^k,\ee
		where ${\cal M}^{(k)}_{\boldsymbol{\sigma}, \ba}$ is a $k$-$M$-loop defined as
		\be\label{eq:KMloop} {\cal M}^{(k)}_{\boldsymbol{\sigma}, \ba}:=\tr\p{ \prod_{i=1}^k \left(M(\sigma_i) E_{a_i}\right) } .\ee
		(Note that this $M$-loop can be simplified as \smash{${\cal M}^{(k)}_{\boldsymbol{\sigma}, \ba}= W^{-(k-1)d}\prod_{i=1}^k m(\sig_i) \mathbf 1(a_1=\cdots=a_k)$} for the random band matrix model.) 
		We call \smash{${\cal K}^{(\fn)}_{t, \boldsymbol{\sigma}, \ba}$} an $\fn$-$\cK$-loop. Moreover, we refer to \smash{${\cal K}^{(\fn)}_{t, \boldsymbol{\sigma}, \ba}$} as providing a \emph{\bf tree approximation} of the $G$-loop \smash{$\mathcal{L}^{(n)}_{t, \boldsymbol{\sigma}, \ba}$}, since it admits an explicit tree representation, as shown in \cite{YY_25,RBSO1D} (see also \Cref{Sec:CalK} for the detailed construction).
	\end{definition}

	Given any Hermitian matrix $\cal A$, define its resolvent as $R(z):=(\cal A-z)^{-1}$ for $z= E+ \ii \eta\in \C_+$. Then, with the algebraic identity $R-R^*=2\ii \eta RR^*=2\ii \eta R^*R$, we get the well-known Ward's identity:
	\be\label{eq_Ward0}
	\begin{split}
		\sum_x \overline {R_{xy'}}  R_{xy} = \frac{1}{2\ii \eta}\p{R_{y'y}-\overline{R_{yy'}}},\quad
		\sum_x \overline {R_{y'x}}  R_{yx} = \frac{1}{2\ii \eta}\p{R_{yy'}-\overline{R_{y'y}}}.
	\end{split}
	\ee
	As a special case, if $y=y'$, we have
	\be\label{eq_Ward}
	\sum_x |R_{xy}( z)|^2 =\sum_x |R_{yx}( z)|^2 = {\im R_{yy}(z) }/{ \eta}.
	\ee
Applying \eqref{eq_Ward0} to $G_t$, we can show that the $G$-loops satisfy the following identity \eqref{WI_calL}, which we also refer to as a ``Ward's identity". In \cite{YY_25,RBSO1D}, it shows that a similar Ward's identity \eqref{WI_calK} holds for the $\cal K$-loops. 

\begin{lemma}[Ward's identity for $\cL$-loops and $\cK$-loops]\label{lem_WI_K} 
	Given $\bsig\in\{+,-\}^n$ with $\fn\ge 2$ and $\sigma_1=-\sig_{\fn}$, we have the following identities, which are called Ward's identities at the vertex $a_\fn$: 
	\begin{align}\label{WI_calL}
		&\sum_{a_\fn}{\cal L}^{(\fn)}_{t, \boldsymbol{\sigma}, \ba}=
		\frac{1}{2\ii W^d\eta_t}\left( {\cal L}^{(\fn-1)}_{t,  \wh\bsig^{(+,\fn)}, \wh\ba^{(\fn)}}- {\cal L}^{(\fn-1)}_{t,  \wh\bsig^{(-,\fn)} , \wh\ba^{(\fn)}}\right),  \\
		\label{WI_calK}
		& \sum_{a_\fn}{\cal K}^{(\fn)}_{t, \boldsymbol{\sigma}, \ba}=
		\frac{1}{2\ii W^d\eta_t}\left( {\cal K}^{(\fn-1)}_{t,  \wh\bsig^{(+,\fn)}, \wh\ba^{(\fn)}}- {\cal K}^{(\fn-1)}_{t,  \wh\bsig^{(-,\fn)}, \wh\ba^{(\fn)}}\right) , 
	\end{align}
	where $\eta_t$ is defined in \eqref{eta}, $\wh\bsig^{(\pm,\fn)}$ is obtained by removing $\sigma_\fn$ from $\boldsymbol{\sigma}$ and replacing $\sigma_1$ with $\pm$, i.e., \smash{$\wh\bsig^{(\pm,\fn)}:=(\pm, \sigma_2, \cdots \sigma_{\fn-1})$}, and \smash{$\wh\ba^{(\fn)}$} is obtained by removing $a_\fn$ from $\ba$, i.e., \smash{$\wh\ba^{(\fn)}:=(a_1, a_2,\cdots, a_{\fn-1})$}. 
\end{lemma}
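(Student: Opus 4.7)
The plan is to treat the two identities separately: the $\cL$-loop identity is algebraic, while the $\cK$-loop identity requires propagating the Ward structure through the defining ODE in \eqref{pro_dyncalK}.

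For \eqref{WI_calL}, I would use the resolution of the identity $\sum_{a\in\Zn}E_a=W^{-d}I$ (which holds because the block projections $\{\id_a\}$ partition $I$) and then cyclicity of the trace to write
\begin{equation*}
\sum_{a_n}{\cal L}^{(n)}_{t,\bsig,\ba}=W^{-d}\big\langle G_t(\sig_n)G_t(\sig_1)\,E_{a_1}G_t(\sig_2)\cdots G_t(\sig_{n-1})E_{a_{n-1}}\big\rangle.
\end{equation*}
The hypothesis $\sig_1=-\sig_n$ then activates the resolvent identity $G_t(+)G_t(-)=(G_t(+)-G_t(-))/(2\ii\eta_t)$, which follows from $(G_t(+))^{-1}-(G_t(-))^{-1}=\bar z_t-z_t=-2\ii\eta_t$ together with $[G_t(+),G_t(-)]=0$. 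Identifying the two resulting traces as $(n-1)$-loops with modified opening sign yields \eqref{WI_calL}.

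For \eqref{WI_calK} I would induct on $n\geq 2$, showing that the defect
\begin{equation*}
\Delta^{(n)}_{t,\bsig,\wh\ba^{(n)}}\deq \sum_{a_n}{\cal K}^{(n)}_{t,\bsig,\ba}-\frac{1}{2\ii W^d\eta_t}\big({\cal K}^{(n-1)}_{t,\wh\bsig^{(+,n)},\wh\ba^{(n)}}-{\cal K}^{(n-1)}_{t,\wh\bsig^{(-,n)},\wh\ba^{(n)}}\big)
\end{equation*}
vanishes identically in $t\in[0,1]$. At $t=0$ the initial data \eqref{eq:initial_K} evaluate the LHS to $W^{-(n-1)d}\prod_{i=1}^n m(\sig_i)\mathbf 1(a_1=\cdots=a_{n-1})$ and the RHS to $W^{-(n-1)d}\prod_{i=2}^{n-1}m(\sig_i)\mathbf 1(a_1=\cdots=a_{n-1})$ after the division by $2\ii W^d\eta_0=2\ii W^d\im m(E)$, and the two match via the bulk identity $|m(E)|=1$ (which gives $m(\sig_1)m(\sig_n)=|m(E)|^2=1$ under $\sig_1=-\sig_n$, and is automatic in the bulk regime secured by Lemma \ref{zztE}). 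For the evolution, I would differentiate both sides using \eqref{pro_dyncalK}. On the LHS, $\sum_{a_n}$ enters only the left chain of each cut-and-glue $\cutL^{(a)}_{k,l}\circ\cK^{(n)}$, since $\cutR^{(b)}_{k,l}$ never involves $a_n$, and this chain still opens and closes with signs $\sig_1,\sig_n$. For \emph{non-adjacent cuts} $l\geq k+2$ the chain has length $n+k-l+1<n$, so the inductive hypothesis converts $\sum_{a_n}$ into a $\pm$-difference of shorter $\cK$-loops; these terms reproduce, term-by-term, the corresponding contributions in $\partial_tB^{(n)}$ obtained by applying \eqref{pro_dyncalK} to the $(n-1)$-loops on the right.

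The hard part is the \emph{adjacent cut} range $l=k+1$: here the left chain still has length $n$, so the inductive hypothesis is unavailable and the equation becomes self-referential, producing $\sum_{k=1}^{n-1}W^d\sum_{a,b}(\sum_{a_n}\cK^{(n)})\cdot S^{\LK}_{ab}\cK^{(2)}_{t,(\sig_k,\sig_{k+1}),(a_k,b)}$. These must match the scalar multiple $(1-t)^{-1}B^{(n)}$ generated on the RHS by $\partial_t(1/\eta_t)=1/((1-t)\eta_t)$. This reduces to a scalar identity for $\cK^{(2)}$ contracted against $S^{\LK}$, which I would obtain from the explicit solution $\cK^{(2)}(t)=W^{-d}\Theta^{(\sB)}_t$ announced for Section \ref{Sec:CalK} and the row-sum identity $\Theta^{(\sB)}_t\mathbf 1=(1-t)^{-1}\mathbf 1$ (which follows from $S^{(\sB)}\mathbf 1=\mathbf 1$ and $|m|^2=1$ in the bulk), together with the analogous identity for the same-sign $\cK^{(2)}$. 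Once this scalar matching is in place, the adjacent-cut contributions on the left align with $(1-t)^{-1}B^{(n)}$ on the right, $\partial_t\Delta^{(n)}\equiv 0$, and the induction closes.
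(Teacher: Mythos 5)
This lemma is imported verbatim from \cite{YY_25} (Lemma 3.6 there) and the present paper gives no proof, so your argument has to stand on its own. Your proof of \eqref{WI_calL} is correct and complete: $\sum_{a_\fn}E_{a_\fn}=W^{-d}I$, cyclicity of the trace, commutativity of $G_t(+)$ and $G_t(-)$, and the resolvent identity $G_t(+)G_t(-)=(G_t(+)-G_t(-))/(2\ii\eta_t)$ give exactly \eqref{WI_calL}. For \eqref{WI_calK}, the overall strategy (induction on $\fn$, vanishing of the defect $\Delta^{(\fn)}$ via a linear ODE with zero initial data) is sound, and your base-case computation at $t=0$, including the role of $|m(E)|^2=m(\sig_1)m(\sig_\fn)=1$, is right.

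The gap is in the term matching of the induction step, which as described would not close. The scalar contribution $\partial_t(1/\eta_t)=((1-t)\eta_t)^{-1}$, i.e.\ the term $(1-t)^{-1}B^{(\fn)}$ on the right, is \emph{not} matched by the adjacent cuts $l=k+1$; it is matched by the single non-adjacent cut $(k,l)=(1,\fn)$, whose \emph{left} chain is the $2$-loop $\cK^{(2)}_{t,(\sig_1,\sig_\fn),(a,a_\fn)}$ with opposite charges; summing it over $a_\fn$ gives the constant $W^{-d}(1-t)^{-1}$, and then $\sum_a S^{\LK}_{ab}=1$ produces $(1-t)^{-1}\sum_{a_\fn}\cK^{(\fn)}$. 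Your proposed route through the adjacent cuts cannot produce a scalar multiple of $B^{(\fn)}$ at all: the adjacent cut at $k$ acts on $\sum_{a_\fn}\cK^{(\fn)}$ by a genuine (non-scalar) convolution against $S^{\LK}\cK^{(2)}_{t,(\sig_k,\sig_{k+1})}$ in the $a_k$ variable, and for $\sig_k=\sig_{k+1}$ the relevant row sum is $m_km_{k+1}/(1-tm_km_{k+1})\neq(1-t)^{-1}$, so the row-sum identity you invoke is false for the same-charge $2$-loops. What actually happens is that the interior adjacent cuts $2\le k\le \fn-2$ pair with the adjacent cuts of the $(\fn-1)$-loops on the right and contribute only terms linear in the defect. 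Moreover, your claim that the non-adjacent cuts match ``term-by-term'' fails for the boundary cuts $(1,l)$ and $(k,\fn)$: on the right-hand side, every cut of $\cK^{(\fn-1)}_{t,\wh\bsig^{(\pm,\fn)}}$ with $k'=1$ severs the edge carrying the replaced charge $\pm$, so \emph{both} factors of the product depend on $\pm$ and the difference of products does not factor as $(\cK_+-\cK_-)$ times a common loop. Closing the induction requires pairing each leftover $T^{A}_{1,l}$ against the corresponding $T^{A}_{l,\fn}$ and the $k'=1$ cuts on the right, and the cancellation uses the cyclic invariance of the $\cK$-loops (which itself needs justification from the ODE) together with the symmetry of $S^{\LK}$. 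None of this bookkeeping is present in your sketch, so the induction step is not established as written, even though the statement is true and the strategy can be carried out.
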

\begin{proof}
	For the random band matrix, this corresponds to Lemma 3.6 in \cite{YY_25}, while for the block Anderson model, it corresponds to Lemma 3.17 in \cite{RBSO1D}. 
\end{proof}

In \Cref{Sec:CalK}, we will present a tree representation formula for the $\cK$-loops, originally discovered in \cite{YY_25} for the random band matrix model and in \cite{RBSO1D} for the block Anderson model. Using this tree representation, we can establish the following upper bound \eqref{eq:bcal_k} for $\cK$-loops. The proof, being similar to those in \cite{YY_25,RBSO1D}, is deferred to \Cref{Sec:CalK}.

\begin{lemma}[Upper bounds on $\cK$-loops]\label{ML:Kbound}
	Fix any dimension $d\ge 3$.	Then, for every $\fn\in \N$ and $t\in [0,1)$, the $\cK$-loops satisfy the upper bound 
	\begin{equation}\label{eq:bcal_k}
		\max_{\bsig\in\{+,-\}^\fn}\max_{\ba\in(\Zn)^\fn} \big| {\cal K}^{(\fn)}_{t, \boldsymbol{\sigma}, \ba} \big| \prec \left(W^{-d}B_{t,0}\right)^{n-1},      
	\end{equation}
	where $B_{t,0}=(\ilambda^2+|1-t|)^{-1}+(L^d|1-t|)^{-1}$ is defined precisely in \Cref{defi:ofB} below. 
\end{lemma}

\subsection{Propagators} 

The quantum diffusion behavior of the resolvents is governed by the $\Theta$-propagators, defined as follows.

\begin{definition}\label{def_Theta}
Given $\sigma_1,\sigma_2\in \{+,-\}$, define an $L^d\times L^d$ matrix $M^{(\sigma_1,\sigma_2)}$ as a rescaled 2-$M$-loop:
\be\label{eq:Msig} M^{(\sigma_1,\sigma_2)}_{ab}:= W^d\tr\p{ M(\sigma_1) E_{a}M(\sigma_2) E_{b}},\quad \forall a,b\in \Zn.\ee
For the random band matrix model, we have $M^{(\sigma_1,\sigma_2)}=m(\sigma_1)m(\sigma_2)I_{L^d}$; for the block Anderson model, we have \smash{$M^{(\sigma_1,\sigma_2)}_{ab}=M^{\LK}_{ba}(\sigma_1)M^{\LK}_{ab}(\sigma_2)$}, where $M^{\LK}$ is defined in \eqref{def_G0}, with $M^{\LK}(+)\equiv M^{\LK}$ and $M^{\LK}(-)\equiv (M^{\LK})^*$. 
For $t\in[0,1]$ and $\sigma_1,\sigma_2\in \{+,-\}$, the $\Theta$-propagator \smash{$\Theta_t^{(\sigma_1,\sigma_2)}$} is an $L^d\times L^d$ matrix defined as:
\begin{equation}\label{def_Thxi}
	\Theta_{t}^{(\sigma_1,\sigma_2)} := \p{1 - t M^{(\sigma_1,\sigma_2)}S^{\LK}}^{-1}, 
\end{equation}
where $S^{\LK}$ is given by $S^{\LK}(\ilambda)$ in \eqref{eq:variancematrix} or $S^{\LK}(0)=I_{L^d}$, depending on whether we consider the random band matrix model or the block Anderson model.  
We denote the entries of \smash{$\Theta_{t}^{(\sigma_1,\sigma_2)}$} as \smash{$\Theta_{t}^{(\sig_1,\sig_2)}(a,b)$ or $\Theta_{t,ab}^{(\sig_1,\sig_2)}$}. We further define the zero-mode-removed propagator \smash{$\zTheta^{(\sig_1,\sig_2)}_{t}$} by 
\begin{equation}\label{def_Thxi0}
	\zTheta^{(\sig_1,\sig_2)}_{t}(a,b):=\Theta^{(\sig_1,\sig_2)}_{t}(a,b) -{L^{-2d}}\sum_{a',b'}\Theta^{(\sig_1,\sig_2)}_{t}(a',b').
\end{equation}
\end{definition}

Note that for the random band matrix model, the $\Theta$-propagators $\Theta^{(\sig,\sig')}_{t_0}$ for $\sig,\sig'\in\{+,-\}$ reduce to the $\Theta$-matrices defined in \eqref{def:Theta} under the relation \eqref{eq:zztE}. 
Similarly, for the block Anderson model, there is a corresponding reduction to the definition in \eqref{def:Theta_BA} at $t=t_0$ under the relation \eqref{eq:zztE_BA} below.

To capture the decay profile of the $\Theta$-propagators, we introduce the following control parameter $B_{t,K}$ in \eqref{eq_B_param}. It is an order parameter that will appear in many estimates throughout the proof. 

\begin{definition}[Definition of $B$]\label{defi:ofB}
For any $t\in [0,1)$, define 
\begin{equation}\label{eq_B_param}
	B_{t, K}:=\frac{(\ilambda^{2}+|1-t|)^{-1}}{(K+1)^{d-2}}+\frac{1}{L^d|1-t|},\quad \forall K\ge 0.
\end{equation}
Since $\eta_t\asymp 1-t$ by \eqref{eta}, the parameter $\cal B_{\eta_t,K}$ defined in \eqref{eq:calBetaK} satisfies 
\be\label{eq:BtBt}
\cal B_{\eta_t,K} \asymp W^{-d}B_{t,(K/W)}.
\ee
\end{definition}

We now summarize some fundamental properties of the $\Theta$-propagators that will be used extensively in the main proof. These properties have essentially been established in previous works \cite{DYYY25, yang2024Del, RBSO1D}. For the reader's convenience, we briefly outline the proof of \Cref{lem_propTH} in \Cref{sec:pf_propTH}.

\begin{lemma}\label{lem_propTH}
For any $t\in [0,1)$, define 
\be\label{eq:ellt}
\ell_t:= \min\p{ \max\big(\ilambda |1-t|^{-\frac 1 2},1\big)\; ,\; L} \, .
\ee
For any $\sig_1,\sig_2\in\{+,-\}$, $\Theta^{(\sig_1,\sig_2)}_t$ defined in \Cref{def_Theta} satisfies the following properties:
\begin{enumerate}
	\item {\bf Symmetry}: $\Theta^{(\sig_2,\sig_1)}_t=(\Theta^{(\sig_1,\sig_2)}_t)^\top= \Theta^{(\sig_1,\sig_2)}_t$.
	
	\item {\bf Translation invariance}: For any $a,b,r\in \Zn$, we have $\Theta^{(\sig_1,\sig_2)}_t(a+r,b+r)= \Theta^{(\sig_1,\sig_2)}_t(a,b).$
	
	\item {\bf Commutativity}: We have $\big[S^{(\sB)}, \Theta^{(\sig_1,\sig_2)}_t\big]=\big[\Theta^{(\sig_1,\sig_2)}_t,\Theta^{(\sig_1,\sig_2)}_{t'}\big]=0$ for all $t\ne t'$.
	
	\item {\bf $(\infty\to\infty)$-norm}: For any $a,b\in \Zn$, we have $|\Theta_{t,ab}^{(\sig_1,\sig_2)}|\le \Theta_{t,ab}^{(+,-)}$. Moreover, 
	\be\label{eq:THETAinftinf} \|\Theta_{t}^{(\sig_1,\sig_2)}\|_{\infty\to\infty} = \max_a \sum_{b}\big|\Theta_{t,ab}^{(\sig_1,\sig_2)} \big|\le \max_a \sum_{b}\Theta_{t,ab}^{(+,-)}=\frac{1}{1-t}.\ee
	
	\item {\bf Polynomial and exponential decay}: For all $\sig_1,\sig_2\in \{+,-\}$, there exist constants $c_d,C_d>0$ (depending on $d$) such that the following bound holds: 
	\begin{equation}\label{prop:ThfadC}   
		\big|\Theta^{(\sig_1,\sig_2)}_{t}(0,a)\big|\le C_d B_{t, |a|}\cdot {e^{-c_d |a|/ {\ell}_t}},\quad \forall a\in \Zn .  
	\end{equation}
	Furthermore, when $\sig_1=\sig_2$, we have a much stronger exponential decay: there exist constants $c_\kappa,C_\kappa>0$ (depending on $d$ and $\kappa$) such that 
	\begin{equation}\label{prop:ThfadC_short}  \big|\Theta^{(\sig_1,\sig_2)}_{t}(0,a)\big|\le C_\kappa \left(1_{a=0} + \ilambda^2 e^{-c_{\kappa}|a|}\right),\quad \forall a\in \Zn.
	\end{equation}
	In the setting of the block Anderson model, these constants may also depend on $\ilambda^{-1}$ when $1\le \ilambda\le \fd^{-1}$. 
	
	\item {\bf First-order difference}: The following estimate holds for all $a, r\in \Zn$ satisfying $|r|\lesssim |a|$:  
	\begin{equation}\label{prop:BD1}     
		\left| \Theta^{(\sig_1,\sig_2)}_{t}(0, a+r)-\Theta^{(\sig_1,\sig_2)}_{t}(0, a)\right|\prec \frac{1}{\ilambda^2+|1-t|}\frac{|r|}{(|a|+1)^{d-1}}.
	\end{equation}

	\item {\bf Second-order difference}: The following estimate holds for all $a,r\in \Zn$ satisfying $|r|\lesssim |a|$: 
	\begin{equation}\label{prop:BD2}
		\left|\Theta^{(\sig_1,\sig_2)}_{t} (0,a+r) + \Theta^{(\sig_1,\sig_2)}_{t} (0,a-r)-  2\Theta^{(\sig_1,\sig_2)}_{t} (0,a) \right|
		\prec \frac{1}{\ilambda^2+|1-t|}\frac{|r|^2}{(|a|+1)^{d}} .
	\end{equation}
	
	\item {\bf Propagator without zero mode.} The following estimate holds for all $a \in \Zn$: 
	\be\label{prop:ThfadC0} 
	\big|\zTheta^{(\sig_1,\sig_2)}_{t} (0,a)\big|\prec \frac{1}{\ilambda^2+|1-t|}\frac{1}{(|a|+1)^{d-2}}. 
	\ee
\end{enumerate}

\end{lemma}

\begin{example} 
As shown in \cite{YY_25,RBSO1D}, the $2$-$\cK$ and $3$-$\cK$ loops are given by 
\begin{align}\label{Kn2sol}\cK^{(2)}_{t,\bsig,\ba}&= \sum_{b}\Theta_{t}^{(\sig_1,\sig_2)}(a_1,b)\cal M^{(2)}_{\bsig,(b,a_2)}=W^{-d} \left(\Theta_t^{(\sigma_1,\sigma_2)}  M^{(\sig_1,\sig_2)}\right)_{a_1a_2},\\
	\label{Kn3sol} \mathcal{K}^{(3)}_{t, \boldsymbol{\sigma}, \ba} &= \sum_{b_1,b_2,b_3}\Theta_{t}^{(\sig_1,\sig_2)}(a_1,b_1)\Theta_{t}^{(\sig_2,\sig_3)}(a_2,b_2)\Theta_{t}^{(\sig_3,\sig_1)}(a_3,b_3) \cal M^{(3)}_{\bsig,\mathbf b},
\end{align}
where $\bsig=(\sig_1,\ldots,\sig_\fn)$ and $\ba=(a_1,\ldots,a_\fn)$ for $\fn\in \{2,3\}$, and the $M$-loops are defined in \eqref{eq:KMloop}. 
\end{example}

\subsection{Proof of the main results}

The main results, \Cref{MR:locSC,MR:QuDiff}, for random band matrices follow directly from the following key lemmas on the $G$-loop estimates, while the proof of \Cref{MR:decol_BA} for the block Anderson model will be given separately in \Cref{sec:ext-to-BA}. Recall the notions of stochastic domination, $B_{t, K}$, and $\ell_t$ defined in \eqref{stoch_domination}, \eqref{eq_B_param}, and \eqref{eq:ellt}, respectively.

\begin{lemma}[$G$-loop estimates]\label{ML:GLoop}
In the setting of \Cref{MR:locSC}, fix any $z=\hat{E}+\ii \eta\in \mathbf D_{\kappa,\e}$ and consider the flow framework in \Cref{zztE}. For each fixed $n\in \N$, the following estimate holds uniformly in $t\in [0,t_0]$: 
\begin{align}\label{Eq:L-KGt}
	\max_{\boldsymbol{\sigma}, \mathbf{a}}\left|{\cal L}^{(n)}_{t, \boldsymbol{\sigma}, \mathbf{a}}-{\cal K}^{(n)}_{t, \boldsymbol{\sigma}, \mathbf{a}}\right|&\prec  (W^{-d}B_{t,0})^{ n}, \\
	\label{Eq:L-KGt2}
	\max_{\boldsymbol{\sigma}, \mathbf{a}}\left|{\cal L}^{(n)}_{t, \boldsymbol{\sigma}, \mathbf{a}} \right|&\prec (W^{-d}B_{t,0})^{ n-1}.
\end{align}
\end{lemma}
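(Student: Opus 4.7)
The plan is to prove \eqref{Eq:L-KGt} and \eqref{Eq:L-KGt2} simultaneously by a combined induction on the loop length $n$ and a continuous-time bootstrap along the flow $t \in [0, t_0]$. Since $H_0=0$, the initial condition yields $\mathcal{L}^{(n)}_{0,\boldsymbol{\sigma},\mathbf{a}} = \mathcal{K}^{(n)}_{0,\boldsymbol{\sigma},\mathbf{a}}$ exactly. Assuming as bootstrap hypothesis that \eqref{Eq:L-KGt2} holds (in some slightly weakened form) at higher loop lengths up to $2n+2$ at time $t$, I would derive \eqref{Eq:L-KGt} at length $n$ by integrating the difference between the loop hierarchy \eqref{eq:mainStoflow} and the primitive equation \eqref{pro_dyncalK}. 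This difference satisfies a linear SDE driven by three contributions: (i) the linearized quadratic term $\cL \diamond (\cL-\cK) + (\cL-\cK) \diamond \cK$ assembled from the $\cutL$/$\cutR$ operators; (ii) the martingale $\dd\mathcal{E}^{M,(n)}$; and (iii) the light-weight correction $\mathcal{E}^{\Gc,(n)}$.

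For the linearized quadratic term, I would use the $L^\infty$-stability of the primitive hierarchy in the general framework of \cite{YY_25}. Using the explicit form $\cK^{(2)}_{t,\bsig,\ba} = W^{-d}m_1 m_2 \Theta^{(\sB)}_{tm_1m_2}(a_1,a_2)$ together with the exponential decay \eqref{prop:ThfadC} and Ward's identity \eqref{WI_calL} applied at indices with $\sigma_1=-\sigma_n$, sums over internal block indices produce at worst the factor $(1-t)^{-1}$ from $\|\Theta^{(\sB)}_t\|_{\infty\to\infty}$ in \eqref{eq:THETAinftinf}, whose time-integral yields only a logarithmic loss absorbable into the $\prec$-notation. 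A Gronwall-type argument combined with the family of Ward inequalities announced in the introduction (\Cref{ygdhmsgq}) then closes the $L^\infty$ bound on $\cL-\cK$.

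The martingale term is handled by raising to an even moment $p$ chosen large depending on $\tau$ and $D$ in \Cref{stoch_domination}, and applying the Burkholder--Davis--Gundy inequality. The quadratic variation involves $(2n+2)$-loops, which are controlled by the inductive hypothesis \eqref{Eq:L-KGt2} at higher loop length; careful bookkeeping using Ward at the sign-changing indices produces exactly the scaling $(W^{-d}B_{t,0})^{n}$ per increment. Integration in time and Markov's inequality then give the required high-probability bound.

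The main obstacle, and the reason this lemma is nontrivial in $d\ge 3$, is the light-weight term $\mathcal{E}^{\Gc,(n)}$. As illustrated in the naive bound \eqref{eq:introGc}, a direct a-priori estimate produces only $(\Theta^{(\sB)}_{t,ab})^{1/2}$ instead of $\Theta^{(\sB)}_{t,ab}$, which would be fatal for the pointwise approximation \eqref{eq:n=2approx} required at $n=2$. To recover the missing half-factor, I would expand the even moment $\bE|\mathcal{E}^{\Gc,(n)}_{t,\bsig,\ba}|^p$ via the Gaussian integration-by-parts diagrammatic expansions of \cite{yang2021delocalization, yang2022delocalization, yang2021random}, in a simplified form whose only aim is to generate $p$ additional \emph{long edges} per graph rather than the full high-order self-energy renormalization developed there. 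The internal summations must then be performed in a refined \emph{ordered nested} order that preserves enough resolvent entries at each step to apply Ward's identity on every long edge. Combining this with the sum-zero property of the self-energy contributions and the CLT fluctuation-cancellation mechanism used in the $d=2$ analysis \cite{DYYY25} then restores the missing $(\Theta^{(\sB)}_{t,ab})^{1/2}$ factor, closes the bootstrap, and completes the induction step.
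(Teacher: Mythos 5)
Your proposal follows the same overall route as the paper: the lemma is obtained by a time-step induction along the flow (the paper's \Cref{lem:main_ind}), and within each step the $(\cL-\cK)$-hierarchy is integrated via Duhamel, with the martingale handled by BDG through the $(2n+2)$-loop quadratic variation, the light-weight term by Ward-type inequalities plus the graphical long-edge expansion, and the higher-loop dependence by a self-improving bootstrap. However, two places where the real work happens are not supplied, and one of your claims would fail as stated.

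First, the assertion that ``a Gronwall-type argument combined with the family of Ward inequalities closes the $L^\infty$ bound on $\cL-\cK$'' is not correct for the sharp estimates \eqref{Eq:L-KGt}--\eqref{Eq:L-KGt2}. The linear operator $\varTheta^{(n)}_{u,\bsig}$ coming from $\cK^{(2)}\sim(\cL-\cK)$ has $(\infty\to\infty)$-norm of order $(1-u)^{-1}$, so Gronwall yields a multiplicative loss $((1-s)/(1-t))^{C}$, i.e.\ a fixed positive power of $W$ under \eqref{con_st_ind}; this is acceptable only for the intermediate weak bounds (Steps 1--2) and must be removed. The paper does this by replacing Gronwall with the evolution-kernel estimates of \Cref{lem:sum_decay}: for non-alternating $\bsig$ the improved bound \eqref{sum_res_2_NAL} suffices, but for alternating $\bsig$ one must first decompose $(\cL-\cK)^{(n)}$ via the sum-zero operator $\cal Q_t$ built from a mollifier $\dthn^{(n)}_{t,\ba}$ (a construction that has to be modified in $d\ge3$, cf.\ \Cref{rmk:connect-to-1D}) and, in the regime $1-s\le L^{-2}$, via the zero-mode-removing operators $Q^{(A)}$ together with \eqref{prop:ThfadC0}. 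Your proposal contains none of this, and without it the argument does not close. Second, assuming \eqref{Eq:L-KGt2} ``in some slightly weakened form at higher loop lengths up to $2n+2$ at time $t$'' is not a mechanism: the initial a priori bound from Step 1 is off by a factor $(\eta_s/\eta_t)^{n-1}W^{d}$ (not a slight weakening), and the paper removes it through the explicit double iteration of \Cref{lem:iterations} in $(n,k)$ with the control parameter $\Psi(n,k;s,t)$, each pass gaining a factor $W^{-d/8}$. You would need to specify such a scheme to legitimately close the hierarchy. A minor further point: for the maximum bounds of this lemma the light-weight term at general $n$ is controlled directly by the generalized Ward inequality \eqref{yi2oslxj2} (see \eqref{bEwGn}); the diagrammatic long-edge expansion is needed only for the pointwise $2$-loop input of Step 2, so your emphasis there, while not wrong, is slightly misplaced for this particular statement.
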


\begin{lemma}[$2$-loop estimates]\label{ML:GLoop_expec}
In the setting of \Cref{ML:GLoop}, for $\boldsymbol{\sigma}\in\{+,-\}^2$ and $ \ba=(a_1, a_2)\in (\Zn)^2$, the following pointwise estimate holds uniformly in $t\in [0,t_0]$ for any large constant $D>0$: 
\begin{equation}\label{Eq:Gdecay}
	\left| {\cal L}^{(2)}_{t, \boldsymbol{\sigma}, \mathbf{a}}-{\cal K}^{(2)}_{t, \boldsymbol{\sigma}, \mathbf{a}}\right|\prec 
	\p{W^{-d}B_{t, 0}}^{1/5}\cdot \p{W^{-d}B_{t,  |a_1-a_2| }}e^{-\left({| a_1-a_2| }/{\ell_t}\right)^{1/2}} + W^{-D}. 
\end{equation}
Moreover, the expectation of a $2$-$G$-loop satisfies the following $L^\infty$-estimate uniformly in $t\in [0,t_0]$: 
\begin{equation}\label{Eq:Gtlp_exp}
	\max_{\boldsymbol{\sigma}, \ba}\left|\mathbb E{\cal L}^{(2)}_{t, \boldsymbol{\sigma}, \ba}-{\cal K}^{(2)}_{t, \boldsymbol{\sigma}, \ba}\right|\prec \p{W^{-d}B_{t,0}}^2 \p{(\ilambda^2W^d)^{-1/5}+W^{-d}B_{t,0}}
	. 
\end{equation}
In the above estimates, the exponent $1/5$ can be replaced by any other positive constant less than 1/4.
\end{lemma}

\begin{lemma}[Local law for $G_t$]\label{ML:GtLocal}
In the setting of \Cref{ML:GLoop}, the following entrywise local law holds uniformly in $t\in [0,t_0]$: 
\begin{equation}\label{Gt_bound}
	\|(G_{t}-M)_{xy}\|_{\max}^2\prec W^{-d}B_{t,(|x-y|/W)} .
\end{equation} 
\end{lemma}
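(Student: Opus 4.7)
The plan is to propagate the trivial entrywise bound from $t=0$ along the stochastic flow by a moment method, using the $G$-loop estimates as input. At $t=0$ we have $H_0=0$, and by \Cref{zztE} together with the relation $m(E)(E+m(E))=-1$ satisfied by the semicircle Stieltjes transform, the resolvent equals $G_{0}=-z_0^{-1}I=m(E)I$; hence $G_{0,xy}-m\delta_{xy}\equiv 0$ and \eqref{Gt_bound} holds trivially. The task is therefore to control the fluctuations accumulated along the flow up to time $t_0$.

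I would apply It\^o's formula (using the SDE \eqref{eq:SDE_Gt}) to the moments $\Phi_{xy,p}(t):=\E|G_{t,xy}-m\delta_{xy}|^{2p}$ for an arbitrarily large even integer $p$, and derive a Gronwall-type ODE. Two contributions appear on the right-hand side: a \emph{drift} built from $\sum_{k} G_{t,xk}G_{t,ky}\bigl(\sum_{l}S_{kl}(G_{t,ll}-m)\bigr)$, and a \emph{quadratic-variation} integrand of the form $\sum_{k,l}S_{kl}|G_{t,xk}|^{2}|G_{t,ly}|^{2}$ (weighted by $p|G_{t,xy}-m\delta_{xy}|^{2(p-1)}$). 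The drift is handled using the $n=1$ case of \Cref{ML:GLoop}: since $S_{kl}$ is constant on each block pair, one may rewrite $\sum_{l}S_{kl}(G_{t,ll}-m)=\sum_{b}S^{(\sB)}_{a_{k}b}\bigl(\cL^{(1)}_{t,+,b}-m(E)\bigr)$ for $k\in \cal I_{a_k}$, which is $\prec W^{-d}B_{t,0}$ uniformly in $k$ by \eqref{Eq:L-KGt} combined with $\sum_b S^{(\sB)}_{a_k b}=1$.

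The quadratic variation is the delicate step. A naive double application of Ward's identity only yields $\sum_{k,l}S_{kl}|G_{t,xk}|^{2}|G_{t,ly}|^{2}\lesssim W^{-d}\eta_t^{-2}$, which is too weak to close the moment bound after integration in $t$. To sharpen it, I would invoke the pointwise $2$-loop estimate from \Cref{ML:GLoop_expec}, which implies that the block-averaged $|G_{t,xk}|^{2}$ is concentrated on the diffusive range $|x-k|\lesssim W\ell_t$ with a profile of order $W^{-d}B_{t,|x-k|/W}$. Since the kernel $S_{kl}$ is supported on $|k-l|\lesssim W$, this restricts both $k$ and $l$ to thin diffusive neighborhoods of $x$ and $y$, and a careful summation in the spirit of the heuristic computation \eqref{eq:introGc} yields
\begin{equation*}
\int_{0}^{t}\sum_{k,l}S_{kl}|G_{s,xk}|^{2}|G_{s,ly}|^{2}\,\mathrm{d}s\prec W^{-d}B_{t,0},
\end{equation*}
modulo logarithmic corrections coming from the $(1-s)^{-1}$ singularity. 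Feeding this into the Gronwall argument and iterating over $p$ produces $\Phi_{xy,p}(t)\prec (W^{-d}B_{t,0})^{p}$ for every fixed $p$; Markov's inequality and a union bound over $(x,y)\in\ZL^{2}$ then yield \eqref{Gt_bound}.

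The principal obstacle I anticipate is closing the Gronwall argument at the correct power of $W^{-d}B_{t,0}$: the feedback of $|G_{t,xy}-m\delta_{xy}|^{2(p-1)}$ in both the drift and quadratic variation, together with the $(1-t)^{-1}$ singularity inherent to $\Theta^{(\sB)}_{t}$, must be absorbed without inflating the target moment bound. Since \Cref{ML:GLoop_expec} is itself established using only $L^{\infty}$-type control of higher loops, a consistent bootstrap must simultaneously refine the entrywise bound and the pointwise $2$-loop bound; I expect managing this circular dependence---and threading it through the integration along the flow without losing the Ward-based gain---to be the main technical hurdle, analogous to the corresponding step in the $d=2$ analysis of \cite{DYYY25}.
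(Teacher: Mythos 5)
Your route is genuinely different from the paper's, and it has a gap at its central step. In the paper, \Cref{ML:GtLocal} is not proved by running any flow on the resolvent entries: it is an immediate consequence of the sharp $2$-loop bound $\max_{a,b}\cL^{(2)}_{t,(-,+),(a,b)}\prec W^{-d}B_{t,0}$ combined with the fluctuation-averaging statement \eqref{GiiGEX} of \Cref{lem_GbEXP}, which says that on an a priori event $\{\|G_t-m\|_{\max}\le W^{-\e_0}\}$ one has $\|G_t-m\|_{\max}^2\le W^{\tau}\max_{a,b}\cL^{(2)}_{t,(-,+),(a,b)}$. That lemma (a resolvent-identity plus large-deviation argument at fixed $t$) is exactly the device that upgrades a \emph{block-averaged} quantity $W^{-2d}\sum_{x\in\cal I_a,y\in\cal I_b}|G_{xy}|^2$ to a bound on a \emph{single} entry, and it is the missing ingredient in your argument: your quadratic-variation estimate needs $\sum_{k\in\cal I_c}|G_{t,xk}|^2\prec B_{t,|[x]-c|}$ for a \emph{fixed} $x$, but \Cref{ML:GLoop_expec} only controls the average over $x\in\cal I_a$ as well, so a priori a single $x$ could carry all the mass and you lose a factor $W^{d}$. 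Without that conversion, the best you can do is Ward's identity plus the max-norm itself, which leaves a term of order $\Lambda_t^2\,\eta_t^{-1}$ (with $\Lambda_t=\|G_t-m\|_{\max}$) in the moment ODE; the resulting Gr\"onwall multiplier $\exp\bigl(\int_0^{t_0}\eta_s^{-1}\,\dd s\bigr)\asymp\eta_{t_0}^{-1}\asymp N^{1-\fd}$ over the full interval $[0,t_0]$ destroys the target bound. This is precisely why the paper's entire induction (\Cref{lem:main_ind}) proceeds in short time steps subject to \eqref{con_st_ind} and re-derives, rather than merely propagates, the estimates on each step.

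There is also an ordering problem you flag but do not resolve. Within each induction step the weak local law (Step 1) must be established \emph{before} the $2$-loop estimate, because the light-weight and martingale bounds (\Cref{lem:LWterm,lem: EWGn2_N,lem: EMn2_N}) take the a priori local law \eqref{initialGT2} as a hypothesis; the sharp local law is then read off from the $2$-loop bound via \Cref{lem_GbEXP} at the end of Step 2. Your proposal inverts this order by treating \Cref{ML:GLoop_expec} as a black-box input to a flow argument for the local law, which is incompatible with how the $2$-loop estimate is actually obtained. The constructive suggestion is: once you grant yourself the fluctuation-averaging inequality \eqref{GiiGEX}, the entrywise local law follows from the $2$-loop bound in one line, and the It\^o/moment machinery on the entries is unnecessary.
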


With these lemmas, we readily conclude the proofs of our main results, \Cref{MR:locSC,MR:QuDiff}.
\begin{proof}[\bf Proof of \Cref{MR:locSC,MR:QuDiff}]
For a fixed $z=\hat{E}+\ii \eta\in \mathbf D_{\kappa,\e}$, we can choose the flow framework as in \Cref{zztE}. 
Then, with \eqref{eq:zztE}, \eqref{eq:BtBt}, and \eqref{Kn2sol}, we observe that \Cref{ML:GtLocal}, \Cref{ML:GLoop} (with $\fn=1$), and \Cref{ML:GLoop_expec} respectively give the entrywise local law \eqref{G_bound}, the averaged local law \eqref{G_bound_ave}, and the quantum diffusion estimates in \Cref{MR:QuDiff} at each \emph{fixed $z$}. To extend these estimates uniformly to all $z$, we can use a standard $N^{-C}$-net and perturbation argument, whose details we omit. 
\end{proof}

Before concluding this section, we now outline the proofs of \Cref{ML:GLoop,ML:GLoop_expec,ML:GtLocal}. At $t=0$, we have $G_{0}(\sigma)=M(\sigma)$ for $\sig\in\{+,-\}$. Together with Definitions \ref{Def:G_loop} and \ref{Def_Ktza}, it implies that for any fixed $\fn\in \N$:   
$${\cL}^{(\fn)}_{0, \boldsymbol{\sigma},\ba}= {\cK}^{(\fn)}_{0, \boldsymbol{\sigma},\ba},\quad \forall \; \boldsymbol{\sigma}\in \{+,-\}^\fn,\ \ \ba\in (\Zn)^\fn. 
$$
Now, for $t>0$, we will establish the following theorem, which provides an induction result that extends the $G$-loop estimate progressively along the stochastic flow.
 
\begin{theorem}\label{lem:main_ind} 
In the setting of \Cref{MR:locSC}, fix any $z=\hat{E}+\ii \eta\in \mathbf D_{\kappa,\e}$ and consider the flow framework in \Cref{zztE}. 
Suppose the estimates \eqref{Eq:L-KGt}, \eqref{Eq:Gdecay}, \eqref{Eq:Gtlp_exp}, and \eqref{Gt_bound} hold at a fixed $s\in [0,t_0]$, that is: 
\begin{itemize}
	\item[(a)] {\bf $G$-loop estimate}: For each fixed $n\ge 1$, we have
	\be\label{Eq:L-KGt+IND}
	\max_{\boldsymbol{\sigma}, \ba}\left|{\cal L}^{(n)}_{s, \boldsymbol{\sigma}, \ba}-{\cal K}^{(n)}_{s, \boldsymbol{\sigma}, \ba}\right|\prec (W^{-d}B_{s,0})^{n}.
	\ee
	\item[(b)] {\bf 2-loop estimate}: 
	For $\boldsymbol{\sigma}\in\{+,-\}^2$, $ \ba=(a_1,a_2)\in (\Zn)^2,$ and any large constant $D>0$, we have 
	\be\label{Eq:Gdecay+IND}
	\left| {\cal L}^{(2)}_{s, \boldsymbol{\sigma}, \ba}-{\cal K}^{(2)}_{s, \boldsymbol{\sigma}, \ba}\right|\prec \p{W^{-d}B_{s, 0}}^{1/5}\cdot \p{W^{-d}B_{s, |a_1-a_2|}} e^{-\left({| a_1-a_2| }/{\ell_s}\right)^{1/2}} +W^{-D}.
	\ee
	Furthermore, if $1-s\ge \ilambda^2$ (where $\ell_s= 1$ by \eqref{eq:ellt}), we assume a stronger estimate:
	\be\label{Eq:Gdecay+IND_s<g}
	\left| {\cal L}^{(2)}_{s, \boldsymbol{\sigma}, \ba}-{\cal K}^{(2)}_{s, \boldsymbol{\sigma}, \ba}\right|\prec \p{W^{-d}B_{s, 0}}^2 e^{-| a_1-a_2|^{1/2}} +W^{-D}.
	\ee
	
	\item[(c)] {\bf Local law}: We have the (maximum) entrywise local law 
	\be \label{Gt_bound+IND}
	\|G_{s}-M\|_{\max} \prec (W^{-d}B_{s,0})^{1/2}.
	\ee
	
	\item[(d)] {\bf Expected $2$-loop estimate}: For all $\boldsymbol{\sigma}\in\{+,-\}^2$ and $ \ba=(a_1,a_2)\in (\Zn)^2,$ we have 
	\be \label{Eq:Gtlp_exp+IND}
	\max_{\boldsymbol{\sigma}, \ba}\left|\mathbb E{\cal L}^{(2)}_{s, \boldsymbol{\sigma}, \ba}-{\cal K}^{(2)}_{s, \boldsymbol{\sigma}, \ba}\right|\prec \p{W^{-d}B_{s,0}}^2 \p{(\ilambda^2W^d)^{-1/5}+W^{-d}B_{s,0}}. 
	\ee
\end{itemize} 
Then, there exists a constant $0<\fc_d \le 10^{-2}$ (depending on $d$, $\kappa$, $\e$, and $\fd$ in \eqref{eq:WO}) such that for any $s< t < 1$ satisfying 
\begin{equation}\label{con_st_ind}
	\p{W^{-d}B_{t,0}}^{\fc_d}\le \frac{1-t}{1-s} < 1, 
\end{equation}
the estimates \eqref{Eq:L-KGt}--\eqref{Gt_bound} hold.  
In the proof, we do not track the exact value of $\fc_d$ (although it can be done by keeping track of the constants carefully in our proof).
In addition, if $1-t\ge \ilambda^2$, we have
\be\label{Eq:Gdecay+s<g}
\left| {\cal L}^{(2)}_{t, \boldsymbol{\sigma}, \ba}-{\cal K}^{(2)}_{t, \boldsymbol{\sigma}, \ba}\right|\prec \p{W^{-d}B_{t, 0}}^2 e^{-| a_1-a_2|^{1/2}} +W^{-D},\quad \forall \boldsymbol{\sigma}\in\{+,-\}^2,\  \ba \in (\Zn)^2.
\ee
\end{theorem}

With \Cref{lem:main_ind} in hand, we can establish \Cref{ML:GLoop,ML:GLoop_expec,ML:GtLocal} via induction on $t$. 
\begin{proof}[\bf Proof of \Cref{ML:GLoop,ML:GLoop_expec,ML:GtLocal}]
We first perform induction from $t=0$ up to $t=1-\ilambda^2$ (when $\ilambda^2\le 1/2$) or $t=1/2$ (when $\ilambda^2>1/2$), establishing \eqref{Eq:L-KGt}--\eqref{Gt_bound} at $t_1:=(1-\ilambda^2)\vee (1/2)$. In the case $\ilambda^2\le 1/2$, we additionally maintain the stronger 2-loop estimate \eqref{Eq:Gdecay+s<g} throughout this step. Next, starting from $t_1$, we continue the induction using \Cref{lem:main_ind} up to $t=t_0$, thereby completing the proof of the estimates stated in \Cref{ML:GLoop,ML:GLoop_expec,ML:GtLocal}.    
\end{proof}

The proof of \Cref{lem:main_ind} is divided into six steps, which comprise the remainder of this paper. Throughout these steps, we assume the hypotheses of \Cref{lem:main_ind} hold. Moreover, each step builds upon the results obtained in the preceding ones.

\medskip 
\noindent
\textbf{Step 1} (A priori $G$-loop bound):  
The $n$-$G$-loops satisfy the a priori bound: 
\begin{equation}\label{lRB1}
{\cal L}^{(n)}_{u,\boldsymbol{\sigma}, \mathbf{a}}\prec \p{\frac{1-s}{1-u}}^{n-1}  
(W^{-d}B_{s,0})^{n-1},\quad  \forall  u\in [s, t].
\end{equation}
Furthermore, a weak local law holds in the sense 
\begin{equation}\label{Gtmwc}
\|G_u-M\|_{\max}\prec  (W^{-d}B_{u,0})^{1/4},\quad \forall  u\in [s, t] .
\end{equation} 

\medskip 
\noindent 
\textbf{Step 2}  (A priori $2$-$G$-loop decay): 
The following sharp local laws hold uniformly for all $u\in [s,t]$: 
\begin{align}\label{Gt_bound_flow}
	|(G_u-M)_{xy}|^2&\prec W^{-d}B_{u,(|x-y|/W)},\quad \forall x,y\in \ZL, \\
	\label{Gt_avgbound_flow} 
	\max_{a\in \Zn}\left|\tr\p{(G_u-M)E_a}\right| &\prec   W^{-d}B_{u,0}.
\end{align}
In particular, the entrywise local law  \eqref{Gt_bound} and the $1$-loop estimate in \eqref{Eq:L-KGt} hold at time $t$. 
In addition, there exists a constant $C_d>0$ (independent of $\fc_d$ in \eqref{con_st_ind}) such that, for any $\boldsymbol{\sigma}\in\{+,-\}^2$, $\ba=(a_1,a_2)\in(\Zn)^2$, and $u\in[s,t]$, the following estimate holds for arbitrarily large constant $D>0$: 
\begin{equation}\label{Eq:Gdecay_w}
	\left| {\cal L}^{(2)}_{u, \boldsymbol{\sigma}, \mathbf{a}}-{\cal K}^{(2)}_{u, \boldsymbol{\sigma}, \mathbf{a}}\right| \prec \p{\frac{1-s}{1-u}}^{C_d}\p{W^{-d}B_{u, 0}}^{1/5}\cdot \p{W^{-d}B_{u,  |a_1-a_2| }} e^{-\left({| a_1-a_2| }/{\ell_u}\right)^{1/2}} +W^{-D} . 
\end{equation}
Hence, at this step, the estimate \eqref{Eq:Gdecay+IND} deteriorates at time $t$ by a factor of $(|1-s|/|1-t|)^{C_d}$. 
We note that the exponent $1/5$---as an arbitrarily chosen positive constant less than $1/4$---in \eqref{Eq:Gdecay_w} is not optimal. Achieving the optimal decay rate for a 2-loop estimate would require this exponent to be 1, which is beyond the scope of the current paper (see also \Cref{rmk_bottleneck} below).

\medskip
\noindent 
\textbf{Step 3}  (Sharp $n$-loop bound): 
The following bound on $n$-$G$-loops holds for any fixed $n\in \N$: 
\begin{equation}\label{Eq:LGxb}
	\max_{\boldsymbol{\sigma}, \mathbf{a}}\left| {\cal L}^{(n)}_{u, \boldsymbol{\sigma}, \mathbf{a}} \right|
	\prec 
	(W^{-d}B_{u, 0})^{-n+1} ,\quad \forall u\in[s,t] .
\end{equation}
In particular, this gives the estimate \eqref{Eq:L-KGt2} at time $t$. 

\medskip
\noindent 
\textbf{Step 4} (Sharp  $(\cal L - \cal K)$-loop estimate): The following  estimate on $(\cL-\cK)$-loops holds for any fixed $n\in \N$: 
\begin{equation}\label{Eq:L-KGt-flow}
	\max_{\boldsymbol{\sigma}, \mathbf{a}}\left|{\cal L}^{(n)}_{u, \boldsymbol{\sigma}, \mathbf{a}}-{\cal K}^{(n)}_{u, \boldsymbol{\sigma}, \mathbf{a}}\right|\prec (W^{-d}B_{u, 0})^{-n},\quad \forall u\in[s,t] .
\end{equation}
In particular, this gives the estimate \eqref{Eq:L-KGt} at time $t$.

\medskip
\noindent 
\textbf{Step 5} (Pointwise estimate for $2$-loops): For any $\boldsymbol{\sigma}\in\{+,-\}^2$, $ \ba=(a_1,a_2)\in(\Zn)^2,$ and $u\in[s,t]$, the following estimate holds for any large constant $D>0$:
\begin{equation}\label{Eq:Gdecay_flow}
	\left| {\cal L}^{(2)}_{u, \boldsymbol{\sigma}, \ba}-{\cal K}^{(2)}_{u, \boldsymbol{\sigma}, \ba}\right| \prec  \p{W^{-d}B_{u, 0}}^{1/5}\cdot \p{W^{-d}B_{u,  |a_1-a_2| }} e^{-\left({| a_1-a_2| }/{\ell_u}\right)^{1/2}} +W^{-D} . 
\end{equation}
Hence, the pointwise decay estimate \eqref{Eq:Gdecay} holds at time $t$. Furthermore, if $1-t\ge \ilambda^2$, then we have 
\be\label{Eq:Gdecay+s<g_flow}
\left| {\cal L}^{(2)}_{u, \boldsymbol{\sigma}, \ba}-{\cal K}^{(2)}_{u, \boldsymbol{\sigma}, \ba}\right|\prec \p{W^{-d}B_{u, 0}}^2 e^{-| a_1-a_2|^{1/2}} +W^{-D},\quad \forall u\in[s,t].
\ee

\noindent 
\textbf{Step 6} (Expected 2-$G$-loop estimate): 
The following estimate holds: 
\begin{equation}\label{Eq:Gtlp_exp_flow}
	\max_{\boldsymbol{\sigma}, \ba}\left|\mathbb E{\cal L}^{(2)}_{u, \boldsymbol{\sigma}, \ba}-{\cal K}^{(2)}_{u, \boldsymbol{\sigma}, \ba}\right|\prec \p{W^{-d}B_{u,0}}^2 \p{(\ilambda^2W^d)^{-1/5}+W^{-d}B_{u,0}},\quad  
	\forall u \in [s, t] .
\end{equation}
Hence, the estimate \eqref{Eq:Gtlp_exp} holds at time $t$. 

\medskip

We remark that the estimates established in each step hold uniformly in $u\in[s,t]$ (recall \eqref{stoch_domination}) due to a standard $N^{-C}$-net and perturbation argument. For simplicity of presentation, we will not emphasize this uniformity at every step of the proof.

\section{Steps 1 and 2: A priori \texorpdfstring{$G$}{G}-loop estimates}\label{Sec:Steps12}

The remainder of the paper is devoted to proving \Cref{lem:main_ind}, according to the six steps outlined following its statement.
Most of the arguments extend directly to the block Anderson model, with technical differences arising in Step 1 and in the treatment of the light-weight term \smash{$\mathcal{E}^{\Gc,(2)}$} (defined in \eqref{def_EwtG}) when controlling the 2-$G$-loops---specifically, in the proofs of \Cref{lem:LWterm,lem: EWGn2_N} below. 
We will describe the necessary modifications to adapt the argument to the block Anderson model in \Cref{sec:ext-to-BA}.

\subsection{Proof of Step 1} 

Our proof depends crucially on the following lemma, which provides estimates on the resolvent entries via bounds on 2-$G$-loops. Specifically, \eqref{GiiGEX} and \eqref{GavLGEX} establish bounds on both the entrywise and averaged differences between $G$ and $M$ in the $\max$-norm sense, while \eqref{GijGEX} provides a finer estimate on the off-diagonal resolvent entries, which allows us to derive the decay of the resolvent entries from the decay of the 2-$G$ loops.

\begin{lemma}\label{lem_GbEXP}
	Consider the setting of the random band matrix model. For any $t\in [0,t_0]$, define the event 
	\begin{equation}\label{def_asGMc}
		\Omega(t, \e_0) := \big\{\|G_t - M\|_{\max} \leq W^{-\e_0} \big\}
	\end{equation}
	for a small constant $\e_0>0$. Then, the following \emph{entrywise local law} holds for any constants $\tau,D>0$: 
	\begin{equation}
		\P\Big(\mathbf 1(\Omega(t, \e_0))\cdot \|G_{t}-M\|_{\max}^2 \le W^\tau \max_{a,b\in \Zn} \cL^{(2)}_{t,(-,+),(a,b)} \Big)\ge 1-W^{-D} \, .\label{GiiGEX}
	\end{equation}
	Furthermore, we have the following pointwise decay estimate of $G_t$ for all $a,b\in \Zn$:
	\begin{align}\label{GijGEX}
		\mathbf 1(\Omega(t, \e_0))\cdot \max_{\substack{x\in [a], y \in [b], x\ne y}} |(G_t)_{xy}|^{2}  \prec & \sum_{\substack{|a'-a|\le 1,|b'-b|\le 1,\\ \bsig\in \{(+,-),(-,+)\}}}  \cL^{(2)}_{t,\bsig,(a',b')}+ W^{-d}\mathbf 1_{|a-b|\le 1}  \, .
	\end{align}
	Finally, suppose the following estimates hold for a deterministic control parameter $W^{-d/2}\le \Psi_t \le W^{-\e_0}$:\footnote{This parameter $\Psi_t$ should be distinguished from the matrix $\Psi$ appearing in the block Anderson Hamiltonian \eqref{eq:H_blocka}.}
	\be\label{initialGT2} 
	\|G_{t} - M \|_{\max}\prec W^{-\e_0} ,\quad \max_{a,b\in \Zn} \cL^{(2)}_{t,(-,+),(a,b)} \prec \Psi_t^2 \, . 
	\ee
	Then, we have the \emph{averaged local law}: 
	\begin{align}
		\max_{a} \left|\tr\p{\left(G_{t}-M\right)E_{a}} \right| &\prec \Psi_t^2  \, . \label{GavLGEX}
	\end{align}
\end{lemma}
\begin{proof}
	These estimates have been proven as Lemma 4.1 in \cite{YY_25} for 1D random band matrices. However, their proofs are dimension-independent and use standard arguments based on resolvent identities and large deviation estimates as in \cite{erdHos2012rigidity,erdos2013delocalization}.     
\end{proof}

Step 1 of the proof of \Cref{lem:main_ind} is similar to that in \cite[Section 5.1]{YY_25}, where the core is to establish the following continuity estimate for the $G$-loops. 

\begin{lemma}\label{lem_ConArg}
	Fix any \(\e \le s \leq t \leq 1\) for a constant \(\e > 0\). In the flow setting given by \Cref{def_flow} and \Cref{zztE}, assume that the following bound holds at time \(s\) for any fixed $n\in \N$: 
	\begin{equation}\label{55}
		\max_{\boldsymbol{\sigma}, {\ba}} \abs{{\cal L}^{(n)}_{s, \boldsymbol{\sigma}, {\ba}}} \prec(W^{-d}B_{s,0})^{n-1}.
	\end{equation}
	Then, on the event \(\Omega_t= \left\{\|G_{t}\|_{\max} \leq C_0\right\}\) for a constant $C_0>0$, the following estimate holds for any fixed \(n \in \N\) with $n\ge 2$:
	\begin{equation}\label{res_lo_bo_eta}
		{\bf 1}(\Omega_t) \cdot \max_{\boldsymbol{\sigma}, {\ba}} \big|{\cal L}^{(n)}_{t, \boldsymbol{\sigma}, {\ba}} \big|\prec \left(
		(W^{-d}B_{s,0})\cdot\frac{\eta_{s}}{\eta_{t}}\right)^{n-1}
		\le  \left(
		(W^{-d}B_{t,0})\cdot\frac{\eta_{s}}{\eta_{t}}\right)^{n-1}.
	\end{equation}
\end{lemma} 
\begin{proof}
	The proof of this lemma is exactly the same as that for Lemma 5.1 in \cite{YY_25}, except for some minor changes in notations. Hence, we omit the details.\end{proof}

With \Cref{lem_GbEXP,lem_ConArg}, Step 1 of the proof of \Cref{lem:main_ind} for the random band matrix model (i.e., the proof of \eqref{lRB1} and \eqref{Gtmwc}) is the same as that in \cite[Section 5.1]{YY_25}. Hence, we omit the details. 

\subsection{Dynamics of  \texorpdfstring{$(\cal L-\cal K)$-loops}{L-K}}\label{subsec:DyLK}

We begin by presenting some representations of the $G$-loop dynamics, formulated using the loop hierarchy \eqref{eq:mainStoflow}, Duhamel’s principle, and certain evolution kernels, which we introduce below. This dynamics has already been derived for random band matrices in \cite{YY_25} and for the block Anderson model in \cite{RBSO1D}, and will be the basis for Step 2 and subsequent steps for the proof of \Cref{lem:main_ind}. 
For any fixed $\fn\in \N$, combining the loop hierarchy \eqref{eq:mainStoflow} and the convolution tree equation \eqref{pro_dyncalK}, we obtain the following SDE for the $(\cL-\cK)$-loops as shown in equation (5.12) of \cite{YY_25}: 
\begin{align}\label{eq_L-Keee}
	\dd(\mathcal{L} - \mathcal{K})^{(\fn)}_{t, \boldsymbol{\sigma}, \ba} = &\left[\cK^{(2)} \sim (\mathcal{L} - \mathcal{K})\right]^{(\fn)}_{t, \boldsymbol{\sigma}, \ba} \, \dd t+\sum_{\lenk=3}^\fn \left[\cK^{(\lenk)}\sim (\mathcal{L} - \mathcal{K})\right]^{(\fn)}_{t, \boldsymbol{\sigma}, \ba}\, \dd t \nonumber\\
	&+ \mathcal{E}^{(\cL-\cK)\times (\cL-\cK),(\fn)}_{t, \boldsymbol{\sigma}, \ba}\dd t+\mathcal{E}^{\Gc,(\fn)}_{t, \boldsymbol{\sigma}, \ba}\dd t + \dd\mathcal{E}^{M,(\fn)}_{t, \boldsymbol{\sigma}, \ba} .
\end{align} 
Here, every \smash{$\cK^{(\lenk)}$}, $2\le \lenk\le n$, is regarded as a linear operator acting on the $(\cL-\cK)$-loops, defined as:
\begin{align}\label{DefKsimLK}
	&\left[\cK^{(\lenk)}\sim (\mathcal{L} - \mathcal{K})\right]_{t, \boldsymbol{\sigma}, \ba}^{(\fn)}:= W^d \sum_{1\le k < l \leq \fn : l-k=\lenk-1} \sum_{a,b} 
	(\mathcal{L} - \mathcal{K})^{(\fn-\lenk+2)}_{t, \cutL^{(a)}_{k, l}\left(\boldsymbol{\sigma},\, \ba\right)} 
	S^{\LK}_{ab}\mathcal{K}^{(\lenk)}_{t,\cutR^{(b)}_{k,l}\left(\boldsymbol{\sigma} ,\ba\right)} \nonumber\\
	&\quad +  W^d \sum_{1 \leq k < l \leq \fn:l-k=\fn-\lenk+1} \sum_{a,b} \mathcal{K}^{(\lenk)}_{t, \cutL^{(a)}_{k, \,l}\left(\boldsymbol{\sigma},\ba\right)} S^{\LK}_{ab}
	\left(\cL-\cK\right)^{(\fn-\lenk+2)}_{t,\cutR^{(b)}_{k, l}\left(\boldsymbol{\sigma},\ba\right)} ,
\end{align}
$\mathcal{E}^{\Gc}$ and $\dd\mathcal{E}^{M}$ are defined in \eqref{def_EwtG} and \eqref{def_Edif}, respectively, and the term $\mathcal{E}^{(\cL-\cK)\times (\cL-\cK)}$ is defined by 
\begin{equation}\label{def_ELKLK}
	\mathcal{E}^{(\cL-\cK)\times (\cL-\cK),(\fn)}_{t, \boldsymbol{\sigma}, \ba} :=
	W^d \sum_{1 \leq k < l \leq \fn} \sum_{a,b} 
	(\mathcal{L} - \mathcal{K})^{(\fn+k-l+1)}_{t, \cutL^{(a)}_{k, l}\left(\boldsymbol{\sigma},\, \ba\right)} 
	S^{\LK}_{ab} (\cL-\mathcal{K})^{(l-k+1)}_{t,\cutR^{(b)}_{k,l}\left(\boldsymbol{\sigma} ,\ba\right)}\,  .
\end{equation}
In \eqref{eq_L-Keee}, the superscript ``$(n)$" indicates the length of the $(\cL-\cK)$-loop on the LHS of the equation, and we will sometimes omit it from our notations when the value of $n$ is clear from the context.

We now rewrite \eqref{eq_L-Keee} into an integral equation using Duhamel’s principle. First, we introduce the evolution kernel associated with this integral equation.

\begin{definition}[Evolution kernel]\label{DefTHUST}
	For each $t\in [0,1)$, fixed $n\ge 2$ and $\bsig=(\sigma_1,\ldots,\sig_\fn)\in \{+,-\}^\fn$, we define the linear operator ${\varTheta}^{(n)}_{t, \boldsymbol{\sigma}}$ acting on $n$-dimensional tensors ${\cal A}: (\Zn)^{n}\to \mathbb C$ as follows (recall \eqref{eq:Msig}):
	\begin{align}\label{def:op_thn}
		\left({\varTheta}^{(\fn)}_{t, \boldsymbol{\sigma}} \circ \mathcal{A}\right)_{\ba} & = \sum_{i=1}^\fn \sum_{b_i\in\Zn} \left(\frac{M^{(\sig_i,\sig_{i+1})}S^{(\sB)}}{1 - t M^{(\sig_i,\sig_{i+1})}S^{(\sB)}}\right)_{a_ib_i}  \mathcal{A}_{\ba^{(i)}(b_i)},
	\end{align} 
	where $\ba=(a_1,\ldots, a_\fn)\in (\Zn)^\fn$, \(\ba^{(i)}(b_i):= (a_1, \ldots, a_{i-1}, b_i, a_{i+1}, \ldots, a_\fn),\) and we adopt the cyclic convention that $\sigma_{\fn+1}=\sig_1$.  
	The evolution kernel corresponding to ${\varTheta}^{(\fn)}_{t, \boldsymbol{\sigma}}$ is given by
	\begin{align}\label{def_Ustz}
		\left(\mathcal{U}_{s, t, \boldsymbol{\sigma}}^{(\fn)} \circ \mathcal{A}\right)_{\ba} = \sum_{\mathbf{b} = (b_1, \ldots, b_{\fn})} \prod_{i=1}^\fn \left(\frac{1 - s \cdot M^{(\sig_i,\sig_{i+1})} S^\LK }{1 - t \cdot M^{(\sig_i,\sig_{i+1})} S^\LK}\right)_{a_i b_i} \cdot \mathcal{A}_{\mathbf{b}} . 
	\end{align}
\end{definition}

By the definition of the 2-$\cK$-loop in \eqref{Kn2sol}, we observe that the first term on the RHS of \eqref{eq_L-Keee} can be rewritten as ${\varTheta}^{(n)}_{t, \boldsymbol{\sigma}} \circ (\mathcal{L} - \mathcal{K})^{(n)}_{t,\bsig}$. With this fact and using Duhamel's principle, we obtain the following lemma.

\begin{lemma}[Integrated loop hierarchy, Lemma 5.3 of \cite{YY_25}] \label{Sol_CalL}
	First, \eqref{eq_L-Keee} is equivalent to the integral equation: for any stopping time $\tau\ge s$ with respect to the matrix Brownian motion $\{H_t\}$,
	\begin{align}\label{int_K-L_ST}
		({\cal L}-{\cal K})^{(n)}_{\tau,\boldsymbol{\sigma},{\ba}}&=({\cal L}-{\cal K})^{(n)}_{s,\boldsymbol{\sigma},{\ba}}+\int_{s}^{\tau}\p{\varTheta^{(n)}_{u,\boldsymbol{\sigma}}\circ({\cal L}-{\cal K})^{(n)}_{u,\boldsymbol{\sigma}}}_{{\ba}}\dd u+\sum_{l_\mathcal{K}=3}^n \int_s^\tau \Big[\mathcal{K}^{(\lenk)} \sim (\mathcal{L} - \mathcal{K})\Big]^{(n)}_{u, \boldsymbol{\sigma}, {\ba}}\dd u \nonumber\\
		&+ \int_s^\tau \mathcal{E}^{(\mathcal{L} - \mathcal{K}) \times (\mathcal{L} - \mathcal{K}),(n)}_{u, \boldsymbol{\sigma}, {\ba}}\dd u + \int_{s}^{\tau}\mathcal{E}^{\Gc,(n)}_{u, \boldsymbol{\sigma}, {\ba}}\dd u + \int_s^\tau \dd \mathcal{E}^{M,(n)}_{u, \boldsymbol{\sigma}, {\ba}} .
	\end{align}
	Second, applying Duhamel's principle to \eqref{eq_L-Keee}, we obtain the following \emph{integrated loop hierarchy}:
		\begin{align}\label{int_K-LcalE}
			& (\mathcal{L} - \mathcal{K})^{(\fn)}_{t, \boldsymbol{\sigma}, \ba}  =
			\left(\mathcal{U}^{(\fn)}_{s, t, \boldsymbol{\sigma}} \circ (\mathcal{L} - \mathcal{K})^{(\fn)}_{s, \boldsymbol{\sigma}}\right)_{\ba} + \sum_{l_\mathcal{K} =3}^\fn \int_{s}^t \left(\mathcal{U}^{(\fn)}_{u, t, \boldsymbol{\sigma}} \circ \Big[\cK^{(\lenk)}\sim (\mathcal{L} - \mathcal{K})\Big]^{(\fn)}_{u, \boldsymbol{\sigma}}\right)_{\ba} \dd u \nonumber \\
			&+ \int_{s}^t \left(\mathcal{U}^{(\fn)}_{u, t, \boldsymbol{\sigma}} \circ \mathcal{E}^{(\cL-\cK)\times(\cL-\cK),(\fn)}_{u, \boldsymbol{\sigma}}\right)_{\ba} \dd u  + \int_{s}^t \left(\mathcal{U}^{(\fn)}_{u, t, \boldsymbol{\sigma}} \circ \mathcal E^{\Gc,(\fn)}_{u, \boldsymbol{\sigma}}\right)_{\ba} \dd u + \int_{s}^t \left(\mathcal{U}^{(\fn)}_{u, t, \boldsymbol{\sigma}} \circ \dd \mathcal E^{M,(\fn)}_{u, \boldsymbol{\sigma}}\right)_{\ba} .
		\end{align}
	\end{lemma}

	Before returning to the proof of \Cref{lem:main_ind}, we introduce the following notation that corresponds to the quadratic variation of the martingale term in \eqref{def_Edif}.

	\begin{definition}[Quadratic variation loop]\label{def:CALE} 
		For $t\in [0,1]$ and $\bsig=(\sigma_1,\ldots,\sig_\fn)\in \{+,-\}^\fn$, we introduce the $(2\fn)$-dimensional \emph{quadratic variation tensor} for any \(\ba=(a_1,\ldots, a_\fn) \) and \(\ba'=(a_1',\ldots, a_\fn')\):
		\be\label{defEOTE}
		\left( \cal E \otimes \cal E  \right)^{M,(\fn)}_{t, \boldsymbol{\sigma}, \ba, \ba'} := 
		\sum_{k=1}^\fn \left( \cal E \otimes \cal E \right)^{M,(n;k)}_{t,  \boldsymbol{\sigma}, \ba, \ba'},\ \ \text{where}\ \ \left( \cal E \otimes \cal E \right)^{M,(n;k)}_{t,  \boldsymbol{\sigma}, \ba, \ba'} :
		=   W^d \sum_{b,b'} S^{\LK}_{bb'}{\cal L}^{(2\fn+2)}_{t, (\boldsymbol{\sigma}\otimes\overline\bsig)^{(k) },(\ba\otimes \ba')^{(k)}(b,b')}.
		\ee
		Here, ${\cal L}^{(2\fn+2)}$ denotes a $(2\fn+2)$-loop obtained by cutting the $k$-th edge of ${\cal L}^{(\fn)}_{t,\boldsymbol{\sigma},\ba}$ and then gluing it (with indices $\ba$) with its conjugate loop (with indices $\ba'$) along the newly introduced vertices $b$ and $b'$. Formally:
		\begin{align*}
			& {\cal L}^{(2\fn+2)}_{t, (\boldsymbol{\sigma}\otimes\overline\bsig)^{(k) },(\ba\otimes \ba')^{(k )}(b,b')}:=  \tr \bigg\{ \prod_{i=k}^\fn \left(G_{t}(\sigma_i) E_{a_i}\right) \cdot \prod_{i=1}^{k-1} \left(G_{t}(\sigma_i) E_{a_i}\right) \cdot G_t(\sigma_k) E_{b} G_t(-\sigma_k) \\
			&\qquad \times \prod_{i={1}}^{k-1} \left(E_{a_{k-i}'} G_{t}(-\sigma_{k-i}) \right)\cdot \prod_{i=k}^{\fn} \left(E_{a_{\fn+k-i}'} G_{t}(-\sigma_{\fn+k-i}) \right) E_{b'}\bigg\} ,
		\end{align*}
		where the notations $(\ba\otimes \ba')^{(k )}$ and $(\boldsymbol{\sigma}\otimes\overline \bsig)^{(k) }$ (with $\overline\bsig$ denoting $(-\sig_1,\ldots,-\sig_\fn)$) represent respectively
		\begin{align}\label{def_diffakn_k}
			(\ba\otimes \ba')^{(k )}(b,b')&=( a_k,\ldots, a_n, a_1,\ldots , a_{k-1}, b, a'_{k-1},\ldots, a_1', a_n',\ldots, a'_{k},b'),\nonumber \\
			(\boldsymbol{\sigma}\otimes \overline\bsig)^{(k )}&=(  \sigma_k, \ldots, \sigma_\fn,   \sigma_1,\ldots ,\sigma_{k}, -\sigma_{k}, \ldots, -\sigma_1 ,   -\sigma_\fn, \ldots, -\sigma_{k }).
		\end{align}
		
	\end{definition}

	Under the above notations, applying the Burkholder-Davis-Gundy inequality, we obtain the following lemma that provides high-moment bounds on the martingale term. 
	
	\begin{lemma}[Lemma 5.5 of \cite{YY_25}]\label{lem:DIfREP}
		Let $\tau$ be a stopping time with respect to the matrix Brownian motion $\{H_t\}$. Then, for any fixed $p\in \N$, there exists a constant $C_{n,p}$ such that
		\begin{align}\label{aaswtghh}
			\mathbb{E} \left [    \int_{s}^\tau \dd  \mathcal{E}^{M,(n)}_{u, \boldsymbol{\sigma}, {{\ba}}}  \right ]^{2p} 
			&\le C_{n,p}  
			\mathbb{E} \left(\int_{s}^\tau 
			\left( 
			\left( \mathcal{E} \otimes  \mathcal{E} \right)
			^{M,(n)}_{u, \boldsymbol{\sigma}, \ba,\ba}  
			\right)\dd u 
			\right)^{p},\\
			\label{alu9_STime}
			\mathbb{E} \left [    \int_{s}^t\left(\mathcal{U}^{(\fn)}_{u, t, \boldsymbol{\sigma}} \circ \dd\cal E^{M,(\fn)}_{u, \boldsymbol{\sigma}}\right)_{\ba} \right ]^{2p} 
			&\le C_{n,p}
			\mathbb{E} \left(\int_{s}^t 
			\left(\left(
			\mathcal{U}^{(\fn)}_{u, t,  \boldsymbol{\sigma}}
			\otimes 
			\mathcal{U}^{(\fn)}_{u, t,  \overline{\boldsymbol{\sigma}}}
			\right) \;\circ  \;
			\left( \cal E \otimes \cal E\right)^{M,(\fn)}
			_{u, \boldsymbol{\sigma}  }
			\right)_{\ba, \ba}\dd u 
			\right)^{p}.
		\end{align} 
		Here, $\mathcal{U}^{(\fn)}_{u, t,  \boldsymbol{\sigma}} \otimes    \mathcal{U}^{(\fn)}_{u, t,  \overline{\boldsymbol{\sigma}}}$ denotes the tensor product of the evolution kernel defined in \eqref{def_Ustz}: 
		$$
		\left[\left(
		\mathcal{U}^{(\fn)}_{u, t,  \boldsymbol{\sigma}}
		\otimes 
		\mathcal{U}^{(\fn)}_{u, t,  \overline{\boldsymbol{\sigma}}}
		\right)\circ \cal A\right]_{\ba,\ba'}=
		\sum_{\bfb,\bfb'\in(\Zn)^{\fn}}  \prod_{i=1}^\fn \left(\frac{1 - u\cdot M^{(\sig_i,\sig_{i+1})} S^\LK}{1 - t\cdot  M^{(\sig_i,\sig_{i+1})} S^\LK}\right)_{a_i b_i} \prod_{i=1}^\fn \left(\frac{1 - u\cdot M^{(-\sig_i,-\sig_{i+1})} S^\LK }{1 - t\cdot M^{(-\sig_i,-\sig_{i+1})} S^\LK }\right)_{a'_i b'_i} \mathcal{A}_{\bfb,\bfb'}
		$$
		for any $(2\fn)$-dimensional tensor ${\cal A}: (\Zn)^{2\fn}\to \mathbb C$ and $\bfb=(b_1,\ldots, b_\fn)$, $\bfb'=(b_1',\ldots, b_\fn')$.
		
	\end{lemma}

	\subsection{Proof of Step 2}\label{subsec:step2_pf}
	
	In this subsection, we focus on the proof of \eqref{Eq:Gdecay_w}. The local laws \eqref{Gt_bound_flow} and \eqref{Gt_avgbound_flow} then follow directly from \eqref{Eq:Gdecay_w} together with Lemma \ref{lem_GbEXP}. 
	Because of the hierarchical structure of \eqref{eq:mainStoflow}, establishing the pointwise stability estimate \eqref{Eq:Gdecay_w} would, in principle, require \emph{pointwise control} of all higher-order $G$-loops with length $n \ge 3$. Truncating this “pointwise” loop hierarchy (even if possible) would be substantially more involved than handling the “maximum” loop hierarchy used in our current approach.
	Fortunately, by combining two key technical ingredients—the diagrammatic techniques developed for the light-weight estimate and the $\contract$ inequality (see \Cref{ygdhmsgq0}) used in the martingale estimate—we can overcome this difficulty and effectively truncate the pointwise loop hierarchy at order $n=2$. This is one of the pivotal components of our proof: without it, both Step 2 and the $L^\infty$-stability of the tree approximation of the loop hierarchy, established in Steps 3 and 4 below, would break down. 
	
	We begin by defining the following tail functions to quantify the pointwise decay.

	\begin{definition}[Tail functions]\label{def: TTfunc}
		For any $t\in [0,1)$, we define a tail function $\cal T_{t}:[0,\infty)\to [0,\infty)$ as
		\begin{align}\label{defTUL}
			{\cal T}_{t}(r):=  B_{t,r}\cdot e^{- \left( 
				r/\ell_t \right) ^{1/2} },\quad \forall r\ge 0,
		\end{align}
		which corresponds to the $\Theta$-propagator bound in \eqref{prop:ThfadC}. 
		Given any $0\le \ell \le L$ and large constant $D>0$, we will also use a truncated tail function \smash{$\widetilde{{\cal T}}$} defined as: 
		\begin{align}\label{defWTTlD}
			\wT^{\ell}_{t,D}(r):= \max\left( {\cal T}_{t}(r\wedge \ell), \;  W^{-D}\right).
		\end{align}
	\end{definition}
	
	Note that ${\cal T}_{t}$ is a decreasing function in $r\ge 0$. Moreover, for $0\le r\le L$, when $1-t\ge \ilambda^2/L^{2}$, the term $(L^{d}|1-t|)^{-1}$ is always dominated by the term $(\ilambda^2+|1-t|)^{-1}/(r+1)^{d-2}$ in $B_{t,r}$, while when $1-t\le \ilambda^2/L^{2}$, $\ell_t=L$ and the exponential factor \smash{$\exp (- (r/\ell_t ) ^{1/2})$} in \eqref{defTUL} is of constant order.  
	The tail function $\cT_t$ satisfies the following elementary estimate, whose proof is provided in \Cref{sec:pfpropT}. 
	
	\begin{lemma}[Property of $\cal T$]\label{lem:propT}
		There exists a constant $C_d>0$ (depending only on $d$) such that the following bound holds 
		for any $0\le u\le t<1$ satisfying (i) $1-u\ge 1- t\ge \ilambda^2/L^{2}$, or (ii) $1-t\le 1-u \le \ilambda^2/L^{2}$:
		\begin{align}\label{TTT2}
			\sum_{c\in \Zn}{\cal T}_{u}(|a-c|)\cdot {\cal T}_{t}(| c -b |) \le \frac{C_d}{1-u}\cdot {\cal T}_{t}(|a-b|) ,\quad \forall a,b\in \Zn.
		\end{align}
	\end{lemma}

	Our core strategy for the proof of step 2 involves iterating the following self-improving estimates: for any large constant $D>0$, we have that  
	\begin{align}\label{awi2iks}
		\sup_{u\in [s,t]}\max_{\ba=(a,b)} \frac{\big|{\cal L}^{(2)}_{u,(+,-),\ba}\big|}{W^{-d} \widetilde{{\cal T}}^{\ell}_{u,D}\p{ |a-b|}}\prec 1  &\implies \sup_{u\in [s,t]}\max_{\ba=(a,b)}\max_{\bsig} \frac{\big|{(\cal L-\cal K)}^{(2)}_{u,\bsig,\ba}\big|}{W^{-d} \widetilde{{\cal T}}^{\ell} _{u,D}\p{|a-b|}}\prec \p{\frac{1-s}{1-t}}^{C_d}(W^{-d}B_{t,0})^{1/5} \nonumber
		\\
		&\implies \sup_{u\in [s,t]}\max_{\ba=(a,b)} \frac{\big|{\cal L}^{(2)}_{u,(+,-),\ba}\big|}{W^{-d} \widetilde{{\cal T}}^{\ell'}_{u,D}\p{ |a-b|}}\prec 1 ,
	\end{align}
	where the two scales $\ell'>\ell$ are roughly connected through the relation \({\cal T}_{t} \p{\ell'}\asymp  (W^{-d}B_{t,0})^{1/6}\cdot {\cal T}_{t}\p{\ell}.\)
	Here, the decrease from $1/5$ to $1/6$ accounts for the prefactor $(|1-s|/|1-t|)^{C_d}$. In other words, \eqref{awi2iks} shows that once we have obtained a sharp 2-$G$-loop bound up to the scale $\ell$, then after one iteration, we can push this bound to a slightly larger scale $\ell'$. After $\OO(1)$ iterations, we can improve the 2-$G$-loop bound up to a scale \smash{$\wh\ell$} with \smash{$\cal T_t(\wh\ell)=\OO(W^{-D})$}. At this scale, we have 
	\[{(\cal L-\cal K)}^{(2)}_{u,\bsig,\ba}\prec W^{-d}\widetilde{{\cal T}}^{\wh\ell}_{u,D}\p{|a-b|}\asymp W^{-d}\widetilde{{\cal T}}^{L}_{u,D}\p{|a-b|}.\]
	Together with the middle step of \eqref{awi2iks}, it implies the desired estimate \eqref{Eq:Gdecay_w}. 
	
	To establish \eqref{awi2iks}, we use equation \eqref{int_K-L_ST} with $n=2$:
	\begin{align}\label{LK_simple}
		({\cal L}-{\cal K})^{(2)}_{\tau,\boldsymbol{\sigma},{\ba}}&=({\cal L}-{\cal K})^{(2)}_{s,\boldsymbol{\sigma},{\ba}}+\int_{s}^{\tau}\p{\varTheta^{(2)}_{u,\boldsymbol{\sigma}}\circ({\cal L}-{\cal K})^{(2)}_{u,\boldsymbol{\sigma}} + \mathcal{E}^{(\mathcal{L} - \mathcal{K}) \times (\mathcal{L} - \mathcal{K})}_{u, \boldsymbol{\sigma}} + \mathcal{E}^{\Gc}_{u, \boldsymbol{\sigma}} }_{{\ba}}\dd u + \int_s^\tau \dd \mathcal{E}^{M}_{u, \boldsymbol{\sigma}, {\ba}} ,
	\end{align} 
	where we omit ``$(2)$" from some superscripts. 
	Define the following (random) control parameter:
	\begin{align}\label{defCALJ}
		\wh{\cal J}^{\ell}_{u,D}:=\max_{\ba=(a,b)} \max_{\bsig}{\big|\left({\cal L-\cal K}\right)^{(2)}_{u,\bsig, \ba}\big|}\big/\big[ W^{-d} \widetilde{{\cal T}}^{\ell}_{u,D}\p{|a-b|}\big].
	\end{align}
	The first two terms on the RHS of \eqref{LK_simple} can be bounded as in the following lemma, whose proof is postponed to  
	\Cref{subsec:pf_lem:newKLK}.
	
	\begin{lemma}\label{lem:newKLK} 
		In the setting of \Cref{lem:main_ind}, suppose the weak local law \eqref{Gtmwc} holds. Then, there exists a constant $C>0$ (depending only on $c_d$ and $C_d$ in \eqref{prop:ThfadC}) such that the following estimates hold with high probability for all $0\le \ell \le L$:
		\begin{align}\label{juwo2=klk}      
			\max_{\ba=(a,b)} \max_{\bsig}\abs{\p{\varTheta^{(2)}_{u,\boldsymbol{\sigma}}\circ({\cal L}-{\cal K})^{(2)}_{u,\boldsymbol{\sigma}}}_{{\ba}}}\Big/\br{W^{-d}\widetilde{{\cal T}}^{\ell}_{u,D}\p{|a-b|}} & \le  \frac{C}{1-u}\cdot  \wh{\cal J}^{\ell}_{u, D }  ,\\
			\label{juwo=Lklk}      
			\max_{\ba=(a,b)}\max_{\bsig} \left|\mathcal{E}^{(\mathcal{L} - \mathcal{K}) \times (\mathcal{L} - \mathcal{K})}_{u, \boldsymbol{\sigma}, {\ba}} \right|\Big/\br{W^{-d}\widetilde{{\cal T}}^{\ell}_{u,D}\p{ |a-b|}} &\le  \frac{C}{1-u} \cdot \left( \wh{\cal J}^{\ell}_{u,D} + \left(\wh{\cal J}^{\ell}_{u,D} \right)^2\cdot {\bf 1}_{\ell\ge 1}\right)  .
		\end{align}  
	\end{lemma}

	Next, we bound the light-weight term $\cal E^{\Gc}$ using the following two lemmas. \Cref{lem:LWterm} is applied to recover the polynomial decay $B_{t,\cdot}$ in \eqref{eq_B_param}, while \Cref{lem: EWGn2_N} is used to establish the tail behavior described by the tail function in \eqref{defWTTlD}.

	\begin{lemma}[Light-weight estimate: $B$-bound]\label{lem:LWterm}
		In the setting of \Cref{lem:main_ind}, assume that the bounds in \eqref{initialGT2} hold for some constant $\e_0>0$ and deterministic control parameter $ W^{-d/2}\le \Psi_t \le W^{-\e_0}$. Suppose we have the estimate 
		\be\label{eq:LW_assm}
		{\cal L}^{(2)}_{t,\bsig,(a,b)}\prec \Psi_t^2(|a-b|),\quad \forall \ \bsig\in\{(+,-),(-,+)\}, \ a,b\in \Zn, \ee
		for a class of deterministic parameters $0<\Psi_t(|a-b|)\le W^{-\e_0}$. Without loss of generality, assume that $\Psi_t(\ell)$ is monotonically decreasing for $\ell\ge 0$ (otherwise, one may replace it by the function $\sup_{\ell'\ge\ell}\Psi_t(\ell')$). 
		Suppose there exist constants $C_1,C_2>1$ such that the following relations hold for any constant $C>1$: 
		\be\label{eq:Psi}
		W^{-d/2}\lesssim \Psi_t(0)\asymp\Psi_t(\ell) \ \  \forall \ 0\le \ell\le C,\quad \text{and}\quad \Psi_t(\ell_1)/\Psi_t(\ell_2)\le C_1(\ell_2/\ell_1)^{C_2} \ \ \forall \ell_2\ge \ell_1\ge 1.
		\ee
		Then, the following estimate holds: 
		\be\label{eq:LW_conclusion}
		\mathcal{E}^{\Gc,(2)}_{t, \boldsymbol{\sigma}, (a,b)}\prec \frac{1}{\eta_t}\Psi_t(0) \cdot \Psi^2_t\p{|a-b|}, 
		\quad \forall \ \bsig\in \{+,-\}^2, \ a,b\in \Zn.
		\ee
		In particular, if we take \(\Psi_t(|a-b|)=  (W^{-c_0}B_{t, |a-b|\wedge K})^{1/2}\) for a constant $c_0>0$ and some $0\le K\le L$, then 
		\be\label{eq:LW_conclusion2}
		\mathcal{E}^{\Gc,(2)}_{t, \boldsymbol{\sigma}, (a,b)}\prec \frac{1}{\eta_t}\p{W^{-c_0}B_{t,0}}^{1/2}\cdot W^{-c_0} B_{t,|a-b|\wedge K}  .
		\ee
	\end{lemma}
	
	\begin{lemma}[Light-weight estimate: $\cal T$-bound]\label{lem: EWGn2_N} 
		Given any $t \in [0,1)$, assume that the bounds in \eqref{initialGT2} hold for some constant $\e_0>0$ and deterministic control parameter $W^{-d/2}\le \Psi_t\le W^{-\e_0}$. Suppose for some $0\le \ell \le (\log W)^{10}\ell_t$, the following estimate holds for any large constant $D>0$:
		\be\label{eq:LW_assm_exp}
		{\cal L}^{(2)}_{t,\bsig,(a,b)}\prec W^{-d}\wT_{t,D}^{\ell}(|a-b|),\quad \forall \ \bsig\in\{(+,-),(-,+)\}, \ a,b\in \Zn . \ee
		Then, the following estimate holds for any large constant $D>0$: 
		\be\label{eq:LW_conclusion_exp}
		\mathcal{E}^{\Gc,(2)}_{t, \boldsymbol{\sigma}, (a,b)} \prec \frac{1}{\eta_t}(W^{-d}B_{t,0})^{1/2} \cdot W^{-d} \wT_{t,D}^{\ell}(|a-b|)  , 
		\quad \forall \ \bsig\in \{+,-\}^2, \ a,b\in \Zn.
		\ee
	\end{lemma}
	\begin{remark}\label{rmk:poly_exp}
		The second condition in \eqref{eq:Psi} implies that the parameter $\Psi_t(r)$ decays polynomially in $r$. In particular, the parameter \smash{$W^{-d}\wT_{t,D}^{\ell}(r)$} in \eqref{eq:LW_assm_exp} does not satisfy this condition because of the exponential factor \smash{$\exp\p{-(r/\ell_t)^{1/2}}$} in \eqref{defTUL}. Consequently, \Cref{lem: EWGn2_N} cannot be deduced directly from \Cref{lem:LWterm}. The corresponding proof is technically more delicate, since factors of the form $\Psi_t(c|a-b|)$, for some constant $c\in(0,1)$, can no longer be replaced by $\Psi_t(|a-b|)$ as in the polynomial-decay setting.
		In \Cref{lem: EWGn2_N}, the assumption $\ell \le (\log W)^{10}\ell_t$ is made without loss of generality, since $\cal T_{t}(\ell) \le W^{-D}$ whenever $\ell > (\log W)^{10}\ell_t$. 
	\end{remark}
	
	The proofs of \Cref{lem:LWterm,lem: EWGn2_N} are based on bounding the high moments using Gaussian integration by parts and some diagrammatic tools developed in \cite{yang2021delocalization,yang2021random}. We will present the details in \Cref{Sec:graph}.  
	Finally, the martingale term is bounded as in the following lemma, whose proof will be postponed to \Cref{subsec:pf_lem: EMn2_N}. 
	
	\begin{lemma}[Martingale estimate]\label{lem: EMn2_N} 
		Given any $t\in[0,1)$, suppose the setting of \Cref{lem:LWterm} holds. Consider the term \smash{$\left( \cal E\otimes  \cal E \right)^{M}_{t, \boldsymbol{\sigma}, \ba,\ba}\equiv \left( \cal E\otimes  \cal E \right)^{M,(2;k)}_{t, \boldsymbol{\sigma}, \ba,\ba}$} defined in \eqref{defEOTE} for any \smash{$\ba=(a,b)\in (\Zn)^2$}, $\bsig=(\sig_1,\sig_2)\in \{+,-\}^2$, and $k\in\{1,2\}$.
		First, the following estimate holds:
		\be\label{eq:MG_conclusion}
		\left( {\cal E}\otimes  {\cal E} \right)^{M}_{t, \boldsymbol{\sigma}, \ba, \ba} \prec \frac{1}{\eta_t}\Psi_t(0) \cdot  \Psi_t^4\p{ |a-b|} .
		\ee
		In particular, if we take \(\Psi_t(|a-b|)= (W^{-c_0}B_{t, |a-b|\wedge K})^{1/2}\) for a constant $c_0>0$ and some $0\le K\le L$, then 
		\be\label{eq:MG_conclusion2}
		\left( \cal E\otimes  \cal E \right)^{M}_{t, \boldsymbol{\sigma}, \ba, \ba} \prec \frac{1}{\eta_t}\p{W^{-c_0}B_{t,0}}^{1/2}\cdot  \p{W^{-c_0} B_{t,|a-b|\wedge K}}^2.
		\ee
		Second, suppose for some $0\le \ell\le (\log W)^{10}\ell_t$, the estimate \eqref{eq:LW_assm_exp} holds for any large constant $D>0$. Then, the following estimate holds for any large constant $D>0$:
		\be\label{eq:MG_conclusion3}
		(\mathcal{E} \otimes \mathcal{E})^{M}_{t,\bsig, \ba,\ba} \prec \frac{1}{\eta_t} \left[\left(W^{-d}B_{t,0}\right)^{1/2} +  \left(\wh{\mathcal{J}}_{t,D}^{\ell}\right)^3\right]\cdot \left(W^{-d} \wT^{\ell}_{t,D}(|a-b|)\right)^2.
		\ee
	\end{lemma}
	\begin{remark}\label{rmk_bottleneck}
		We remark that the martingale estimate is the main bottleneck of our proof, preventing us from improving the exponent $1/5$ in \eqref{Eq:Gdecay_flow} to the optimal value $1$. This might be overcome by incorporating certain (not necessarily sharp) spatial decay properties of higher-order $G$-loops into the loop hierarchy analysis. However, such a refinement is far from a straightforward extension of the current argument. Even if achievable, it would entail substantial additional technical complexity while yielding only a ``marginal" improvement in the quantum diffusion estimates. Since this refinement does not affect the main results---delocalization (\Cref{MR:decol}), local laws (\Cref{MR:locSC}), QUE (\Cref{MR:QUE}), and bulk universality (\Cref{Thm: B_Univ})---we leave this direction for future investigation.
	\end{remark}
	
	With the above Lemmas \ref{lem:newKLK}--\ref{lem: EMn2_N} as inputs, we are ready to control the terms in equation \eqref{LK_simple} and complete Step 2 for the proof of \Cref{lem:main_ind}. 
	
	\begin{proof}[\bf Step 2: Proof of \eqref{Gt_bound_flow}--\eqref{Eq:Gdecay_w}] 
		Suppose the estimate \eqref{Eq:Gdecay_w} has been established. Then, using \eqref{Kn2sol} and \eqref{prop:ThfadC}, we obtain 
		\be\label{eq:kn2sol_decay}
		{\cal K}^{(2)}_{u, (-,+), (a_1,a_2)}\prec W^{-d}B_{u,|a_1-a_2|}, \quad \forall a_1,a_2\in \Zn.
		\ee
		Together with \eqref{Eq:Gdecay_w}, this implies---provided $\fc_d$ in \eqref{con_st_ind} is chosen sufficiently small---the 2-$G$-loop bound
		\be\label{eq:L2_decay}
		{\cal L}^{(2)}_{u, (-,+), (a_1,a_2)}\prec W^{-d}B_{u,|a_1-a_2|}, \quad \forall a_1,a_2\in \Zn.
		\ee
		Applying the weak local law \eqref{Gtmwc} from Step 1 to verify the first condition in \eqref{initialGT2}, and then invoking \Cref{lem_GbEXP}, we obtain the entrywise local law \eqref{Gt_bound_flow} and the averaged local law \eqref{Gt_avgbound_flow}.
		
		It remains to establish the estimate \eqref{Eq:Gdecay_w}. From the bound \eqref{lRB1} proved in Step 1, we know 
		\begin{equation} \label{lokis2}
			\max_{\boldsymbol{\sigma},{\ba}}\abs{{\cal L}^{(2)}_{u,\bsig,{\ba}}}\prec \frac{1-s}{1-t}(W^{-d}B_{t,0}) =: W^{-c_0},\quad \forall u\in [s,t],
		\end{equation}
		where $c_0\equiv c_0(W)\gtrsim 1$ under the condition \eqref{con_st_ind}. We first prove the following maximum bound:
		\be\label{eq:opt_L2}
		\max_{\boldsymbol{\sigma},{\ba}}\abs{\left({\cal L}-{\cal K}\right)^{(2)}_{u,\bsig,{\ba}}}\prec  W^{-d}B_{u,0},\quad \forall u\in [s,t].
		\ee
		Using \Cref{lem:newKLK} (for the case $\ell=0$), \Cref{lem:LWterm}, and the inductive hypothesis \eqref{Eq:L-KGt+IND} with $n=2$, we obtain from the flow \eqref{LK_simple} (with $\tau=t$) that 
		\begin{align}\label{l>0EQ}
			\max_{\bsig,\ba}\left|\left({\cal L}-{\cal K}\right)^{(2)}_{t,\bsig, {\ba}}\right|
			\le \int_{s}^t \frac{C_0}{1-u}\max_{\bsig,\ba}\left|\left({\cal L}-{\cal K}\right)^{(2)}_{u,\bsig,\ba}\right| \dd u  +\OO_\prec \p{(W^{-d}B_{s,0})^2+W^{-\frac32c_0}} + \abs{\int_s^t \dd \mathcal{E}^{M}_{u, \boldsymbol{\sigma}, {\ba}}}   
		\end{align}
		for a constant $C_0>0$ depending only on $c_d$ and $C_d$ in \eqref{prop:ThfadC}. Combining the estimate \eqref{eq:MG_conclusion} in \Cref{lem: EMn2_N} with \Cref{lem:DIfREP}, and applying Markov's inequality, we obtain that 
		\[\abs{\int_s^t \dd \mathcal{E}^{M}_{u, \boldsymbol{\sigma}, {\ba}}} \prec \p{\int_{s}^{t}\frac{W^{-5c_0/2}}{\eta_u}\dd u}^{1/2}\prec W^{-\frac54c_0}.\]
		Together with \eqref{l>0EQ}, it gives
		\be\label{eq:Gronwall_2L_max}\max_{\bsig,{\ba}}\left|\left({\cal L}-{\cal K}\right)^{(2)}_{t, \bsig, {\ba} }\right|\le \int_{s}^t \frac{C_0}{1-u}\max_{\bsig,\ba}\left|\left({\cal L}-{\cal K}\right)^{(2)}_{u,\bsig,\ba}\right| \dd u  +\OO_\prec \p{(W^{-d}B_{s,0})^2+W^{-\frac54c_0 }} .
		\ee
		Recall the following classical Gr{\"o}nwall's inequality, where $\beta$ is non-negative and $\al$ is non-decreasing:
		\be\label{Gronwall_inequality}
		\begin{aligned}
			&f(t) \leq \alpha(t)+\int_s^t \beta(u) f(u) \mathrm{d} u \implies f(t) \leq \alpha(t) \exp \left(\int_s^t \beta(u) \mathrm{d} u\right).
		\end{aligned}
		\ee
		Applying it to \eqref{eq:Gronwall_2L_max} yields that  
		\be\label{eq:L-K2max} \max_{\bsig,\ba}\left|\left({\cal L}-{\cal K}\right)^{(2)}_{t,\bsig,\ba}\right|\prec 
		\p{\frac{1-s}{1-t}}^{C_0}\left((W^{-d}B_{t,0})^2 + W^{-\frac54c_0}\right)\le W^{-d}B_{t,0},
		\ee
		where, in the last step, we use that 
		\[ \p{\frac{1-s}{1-t}}^{C_0}W^{-\frac54c_0} \le W^{-d}B_{t,0} \cdot \p{\frac{1-s}{1-t}}^{C_0+5/4}\p{W^{-d}B_{t,0}}^{1/4} \ll W^{-d}B_{t,0} \]
		as long as we choose $\fc_d$ in \eqref{con_st_ind} sufficiently small depending on $C_0$. Combining \eqref{eq:L-K2max} with \eqref{eq:bcal_k} (for the $n=2$ case), we conclude \eqref{eq:opt_L2} at $u=t$. Obviously, the same result applies to each $u\in [s,t]$, and a standard $N^{-C}$-net and perturbation argument extends it uniformly to all $u\in[s,t]$, which concludes \eqref{eq:opt_L2}.

		We can obtain from \eqref{eq:opt_L2} that for $\ell^{(0)}=0$ and any large constant $D>0$,
		\begin{align}\label{ksjjuw}
			\max_{\ba=(a,b)}\max_{\bsig}\big|\cal L^{(2)}_{u, \bsig, {\ba} }\big|\Big/\p{W^{-d} \widetilde{\cal {T}}^{\ell^{(0)}}_{u,D}\p{|a-b|}} \prec 1, \quad \forall u\in[s,t],
		\end{align}
		where we also use that 
		\be\label{eq:simpleboundK}\cK^{(2)}_{u,\bsig,\ba}\prec W^{-d}\wT^{L}_{u,D}\p{|a-b|}\ee 
		by \eqref{prop:ThfadC} and the definition of 2-$\cK$-loop in \eqref{Kn2sol}. 
		Now, fix a large constant $D>0$. Suppose we have the following bound for a collection of length scales $\{K_u\ge 0:u\in [s,t]\}$:
		\begin{align}\label{kwr3juw}
			\max_{\ba=(a,b)}\max_{\bsig}\big|{\cal L}^{(2)}_{u,\bsig,{\ba} }\big|\Big/\p{W^{-d}\widetilde{\cal {T}}^{K_u}_{u,D}\p{|a-b|}}\prec 1,\quad \forall u\in[s,t]. 
		\end{align}
		Moreover, assume that $\cal T_u(K_u)$ is non-decreasing in $u$, i.e., 
		\be\label{eq:monotone_Ku}
		\cal T_{u}(K_u) \le \cal T_v(K_v), \quad  \wT_{u,D}^{K_u}(r) \le \wT_{v,D}^{K_v}(r),\quad \forall s\le u\le v \le t, \ r\ge 0.
		\ee
		Our goal is to establish the following estimate for a constant $C_0>0$ depending only on $c_d$ and $C_d$ in \eqref{prop:ThfadC}:
		\be\label{eq:Gronwall_dervJuD}
		\wh{\mathcal{J}}_{u,D}^{K_u} \prec\left(|1-s|/|1-u|\right)^{C_0}(W^{-d} {B} _{t,0})^{1/5}  ,\quad \forall u\in [s,\tau],  
		\ee
		where $\tau$ is a stopping time defined as
		\be\label{eq:def2_stopping} 
		\tau :=t\wedge T,\quad \text{with}\quad T:=\inf\left\{u\ge s: \wh{\cal J}_{u,D}^{K_u} \ge (W^{-d}B_{u,0})^{1/6}\right\}.\ee

		For the proof of \eqref{eq:Gronwall_dervJuD}, using \Cref{lem: EWGn2_N}, we obtain that 
		\be\label{eq:boundGcterm}
		\int_{s}^{\tau}\mathcal{E}^{\Gc}_{u, \boldsymbol{\sigma}, {\ba}}\dd u \prec \int_{s}^\tau \frac{1}{\eta_u} (W^{-d}B_{u,0})^{1/2} \cdot \p{W^{-d} \wT_{u,D}^{K_u}(|a-b|)}\dd u  \prec (W^{-d}B_{\tau,0})^{1/2} \cdot \p{W^{-d} \wT_{\tau,D}^{K_\tau}(|a-b|)},
		\ee
		where, in the second step, we also use the fact that $B_{u,0}$ and $\wT_{u,D}^{K_u}(r)$ are monotonically increasing in $u$. Similarly, using \Cref{lem: EMn2_N} along with \Cref{lem:DIfREP} and the definition of the stopping time $\tau$, we obtain that 
		\be\label{eq:boundmgterm}
		\begin{split}
			\int_s^\tau \dd \mathcal{E}^{M}_{u, \boldsymbol{\sigma}, {\ba}} &\prec  \Big[\left(W^{-d}B_{\tau,0}\right)^{1/4}  + \sup_{u\in[s,\tau]}\p{\wh{\cal J}_{u,D}^{K_u}}^{3/2}\Big]\cdot \left(W^{-d} \wT^{K_\tau}_{\tau,D}(|a-b|)\right) \\
			&\lesssim \left(W^{-d}B_{\tau,0}\right)^{1/4} \cdot \left(W^{-d} \wT^{K_\tau}_{\tau,D}(|a-b|)\right).\end{split}
		\ee
		Applying \Cref{lem:newKLK}, \eqref{eq:boundGcterm}, and \eqref{eq:boundmgterm} to equation \eqref{LK_simple} yields that 
		\begin{align}\label{LKu2p2mmvj}
			\left|\left({\cal L}-{\cal K}\right)^{(2)}_{\tau,\bsig,\ba}\right| \le &~
			\left|\left({\cal L}-{\cal K}\right)^{(2)}_{s, \bsig,\ba}\right|+ \int_s^\tau  \frac{C_0}{1-u} \wh{\mathcal{J}}^{K_u}_{u,D} \cdot W^{-d} \widetilde{{\cal T}}^{K_u} _{u,D}\p{|a-b|} \dd u  
			\\\nonumber
			&~+\OO_\prec\p{\p{W^{-d} {B}_{\tau,0}}^{1/4}\cdot W^{-d} \widetilde{{\cal T}}^{K_\tau}_{\tau,D}\p{|a-b|}}.
		\end{align}
		From this equation, using the induction hypothesis \eqref{Eq:Gdecay+IND} at time $s$ and the definition \eqref{defCALJ}, we obtain that 
		$$
		\wh{\mathcal{J}}^{K_\tau}_{\tau,D}\le  \int_s^\tau \frac{C_0}{1-u} \wh{\mathcal{J}}^{K_u}_{u,D}  \dd u + \OO_\prec \p{(W^{-d} {B} _{\tau,0})^{1/5}}.
		$$
		Applying Gr{\"o}nwall's inequality \eqref{Gronwall_inequality} again to this equation, we derive \eqref{eq:Gronwall_dervJuD}.

		By the induction hypothesis \eqref{Eq:Gdecay+IND} at time $s$, the stopping time $T$ defined in \eqref{eq:def2_stopping} satisfies $T>s$ with high probability. In \eqref{eq:Gronwall_dervJuD}, using again \eqref{con_st_ind} with $\fc_d$ chosen sufficiently small depending on $C_0$, we can ensure that with high probability, \smash{\(\wh{\mathcal{J}}_{u,D}^{K_u} \ll (W^{-d} {B} _{u,0})^{1/6}\)} for all \(u\in [s,\tau]\). This implies that $T\ge t$ with high probability, so that \eqref{eq:Gronwall_dervJuD} holds for all $u\in[s,t]$. 
		Next, we define a new collection of parameters $\{K_u'\ge 0:u\in [s,t]\}$ as follows: for each $u\in[s,t]$, let $K_u'$ be the unique positive solution to  
		\be\label{eq:def_ell1} 
		\cal T_u(K_u')=\cal T_u(K_u)\cdot (W^{-d} {B}_{u,0})^{1/6} . \ee
		Under this definition, the monotonicity relation \eqref{eq:monotone_Ku} continues to hold for the family $\{K_u'\}_{u\in[s,t]}$. Furthermore, from \eqref{eq:simpleboundK} and \eqref{eq:Gronwall_dervJuD}, we obtain
		\[ {\cal L}^{(2)}_{u,\bsig, \ba}\prec W^{-d} \p{\widetilde{{\cal T}}^{K_u}_{u,D}(|a-b|)\cdot (W^{-d} {B} _{u,0})^{1/6} + \wT_{u,D}^{L}(|a-b|)} \lesssim W^{-d}\widetilde{{\cal T}}^{K_u'}_{u,D}(|a-b|),\]
		where, in the second step, we use \eqref{eq:def_ell1} along with the relation $\widetilde{{\cal T}}^{K_u}_{u,D}(|a-b|)\asymp \widetilde{{\cal T}}^{L}_{u,D}(|a-b|)$ for $K_u\ge L$. We now use this as the input (with $K_u$ replaced by $K_u'$) in \eqref{kwr3juw} for the next iteration of the above argument, and show that \eqref{kwr3juw} continues to hold at an even larger scale $K_u''$. From \eqref{eq:def_ell1}, it follows that, for any fixed $D>0$, after performing at most $\OO(1)$ iterations (where the number of iterations depends on $D$), we reach scales $\{K_u:u\in[s,t]\}$ such that $\cal T_u(K_u)\le W^{-D}$ for all $u\in[s,t]$. Consequently, we have 
		\[\wT_{u,D}^{K_u}(|a-b|)\asymp \wT_{u,D}^{L}(|a-b|), \quad \forall u\in [s,t], \ a,b\in \Zn.\]
		Finally, applying \eqref{eq:Gronwall_dervJuD} once more yields \eqref{Eq:Gdecay_w}.
	\end{proof}
	
	\begin{remark}
		We briefly explain why the stability estimate \eqref{Eq:Gdecay_w} \emph{cannot} be established continuously along the flow using Duhamel's formula, as was done in the $d \in \{1,2\}$ case \cite{YY_25,DYYY25,erdHos2025zigzag}.
		For a large constant $D>0$, these works consider the random control parameter \smash{$J_{u,D}\equiv \wh{\cal J}^{L}_{u,D}$} (defined in \eqref{defCALJ}) at the full-system scale $\ell=L$.   
		Roughly speaking, the previous approach begins with an initial high-probability bound $J_{s,D}\le W^{-c}$ (for some constant $c>0$ at $u=s$) and then propagates this bound along a $W^{-C}$-net of $[s,t]$. The essential idea is that if $J_{u,D}\le W^{-c}$ holds, a perturbative argument yields a slightly weaker bound $J_{u',D}\le W^{-c+\e}$ at $u'=u+W^{-C}$. This can then be bootstrapped back to $J_{u',D}\le W^{-c}$ using the loop hierarchy \eqref{int_K-LcalE}, at the cost of a probability loss $W^{-D}$, which remains acceptable since the total accumulation over the net is only $W^{-D+C}$.
		However, this mechanism breaks down once we attempt to incorporate the light-weight estimates from \Cref{lem:LWterm,lem: EWGn2_N}. They rely on a moment method that degrades the probability bound much more severely---from $W^{-D}$ in the original assumptions \eqref{eq:LW_assm} (resp.~\eqref{eq:LW_assm_exp}) to $W^{-\delta D}$ in \eqref{eq:LW_conclusion} (resp.~\eqref{eq:LW_conclusion_exp}) for some constant $\delta <1$. As a result, \Cref{lem:LWterm,lem: EWGn2_N} can each be applied $\OO(1)$ times only, and thus the flow-based propagation strategy is no longer applicable. 
		Instead, we must prove the light-weight estimate simultaneously for all $u\in[s,t]$, starting from the weak local law \eqref{Gtmwc} established in Step 1. This necessitates the self-improving argument developed in \eqref{awi2iks}.

		Moreover, since our initial control of $J_{u,D}$ is too weak (it can be as large as $W^D$), the quadratic term \smash{$\cal E^{(\cL-\cK)\times (\cL-\cK)}$} cannot be effectively bounded by $J_{u,D}^2$, unlike in \cite{YY_25,DYYY25,erdHos2025zigzag}. We must therefore rely on the bound \eqref{juwo=Lklk}. However, this estimate invalidates the Duhamel-based argument in dimensions $d\in\{1,2\}$: when inserted into Duhamel’s formula, it contributes a term of size \smash{$(|1-s|/|1-t|)\cdot W^{-d}\widetilde{{\cal T}}^{\ell}_{u,D}\p{|a-b|}$}, which already breaks the first step of \eqref{awi2iks}. As a result, we must treat \smash{$\cal E^{(\cL-\cK)\times (\cL-\cK)}$} as a main term rather than an error term, and reorganize the proof around a new Grönwall-type argument.
		
	\end{remark}

	\subsection{Proof of Lemma \ref{lem:newKLK}}\label{subsec:pf_lem:newKLK}
	
	Without loss of generality, it suffices to consider the case where 
	\begin{align}\label{uwu3po2[]}
		{\cal T}_{u}\p{\ell}\ge  W^{-D}. 
	\end{align} 
	Otherwise, we can find $K\le \ell$ such that ${\cal T}_{u}\p{K} = W^{-D}$, where there is always \(\widetilde{{\cal T}}^{\ell}_{u,D}\p{|a-b|}=\widetilde{{\cal T}}^{K}_{u,D}\p{|a-b|}\) by definition \eqref{defWTTlD}. 
	For the proof of \eqref{juwo2=klk}, by the definitions \eqref{def:op_thn}, \eqref{defWTTlD}, and \eqref{defCALJ}, we have that 
	\begin{align} \label{uu2j2ois} W^d\abs{\p{\varTheta^{(2)}_{u,\boldsymbol{\sigma}}\circ({\cal L}-{\cal K})^{(2)}_{u,\boldsymbol{\sigma}}}_{{\ba}}}&\; \le \;C \wh{\cal J}^{\ell}_{u,D}\sum_c \big|\big(\Theta_{t}^{(\sig_1,\sig_2)}M^{(\sig_1,\sig_2)}\big)_{ac}\big|\cdot \widetilde{{\cal T}}^{\ell}_{u,D}\p{|c-b|} \nonumber \\
		&\; \le \;  C \wh{\cal J}^{\ell}_{u,D}\sum_{c:|c-b|\ge \ell} \big|\Theta_{t,ac}^{(\sig_1,\sig_2)}\big|\cdot {\cal T}_{u}\p{\ell} + C \wh{\cal J}^{\ell}_{u,D}\sum_{c:|c-b|< \ell} \cal T_{u}(|a-c|){\cal T}_{u}\p{|c-b|} \nonumber\\
		&\; \le \;  \frac{C}{1-u} \wh{\cal J}^{\ell}_{u,D}\br{{\cal T}_{u}\p{\ell} +\cal T_{u}(|a-b|)} \le \frac{C}{1-u} \wh{\cal J}^{\ell}_{u,D}\wT_{u,D}^{\ell}(|a-b|),
	\end{align} 
	where, in the second step, we use the estimate \eqref{prop:ThfadC}, and the facts that under the condition \eqref{uwu3po2[]}, \be\label{eq;facts}\widetilde{{\cal T}}^{\ell}_{u,D}\p{|c-b|}\le {{\cal T}} _{u}\p{\ell}\ \ \text{for}\ \ |c-b|\ge \ell,\quad \widetilde{{\cal T}}^{\ell}_{u,D}\p{|c-b|}\le {{\cal T}} _{u}\p{|c-b|}\ \ \text{for}\ \ |c-b|<\ell,
	\ee
	and in the third step, we use \eqref{eq:THETAinftinf} and \eqref{TTT2}. This concludes \eqref{juwo2=klk}.

	The proof of \eqref{juwo=Lklk} follows a similar argument. We have that for any $\bsig_1,\bsig_2\in\{+,-\}^2$,  
	\begin{align}\label{i2kk2zgg}
		&~ W^d \sum_{c_1,c_2} \left|  \p{\cal L-\cal K}_{u,\bsig_1,(a,c_1)}^{(2)} S^{(\sB)}_{c_1c_2} \p{\cal L-\cal K}^{(2)}_{u,\bsig_2,(c_2,b)}\right|\\
		\;\le \;&~  \sum_{c } \left(|{\cal L}^{(2)}_{u,\bsig_1,(a,c)}|+|{\cal K}^{(2)}_{u,\bsig_1,(a,c)}|  \right)\cdot 
		\max_{|c-b|\ge (\ell-1)_+}
		\wh{\cal J}^{\ell}_{u,D} \widetilde{\mathcal{T}}_{u,D}^{\ell}\p{|c-b|}\nonumber \\ \nonumber
		\;+ \;&~ \sum_{c } \left(|{\cal L}^{(2)}_{u,\bsig_2,(c,b)}|+|{\cal K}^{(2)}_{u,\bsig_2,(c,b)}|  \right)\cdot 
		\max_{|c-a|\ge  (\ell - 1)_+}
		\wh{\cal J}^{\ell}_{u,D} \widetilde{\mathcal{T}}_{u,D}^{\ell}\p{|a-c|} 
		\\\nonumber
		\;+ \;&~ \mathbf 1_{\ell\ge 1}\cdot W^{-d}\left(\wh{\cal J}^{\ell}_{u,D}\right)^2
		\sum_{c:|c-a|\vee |c-b| < \ell} \widetilde{\mathcal{T}}_{u,D}^{\ell}\p{|a-c|} \widetilde{\mathcal{T}}_{u,D}^{\ell}\p{|c-b|} .\nonumber
	\end{align}
	For the first term on the RHS of \eqref{i2kk2zgg}, applying the Cauchy–Schwarz inequality and Ward's identities \eqref{WI_calL} and \eqref{WI_calK}, we get that with high probability,
	\begin{align*}
		\sum_{c } \left(|{\cal L}^{(2)}_{u,\bsig_1,(a,c)}|+|{\cal K}^{(2)}_{u,\bsig_1,(a,c)}|  \right) &\le 
		\sum_{c } \p{ {\cal L}^{(2)}_{u,(-,+),(a,c)}+{\cal K}^{(2)}_{u,(-,+),(a,c)}}\\
		&=\frac{\im m+\max_a \im \tr\p{G_uE_a}}{W^{d} \eta_u} =\frac{1+\oo(1)}{W^{d} (1-u)}, 
	\end{align*}
	where we use the weak local law \eqref{Gtmwc} and the relation \eqref{eta} in the last step. With this estimate, we can bound the first term on the RHS of \eqref{i2kk2zgg} as follows with high probability:
	$$
	\frac{1+\oo(1)}{W^d(1-u)} \max_{|c-b|\ge (\ell-1)_+}
	\wh{\cal J}^{\ell}_{u,D} \widetilde{\mathcal{T}}_{u,D}^{\ell}\p{|c-b|} \le \frac{C}{W^d(1-u)} \wh{\cal J}^{\ell}_{u,D} {\mathcal{T}}_{u}\p{\ell} ,
	$$
	where we have used \eqref{eq;facts} and $\widetilde{\mathcal{T}}_{u}\p{K+1}\asymp \widetilde{\mathcal{T}}_{u}\p{K}$ for any $K\ge 0$. The second term on the RHS of \eqref{i2kk2zgg} can be bounded in the same way. For the last term on the RHS of \eqref{i2kk2zgg}, using \eqref{eq;facts}, we bound it by 
	\be\label{eq:i2kk2zgglast}\mathbf 1_{\ell\ge 1} \cdot CW^{-d}\left(\wh{\cal J}^{\ell}_{u,D}\right)^2
	\sum_{c}{\mathcal{T}}_{u}\p{|a-c|}  {\mathcal{T}}_{u}\p{|c-b|} \le \mathbf 1_{\ell\ge 1} \cdot \frac{C}{W^d(1-u)}\left(\wh{\cal J}^{\ell}_{u,D}\right)^2 \cal T_u(|a-b|),\ee
	where we use \eqref{TTT2} in the second step. Combining the above two bounds, we conclude \eqref{juwo=Lklk}.

	\subsection{Proof of Lemma \ref{lem: EMn2_N}}\label{subsec:pf_lem: EMn2_N}
	
	We only control the term \smash{\((\mathcal{E} \otimes \mathcal{E})_{t, \boldsymbol{\sigma}, \boldsymbol{a}, \boldsymbol{a}}^M\equiv (\mathcal{E} \otimes \mathcal{E})_{t, \boldsymbol{\sigma}, \boldsymbol{a}, \boldsymbol{a}}^{M,(2;1)}\)} without loss of generality. By definition \eqref{defEOTE}, we can write it as follows with $\ba(c,c')=(a,b,c',b,a,c)$ and $(\boldsymbol{\sigma}\otimes \bsig)^{(1)} = (\sigma_1, \sigma_2, \sigma_1, -{\sigma}_1, -{\sigma}_2, -{\sigma}_1)$:
	\begin{equation}\label{eq_mart-sum}
		(\mathcal{E} \otimes \mathcal{E})_{t, \boldsymbol{\sigma}, \boldsymbol{a}, \boldsymbol{a}}^M = W^d \sum_{c, c'}S^{(\sB)}_{cc'} \mathcal{L}^{(6)}_{t, (\boldsymbol{\sigma}\otimes \bsig)^{(1)}, \ba(c,c')}.
	\end{equation}
	We can represent the RHS using the following graph: 
	\begin{equation}\label{eq:2martingale_quad}
		\parbox[c]{0.3\linewidth}{
			\scalebox{0.95}{
				\begin{tikzpicture}[scale=1.2, every node/.style={font=\small}]
					
					\coordinate (a1) at (0, 1.2);
					\fill (a1) circle (2pt) node[above=2pt]{$a$};
					
					\coordinate (a2) at (2, 1.2);
					\fill (a2) circle (2pt) node[above=2pt]{$b$};
					
					\coordinate (b) at (0.75, 0.6);
					\fill (b) circle (2pt) node[left=2pt]{$c$};
					
					\coordinate (b') at (1.25, 0.6);
					\fill (b') circle (2pt) node[right=2pt]{$c'$};
					
					\coordinate (a1') at (0, 0);
					\fill (a1') circle (2pt) node[below=2pt]{$a$};
					
					\coordinate (a2') at (2, 0);
					\fill (a2') circle (2pt) node[below=2pt]{$b$};
					
					\node at (-1,0.6) {$W^d\sum_{c\sim c'}S_{cc'}^{\LK}$};
					
					\draw[thick] (a1) -- (b) node[midway, left] {$\sig_1$};
					\draw[thick] (b') -- (a2) node[midway, right] {$\sig_1$};
					\draw[thick] (b') -- (a2') node[midway, right] {$-\sig_1$};
					\draw[thick] (b) -- (a1') node[midway, left] {$-\sig_1$};
					\draw[thick] (a1) -- (a2) node[midway, above] {$\sig_2$};
					\draw[thick] (a1') -- (a2') node[midway, below] {$-\sig_2$};

			\end{tikzpicture}}
		}
	\end{equation}
	Each solid edge between two block vertices (e.g., $a$ and $b$) with charge $\sig\in\{+,-\}$ denotes a resolvent entry $(G_t(\sig))_{xy}$ or $(G_t(\sig))_{yx}$ with $x\in [a]$ and $y\in [b]$. The orientation of the graph is assumed to be clockwise.
	To illustrate the idea for the proof of \eqref{eq:MG_conclusion}, assume without loss of generality that the vertex $c$ is closer to $b$ than to $a$. Then, the graph in \eqref{eq:2martingale_quad} already contains four ``long legs": in addition to the two edges connecting $a$ and $b$, the two edges connecting $a$ and $c$ contribute a factor $\Psi_t^2(|a-c|) \ge \Psi_t^2(|a-b|/2) \gtrsim \Psi_t^2(|a-b|)$. Summing over the two remaining edges connected to $c'$ and applying Cauchy–Schwarz together with Ward’s identity yields an additional factor of $\eta_t^{-1}$.
	However, compared with the target estimate \eqref{eq:MG_conclusion}, we still miss the factor $\Psi_t(0)$. To recover it, we require a sharper treatment of the 6-$G$-loop that avoids applying Cauchy–Schwarz directly to the two edges connected to $c'$. Indeed, if we can directly perform the global summation over these two edges without taking absolute values, Ward’s identity \eqref{eq_Ward0} produces a solid edge, which contributes the desired small factor $\Psi_t(0)$.
	This is precisely achieved via the following \emph{$\contract$ inequality} for the 6-$G$-loop in \eqref{eq:2martingale_quad}, which can be regarded as a pointwise version of the more general $\contract$ inequality \eqref{u2jzooi-2} below.
	For simplicity, we abbreviate $G\equiv G_t$ in the following proof.

	\begin{lemma} [$\Contract$ inequality]\label{ygdhmsgq0}
		For any subset $\cal A\subset \Zn$, we have that 
		\begin{equation}\label{eq_sym_loop_bound}
			\sum_{c'\in \cal A}\sum_{c\sim c'} \mathcal{L}^{(6)}_{t, (\boldsymbol{\sigma}\otimes \bsig)^{(1)}, \ba(c,c')} \le \frac{1}{W^d\eta_t}\max_{c'\in \cal A}\left(\mathcal{L}^{(4)}_{t,\boldsymbol{\sigma}^{(\alt)}, (c',b,c',b)}\right)^{1/2}\cdot \max_{\sig\in\{+,-\}} \big|\mathcal{L}^{(3)}_{t, (\sigma,\sig_2,-\sig_2), (a,b,a)} \big| ,
		\end{equation}
		where $\boldsymbol{\sigma}^{(\alt)}=(\sig_1,-\sig_1,\sig_1,-\sig_1)$ is an alternating loop. By symmetry, a similar inequality holds: 
		\begin{equation}\label{eq_sym_loop_bound2}
			\sum_{c\in \cal A}\sum_{c'\sim c}\mathcal{L}^{(6)}_{t, (\boldsymbol{\sigma}\otimes \bsig)^{(1)}, \ba(c,c')} \le \frac{1}{W^d\eta_t}\max_{c\in \cal A} \left(\mathcal{L}^{(4)}_{t,\bsig^{(\alt)}, (c,a,c,a)}\right)^{1/2} \cdot \max_{\sig\in \{+,-\}}\big|\mathcal{L}^{(4)}_{t, (\sig, -{\sigma}_2, \sigma_2), (b,a,b)}\big|.
		\end{equation}
	\end{lemma}
	\begin{proof}
		
		We only prove the inequality \eqref{eq_sym_loop_bound}, while the proof of \eqref{eq_sym_loop_bound2} is similar by switching the roles of $(a,c)$ and $(b,c')$. We view the symmetric $6$-loop as a quadratic form. Fix any $c,c'\in \Zn$ with $|c-c'|\le 1$ and $z\in [c]$, we define the column vector \smash{$\psi^{z}\in \C^{W^d}$} and the $W^d\times W^d$ matrix \smash{$A^{(c')}$} as
		\begin{align*}
			\psi^{z}_{y} &=  (G(\sig_1)E_aG(\sig_2))_{zy}, \ \ \forall y\in [b], \quad \text{and}\quad 
			A^{(c')}_{y_1  y_2} = (G(\sig_1)E_{c'} G(-\sig_1))_{y_1 y_2}, \ \ \forall y_1, y_2 \in [b].
		\end{align*}
		Then, the left-hand side (LHS) of \eqref{eq_sym_loop_bound} can be written as
		\begin{equation}\label{eq_6loop_quadform}
			\mathcal{L}^{(6)}_{t, (\boldsymbol{\sigma}\otimes \bsig)^{(1)}, \ba(c,c')} = W^{-3d} \sum_{z\in[c]} \left(\psi^{z}\right)^\ast A^{(c')} \psi^{z} \le W^{-3d} \|A^{(c')}\|  \sum_{z\in[c]} \|\psi^{z}\|_2^2.
		\end{equation}
		Since the operator norm of $A$ is bounded above by the Hilbert-Schmidt norm, we have
		\begin{equation}\label{eq:L4loop}
			\|A^{(c')}\|^2 \le \sum_{y_1,y_2\in [b]} (G(\sig_1)E_{c'} G(-\sig_1))_{y_1y_2} (G (\sig_1)E_{c'}G(-\sig_1))_{y_2 y_1} =: W^{2d} \mathcal{L}^{(4)}_{t,\bsig^{(\alt)}, (c',b,c',b)}.
		\end{equation}
		On the other hand, the sum of the squared $L^2$-norms of $\psi^z$ is equal to
		\begin{equation*}
			\sum_{z\in[c]} \|\psi^{z}\|_2^2 =  \sum_{z\in\cal [c],y\in [b]} (G(\sig_1)E_a G(\sig_2))_{z y} (G(-\sig_2)E_a G(-\sig_1))_{y z} = W^{2d} \mathcal{L}^{(4)}_{t, \boldsymbol{\sigma}', (a,b,a,c)},
		\end{equation*}
		where $\boldsymbol{\sigma}' = (\sigma_1, \sigma_2, -{\sigma}_2, -{\sigma}_1)$. Plugging the previous two displays into \eqref{eq_6loop_quadform} yields 
		\begin{align}\label{eq_sym_loop_bound_old}
			\mathcal{L}^{(6)}_{t, (\boldsymbol{\sigma}\otimes \bsig)^{(1)}, \ba(c,c')} \le \left(\mathcal{L}^{(4)}_{t,\boldsymbol{\sigma}^{(\alt)}, (c',b,c',b)}\right)^{1/2} \cdot \mathcal{L}^{(4)}_{t, \boldsymbol{\sigma}', (a,b,a,c)} .
		\end{align}
		Summing this inequality over $c'\in \cal A$ and $c\sim c'$, we obtain 
		\begin{align*}
			\sum_{c'\in \cal A}\sum_{c\sim c'} \mathcal{L}^{(6)}_{t, (\boldsymbol{\sigma}\otimes \bsig)^{(1)}, \ba(c,c')} &\le \max_{c'\in \cal A}\left(\mathcal{L}^{(4)}_{t,\boldsymbol{\sigma}^{(\alt)}, (c',b,c',b)}\right)^{1/2}\cdot \sum_{c}\mathcal{L}^{(4)}_{t, \boldsymbol{\sigma}', (a,b,a,c)} ,
		\end{align*}
		where the summation over $c$ has been extended to all of $\Zn$. Finally, applying Ward’s identity \eqref{WI_calL} to the sum over $c$ yields the desired bound \eqref{eq_sym_loop_bound}. 
	\end{proof}

	Now, we are ready to complete the proof of \Cref{lem: EMn2_N}. We first prove the bound \eqref{eq:MG_conclusion}. We split the summation over $c$ into two regions according to the distances from $c$ to the fixed indices $a$ and $b$: 
	\begin{equation}\label{eq:S1+S2}
		(\mathcal{E} \otimes \mathcal{E})_{t, \boldsymbol{\sigma}, \boldsymbol{a}, \boldsymbol{a}}^M \lesssim  W^d \sum_{\substack{c'\sim c \\ |c-b|> |c-a|}}  \mathcal{L}^{(6)}_{t, (\boldsymbol{\sigma}\otimes \bsig)^{(1)}, \ba(c,c')} + W^d \sum_{\substack{c'\sim c\\ |c-b|\le |c-a|}} \mathcal{L}^{(6)}_{t, (\boldsymbol{\sigma}\otimes \bsig)^{(1)}, \ba(c,c')} =: \mathcal{S}_1 + \mathcal{S}_2.
	\end{equation}
	By symmetry, it suffices to prove the bound \eqref{eq:MG_conclusion} for the sum $\mathcal{S}_1$ using \eqref{eq_sym_loop_bound}. (To control the sum $\cal S_2$, we only need to utilize the inequality \eqref{eq_sym_loop_bound2} in place of \eqref{eq_sym_loop_bound} in the following proof.) 
	Under the conditions \(|c-c'|\le 1\) and \(|c-b| \ge |a-b|/2\), we bound each off-diagonal $G$-entry of the $4$-loop \smash{$\mathcal{L}^{(4)}_{u,\bsig^{(\alt)}, (c',b,c',b)}$} using \eqref{GijGEX}, and each diagonal $G$-entry using \eqref{GiiGEX}. This gives
	\begin{align}\label{eq:pointwise_loop}
		\p{ \mathcal{L}^{(4)}_{u,\bsig^{(\alt)}, (c',b,c',b)}}^{1/2} &\prec  \max_{\substack{|b'-b|\le 1,|c''-c'|\le 1,\\ \bsig\in \{(+,-),(-,+)\}}}\mathcal{L}^{(2)}_{u,\bsig, (b',c'')}+W^{-d}\mathbf 1_{|b-c'|\le 1}\prec \Psi_t^2(|b-c'|) \lesssim \Psi_t^2(|a-b|).
	\end{align}
	Here, the second step uses the assumption \eqref{eq:LW_assm} on the 2-loops together with condition \eqref{eq:Psi}, while the third step follows from $|c-c'|\le 1$, $|c-b|\ge |a-b|/2$, and another application of \eqref{eq:Psi}. Plugging this bound into the $\contract$ inequality \eqref{eq_sym_loop_bound}, we get that
	\begin{equation}\label{eq:reduce4to3}
		\mathcal{S}_1 \prec \frac{1}{\eta_t} \Psi_t^2\left(|a-b|\right) \max_{\sig\in\{+,-\}}\left|\mathcal{L}^{(3)}_{t, (\sig,\sig_2,-\sig_2), (a,b,a)}\right|.
	\end{equation}
	To bound the 3-$G$-loop on the RHS, we write
	\[\mathcal{L}^{(3)}_{t, (\sig,\sig_2,-\sig_2), (a,b,a)}=W^{-3d}\sum_{x,x'\in [a]}\sum_{y\in[b]}G_{xx'}(\sig)G_{x'y}(\sig_2)G_{yx}(-\sig_2), \]
	and apply an argument analogous to that in \eqref{eq:pointwise_loop}, obtaining 
	\begin{align}
		\big| \mathcal{L}^{(3)}_{t, (\sig,\sig_2,-\sig_2), (a,b,a)}\big|& \prec \Psi_t(0) \cdot \Psi_t^2(|a-b|).\label{eq:reduce4_bdd3}
	\end{align} 
	Roughly speaking, the factor $ \Psi_t^2(|a-b|)$ arises from the two long legs---namely, the entries $G_{x'y}$ and $G_{yx}$ between $[a]$ and $[b]$. The short leg $G_{xx'}$, with both indices in $[a]$, contributes a factor of $\Psi_t(0)+W^{-d}$ by \eqref{GiiGEX}. (More precisely, each off-diagonal entry of $G$ contributes a factor of $\Psi_t(0)$, while diagonal entries contribute $W^{-d}$.) Substituting \eqref{eq:reduce4_bdd3} into \eqref{eq:reduce4to3} gives the desired bound \eqref{eq:MG_conclusion}.

	In order to show the bound \eqref{eq:MG_conclusion3}, we define two cutoff scales 
	\be\label{eq:cutoff_scales}
	\ell_t^\ast = (\log W)^{3/2}\ell_t,\quad \ell_t^{\dag} = (\log W)^{7/4}\ell_t.\ee
	Then, we divide the proof into two cases. 
	First, suppose $|a-b| \le \ell_t^{\dag}$. In this case, the function $\widetilde{\mathcal{T}}^{\ell}_{t,D}\p{|a-b|}$ exhibits no exponential decay. Hence, it suffices to apply the polynomial decay bound \eqref{eq:MG_conclusion} with the profile function \smash{\(\Psi_t(r) = (W^{-d} B_{t, r})^{1/2}.\)}  
	The conclusion \eqref{eq:MG_conclusion3} then follows directly from \eqref{eq:MG_conclusion} and the facts 
	\[
	\br{\Psi_t(r)}^2 \prec  W^{-d}\widetilde{\mathcal{T}}^{\ell}_{t,D}\p{r} , \quad W^{-d}\widetilde{\mathcal{T}}^{\ell}_{t,D}\p{r} \prec \br{\Psi_t(r)}^2, \quad \forall \ 0\le r\le \ell_t^\dag.
	\]
	It remains to deal with the case 
	\be\label{eq:largea-b}|a-b| > \ell_t^{\dag} = (\log W)^{7/4}\ell_t.\ee 
	In this case, we split the summation \eqref{eq_mart-sum} into three parts as follows: 
	\begin{align}
		(\mathcal{E} \otimes \mathcal{E})_{t, \boldsymbol{\sigma}, \boldsymbol{a}, \boldsymbol{a}}^M &\lesssim W^d \bigg(\sum^\star_{\substack{|c-a| \le \ell_t^\ast \text{ or } |c'-b| > \ell}} + \sum^\star_{\substack{|c'-b| \le \ell_t^\ast \text{ or } |c-a| > \ell}} + \sum^\star_{\substack{\ell_t^\ast < |c'-b|,|c-a| \le \ell}}\bigg) \mathcal{L}^{(6)}_{t, (\boldsymbol{\sigma}\otimes \bsig)^{(1)}, \ba(c,c')}\nonumber\\
		&=: \widetilde{\mathcal{S}}_1 + \widetilde{\mathcal{S}}_2 + \widetilde{\mathcal{S}}_3.\label{eq;S123}
	\end{align}
	where $\sum^\star$ refers to the summation subject to the constraint $|c-c'|\le 1$. 
	To estimate $\widetilde{\mathcal{S}}_1$, we combine \eqref{GijGEX} with \eqref{eq:LW_assm_exp} and proceed analogously to \eqref{eq:pointwise_loop}. This yields  
	\begin{align}\label{eq:pointwise_loop2}
		\p{ \mathcal{L}^{(4)}_{u,\bsig^{(\alt)}, (c',b,c',b)}}^{1/2} \prec W^{-d}\wT_{t,D}^{\ell}(|b-c'|) \prec W^{-d}\wT_{t,D}^{\ell}(|a-b|),
	\end{align}
	where the second inequality is justified as follows: (1) if $|c'-b|>\ell$, then \smash{\(\wT_{t,D}^{\ell}(|b-c'|) = \wT_{t,D}^{\ell}(\ell)\le \wT_{t,D}^{\ell}(|a-b|);\)} (2) if $|c-a|\le \ell_t^*$, then by \eqref{eq:largea-b}, we have $|b-c'|\ge |a-b|-(\ell_t^*+1) =(1+\oo(1))|a-b|$, which implies 
	\[ \wT_{t,D}^{\ell}(|b-c'|) \le \wT_{t,D}^{\ell}\p{|a-b|-(\ell_t^*+1)} \prec \wT_{t,D}^{\ell}(|a-b|). \]
	Substituting \eqref{eq:pointwise_loop2} into the $\contract$ inequality \eqref{eq_sym_loop_bound}, we obtain  
	\begin{align}
		\widetilde{\mathcal{S}}_1 &\prec  \frac{1}{\eta_t} \left[W^{-d} \widetilde{\mathcal{T}}^{\ell}_{t,D}\p{|a-b|}\right] \max_{\sig\in\{+,-\}}\left|\mathcal{L}^{(3)}_{t, (\sig,\sig_2,-\sig_2), (a,b,a)}\right|.\label{eq_S1tilde}
	\end{align}
	Next, using \eqref{GijGEX} and \eqref{eq:LW_assm_exp}, we obtain a similar bound on the 3-$G$-loop as in \eqref{eq:reduce4_bdd3}:
	\[\left|\mathcal{L}^{(3)}_{t, (\sig,\sig_2,-\sig_2), (a,b,a)}\right|\prec (W^{-d}B_{t,0})^{1/2}\cdot W^{-d} \widetilde{\mathcal{T}}^{\ell}_{t,D}\p{|a-b|}.\]
	Plugging this into \eqref{eq_S1tilde} gives
	\begin{equation}\label{eq:boundwtS_1}
		\widetilde{\mathcal{S}}_1 \prec \frac{1}{\eta_t} \left(W^{-d}B_{t,0}\right)^{1/2}\left[W^{-d} \widetilde{\mathcal{T}}^{\ell}_{t,D}\p{|a-b|}\right]^2 , 
	\end{equation}
	which is controlled by the RHS of \eqref{eq:MG_conclusion3}. By symmetry, an identical bound holds for $\wt {\cal S}_2$.

	It remains to control the sum $\widetilde{\mathcal{S}}_3$. In this case, we bound all legs of the original $6$-loop directly using \eqref{GijGEX}. In this case, all legs of this loop have lengths (i.e., $|a-b|$, $|a-c|$, and $|c'-b|$) at least $\ell_t^\ast$. Then, instead of using the assumption \eqref{eq:LW_assm_exp}, we will bound the $2$-loops using the parameter defined in \eqref{defCALJ} as follows: for any $a,b\in \Zn$ satisfying $|a-b|\gtrsim \ell_t^\ast$,  
	\begin{equation}\label{eq_L2-J}
		\mathcal{L}^{(2)}_{t,(-,+),(a,b)} \prec \wh{\mathcal{J}}^{\ell}_{t,D} \cdot W^{-d} \widetilde{\mathcal{T}}^{\ell}_{t,D}\p{|a-b|} + \mathcal{K}^{(2)}_{t,(-,+),(a,b)} \lesssim \big(\wh{\mathcal{J}}^{\ell}_{t,D} +W^{-D}\big)\cdot W^{-d} \widetilde{\mathcal{T}}^{\ell}_{t,D}\p{|a-b|} ,
	\end{equation}
	where the second step is due to the exponential decay of the 2-$\mathcal{K}$-loop given by \eqref{prop:ThfadC}. 
	Then, using \eqref{GijGEX} to bound the 6 legs of the 6-loop, we obtain that 
	\begin{align}
		\widetilde{\mathcal{S}}_3 \prec &~W^{-2d}\p{\wh{\mathcal{J}}^{\ell}_{t,D}+W^{-D}}^3\sum_{\substack{\ell_t^\ast < |c'-b|,|c-a| \le \ell}}^\star \widetilde{\mathcal{T}}^{\ell}_{t,D}\p{|a-b|} \widetilde{\mathcal{T}}^{\ell}_{t,D}\p{|a-c|}  \widetilde{\mathcal{T}}^{\ell}_{t,D}\p{|c-b|} ,\label{eq:boundwtS31}
	\end{align}
	where we also use that 
	$\widetilde{\mathcal{T}}_{u,D}^{\ell}\p{|c'-b|}\asymp \widetilde{\mathcal{T}}_{u,D}^{\ell}\p{|c-b|}$ for $|c-c'|\le 1$. We can bound the RHS of \eqref{eq:boundwtS31} by
	\begin{align*}
		&~\left(\wh{\mathcal{J}}^{\ell}_{t,D}+W^{-D}\right)^3\cdot W^{-2d} \widetilde{\mathcal{T}}^{\ell}_{t,D}\p{|a-b|}\cdot \sum_c \br{\mathcal{T}_{t}\p{|a-c|}+W^{-D}}\br{\mathcal{T}_{t}\p{|c-b|}+W^{-D}} \\
		\lesssim &~\frac{(\wh{\mathcal{J}}^{\ell}_{t,D}+W^{-D})^3}{1-t} \cdot W^{-2d} \widetilde{\mathcal{T}}^{\ell}_{t,D}\p{|a-b|}\cdot \br{\mathcal{T}_{t}\p{|a-b|} +W^{-D}}\lesssim \frac{(\wh{\mathcal{J}}^{\ell}_{t,D}+W^{-D})^3}{\eta_t} \cdot \left(W^{-d} \widetilde{\mathcal{T}}^{\ell}_{t,D}\p{|a-b|}\right)^2,
	\end{align*}
	where we use \eqref{TTT2} and the fact $\sum_{a}\cal T_t\p{a}\lesssim (1-t)^{-1}$. This finishes the estimation of \smash{$\wt{\cal S}_3$}, and hence completes the proof of the bound \eqref{eq:MG_conclusion3}.

	\section{Steps 3 and 4: Sharp maximum estimates for \texorpdfstring{$G$}{G}-loops}\label{Sec:Steps34}

	In this section, we apply the integrated loop hierarchy \eqref{int_K-LcalE} to complete the proofs of Steps 3 and 4. The key task is to show that the second through fifth terms in \eqref{int_K-LcalE} are errors in the max-norm sense.
	
	The integrands in the second to fourth terms on the RHS of \eqref{int_K-LcalE} each involve a global sum over an index of an $\cal L$- or $(\cL-\cK)$-loop (see \eqref{DefKsimLK}, \eqref{def_ELKLK}, and \eqref{def_EwtG}). In low dimensions $d\in \{1,2\}$, these terms can be controlled directly using their max-norms, together with a factor $\ell_t^d$ that captures the range of exponential decay.
	For $d\in\{1,2\}$, the factor $\ell_t^d$ is bounded by $(1-t)^{-1}$, which incurs only a harmless logarithmic contribution upon integration in time. However, in higher dimensions $d\ge 3$, an additional factor $\ell_t^{d-2}$ arises on top of the $(1-t)^{-1}$, creating a substantial term that cannot be canceled. 
	To obtain sufficiently sharp bounds without relying on the precise pointwise decay of higher-order $G$-loops, we develop a new \emph{$\contract$ inequality} \eqref{yi2oslxj2}, derived from the Cauchy–Schwarz inequality and the Ward's identities \eqref{WI_calL} and \eqref{WI_calK}. Although technically simple, this inequality provides a powerful tool: it allows us to control the absolute sum of a higher-order $G$-loop in terms of lower-order ones in a sharp max-norm sense, and crucially produces the desired $(1-t)^{-1}$ factor without introducing any excess powers of $\ell_t$.
	Among all these terms, one remains that cannot be effectively controlled using the $\contract$ inequalities—namely, the term of the form $(\cal L-\cal K)^{(2)}\diamond(\cal L-\cal K)^{(n)}$ (recall the notation in \eqref{eq:diamond}) in \eqref{def_ELKLK}. This term is instead shown to be an error by applying the pointwise $(\cal L-\cal K)^{(2)}$-estimate \eqref{Eq:Gdecay_w}.

	Bounding the martingale term requires controlling a $(2n+2)$-loop (recall \Cref{def:CALE}). The corresponding loop structure is symmetric and involves two summation indices along the loop (see e.g., \eqref{eq:2martingale_quad}). To handle it, we derive another $\contract$ inequality \eqref{u2jzooi-2}, obtained by extending the argument in the proof of \Cref{ygdhmsgq0}. This inequality yields a non-sharp but sufficient bound for the martingale term, introducing an additional power of $W$, i.e., the factor $(W^{-d}B_{t,0})^{-1/(2p)}$ in \eqref{eq:MG_nloop}. However, this factor remains small enough for our proof for large enough $p$.

	Our overall strategy for Steps 3 and 4 can now be summarized as follows. Using the $\contract$ inequalities together with a good stability estimate \eqref{Eq:Gdecay_w} for $({\cal L}-{\cal K})^{(2)}$-loops, we can bound each term in \eqref{int_K-LcalE} either by lower-order $(\cL-\cK)$-loops or by higher-order $(\cL-\cK)$-loops with an extra small factor $W^{-\varepsilon}$. Then, with the integrated hierarchy \eqref{int_K-LcalE}, we can bootstrap from a sequence of weaker bounds to tighter ones, ultimately yielding improved control on the entire hierarchy of $(\cL -\cK)$-loops in each induction. As in \cite{YY_25, DYYY25, erdHos2025zigzag}, after $\OO(1)$ iterations, this process yields (almost) sharp max-norm bounds for all $(\cL-\cK)$-loops.

	\begin{lemma} [$\Contract$ inequality]\label{ygdhmsgq}
		For each $1\le k \le n-1$, we have the bound
		\begin{align}\label{yi2oslxj2}
			\max_{\boldsymbol{\sigma}}\sum_{a_n} |{\cal L}^{(n)}_{t, \bsig, \ba}|\le \frac 1 {W^{d}\eta_t} \left(\max_{\boldsymbol{\sigma},\mathbf{b}}  \left|{\cal L}^{(2k-1)}_{t, \bsig, \mathbf{b}}\right|\cdot \max_{\boldsymbol{\sigma},\mathbf{b}}  \left|{\cal L}^{(2n-2k-1)}_{t, \bsig, \mathbf{b} }\right|\right)^{\frac 1 2}   .
		\end{align}
		Furthermore, for any $n\ge 4$, $1\le k<j<l\le n-1$, and subsets $\cal A(a_n)\subset \Zn$ (which may depend on $a_n$) of cardinality $|\cal A(a_n)|\le C$ for a constant $C>0$, the following bound holds for any fixed $p\ge 1$: 
		\begin{align}\label{u2jzooi-2}
			\max_{\boldsymbol{\sigma}} \sum_{a_n}\sum_{a_j\in \cal A(a_n)} |{\cal L}^{(n)}_{t,\bsig, {\ba}}|\le  &~\frac{C}{W^d\eta_t} \left(\max_{\boldsymbol{\sigma},\mathbf{b}}  \left|{\cal L}^{(2k-1)}_{t, \bsig, \mathbf{b}}\right|\cdot \max_{\boldsymbol{\sigma},\mathbf{b}}  \left|{\cal L}^{(2n-2l-1)}_{t,\bsig,\mathbf{b}}\right|\right)^{\frac 1 2}   \left(\max_{\boldsymbol{\sigma},\bfb}   \left|{\cal L}^{(2(l-k)p)}_{t,\bsig, \mathbf b}\right|\right)^{\frac1{2p}} .
		\end{align}
	\end{lemma}
	
	\begin{proof}
		We first prove \eqref{yi2oslxj2}.  Applying the Cauchy-Schwarz inequality with respect to the averages over $[a_k]$ and $[a_n]$ in the loop \smash{${\cal L}^{(n)}_{t,\bsig, \ba}$}, we obtain that for $\bsig=(\sig_1,\ldots, \sig_n)$ and $\ba=(a_1,\ldots, a_n)$, 
		\begin{align}\label{eq:CS1}
			|{\cal L}^{(n)}_{t, \bsig,\ba}| 
			\; \le \;&
			\left({\cal L}^{(2k)}_{t,\boldsymbol{\sigma}_1,\ba_1 }\cdot  {\cal L}^{(2n-2k)}_{t,\bsig_2,\ba_2}\right)^{1/2},
		\end{align}
		where $\ba_1$, $\ba_2$, $\bsig_1$, and $\bsig_2$ are defined as     
		\begin{align}
			{\ba}_1= (a_1,\cdots, a_{k-1}, a_k, a_{k-1},\cdots ,a_1,a_n), &\quad 
			\boldsymbol{\sigma}_1 = (\sigma_1, \cdots, \sigma_{k}, -{\sigma_{k}},\cdots, -\sigma_1) ,\nonumber
			\\\nonumber
			{\ba}_2= (a_{n-1},\cdots, a_{k+1}, a_k, a_{k+1}, \cdots ,a_{n-1},a_n), &\quad 
			\boldsymbol{\sigma}_2 = (-{\sigma_n}, \cdots, -\sig_{k+1}, \sigma_{k+1}, \cdots ,\sigma_n).
		\end{align}  
		Note that the loops ${\cal L}^{(2k)}_{t,\boldsymbol{\sigma}_1,{\ba}_1}$ and ${\cal L}^{(2n-2k)}_{t, \boldsymbol{\sigma}_2,{\ba}_2}$ are both non-negative since $\bsig_1$ and $\bsig_2$ are symmetric.  Then, applying the Cauchy-Schwarz inequality to \eqref{eq:CS1} again, we obtain that 
		\be\label{eq;genWard0}
		\sum_{a_n} |{\cal L}^{(n)}_{t, \boldsymbol{\sigma},\ba}| \le  
		\Big(\sum_{a_n}{\cal L}^{(2k)}_{t, \boldsymbol{\sigma}_1,\ba_1}\Big)^{1/2} \Big(\sum_{a_n}{\cal L}^{(2n-2k)}_{t, \boldsymbol{\sigma}_2,\ba_2}\Big)^{1/2}.
		\ee
		Using Ward's identity \eqref{WI_calL}, we can express the $G$-loops on the RHS as  
		\be\label{eq;genWard1}
		\sum_{a_n}{\cal L}^{(2k)}_{t, \boldsymbol{\sigma}_1,\ba_1} =\frac{{\cal L}^{(2k-1)}_{t, \boldsymbol{\sigma}^+_1,\ba'_1}
			-{\cal L}^{(2k-1)}_{t, \boldsymbol{\sigma}^-_1,\ba'_1}}{2\ii W^d\eta_t} ,\quad
		\sum_{a_n}{\cal L}^{(2n-2k)}_{t, \boldsymbol{\sigma}_2,\ba_2} =\frac{{\cal L}^{(2n-2k-1)}_{t, \boldsymbol{\sigma}^+_2,\ba'_2}
			-{\cal L}^{(2n-2k-1)}_{t, \boldsymbol{\sigma}^-_2,\ba'_2}}{2\ii W^d\eta_t},
		\ee
		where $\ba'_1$, $\ba'_2$, $\boldsymbol{\sigma}^{\pm}_1$, and $\boldsymbol{\sigma}^{\pm}_2$ are defined as 
		\begin{align*}
			\ba'_1=(a_1,\cdots, a_{k-1}, a_k, a_{k-1}\cdots, a_1),\quad &\boldsymbol{\sigma}^{\pm}_1 = ( \pm,\sigma_2, \cdots, \sigma_{k}, -{\sigma_{k}},\cdots,-{\sigma_2}), \\
			\ba'_2=( a_{n-1}\cdots,   a_{k+1}, a_{k}, a_{k+1},\cdots, a_{n-1}), \quad 
			&\boldsymbol{\sigma}^{\pm}_2 = (\pm, -{\sigma_{n-1}}, \cdots, - \sigma_{k+1}, \sigma_{k+1},\cdots  \sigma_{n-1}).
		\end{align*}
		Plugging \eqref{eq;genWard1} into \eqref{eq;genWard0}, we  conclude \eqref{yi2oslxj2}. 
		
		For $\bsig=(\sigma_1,  \ldots, \sigma_n)\in \{+,-\}^n$ and \smash{$\ba = (a_1, \ldots, a_{n-1})\in (\Zn)^{n-1}$}, denote a $G$-chain of length $n$ by 
		\begin{equation}\label{defC=GEG}
			{\cal C}^{(n)}_{t, \boldsymbol{\sigma}, \ba} = \prod_{i=1}^{n-1}\left(G_t(\sigma_i) E_{a_i}\right)\cdot  G_t(\sigma_n) .
		\end{equation}
		To show \eqref{u2jzooi-2}, we split the loop ${\cal L}^{(n)}_{t, \boldsymbol{\sigma}, \ba}$ at $a_k$, $a_l$, and $a_n$, and write it as 
		$${\cal L}^{(n)}_{t, \boldsymbol{\sigma}, \ba} = W^{-3d}\sum_{x_k\in\br{a_k} }\sum_{x_l\in\br{a_l}} \sum_{x_n\in \br{a_n}} \p{{\cal C}^{(k)}_{t, \boldsymbol{\sigma}_1, \ba_1}}_{x_n x_k}\p{{\cal C}^{(l-k)}_{t, \boldsymbol{\sigma}_2, \ba_2}}_{x_kx_l}\p{{\cal C}^{(n-l)}_{t, \boldsymbol{\sigma}_3, \ba_3}}_{x_lx_n} ,$$
		where $\ba_1=(a_1,\ldots,a_{k-1})$, $\bsig_1=(\sig_1,\ldots,\sig_k)$, $\ba_2=(a_{k+1},\ldots,a_{l-1})$, $\bsig_2=(\sig_{k+1},\ldots,\sig_l)$, $\ba_3=(a_{l+1},\ldots,a_{n-1})$, and $\bsig_3=(\sig_{l+1},\ldots,\sig_n)$. We can bound the RHS by using the operator norm of the $W^d\times W^d$ matrix $A=(A_{xy}:x\in  \br{a_k}, y\in \br{a_l})$ with \smash{$A_{xy}= ({\cal C}^{(l-k)}_{t, \boldsymbol{\sigma}_2, \ba_2})_{xy}$}:
		\begin{align}
			\left|{\cal L}^{(n)}_{t, \ba, \boldsymbol{\sigma}}\right|\le 
			\frac{1}{W^{2d}}\sum_{x_n\in \br{a_n}}
			\bigg(\sum_{x_k\in \br{a_k}}\Big|\p{{\cal C}^{(k)}_{t, \boldsymbol{\sigma}_1, \ba_1}}_{x_n x_k}\Big|^2\bigg)^{1/2}
			\bigg(\sum_{x_l\in \br{a_l}}\Big|\p{{\cal C}^{(n-l)}_{t, \boldsymbol{\sigma}_3, \ba_3}}_{x_l x_n}\Big|^2\bigg)^{1/2} \cdot \frac{1}{W^d}\|A\| .\label{eq:Apsipsi}
		\end{align}
		To control the operator norm $\|A\|$, we use the simple linear algebra fact 
		\(\|A\|\le \left\{\tr[(AA^*)^p]\right\}^{\frac{1}{2p}} \)
		for any $p\in \N$, which gives that  
		\begin{align}\label{eq:Apsipsi1}
			\frac{1}{W^d}\|A\| \le  \left\{\frac{1}{W^{2pd}}\tr \br{\left(AA^*\right)^{p}}\right\}^{\frac1{2p}}
			\le \left(\max_{\boldsymbol{\sigma},\bfb}   \left|{\cal L}^{(2(l-k)p)}_{t, \boldsymbol{\sigma} , \bfb}\right|\right)^{\frac1{2p}}
		\end{align}
		Plugging \eqref{eq:Apsipsi1} into \eqref{eq:Apsipsi} and applying the Cauchy–Schwarz inequality, we obtain that 
		\begin{align*}
			\sum_{a_n}\sum_{a_j\in \cal A(a_n)} \left|{\cal L}^{(n)}_{t, \boldsymbol{\sigma}, \ba}\right|&\le C  \left(\max_{\boldsymbol{\sigma},\bfb}  \left| {\cal L}^{(2(l-k)p)}_{t, \boldsymbol{\sigma}, \bfb}\right|\right)^{\frac1{2p}} \bigg(\frac{1}{W^{2d}}\sum_{a_n}\sum_{x_n\in\br{a_n}}\sum_{x_k\in\br{a_k}}\Big|\p{{\cal C}^{(k)}_{t, \boldsymbol{\sigma}_1, \ba_1}}_{x_n x_k}\Big|^2\bigg)^{1/2} \\
			&\qquad \qquad \times \bigg(\frac{1}{W^{2d}}\sum_{a_n}\sum_{x_n\in\br{a_n}}\sum_{x_l\in\br{a_l}}\Big|\p{{\cal C}^{(n-l)}_{t, \boldsymbol{\sigma}_3, \ba_3}}_{x_l x_n}\Big|^2\bigg)^{1/2}   \\
			&\le \frac{C}{W^d\eta_t}\left(\max_{\boldsymbol{\sigma},\bfb}   \left|{\cal L}^{(2(l-k)p)}_{t, \bfb, \boldsymbol{\sigma}}\right|\right)^{\frac1{2p}} \bigg(\max_{\boldsymbol{\sigma},\bfb}  \left|{\cal L}^{(2k-1)}_{t, \boldsymbol{\sigma}, \bfb}\right|\cdot \max_{\boldsymbol{\sigma},\bfb}  \left|{\cal L}^{(2n-2l-1)}_{t, \boldsymbol{\sigma}, \bfb}\right| \bigg)^{1/2},
		\end{align*}
		where we applied Ward's identity \eqref{WI_calL} in the second step. This concludes \eqref{u2jzooi-2}.
	\end{proof}
	
	In the course of proving \Cref{ML:Kbound}, we will also establish the following bound, whose proof is deferred to \Cref{Sec:CalK}.
	
	\begin{lemma}\label{lem_wardineq_K}
		For any $n\ge 2$ and $t\in[0,1)$, we have that
		\begin{align}\label{wardineq_K}
			\max_{\boldsymbol{\sigma}\in\{+,-\}^n}\sum_{a_n} |{\cal K}^{(n)}_{t, \bsig, \ba}|\prec \frac 1 {W^{d}\eta_t} (W^{-d}B_{t,0})^{n-2}.  
		\end{align} 
	\end{lemma}

	For any $\fn\ge 1$, let $\Xi^{({\cal L})}_{t, \fn}\ge 1$ and $\Xi^{({\cal L}-{\cal K})}_{t, \fn}\ge 1$ be \emph{deterministic} control parameters for $\cL$-loops and $(\cL-\cK)$-loops of length $\fn$ such that the following bounds hold: 
	\begin{align}\label{def:XiL}
		\wh\Xi^{({\cal L})}_{t, \fn} &:= 1+ \max_{\boldsymbol{\sigma}\in \{+, -\}^\fn}\max_{\ba\in (\Zn)^\fn} \left|{\cal L}^{(\fn)}_{t, \boldsymbol{\sigma}, \ba} \right|\Big/ \left(W^{-d}B_{t,0}\right)^{\fn-1}\prec \Xi^{({\cal L})}_{t, \fn},\\
		\wh\Xi^{({\cal L}-{\cal K})}_{t, \fn} &:=1+\max_{\boldsymbol{\sigma}\in \{+, -\}^\fn}\max_{\ba\in (\Zn)^\fn} \left|({\cal L}-{\cal K})^{(\fn)}_{t, \boldsymbol{\sigma}, \ba} \right|\Big/\left(W^{-d}B_{t,0}\right)^{\fn} \prec \Xi^{({\cal L}-{\cal K})}_{t, \fn}.\label{def:XIL-K}
	\end{align}
	Using these control parameters and the $\contract$ inequalities in \Cref{ygdhmsgq,lem_wardineq_K}, we can control the terms on the RHS of equation \eqref{int_K-LcalE} as follows.

	\begin{lemma}[Estimates of $\cal E$ terms]\label{lem:SEforLn}
		In the setting of \Cref{lem:main_ind}, suppose the local laws \eqref{Gt_bound_flow} and \eqref{Gt_avgbound_flow} and the $2$-$G$-loop estimate \eqref{Eq:Gdecay_w} hold. Then, the following estimates hold for any fixed $n\ge 2$: 
		\begin{enumerate}
			
			\item The light-weight term defined in \eqref{def_EwtG} satisfies that  
			\begin{align}\label{bEwGn}
				\max_{\bsig,\ba}\abs{{\cal E}^{\Gc, (n)}_{t,\bsig,\ba}}\prec \frac{\p{W^{-d}B_{t,0}}^n}{\eta_t} \cdot\p{\wh\Xi^{({\cal L})}_{t, \fn_1}\cdot \wh\Xi^{({\cal L})}_{t, \fn_2}}^{\frac12}, 
			\end{align} 
			where $n_1=n-1$ and $n_2=n+1$ if $n$ is even, and $n_1=n_2=n$ if $n$ is odd.
			
			\item For $3\le \lenk \le n$, the $[\cK^{(\lenk)}\sim (\cL-\cK)]^{(n)}$ term defined in \eqref{DefKsimLK} satisfies that
			\begin{align}\label{eq:KsimL-K}
				\max_{\bsig,\ba} \abs{[\cK^{(\lenk)}\sim (\cL-\cK)]^{(n)}_{t,\bsig,\ba}}\prec \frac{\left(W^{-d}B_{t,0}\right)^{n}}{\eta_t}\cdot \wh\Xi^{({\cal L}-{\cal K})}_{t, n-\lenk+2}. 
			\end{align}

			\item The ${\cal E}^{(\cL-\cK)\times (\cL-\cK),(n)}$ term defined in \eqref{def_ELKLK} satisfies that 
			\begin{align}\label{eq:L-KsimL-K}
				\max_{\bsig,\ba} \abs{ {\cal E}^{(\cL-\cK)\times (\cL-\cK),(n)}_{t,\bsig,\ba}}\prec &~ 
				\frac{\left(W^{-d}B_{t,0}\right)^{n}}{\eta_t}  \sum_{n'= \lceil n/2\rceil +1}^{n-1} \wh\Xi^{({\cal L}-{\cal K})}_{t,n+2-n'} \cdot \left(\wh\Xi^{({\cal L})}_{t,n_1'} \cdot  \wh\Xi^{({\cal L})}_{t,n_2'}\right)^{1/2} \nonumber\\
				&~ +\frac{\left(W^{-d}B_{t,0}\right)^{n}}{\eta_t}  \cdot (W^{-d} {B}_{t,0})^{\frac 1 6}\wh\Xi^{({\cal L}-{\cal K})}_{t,n},
			\end{align}
			where $n'_1=n_2'=n'-1$ if $n'$ is even, and $n'_1=n'-2$ and $n_2'=n'$ if $n'$ is odd. Note that the first term on the RHS is zero when $n\in\{2,3\}$. 
			
			\item The term $\left( \cal E\otimes  \cal E \right)^{M}_{t, \boldsymbol{\sigma}, \ba,\ba'}\equiv \left( \cal E\otimes  \cal E \right)^{M,(n;k)}_{t, \boldsymbol{\sigma}, \ba,\ba'}$ defined in \eqref{defEOTE} satisfies the following estimate for each $k \in \qqq{n}$ and any fixed $p\in \N$:
			\begin{align}\label{eq:MG_nloop}
				\max_{\bsig}\max_{\ba,\ba'}\left( \cal E\otimes  \cal E \right)^{M}_{t, \boldsymbol{\sigma}, \ba,\ba'}\prec \frac{(W^{-d} {B}_{t,0})^{2n-\frac{1}{2p}}}{\eta_t}  \cdot \wh\Xi^{({\cal L})}_{t,2n-1}  \left(\wh\Xi^{({\cal L})}_{t,4p}\right)^{\frac{1}{2p}}. 
			\end{align}
		\end{enumerate}
	\end{lemma}
	\begin{proof}
		Using the inequality \eqref{yi2oslxj2} (with $k=\lceil n/2\rceil$) and the averaged local law \eqref{Gt_avgbound_flow} established in Step 2, we can bound that 
		\begin{align*}
			\abs{\mathcal{E}^{\Gc,(\fn)}_{t, \boldsymbol{\sigma}, \ba}} &\le   {W}^d \sum_{k=1}^\fn \sum_{a, b\in \Zn} \; 
			\abs{\tr\p{ \Gc_t(\sigma_k) E_{a} }}
			S^{\LK}_{ab} 
			\abs{\left( {\cut}^{(b)}_{k} \circ {\cal L}^{(\fn)}_{t, \boldsymbol{\sigma}, \ba} \right)} \\
			& \prec W^d\cdot (W^{-d}B_{t,0}) \cdot \frac{1}{W^d\eta_t} \left(\max_{\boldsymbol{\sigma},\bfb}  \left|{\cal L}^{(n_1)}_{t, \boldsymbol{\sigma}, \bfb}\right|\cdot \max_{\boldsymbol{\sigma},\bfb}  \left|{\cal L}^{(n_2)}_{t, \boldsymbol{\sigma}, \bfb}\right|\right)^{\frac 1 2},
		\end{align*}
		which concludes \eqref{bEwGn} together with the definition \eqref{def:XiL}. For \eqref{eq:KsimL-K}, with the $\cK$-loop bounds \eqref{eq:bcal_k} and \eqref{wardineq_K}, we obtain that 
		\begin{align*}
			\max_{\bsig,\ba}\abs{[\cK^{(\lenk)}\sim (\cL-\cK)]^{(n)}_{t,\bsig,\ba}} &\lesssim W^d \p{\max_{\bsig,\ba}\abs{(\mathcal{L} - \mathcal{K})^{(\fn-\lenk+2)}_{t, \bsig, \ba}}} \cdot  \max_{\bsig,\ba} \sum_{a_{\lenk}}\abs{\cK^{(\lenk)}_{t,\bsig,\ba}}    \\
			&\lesssim \frac{1}{\eta_t}\p{\max_{\bsig,\ba}\abs{(\mathcal{L} - \mathcal{K})^{(\fn-\lenk+2)}_{t, \bsig, \ba}}} \cdot (W^{-d}B_{t,0})^{\lenk-2} ,
		\end{align*} 
		which concludes \eqref{eq:KsimL-K} together with the definition \eqref{def:XIL-K}. 
		
		For the estimate \eqref{eq:L-KsimL-K}, we first consider the case $n\in\{2,3\}$. In this case, we have that  
		\begin{align*}
			\max_{\bsig,\ba} \abs{\mathcal{E}^{(\cL-\cK)\times (\cL-\cK),(n)}_{t, \boldsymbol{\sigma}, \ba}} &\lesssim  \p{\max_{\bsig,\ba}\abs{(\mathcal{L} - \mathcal{K})^{(n)}_{t, \bsig, \ba}}}\cdot \max_{\bsig,\ba}W^{d}\sum_{a_2}\abs{(\mathcal{L} - \mathcal{K})^{(2)}_{t, \bsig,\ba}} \prec \frac{(W^{-d} {B}_{t,0})^{\frac16}}{\eta_t}{\max_{\bsig,\ba}\abs{(\mathcal{L} - \mathcal{K})^{(n)}_{t, \bsig, \ba}}},
		\end{align*}
		which concludes \eqref{eq:L-KsimL-K} for $n\in\{2,3\}$ by using the definition \eqref{def:XIL-K}. Above, in the second step, we use the $2$-$G$-loop estimate \eqref{Eq:Gdecay_w} to get that 
		\be\label{eq:sumtwoloop}W^{d}\sum_{a_2}\abs{(\mathcal{L} - \mathcal{K})^{(2)}_{t, \bsig,\ba}} \prec \p{\frac{1-s}{1-u}}^{C_d}(W^{-d} {B}_{t,0})^{1/5}\cdot \sum_{a_2} {\cal T}_{t}(|a_1-a_2|)+W^{-D} \prec \frac{(W^{-d} {B}_{t,0})^{1/6}}{\eta_t} \ee
		under the condition \eqref{con_st_ind} as long as $\fc_d$ is chosen sufficiently small depending on $C_d$. 
		For general $n\ge 4$, we need to bound that 
		\begin{align}\nonumber
			\max_{\bsig,\ba}\abs{\mathcal{E}^{(\cL-\cK)\times (\cL-\cK),(\fn)}_{t, \boldsymbol{\sigma}, \ba}} &\lesssim \sum_{n'= \lceil n/2\rceil +1}^{n-1} \p{\max_{\bsig,\ba}\abs{({\cal L}-{\cal K})^{(n+2-n')}_{t,\bsig,\ba}}} \cdot \max_{\bsig,\ba}  
			W^d \sum_{a_{n'}}\abs{({\cal L}-{\cal K})^{(n')}_{t,\bsig,\ba}} \\
			&+ \p{\max_{\bsig,\ba}\abs{({\cal L}-{\cal K})^{(n)}_{t,\bsig,\ba}}} \cdot \max_{\bsig,\ba}  
			W^d \sum_{a_{2}}\abs{({\cal L}-{\cal K})^{(2)}_{t,\bsig,\ba}}.\label{eq:sumtwoloop2}
		\end{align}
		Note that the second term on the RHS can be controlled using \eqref{eq:sumtwoloop} again, while the first term on the RHS can be handled with the $\contract$ inequality \eqref{yi2oslxj2} for $\cL$-loops and the bound \eqref{wardineq_K} for $\cK$-loops:
		\begin{align*}
			W^d \sum_{a_{n'}}\abs{({\cal L}-{\cal K})^{(n')}_{t,\bsig,\ba}} &\le W^d \sum_{a_{n'}}\p{\abs{{\cal L}^{(n')}_{t,\ba,\bsig}}+\abs{{\cal K}^{(n')}_{t,\ba,\bsig}}}\\
			&\le \frac{1}{\eta_t}\min_{k=1}^{n'-1}\left(\max_{\boldsymbol{\sigma},\bfb}  \left|{\cal L}^{(2k-1)}_{t, \boldsymbol{\sigma}, \bfb}\right|\cdot \max_{\boldsymbol{\sigma},\bfb}  \left|{\cal L}^{(2n'-2k-1)}_{t, \boldsymbol{\sigma},\bfb}\right|\right)^{1/2}+ \frac{1}{\eta_t}(W^{-d}B_{t,0})^{n'-2}.
		\end{align*}
		Then, using \eqref{def:XiL} and \eqref{def:XIL-K}, and setting $k=\lceil n'/2\rceil$, we can bound the RHS of \eqref{eq:sumtwoloop2} by that of \eqref{eq:L-KsimL-K}.
		
		Finally, to establish the estimate \eqref{eq:MG_nloop}, we apply the inequality \eqref{u2jzooi-2}---with $n$ replaced by $2n+2$, and choosing $k=n$, $j=n+1$, and $l=n+2$). This yields that for any $p\ge 1$,
		\begin{align*}
			\max_{\bsig}\max_{\ba,\ba'}\abs{\left( \cal E\times  \cal E \right)^{M}_{t, \boldsymbol{\sigma}, \ba,\ba'}}&\lesssim \max_{\boldsymbol{\sigma},\ba} W^d \sum_{a_{n+1}\sim a_{2n+2}}  |{\cal L}^{(2n+2)}_{t, \boldsymbol{\sigma}, \ba}|  \lesssim \frac{1}{\eta_t} \p{\max_{\boldsymbol{\sigma},\bfb}  \left|{\cal L}^{(2n-1)}_{t, \boldsymbol{\sigma}, \bfb }\right|} \cdot  \left(\max_{\boldsymbol{\sigma},\bfb} \left|  {\cal L}^{(4p)}_{t, \boldsymbol{\sigma}, \bfb}\right|\right)^{\frac{1}{2p}}. 
		\end{align*}
		Combined with the definition \eqref{def:XiL}, this completes the proof of \eqref{eq:MG_nloop}.
	\end{proof}
	
	With the estimates from \Cref{lem:SEforLn}, we now proceed to analyze the integrated loop hierarchy in \eqref{int_K-LcalE}. 
	Without loss of generality, the proof can be divided into two cases, depending on whether (i) $1-t \ge \ilambda^2/L^{2}$, or (ii) $1-s \le \ilambda^2/L^{2}$.\footnote{If $1-t < \ilambda^2/L^{2} < 1-s$, then we can add a middle time $u=1-\ilambda^2/L^{2}$ and perform the proofs for case (i) from $s$ to $u$, and for case (ii) from $u$ to $t$.} 
	In case (i), we employ the sum-zero operator introduced in \cite{YY_25}. In contrast, case (ii) requires a new approach based on the removal of zero modes from the $\cL$-loops.
	
	\subsection{Proof of Step 3: The case \texorpdfstring{$1-t \ge \ilambda^2/L^{2}$}{1-t >= L-2}}
	
	Throughout this subsection, we always assume that $1-t \ge \ilambda^2/L^{2}$, in which case we have 
	\be\label{eq:Bu0asymp}
	B_{u,0}\asymp (\ilambda^2+|1-u|)^{-1},\quad  \ell_u\asymp \ilambda\eta_u^{-1/2}+1, \quad \forall u\in [s,t].
	\ee 
	In Step 2 of the proof of \Cref{lem:main_ind}, we have established an exponential decay of the 2-$G$-loops beyond the scale $\ell_u$ as shown in \eqref{Eq:Gdecay_w}. With \eqref{GijGEX}, we can easily extend this decay to general $G$-loops.
	
	\begin{definition}[Fast decay property]\label{Def_decay}
		Let ${\cal A}: (\Zn)^{\fn}\to \mathbb C$ be an $\fn$-dimensional tensor for a fixed $\fn\ge 2$. Given $u\in[s,t]$ and constants $\e,D>0$, we say $\cal A$ satisfies the $(u, \e, D)$-decay property if 
		\begin{equation}\label{deccA}
			\max_{i,j\in \qqq{\fn}}|a_i-a_j|\ge W^{\e}\ell_u \ \implies \ {\cal A}_{\ba}=\OO(W^{-D})\quad \text{for}\quad \ba=(a_1,a_2,\ldots, a_\fn).
		\end{equation}
	\end{definition}

	It is easy to see that the $G$-loops satisfy the $(u, \e, D)$-decay property for any constants $\e,D>0$ under the estimate \eqref{Eq:Gdecay_w} by using \Cref{lem_GbEXP}. Moreover, in \Cref{Sec:CalK}, we will present a tree representation formula for the $\cK$-loops, which is formed with the $\Theta$-propagators. Thus, the $\cK$-loops also satisfy the $(u, \e, D)$-decay property for any constants $\e,D>0$ by using \eqref{prop:ThfadC}.
	
	\begin{claim}\label{lem_decayLoop} 
		Suppose the estimates \eqref{Gt_bound_flow} and \eqref{Eq:Gdecay_w} hold. For any $\fn\ge 2$, $\bsig\in \{+,-\}^\fn$, $u\in[s,t]$, and constants $\e,D>0$, the loops \smash{${\cal L}^{(\fn)}_{u,\bsig,\ba}$} and \smash{${\cal K}^{(\fn)}_{u,\bsig,\ba}$} satisfy the $(u, \e, D)$-decay property with probability $1-\OO(W^{-D'})$ for any large constant $D'>0$. In other words, we have that
		\begin{align}\label{res_decayLK}
			\mathbb P\left( \max_{\boldsymbol{\sigma}}  \Big(\left|{\cal L}^{(\fn)}_{u,\boldsymbol{\sigma},\ba}\right|+\left|{\cal K}^{(\fn)}_{u,\boldsymbol{\sigma},\ba}\right|\Big)\cdot{\bf 1}\left(\max_{  i, j\in \qqq{\fn} } |a_i-a_j|\ge W^{\e}\ell_u \right) \ge W^{-D}\right)\le W^{-D'}. 
		\end{align}
	\end{claim}
	
	Due to the fast decay property of the $G$-loops and $\cK$-loops, when the evolution kernels act on them, we can apply the evolution kernel estimates in \Cref{lem:sum_decay}. In particular, for non-alternating loops $\bsig$, using the estimates in \Cref{lem:SEforLn} and the estimate \eqref{sum_res_2_NAL} in \Cref{lem:sum_decay}, we readily establish the following lemma.

	\begin{lemma}[Non-alternating loops]\label{lem:STOeq_NQ}  
		Under the assumptions of \Cref{lem:main_ind}, suppose $1-t\ge \ilambda^2/L^{2}$ and the estimates \eqref{Gt_bound_flow}--\eqref{Eq:Gdecay_w} hold uniformly in $u\in[s,t]$. Fix any $\fn\ge 2$ and $\bsig\in \{+,-\}^\fn$ satisfying 
		\begin{equation}\label{NALsigm}
			\sigma_k=\sigma_{k+1} \quad \text{for some}\quad  k\in\qqq{\fn}.
		\end{equation} 
		(Recall that $\sig_{\fn+1}=\sig_1$ as a convention.) 
		Then, we have the following estimate for any fixed $p\ge 1$: 
		\begin{align}\label{am;asoiuw}
			\max_{\ba\in (\Zn)^\fn} \big|({\cal L}-{\cal K})^{(\fn)}_{t, \boldsymbol{\sigma}, \ba} \big|  \big/ (W^{-d}B_{t,0})^n\prec \sup_{u\in [s,t]}
			\left( (W^{-d}B_{t,0})^{\frac 1 6}\wh\Xi^{(\cal L-\cK)}_{u, n}+ \max_{n'=2}^{\fn-1} \Xi^{({\cal L-\cal K})}_{u,n'}+ \max_{n'=\fn-1}^{\fn+1} \Xi^{({\cal L})}_{u,n'} \right)\\
			+\sup_{u\in [s,t]}
			\left(\max_{n'=\lceil n/2\rceil + 1}^{n-1} \Xi^{({\cal L-\cal K})}_{u,n+2-n'} \p{\Xi^{({\cal L})}_{u,n_1'} \Xi^{({\cal L})}_{u,n_2'}}^{\frac 12 } +(W^{-d}B_{t,0})^{-\frac{1}{4p}}(\Xi^{(\cal L)}_{u, {2\fn-1}})^{\frac 1 2}(\Xi^{(\cal L)}_{u, 4p})^{\frac{1}{4p}}\right),\nonumber
		\end{align} 
		where, as defined below \eqref{eq:L-KsimL-K}, $n'_1=n_2'=n'-1$ if $n'$ is even, and $n'_1=n'-2$ and $n_2'=n'$ if $n'$ is odd. 
	\end{lemma}
	
	\begin{proof}
		By \Cref{lem_decayLoop}, the light-weight term $\cal E_{t,\bsig,\ba}^{\Gc,(n)}$ satisfies condition \eqref{deccA0} below for any constants $\e,D>0$. Then, under the assumption \eqref{NALsigm}, using the bound \eqref{bEwGn} together with the evolution kernel estimate \eqref{sum_res_2_NAL} below, we can control the fourth term on the RHS of \eqref{int_K-LcalE} as  
		\begin{align*}
			\int_{s}^t \left(\mathcal{U}^{(\fn)}_{u, t, \boldsymbol{\sigma}} \circ \mathcal E^{\Gc,(\fn)}_{u, \boldsymbol{\sigma}}\right)_{\ba} \dd u &\prec \int_{s}^t \p{\frac{\ilambda^2+|1-u|}{\ilambda^2+|1-t|}}^{\fn-1} \frac{\p{W^{-d}B_{u,0}}^n}{\eta_u} \p{\wh\Xi^{({\cal L})}_{u, \fn_1}\cdot \wh\Xi^{({\cal L})}_{u, \fn_2}}^{\frac12} \dd u \\
			&\prec \p{W^{-d}B_{t,0}}^n\int_{s}^t\frac1{\eta_u} \max_{n'=n-1}^{n+1} \Xi^{(\cal L)}_{u,n'}\dd u ,
		\end{align*}
		where in the second step, we use $\frac{\ilambda^2+|1-u|}{\ilambda^2+|1-t|} B_{u,0} \le B_{t,0}$ and the control parameter in \eqref{def:XiL}.
		Using the bounds \eqref{eq:KsimL-K} and \eqref{eq:L-KsimL-K}, together with the assumption \eqref{Eq:L-KGt+IND} on \smash{$(\mathcal{L} - \mathcal{K})^{(\fn)}_{s, \boldsymbol{\sigma}, \ba}$} and the evolution kernel estimate  \eqref{sum_res_2_NAL}, the first three terms on the RHS of \eqref{int_K-LcalE} can be bounded in the same manner. This yields that 
		\begin{align}\label{sahwNQ}
			&(\mathcal{L} - \mathcal{K})^{(n)}_{t, \boldsymbol{\sigma}, \ba}\big/ (W^{-d}B_{t,0})^n \prec 
			1 + (W^{-d}B_{t,0})^{-n} \int_{s}^t \left(\mathcal{U}^{(\fn)}_{u, t, \boldsymbol{\sigma}} \circ  \dd  \mathcal{E}^{M,(\fn)}_{u, \boldsymbol{\sigma}}\right)_{\ba} +   \int_{s}^t \frac1{\eta_u} \max_{n'=2}^{\fn-1} \Xi^{({\cal L-\cal K})}_{u,n'}\dd u  
			\\
			&+ \int_{s}^t \frac1{\eta_u} \max_{n'=n-1}^{n+1} \Xi^{(\cal L)}_{u,n'}\dd u +  \int_{s}^t \frac1{\eta_u} \max_{n'=\lceil n/2\rceil + 1}^{n-1} \Xi^{({\cal L-\cal K})}_{u,n+2-n'} \p{\Xi^{({\cal L})}_{u,n_1'} \Xi^{({\cal L})}_{u,n_2'}}^{\frac 12 } \dd u + (W^{-d}B_{t,0})^{\frac16} \int_{s}^t \frac{1}{\eta_u}  \wh\Xi^{({\cal L-\cal K})}_{u,n} \dd u  .\nonumber
		\end{align}
		For the martingale term, by Lemma \ref{lem:DIfREP}, and using \eqref{eq:MG_nloop} together with \eqref{sum_res_2_NAL}, we obtain that
		\begin{align*} 
			(W^{-d}B_{t,0})^{-n}\int_{s}^t \left(\mathcal{U}^{(\fn)}_{u, t, \boldsymbol{\sigma}} \circ  \dd  \mathcal{E}^{M,(\fn)}_{u, \boldsymbol{\sigma}}\right)_{\ba} & \prec (W^{-d}B_{t,0})^{-n} \left\{\int_{s}^t 
			\left(\left(
			\mathcal{U}^{(\fn)}_{u,t,  \boldsymbol{\sigma}}
			\otimes 
			\mathcal{U}^{(\fn)}_{u,t,  \overline{\boldsymbol{\sigma}}}
			\right) \;\circ  \;
			\left( \cE \otimes  \cE \right)^{M,(\fn)}
			_{u, \boldsymbol{\sigma}  }
			\right)_{\ba, \ba}\dd u\right\}^{1/2} \nonumber\\
			&\prec (W^{-d}B_{t,0})^{-\frac{1}{4p}}\left\{\int_{s}^t 
			\frac{1}{\eta_u}\Xi^{(\cal L)}_{u, {2\fn-1}}\p{\Xi^{(\cal L)}_{u, 4p}}^{\frac{1}{2p}}\dd u\right\}^{1/2} 
		\end{align*} 
		for any fixed $p\ge 1$. Plugging it into \eqref{sahwNQ} and performing the integral over $u$, we conclude \eqref{am;asoiuw}.\end{proof}

	\noindent{\textbf{Alternating cases}}: It remains to deal with the case with alternating signs, where \eqref{NALsigm} does not occur:
	\begin{equation}\label{NALsig_diff}
		\sigma_k=- \sigma_{k+1}, \quad \forall \ k\in\qqq{\fn}\, .
	\end{equation}
	For this purpose, we introduce another key tool---a sum-zero operator ${\cal Q}_t$.
	
	\begin{definition}[Partial sum and sum-zero operator]\label{Def:QtPt} 
		Let ${\cal A}: (\Zn)^{\fn}\to \mathbb C$ be an $\fn$-dimensional tensor for a fixed $\fn\in \N$ with $\fn\ge 2$. Define the partial sum operator ${\cal P}$ as 
		$$
		\left( {\cal P} \circ {\cal A}\right)_{a_1}:= \sum_{a_i: i\in\qqq{2,\fn}}  {\cal A}_{\ba},\quad \ba=(a_1,\ldots, a_n). 
		$$
		We say a tensor $\cal A$ satisfies the \emph{sum-zero property} if $ {\cal P} \circ {\cal A}\equiv 0$. For $t\in[0,1)$, we define the operator $\cal Q_t$ as
		\be\label{eq:sumzero_op}
		\left({\cal Q}_{t}\circ {\cal A}\right)_{\ba}:={\cal A}_{\ba}-\left({\cal P} \circ {\cal A}\right)_{a_1}\dthn^{(\fn)}_{t, \ba},\ee
		where the tensor $\dthn^{(\fn)}_{t, \ba}$ is a mollifier satisfying \be\label{eq:suma1chi}\sum_{a_2,\ldots,a_n}\dthn^{(\fn)}_{t, \ba}\equiv 1,\quad \forall a_1\in \Zn,
		\ee
		along with the following estimates for a constant $c>0$: 
		\be\label{eq:derv_Theta} \dthn_{t,\ba}^{(\fn)} \prec (\ell_t^d)^{-(\fn-1)}\exp\p{-c\sum_{i=2}^n|a_i-a_1|/\ell_t} ,\quad 
		\max_{\ba}\|\partial_t\dthn_{t,\ba}^{(\fn)}\|_\infty \prec |1-t|^{-1}(\ell_t^d)^{-(\fn-1)}. \ee  
		The detailed form of $\dthn^{(\fn)}_{t, \ba}$ is not important to us, and we will give an example in \Cref{rmk:choosechi} below. 
		With equation \eqref{eq:suma1chi}, we see that \smash{$ {\cal P} \circ  \dthn^{(\fn)}_{t, \ba}\equiv 1$}, ${\cal P} \circ {\cal Q}_t =0$, and that for any tensor $\cal A$,
		\be\label{eq:sum0PA}
		{\cal P} \circ {\cal A} \equiv 0\ \ \implies\ \ {\cal P} \circ \left(\varTheta^{(\fn)}_{t, \boldsymbol{\sigma}}  \circ {\cal A}\right) \equiv 0,
		\ee
		where we recall that \(\varTheta^{(\fn)}_{t, \boldsymbol{\sigma}} \) is the operator defined in \Cref{DefTHUST}. In other words, if \({\cal A}\) satisfies the sum-zero property, then so does \smash{$\varTheta^{(\fn)}_{t, \boldsymbol{\sigma}}  \circ {\cal A}$}. 
	\end{definition}

	\begin{example}\label{rmk:choosechi}
		To construct the mollifier tensor \(\dthn_{t,\ba}^{(\fn)}\), we first choose a compactly supported, smooth, non-negative function \smash{$f\in C_c^{\infty}(\R^d)$} that is not identically equal to zero, and rescale it as \smash{\(f_t(a):=\ell_t^{-d}f(|a|/\ell_t)\)} for $a\in \Zn$. We then define, with an appropriate normalization constant $C_{f,t}$,  
		\begin{align}
			\dthn^{(\fn)}_{t, \ba}:=C_{f,t}\cdot \prod_{i=2}^n   f_t(a_i-a_1) ,\quad \forall \ba\in (\Zn)^n .
		\end{align}
		It is easy to see that this function satisfies the required properties in \eqref{eq:derv_Theta}. 
	\end{example}

	We will use the sum-zero operator to get improved estimates on the terms on the RHS of \eqref{int_K-LcalE} when \eqref{NALsig_diff} holds. 
	Roughly speaking, we will decompose a tensor $\cal A$ as \smash{${\cal A}_{\ba}= ({\cal Q}_{t}\circ {\cal A})_{\ba} + \left({\cal P} \circ {\cal A}\right)_{a_1}\dthn^{(\fn)}_{t, \ba}$} using \eqref{eq:sumzero_op}. For the first part, we can get an improvement by using the evolution kernel estimate \eqref{sum_res_2}, while for the second part, we can apply Ward's identity to ${\cal P} \circ {\cal A}$. 
	
	We first claim that when $\bsig$ satisfies \eqref{NALsig_diff}, the following estimate holds uniformly in $u\in[s,t]$:
	\be\label{eq:Ward_typeP}
	\left[\cal P\circ (\mathcal{L} - \mathcal{K})^{(\fn)}_{u, \boldsymbol{\sigma}}\right]_{a_1} \dthn^{(\fn)}_{u, \ba} \prec   (W^{-d}B_{u,0})^{\fn}  \Xi^{(\mathcal{L}-\mathcal{K})}_{u, \fn-1}.
	\ee
	To see this, we apply Ward's identities in \Cref{lem_WI_K} at the vertex $a_\fn$ and get  
	$$\left[\cal P\circ (\mathcal{L} - \mathcal{K})^{(\fn)}_{u, \boldsymbol{\sigma}}\right]_{a_1}= \frac{1}{2\ii W^d \eta_u}\left[\cal P\circ (\mathcal{L} - \mathcal{K})^{(\fn-1)}_{u, \wh\bsig^{(+,\fn)}}-\cal P\circ (\mathcal{L} - \mathcal{K})^{(\fn-1)}_{u, \wh\bsig^{(-,\fn)}}\right]_{a_1}.$$
	By \eqref{def:XIL-K}, the two $(\cL-\cK)^{(\fn-1)}$-loops on the RHS are controlled by 
	$$(\mathcal{L} - \mathcal{K})^{(\fn-1)}_{u, \wh\bsig^{(\pm,\fn)},\wh \ba^{(n)}} \prec (W^{-d}B_{u,0})^{\fn-1}\Xi^{(\mathcal{L}-\mathcal{K})}_{u, \fn-1}.$$ Moreover, due to the fast decay property of the $(\cal L-\cal K)$-loops, the partial sums over the remaining $(\fn-2)$ vertices lead to an additional \smash{$\ell_u^{d(\fn-2)}$} factor up to a negligible error $\OO(W^{-D})$. This leads to
	\begin{align}\label{jywiiwsoks}
		\left[\cal P\circ (\mathcal{L} - \mathcal{K})^{(\fn)}_{u, \boldsymbol{\sigma}}\right]_{a_1}  \prec (W^d\eta_u)^{-1}(\ell_u^{d})^{n-2}\cdot (W^{-d}B_{u,0})^{n-1}   \Xi^{(\mathcal{L}-\mathcal{K})}_{u, \fn-1}.
	\end{align}
	Together with \eqref{eq:derv_Theta}, it implies that 
	\begin{align*}
		\left[\cal P\circ (\mathcal{L} - \mathcal{K})^{(\fn)}_{u, \boldsymbol{\sigma}}\right]_{a_1} \dthn^{(\fn)}_{u, \ba} \prec (W^d\ell_u^d\eta_u)^{-1}\cdot (W^{-d}B_{u,0})^{n-1}\Xi^{(\mathcal{L}-\mathcal{K})}_{u, \fn-1} \lesssim (W^{-d}B_{u,0})^{n}\Xi^{(\mathcal{L}-\mathcal{K})}_{u, \fn-1} 
	\end{align*}
	uniformly in $u\in [s,t]$, where, in the second step, we use $(\ell_u^d\eta_u)^{-1}\lesssim B_{u,0}$ by \eqref{eq:Bu0asymp}.

	It remains to control ${\cal Q}_t \circ (\mathcal{L} - \mathcal{K})^{(\fn)}_{t, \boldsymbol{\sigma}, \ba}$. For this purpose, we need the following claim on the $(\infty\to \infty)$-norm of the sum-zero operator, which follows easily from \Cref{Def:QtPt} and the estimate \eqref{eq:derv_Theta}.  
	
	\begin{claim}\label{lem_+Q} 
		Let ${\cal A}: (\Zn)^{\fn}\to \mathbb C$ be an $\fn$-dimensional tensor for a fixed $\fn\ge 2$. If $\cal A$ satisfies the $(t, \e, D)$-decay property, then we have that 
		\begin{align}\label{normQA}
			\left\|{\cal Q}_t\circ \cal A\right\|_{\infty} \le W^{C_\fn \e} \|\cal A\|_\infty +W^{-D+C_\fn}
		\end{align}
		for a constant $C_\fn$ that does not depend on $\e$ or $D$. Furthermore, if $\|\cal A\|_\infty\le W^C$ for a constant $C>0$, then $\cal A_\fa-({\cal Q}_t\circ \cal A)_{\ba}=\left({\cal P} \circ {\cal A}\right)_{a_1}\dthn^{(\fn)}_{t, \ba}$ satisfies the $(t, \e', D')$-decay property for any constants $\e',D'>0$. 
	\end{claim}

	We derive from equation \eqref{eq_L-Keee} that 
	\begin{align} 
		\dd{\cal Q}_t \circ (\mathcal{L} - \mathcal{K})^{(\fn)}_{t, \boldsymbol{\sigma}, \ba} 
		&= {\cal Q}_t \circ \left[{\cK}^{(2)} \sim (\mathcal{L} - \mathcal{K})\right]^{(\fn)}_{t, \boldsymbol{\sigma}, \ba} 
		+ \sum_{l_\mathcal{K}=3}^\fn {\cal Q}_t \circ \Big[{\cK}^{(\lenk)}\sim (\mathcal{L} - \mathcal{K})\Big]^{(\fn)}_{t, \boldsymbol{\sigma}, \ba} + {\cal Q}_t \circ \mathcal{E}^{(\cL-\cK)\times(\cL-\cK),(\fn)}_{t, \boldsymbol{\sigma}, \ba}\dd t \nonumber
		\\
		& + {\cal Q}_t \circ \mathcal{E}^{\Gc,(\fn)}_{t, \boldsymbol{\sigma}, \ba} \dd t
		+ {\cal Q}_t \circ \dd\mathcal{E}^{M,(\fn)}_{t, \boldsymbol{\sigma}, \ba} 
		- \left[{\cal P} \circ \left(\mathcal{L} - \mathcal{K}\right)^{(\fn)}_{t,\bsig}\right]_{a_1} \big(\partial_t \dthn_{t,\ba}^{(\fn)}\big) \dd t.\label{zjuii1}
	\end{align} 
	Recalling \Cref{DefTHUST}, we can rewrite the first term on the RHS as 
	\begin{align}\label{zjuii2}
		{\cal Q}_t \circ \Big[{\cK}^{(2)}\sim (\mathcal{L} - \mathcal{K})\Big]_{t, \boldsymbol{\sigma},\ba}^{(\fn)}
		& = \varTheta^{(\fn)}_{t, \boldsymbol{\sigma}} \circ \left[{\cal Q}_t \circ (\mathcal{L} - \mathcal{K})_{t, \boldsymbol{\sigma}}^{(\fn)}\right]_{\ba} 
		+ \left[{\cal Q}_t , \varTheta^{(\fn)}_{t, \boldsymbol{\sigma}} \right] \circ (\mathcal{L} - \mathcal{K})_{t, \boldsymbol{\sigma}, \ba}^{(\fn)},
	\end{align}
	where  $[{\cal Q}_t , \varTheta^{(\fn)}_{t, \boldsymbol{\sigma}} ] = {\cal Q}_t \circ \varTheta^{(\fn)}_{t, \boldsymbol{\sigma}} - \varTheta^{(\fn)}_{t, \boldsymbol{\sigma}}\circ \cal Q_t$ denotes the commutator
	between ${\cal Q}_t $ and $\varTheta^{(\fn)}_{t, \boldsymbol{\sigma}}$. Since \({\cal P} \circ {\cal Q}_t = 0\), we notice that the first 5 terms on the RHS of \eqref{zjuii1} satisfy the sum zero property. 
	Since \smash{${\cal P}  \circ \dthn^{(\fn)}_{t,  \ba}\equiv 1$}, we have \smash{${\cal P} \circ \big(\partial_t \dthn_{t,\ba}^{(\fn)}\big)=0$}, so the last term on the RHS of \eqref{zjuii1} also satisfies the sum zero property.
	Next, due to \eqref{eq:sum0PA}, the first term on the RHS of \eqref{zjuii2} also satisfies the sum-zero property. 
	Finally, since the LHS of \eqref{zjuii2} has the sum-zero property, the second term on the RHS of \eqref{zjuii2} also satisfies the sum-zero property.

	With Duhamel's principle, we can derive from \eqref{zjuii1} and \eqref{zjuii2} the following counterpart of \eqref{int_K-LcalE}:
	\begin{align}\label{int_K-L+Q}
		{\cal Q}_t\circ   (\mathcal{L} - \mathcal{K})^{(n)}_{t, \boldsymbol{\sigma}, \ba} &= 
		\left(\mathcal{U}^{(n)}_{s, t, \boldsymbol{\sigma}} \circ  {\cal Q}_s\circ  {\cal B}_0(s)\right)_{\ba} +   \int_{s}^t \left(\mathcal{U}^{(n)}_{u, t, \boldsymbol{\sigma}} \circ  {\cal Q}_u\circ \sum_{k=1}^5{\cal B}_k(u) \right)_{\ba} \dd u \nonumber\\
		&+  \int_{s}^t \left(\mathcal{U}^{(n)}_{u, t, \boldsymbol{\sigma}} \circ  {\cal Q}_u\circ \dd\mathcal{E}^{M,(n)}_{u, \boldsymbol{\sigma}}\right)_{\ba} ,
	\end{align}
	where the tensors $\cal B_i$ for $i\in \qqq{0,5}$ are defined as follows:
	\begin{align*}
		&{\cal B}_0(s):=(\mathcal{L} - \mathcal{K})^{(n)}_{s, \boldsymbol{\sigma}},\quad {\cal B}_1(u):=\sum_{\lenk=3}^n \Big[\mathcal{K}^{(\lenk)} \sim (\mathcal{L} - \mathcal{K})\Big]^{(n)}_{u, \boldsymbol{\sigma}},\quad {\cal B}_2(u):=\mathcal{E}^{(\mathcal{L} - \mathcal{K}) \times (\mathcal{L} - \mathcal{K}),(n)}_{u, \boldsymbol{\sigma}},\\
		&{\cal B}_3(u):=\mathcal{E}^{\Gc,(n)}_{u, \boldsymbol{\sigma}},\quad {\cal B}_4(u):= \big[{\cal Q}_u , \varTheta^{(n)}_{u,\boldsymbol{\sigma}} \big]\circ (\mathcal{L} - \mathcal{K})^{(n)}_{u, \boldsymbol{\sigma} },\quad {\cal B}_5(u):= -\br{{\cal P} \circ\left(\mathcal{L} - \mathcal{K}\right)^{(n)}_{u, \boldsymbol{\sigma}}}\cdot \partial_t  \dthn^{(n)}_{u } . 
	\end{align*}
	We can control the terms on the RHS of \eqref{int_K-L+Q} by applying the improved evolution kernel estimate \eqref{sum_res_2}, which exploits both the sum-zero and fast-decay properties of the terms on the RHS of \eqref{zjuii1}. Combining this with \eqref{eq:Ward_typeP} and \Cref{lem:STOeq_NQ} for the non-alternating case, we obtain the following inductive bootstrap bounds for the $\Xi$-parameters. The proof proceeds by estimating the RHS of \eqref{int_K-L+Q} 
	in a manner analogous to the proof of \Cref{lem:STOeq_NQ}, using the bounds established in \Cref{lem:SEforLn} together with straightforward controls of the $\mathcal{B}_4$ and $\mathcal{B}_5$ terms, based on the properties in \eqref{eq:derv_Theta}. Hence, we defer the detailed proof to \Cref{subsec:support}.

	\begin{lemma}[Inductive bootstrap bound for $\Xi$-parameters]
		\label{lem:STOeq_Qt} 
		Under the assumptions of \Cref{lem:main_ind}, suppose $1-t\ge \ilambda^2/L^{2}$ and the estimates \eqref{Gt_bound_flow}--\eqref{Eq:Gdecay_w} hold uniformly in $u\in[s,t]$. Then, for any fixed $\fn\ge 2$ and $p\ge 1$, the following bound holds uniformly in $u\in[s,t]$: 
		\begin{align}\label{am;asoi222}
			\sup_{v\in[s,u]} \wh\Xi^{({\cal L-\cal K})}_{v,\fn} &\prec \sup_{v\in[s,u]} \left(\p{W^{-d}B_{u,0}}^{-\frac{1}{4p}}(\Xi^{(\cal L)}_{v, {2\fn-1}})^{\frac 1 2}(\Xi^{(\cal L)}_{v, 4p})^{\frac{1}{4p}}\right)\\
			&+ \sup_{v\in[s,u]} \left(\max_{n'=1}^{\fn-1} \Xi^{({\cal L-\cal K})}_{v,n'}+ \max_{n'=\fn-1}^{\fn+1} \Xi^{({\cal L})}_{v,n'} + \max_{n'=\lceil n/2\rceil + 1}^{n-1} \Xi^{({\cal L-\cal K})}_{v,n+2-n'} \p{\Xi^{({\cal L})}_{v,n_1'} \Xi^{({\cal L})}_{v,n_2'}}^{\frac 12 } \right).\nonumber
		\end{align}
	\end{lemma}

	Now, similar to the argument in Section 5.6 of \cite{YY_25}, we will iterate the bootstrap bound \eqref{am;asoi222} to obtain the sharp $L^\infty$-bound \eqref{Eq:LGxb} on the $G$-loops in the regime $1-t\ge \ilambda^2/L^{2}$. That is, for any fixed $\fn\in \N$, 
	\be\label{eq:Xiiter}
	\sup_{u\in [s,t]} \wh\Xi^{(\cL)}_{u,\fn} \prec 1.
	\ee
	Observe that when $1-s>\ilambda^2$, we have \(B_{u,0}\asymp |1-u|^{-1}\) for all $u\in [s,1-\ilambda^2]$. Consequently, the $G$-loop bound \eqref{Eq:LGxb} follows directly from \eqref{lRB1}. 
	Hence, in the remainder of the proof, it suffices to consider the case $1-t\le 1-s \le \ilambda^2$, where \(W^{-d}B_{u,0}\asymp (\ilambda^2W^d)^{-1}\) for all \(u\in [s,t].\) 
	First, the averaged local law \eqref{Gt_avgbound_flow} gives that \smash{$\wh\Xi_{u,1}^{(\cL-\cK)}\prec 1=:\Xi_{u,1}^{(\cL-\cK)}$} uniformly for $u\in[s,t]$. Second, by the $\cK$-loop bound \eqref{eq:bcal_k}, we have 
	\begin{align}\label{rela_XILXILK}
		\wh\Xi^{(\cal L)}_{u,\fn}\prec 1+(\ilambda^2W^{d})^{-1} \wh\Xi^{({\cal L-\cal K})}_{u, \fn},\quad \quad \wh\Xi^{({\cal L-\cal K})}_{u, \fn}\prec  (\ilambda^2W^{d})\left(\wh\Xi^{(\cal L)}_{u,\fn}+1\right),\quad \text{for}\quad n\ge 2 .
	\end{align}
	Moreover, the a priori $G$-loop bound \eqref{lRB1} provides the following initial estimates, uniform in $u\in[s,t]$:
	\begin{align}\label{sef8w483r324}
		\wh\Xi^{(\cal L)}_{u,\fn}\prec (\eta_s/\eta_u)^{\fn-1}, \quad 
		\wh\Xi^{({\cal L-\cK})}_{u, \fn}\prec (\eta_s/\eta_u)^{\fn-1}\cdot (\ilambda^2W^{d}).
	\end{align}
	We then introduce the control parameter
	\begin{equation}\label{adsyzz0s8d6}
		\Psi(\fn,k;s,t):= (\ilambda^2W^{d})^{3/4} +\p{{\eta_s}/{\eta_t}}^{\fn-1}(\ilambda^2W^{d})^{1-k/8}.
	\end{equation} 
	The iteration will proceed simultaneously in the indices $n$ and $k$. The outcome of each step is summarized in the following lemma, whose proof—being analogous to that of (5.109) in \cite{YY_25}—is deferred to \Cref{subsec:support}.

	\begin{lemma}\label{lem:iterations}
		In the setting of \Cref{lem:main_ind}, suppose \eqref{lRB1}--\eqref{Eq:Gdecay_w} and \eqref{am;asoi222} hold uniformly for all $u\in[s,t]$ with $ 1-s \le \ilambda^2$. Fix any $(\fn,k)\in \N^2$ with $\fn\ge 2$ and $k\ge 1$. Assume that, uniformly for $u\in[s,t]$, 
		\be\label{eq:iteration_induc}
		\sup_{v\in[s,u]}\wh \Xi^{({\cal L-\cK})}_{v,r}\prec   \Psi(r,l;s,u)\ee
		holds for all index pairs $(r,l)\in \{(r, k): 2\le r\le n-1\}\cup \{(r, k-1): 2\le r\le n+2\}.$ 
		Then, \eqref{eq:iteration_induc} also holds for $(r,l)=(n, k)$. 
	\end{lemma}
	
	Roughly speaking, this lemma states if we have already established a ``good" bound for all shorter $G$-loops of length $r\le \fn-1$, along with a ``weaker" bound for $G$-loops of length $r\le \fn+2$, then we can derive the ``good" bound for all $G$-loops of length $\fn$. 
	Using \Cref{lem:iterations}, we apply a simple iterative argument to complete the proof of \eqref{Eq:LGxb} in Step 3 for the case $1-t\ge \ilambda^2/L^{2}$. 
	
	\begin{proof}[\bf Proof of \eqref{Eq:LGxb} when $1-t\ge \ilambda^2/L^{2}$]
		As discussed above, it remains to deal with the case $1-s \le \ilambda^2$. By \eqref{sef8w483r324}, we initially have a weak bound for $G$-loops of arbitrarily large lengths, meaning that \eqref{eq:iteration_induc} holds with $l=0$ for every fixed $r\in \N$. Applying \Cref{lem:iterations} once, we obtain a slightly improved bound \eqref{eq:iteration_induc} for $r=1$ and $l=1$. Then, continuing the iteration in $r$ while keeping $l=1$ fixed, we establish the bound \eqref{eq:iteration_induc} for each fixed $r\in \N$ with $l=1$. 
		Next, applying the iteration in \Cref{lem:iterations} again yields an even stronger bound \eqref{eq:iteration_induc} with $r=2$ and $l=2$. Repeating the iteration in $r$ with $l=2$ fixed, we further establish the bound \eqref{eq:iteration_induc} for every fixed $r\in \N$ with $l=2$. This process continues, progressively improving the bound to \eqref{eq:iteration_induc} for each fixed $r\in \N$ with $l=3$, and so forth.
		
		For any given $(\fn,k)\in \N^2$, by repeating the above procedure for $\OO(1)$ times, we conclude that the estimate \eqref{eq:iteration_induc} holds for $(r,l)=(n,k)$.  
		In particular, if we choose $\fc_d$ in condition \eqref{con_st_ind} sufficiently small and take $k$ large enough so that $(\eta_s/\eta_t)^{\fn-1}\cdot  (\ilambda^2W^{d})^{1-k/8} \ll (\ilambda^2W^{d})^{3/4}$, then we get from \eqref{eq:iteration_induc} that 
		\begin{equation}\label{eq:iterative_pf_largeeta}\sup_{u\in[s,t]}\wh\Xi^{({\cal L-\cK})}_{u,\fn}\prec \Psi(\fn,k;s,t) \lesssim (\ilambda^2W^{d})^{3/4}.\end{equation}
		Together with \eqref{rela_XILXILK}, this implies the estimate \eqref{eq:Xiiter}, 
		which completes the proof of the bound \eqref{Eq:LGxb}.
	\end{proof}

	\subsection{Proof of Step 3: The case \texorpdfstring{$1-s \le \ilambda^2/L^{2}$}{1-s <= L-2}}\label{sec:1-s<L-2}

	In this setting, we have $\ell_u\equiv L$ for all $u\in[s,t]$. Here, a key difference from the $d = 2$ case in \cite{DYYY25} arises in the intermediate regime $\ilambda^2/L^{d} \leq 1 - t \leq \ilambda^2/L^{2}$, where the polynomial decay mode $\ilambda^{-2} / (|a - b| + 1)^{d - 2}$ and the zero mode $(L^d |1 - t|)^{-1}$ mix in the quantity $B_{t, |a - b|}$ (recall \eqref{eq_B_param}).
	In this regime, we lose the fast decay property for $G$-loops that is required by \Cref{lem:sum_decay} below, and instead the weaker evolution kernel estimate \eqref{sum_res_Ndecay} becomes relevant. This estimate introduces a factor of $(1 - s)/(1 - t)$, which transforms the zero mode $(L^d |1 - s|)^{-1}$ at time $s$ into the zero mode $(L^d |1 - t|)^{-1}$ at time $t$. However, this transformation breaks the polynomial decay mode, potentially destabilizing the tree approximation at time $t$.
	To address this difficulty, we observe that the two modes actually propagate independently along the flow. Motivated by this, we introduce a zero-mode-removing operator, which decomposes the $(\cL - \cK)$-loops into two parts: (1) the zero-mode components, which can be controlled using Ward’s identities, and (2) the zero-mode-free components, whose evolution kernel satisfies sharper estimates thanks to the bound \eqref{prop:ThfadC0} (see \eqref{sum_res_Ndecay_nonzero} below).

	\begin{definition}[Zero-mode-removing operators]\label{def;zero_mode_remove}
		Let ${\cal A}: (\Zn)^{\fn}\to \mathbb C$ be an arbitrary $\fn$-dimensional tensor for a fixed $\fn\ge 1$. Define $P^{(i)}$ as the \emph{partial averaging operator} with respect to the $i$-th index of the loop: 
		$$
		(P^{(i)}\circ {\cal A})_{(a_1,\ldots, a_{i-1},b,a_{i+1},\ldots,a_n)}=L^{-d} \sum_{a_i}{\cal A}_{(a_1,\ldots, a_{i-1},a_i,a_{i+1},\ldots,a_n)},\quad \forall i\in \qqq{n}, \ b\in \Zn.$$
		Correspondingly, we define the \emph{zero-mode-removing} operator as $Q^{(i)}:= I-P^{(i)}.$ Furthermore, for any subset $A\subset \qqq{n}$, we denote 
		$$P^{(A)}:=\prod_{i\in A} P^{(i)},\quad \text{and}\quad  
		Q^{(A)}:=\prod_{i\in A} Q^{(i)}.
		$$
		As a convention, when $A=\emptyset$, we define $P^{(\emptyset)}$ and $Q^{(\emptyset)}$ as the identity operator. 
		Note that for any subsets $A$ and $A'$, the operators $P^{(A)}$, $P^{(A')}$, $Q^{(A)}$, and $Q^{(A')}$ all commute with each other. 
	\end{definition}
	
	By definition, the $(\infty\to \infty)$-operator norm of $Q^{(i)}$ is trivially bounded:
	\begin{align}\label{normQA2}
		\big\|Q^{(i)}\circ \cal A\big\|_{\infty} \le 2\|\cal A\|_\infty .
	\end{align}
	Given any $\bsig\in \{+,-\}^n$, we denote
	\( I_{\mathrm{diff}}(\bsig):=\{i\in\qqq{n}:\sig_i\ne\sig_{i+1}\},\)
	where we again adopt the cyclic convention $\sig_{n+1}=\sig_1$. Given any $\cL$- or $\cK$-loop, we can express it as a linear combination of $(Q^{(A)}\circ \cL)_{t,\bsig,\ba}$ or $(Q^{(A)}\circ \cK)_{t,\bsig,\ba}$ loops with $A\supset I_{\mathrm{diff}}(\bsig)$, by repeatedly applying \eqref{WI_calL} and \eqref{WI_calK}. 
	This is summarized in \Cref{lem: newPQ}, whose proof is deferred to \Cref{subsec:support} below.

	\begin{lemma}\label{lem: newPQ}
		For any fixed $n\in \N$, $\bsig\in \{+,-\}^n$, $\ba\in(\Zn)^n$, and subset $A\subset\qqq{n}$, we have the expansion: 
		\begin{align}\label{yurenAL}
			\left(Q^{(A)}\circ {\cal L}^{(n)}\right)_{t,\boldsymbol{\sigma},\ba}=\left(Q^{(A_n)}\circ {\cal L}^{(n)}\right)_{t,\boldsymbol{\sigma},\ba}+\sum_\al \frac{\xi_{\al}}{(2\ii N\eta_t)^{n-k_\al}} \left( Q^{(A_\al)}\circ {\cal L}^{(k_\al)}\right)_{t,\boldsymbol{\sigma}_{\al},\ba_{\al}},
		\end{align}
		where $A_n:=A\cup I_{\mathrm{diff}}(\bsig)$, $\{\al\}$ denotes a collection of $\OO(1)$ many labels, $1\le k_\al \le n-1$, $\ba_{\al}\in (\Zn)^{k_\al}$ consists of a subset of indices in $\ba$, $\bsig_\al\in \{+,-\}^{k_\al}$, $\qqq{k_\al}\supset A_\al \supset I_{\mathrm{diff}}(\bsig_\al)$, and $\xi_{\al}$ denote some deterministic and integer-valued coefficients of order $\OO(1)$. The same expansion also holds for the $\cK$-loop:
		\begin{align}\label{yurenAK}
			\left(Q^{(A)}\circ {\cal K}^{(n)}\right)_{t,\boldsymbol{\sigma},\ba}=\left(Q^{(A_n)}\circ {\cal K}^{(n)}\right)_{t,\boldsymbol{\sigma},\ba}+ \sum_\al \frac{\xi_{\al}}{(2\ii N\eta_t)^{n-k_\al}} \left( Q^{(A_\al )}\circ {\cal K}^{(k_\al)}\right)_{t,\boldsymbol{\sigma}_\al,\ba_\al }.
		\end{align}
		As a special case that is of particular importance to us, the above expansions hold for $A=\emptyset$. 
	\end{lemma}

	Given any $n\ge 2$ and $A\subset\qqq{n}$, we derive from equation \eqref{eq_L-Keee} that 
	\begin{align} 
		\dd Q^{(A)} \circ (\mathcal{L} - \mathcal{K})^{(\fn)}_{t, \boldsymbol{\sigma}, \ba} 
		&= Q^{(A)} \circ \left[\varTheta_{t,\bsig}^{(n)} \circ (\mathcal{L} - \mathcal{K})^{(n)}_{t, \boldsymbol{\sigma}}\right]_{\ba} \dd t
		+ \sum_{l_\mathcal{K}=3}^\fn Q^{(A)} \circ \Big[{\cK}^{(\lenk)}\sim (\mathcal{L} - \mathcal{K})\Big]^{(\fn)}_{t, \boldsymbol{\sigma}, \ba}\dd t \nonumber
		\\
		&+ Q^{(A)} \circ \mathcal{E}^{(\cL-\cK)\times(\cL-\cK),(\fn)}_{t, \boldsymbol{\sigma}, \ba}\dd t  + Q^{(A)} \circ \mathcal{E}^{\Gc,(\fn)}_{t, \boldsymbol{\sigma}, \ba} \dd t
		+ Q^{(A)} \circ \dd\mathcal{E}^{M,(\fn)}_{t, \boldsymbol{\sigma}, \ba} .\label{eq_L-Keee_nonzeromode}
	\end{align} 
	Due to the translation invariance of $S^{(\sB)}$ and $M^{(\sig_i,\sig_{i+1})}$, they both have an eigenvector $\mathbf e$ with $\mathbf e(x)\equiv L^{-d/2}$ for all $x\in \Zn$. As a consequence, the operators $\varTheta^{(n)}$ and $\mathcal{U}^{(\fn)}$ defined in \eqref{def:op_thn} and \eqref{def_Ustz} commute with the operators $P_i$, and hence also commute with the operators $Q^{(A)}$ for all $A\subset \qqq{n}$. Hence, the first term on the RHS of \eqref{eq_L-Keee_nonzeromode} can be written as 
	\[\left[ \varTheta_{t,\bsig}^{(n)} \circ Q^{(A)} \circ (\mathcal{L} - \mathcal{K})^{(n)}_{t, \boldsymbol{\sigma}}\right]_{\ba}.\]
	Then, applying Duhamel's principle to the equation \eqref{eq_L-Keee_nonzeromode}, we derive the following integral equation: 
	\begin{align}\label{iisuwjyys}
		Q^{(A )}\circ (\mathcal{L} - \mathcal{K})^{(n)}_{t, \boldsymbol{\sigma}, \ba} 
		&=    \left(Q^{(A)}\circ \mathcal{U}^{(n)}_{s, t, \boldsymbol{\sigma}} \circ  {\cal B}_0(s)\right)_{\ba} +   \int_{s}^t \left( Q^{(A)}\circ\mathcal{U}^{(n)}_{u, t, \boldsymbol{\sigma}} \circ \sum_{k=1}^3{\cal B}_k(u) \right)_{\ba} \dd u \nonumber\\
		& +  \int_{s}^t \left(Q^{(A)}\circ\mathcal{U}^{(n)}_{u, t, \boldsymbol{\sigma}} \circ   \dd\mathcal{E}^{M,(n)}_{u, \boldsymbol{\sigma}}\right)_{\ba} , 
	\end{align}
	where we use the notations in \eqref{int_K-L+Q}. 
	If $A\supset I_{\mathrm{diff}}(\bsig)$, then the new kernel \smash{$Q^{(A)}\circ \mathcal{U}^{(n)}_{s, t, \boldsymbol{\sigma}}$} has a better $(\infty\to \infty)$-norm estimate (as stated in \Cref{lem:sum_decay_nonzero} below) than the original kernel \smash{$\mathcal{U}^{(n)}_{s, t, \boldsymbol{\sigma}}$} when $1-s\le \ilambda^2/L^{2}$. In fact, corresponding to each $i\in I_{\mathrm{diff}}(\bsig)$, the $(\infty\to \infty)$-norm estimate of \smash{$\mathcal{U}^{(n)}_{s, t, \boldsymbol{\sigma}}$} will be weaker by an $\eta_s/\eta_t$ factor.  
	
	Combining the expansions \eqref{yurenAL} and \eqref{yurenAK} (for the case $A=\emptyset$), the equation \eqref{iisuwjyys}, the evolution kernel estimate in \Cref{lem:sum_decay_nonzero}, and the estimates in \Cref{lem:SEforLn}, we can establish a similar result as in \Cref{lem:STOeq_Qt}. We postpone the proof of \Cref{lem:STOeq_Qt_nonzero} to \Cref{subsec:support} below.
	
	\begin{lemma}\label{lem:STOeq_Qt_nonzero} 
		Under the assumptions of \Cref{lem:main_ind}, suppose $1-s\le \ilambda^2/L^{2}$ and the estimates \eqref{Gt_bound_flow}, \eqref{Gt_avgbound_flow}, and \eqref{Eq:Gdecay_w} hold uniformly in $u\in[s,t]$. 
		Then, for any fixed $\fn\ge 2$ and $p\ge 1$, the bound \eqref{am;asoi222} holds uniformly in $u\in[s,t]$. 
	\end{lemma}
	
	Now, we are ready to complete the proof of Step 3 for the case $1-s\le \ilambda^2/L^{2}$.

	\begin{proof}[\bf Proof of \eqref{Eq:LGxb} when $1-s\le \ilambda^2/L^{2}$]
		Given \Cref{lem:STOeq_Qt_nonzero}, we can prove a similar iterative result to \Cref{lem:iterations}, but with a different control parameter defined as follows: 
		\begin{equation}\label{eq:psipara_smalletacase}
			\Psi_{u}(\fn,k;s,t):=  (W^{-d}B_{s,0})^{-3/4}+(\eta_s/\eta_t)^{\fn-1}(W^{-d}B_{s,0})^{k/8-1}.
		\end{equation} 
		More precisely, suppose the estimate \eqref{eq:iteration_induc} holds uniformly in $u\in[s,t]$ for all index pairs $(r,l)\in \{(r, k): 2\le r\le n-1\}\cup \{(r, k-1): 2\le r\le n+2\}$. 
		Then, we have the following estimate uniformly in $u\in[s,t]$:
		\be\label{eq:iteration_improve_smalleta}
		\sup_{v\in[s,u]}\wh \Xi^{({\cal L-\cK})}_{v,\fn}\prec   \Psi(\fn,k;s,u).\ee
		Since the proof of \eqref {eq:iteration_improve_smalleta} is very similar to that for \Cref{lem:iterations} by using \Cref{lem:STOeq_Qt_nonzero}, we omit the details. 
		With this result, performing exactly the same iterative argument as in the case $1-t\ge \ilambda^2/L^{2}$ (i.e., the argument around \eqref{eq:iterative_pf_largeeta}), we can conclude \eqref{Eq:LGxb} for the case $1-s\le \ilambda^2/L^{2}$.
	\end{proof}

	\subsection{Proof of Step 4}
	
	We again divide the proof of \eqref{Eq:L-KGt-flow} into two cases according to whether $1-t \ge \ilambda^2/L^{2}$ or $1-s \le \ilambda^2/L^{2}$. In the former case, we apply \Cref{lem:STOeq_Qt} established in Step 3, while in the latter we apply \Cref{lem:STOeq_Qt_nonzero}. 
	At this step, using the sharp $G$-loop bound \eqref{Eq:LGxb} together with the averaged local law \eqref{Gt_avgbound_flow}, we may choose the optimal parameters \smash{$\Xi^{(\cal L)}_{v,n'}=1$} for all $G$-loops and \smash{$\Xi^{(\cal L-\cK)}_{v,1}=1$}. Then, from \eqref{am;asoi222}, we get the following bound for any fixed $n\ge 2$ and $p\ge 1$: 
	\begin{align} \label{saww02}
		\sup_{v\in[s,u]}\wh\Xi_{v,\fn}^{(\mathcal{L}-\mathcal{K})}  \prec (W^{-d}B_{s,0})^{-\frac{1}{4p}}+ \sup_{v\in [s,u]}
		\left(\max_{n'=2}^{\fn-1}\Xi^{({\cal L-\cal K})}_{v,n'} \right).
	\end{align}
	First, taking $\fn=2$ in \eqref{saww02}, the second term on the RHS vanishes. Since $p$ can be chosen arbitrarily large, we conclude that \smash{\(\sup_{v\in[s,u]}\wh\Xi_{v,2}^{(\mathcal{L}-\mathcal{K})}\prec 1.\)} 
	Starting from this base case, we can derive that \smash{\(\sup_{v\in[s,u]}\wh\Xi_{v,\fn}^{(\mathcal{L}-\mathcal{K})}\prec 1\)} 
	for any fixed $\fn\in\N$ by applying \eqref{saww02} inductively in $\fn$. This completes the proof of \eqref{Eq:L-KGt-flow} in Step 4.

	\subsection{Evolution kernel estimates}\label{ks-statements}
	
	In the above proof, we have used the following estimates on the evolution kernel defined in \Cref{DefTHUST}. Their proofs are postponed to \Cref{ks}. We first have an easy bound on the $({\infty\to \infty})$-norm of \smash{$\mathcal U^{(n)}_{t,\bsig,\ba}$}. 
	
	\begin{lemma}\label{lem:sum_Ndecay}
		Let ${\cal A}: (\Zn)^{\fn}\to \mathbb C$ be an $\fn$-dimensional tensor for a fixed $\fn\ge 2$. Then, for any $0\le s \le t < 1$, we have that 
		\begin{align}\label{sum_res_Ndecay}
			\| {\cal U}^{(n)}_{s,t,\boldsymbol{\sigma}}\;\circ {\cal A}\|_{\infty} \le \left(  \frac{1-s}{1-t}\right)^{\fn }\cdot \|{\cal A}\|_{\infty} , 
		\end{align}
		where the $L^\infty$-norm of ${\cal A}$ is defined as $\|{\cal A}\|_{\infty}=\max_{\ba\in (\Zn)^\fn}|\cal A_{\ba}|$.
	\end{lemma}

	If the tensor $\cal A$ exhibits faster-than-polynomial decay on scales larger than $\ell_s$, we show that the $({\infty \to \infty})$-norm of the evolution kernel \smash{${\cal U}^{(\fn)}_{s,t,\boldsymbol{\sigma}}$} satisfies a better bound. This bound can be further improved when $\boldsymbol{\sigma}$ is non-alternating or when $\cal A$ satisfies a sum-zero property.

	\begin{lemma}\label{lem:sum_decay}
		Let ${\cal A}: (\Zn)^{\fn}\to \mathbb C$ be an $\fn$-dimensional tensor for a fixed $\fn\ge 2$. Suppose it satisfies the following fast-decay property for some small constant $\e\in(0,1)$ and large constant $D>1$:  
		\begin{equation}\label{deccA0}
			|\cal A_{\ba}|\le W^{-D}, \quad \forall\ \ba=(a_1,\ldots, a_\fn)\in (\Zn)^{\fn} \ \ \text{with}\ \	\max_{i,j\in \Zn}|a_i-a_j|\ge W^{\e}\ell_s  \, .
		\end{equation}
		Fix any $0\le s \le t \le 1 -\ilambda^2/L^{2}$ such that $(1-t)/(1-s)\ge W^{-1}$. 
		Then, there exists a constant $C_\fn>0$ that does not depend on $\e$ or $D$ such that the following bound holds:
		\begin{align}\label{sum_res_1}
			\left\|{\cal U}^{(\fn)}_{s,t,\boldsymbol{\sigma}} \circ {\cal A}\right\|_\infty \le W^{C_\fn\e}\frac{\ell_t^2 }{\ell_s^2 }\p{\frac{\ilambda^2+|1-s|}{\ilambda^2+|1-t|}}^{\fn}\|{\cal A}\|_\infty
			+W^{-D+C_\fn} . 
		\end{align}
		This estimate can be further improved in the following cases:
		
		\begin{itemize}
			\item[(I)] 
			If $\bsig$ is non-alternating, i.e.,  $\sigma_k=\sigma_{k+1}$ for some $k\in \qqq{n}$, then we have
			\begin{align}\label{sum_res_2_NAL}
				\left\|{\cal U}^{(\fn)}_{s,t,\boldsymbol{\sigma}} \circ {\cal A}\right\|_\infty \le W^{C_\fn\e} \p{\frac{\ilambda^2+|1-s|}{\ilambda^2+|1-t|}}^{\fn-1}  \|{\cal A}\|_{\infty}  + W^{-D+C_\fn} 
			\end{align} 
			for a constant $C_\fn>0$ that does not depend on $\e$ or $D$.
			
			\item[(II)] If ${\cal A}$ satisfies the following sum-zero property: 
			\begin{align}\label{sumAzero}
				\sum_{a_2,\ldots,a_\fn\in \Zn}{\cal A}_{\ba}=0, \quad \forall a_1\in \Zn \, ,
			\end{align}
			then there exists a constant $C_\fn>0$ that does not depend on $\e$ or $D$ such that
			\begin{align}\label{sum_res_2}
				\left\|{\cal U}^{(\fn)}_{s,t,\boldsymbol{\sigma}} \circ {\cal A}\right\|_\infty \le W^{C_\fn\e} \p{\frac{\ilambda^2+|1-s|}{\ilambda^2+|1-t|}}^{\fn} \|{\cal A}\|_{\infty} 
				+W^{-D+C_\fn}. 
			\end{align} 
		\end{itemize}
	\end{lemma} 
	If $1-s \le \ilambda^2/L^{2}$, we obtain the following estimates for the evolution kernels with zero modes removed.
	\begin{lemma}\label{lem:sum_decay_nonzero}
		Fix any $1-\ilambda^2/L^{2}\le s \le t<1$ and $\bsig\in \{+,-\}^n$. Let ${\cal A}: (\Zn)^{\fn}\to \mathbb C$ be an arbitrary $\fn$-dimensional tensor for a fixed $\fn\ge 2$. Then, for any subset $A\subset \qqq{n}$ satisfying $A\supset I_{\mathrm{diff}}(\bsig)$, we have  
		\begin{align}\label{sum_res_Ndecay_nonzero}
			\| Q^{(A)}\circ {\cal U}^{(n)}_{s,t,\boldsymbol{\sigma}}\;\circ {\cal A}\|_{\infty} \prec \|{\cal A}\|_{\infty} .
		\end{align}
	\end{lemma}

	\subsection{Proofs of supporting lemmas}\label{subsec:support}
	
	This subsection presents the proofs of several supporting lemmas.
	
	\begin{proof}[\bf Proof of Lemma \ref{lem:STOeq_Qt}] 
		\Cref{lem:STOeq_NQ} already gives a good enough bound for non-alternating $(\cL-\cK)$-loops. It remains to control \smash{\(\max_{\ba} |({\cal L}-{\cal K})^{(\fn)}_{u, \boldsymbol{\sigma}, \ba} |\big/ (W^{-d}B_{u,0})^{n}\)} for alternating $\bsig$ using \eqref{eq:Ward_typeP} and equation \eqref{int_K-L+Q}. 
		
		First, the partial sum term \([\cal P\circ (\mathcal{L} - \mathcal{K})^{(\fn)}_{u, \boldsymbol{\sigma}}]_{a_1} \dthn^{(\fn)}_{u, \ba}\) has been bounded in \eqref{eq:Ward_typeP}. Second, using  \Cref{lem_+Q}, the induction hypothesis \eqref{Eq:L-KGt+IND} at time $s$, the estimates established in \eqref{bEwGn}--\eqref{eq:MG_nloop}, and the evolution kernel estimate \eqref{sum_res_2}, and adopting a similar argument as in the proof of \Cref{lem:STOeq_NQ}, we can control the terms involving $\cal B_{i}$ for $i\in \{0,1,2,3\}$ and the martingale term on the RHS of \eqref{int_K-L+Q} as follows:
		\begin{align}
			&  [\cal P\circ (\mathcal{L} - \mathcal{K})^{(\fn)}_{u, \boldsymbol{\sigma}}]_{a_1} \dthn^{(\fn)}_{u, \ba}+ \left(\mathcal{U}^{(n)}_{s, u, \boldsymbol{\sigma}} \circ  {\cal Q}_s\circ  {\cal B}_0(s)\right)_{\ba} +  \int_{s}^u \left(\mathcal{U}^{(n)}_{v, u, \boldsymbol{\sigma}} \circ  {\cal Q}_v\circ \sum_{k=1}^3{\cal B}_k(v)  \right)_{\ba} \dd v + \int_{s}^u \left(\mathcal{U}^{(\fn)}_{v, u, \boldsymbol{\sigma}} \circ  {\cal Q}_v\circ \dd\mathcal{E}^{M,(\fn)}_{v, \boldsymbol{\sigma}}\right)_{\ba} \nonumber\\
			\prec&~  (W^{-d}B_{u,0})^n \sup_{v\in [s,u]}
			\left((W^{-d}B_{u,0})^{\frac{1}{6}}\wh\Xi^{(\cal L-\cK)}_{v, n}+ \max_{n'=2}^{\fn-1} \Xi^{({\cal L-\cal K})}_{v,n'}+ \max_{n'=\fn-1}^{\fn+1} \Xi^{({\cal L})}_{v,n'} +\max_{n'=\lceil n/2\rceil + 1}^{n-1} \Xi^{({\cal L-\cal K})}_{v,n+2-n'} \p{\Xi^{({\cal L})}_{v,n_1'} \Xi^{({\cal L})}_{v,n_2'}}^{\frac 12 } \right) \nonumber\\
			&~+ (W^{-d}B_{u,0})^n \sup_{v\in[s,u]} \left((W^{-d}B_{u,0})^{-\frac{1}{4p}}(\Xi^{(\cal L)}_{v, {2\fn-1}})^{\frac 1 2}(\Xi^{(\cal L)}_{v, 4p})^{\frac{1}{4p}}\right) .\label{eq:alternatecase1}
		\end{align}
		It remains to address the terms involving $\cal B_{4}$ and $\cal B_{5}$ on the RHS of \eqref{int_K-L+Q}. We claim the following bounds: 
			\begin{align}\label{y27kasdfg}
				\| {\cal B}_4(u)\|_{\infty}+\|{\cal B}_5(u)\|_{\infty}\prec \eta_u^{-1} (W^{-d}B_{u,0})^n \Xi^{(\mathcal{L}-\mathcal{K})}_{u, n-1}.
			\end{align}
			First, combining \eqref{jywiiwsoks} with \eqref{eq:Bu0asymp} and the second estimate in \eqref{eq:derv_Theta}, we obtain that  
			\be\label{A5}  
			\left\|\left[ {\cal P} \circ \left(\mathcal{L} - \mathcal{K}\right)^{(\fn)}_{u, \boldsymbol{\sigma}}\right] \partial_u\dthn_{u}^{(\fn)}\right\|_\infty \prec \frac{1}{\eta_u}\frac{\ilambda^2+|1-u|}{|1-u|\ell_u^{d}}\cdot (W^{-d}B_{u,0})^n  \Xi^{(\mathcal{L}-\mathcal{K})}_{u, \fn-1}\lesssim \eta_u^{-1} (W^{-d}B_{u,0})^n \Xi^{(\mathcal{L}-\mathcal{K})}_{u, \fn-1}.
			\ee
			Second, using the definition of $\cal Q_t$ in \eqref{eq:sumzero_op} and the definition of $\varTheta^{(\fn)}_{u,\bsig}$ in \eqref{def:op_thn}, we can bound that 
			\begin{align}\label{A4}  
				\br{{\cal Q}_u , \varTheta^{(\fn)}_{u,\boldsymbol{\sigma}} }\circ (\mathcal{L} - \mathcal{K})^{(\fn)}_{u, \boldsymbol{\sigma}, \ba}
				&= \varTheta^{(\fn)}_{u, \boldsymbol{\sigma}} \circ \left[ \left({\cal P} \circ \left( \mathcal{L} - \mathcal{K} \right)^{(\fn)}_{u,\bsig}\right) \cdot \dthn^{(\fn)}_u \right]_{\ba} - \left[\left( {\cal P} \circ \varTheta^{(\fn)}_{u, \boldsymbol{\sigma}} \circ (\mathcal{L} - \mathcal{K})^{(\fn)}_{u,\bsig}\right)  \cdot \dthn^{(\fn)}_u \right]_{\ba} \nonumber
				\\ 
				& \prec \eta_u^{-1}\big\|\dthn^{(\fn)}_{u}\big\|_\infty \cdot   \left\| {\cal P} \circ \left(  \mathcal{L} - \mathcal{K} \right)^{(\fn)}_{u,\boldsymbol{\sigma}} \right\|_{\infty} \prec \eta_u^{-1} (W^{-d}B_{u,0})^n \Xi^{(\mathcal{L}-\mathcal{K})}_{u, \fn-1},
			\end{align}
			where in the second step, we use the simple fact that $ \|\varTheta_{u,\bsig}^{(\fn)}\|_{\infty\to\infty} \lesssim (1-u)^{-1}$ for any $\bsig\in \{+,-\}^\fn$ due to \eqref{eq:THETAinftinf}, and in the third step, we use \eqref{jywiiwsoks} and the first bound in \eqref{eq:derv_Theta}. Combining \eqref{A5} and \eqref{A4} yields \eqref{y27kasdfg}. 
			Now, applying the evolution kernel estimate \eqref{sum_res_2} and the estimates in \eqref{y27kasdfg}, and performing the integral over $v$, we can obtain that 
			\begin{align}
				&  \int_{s}^u \left(\mathcal{U}^{(n)}_{v, u, \boldsymbol{\sigma}} \circ  {\cal Q}_v\circ \sum_{k=4}^5{\cal B}_k(v)  \right)_{\ba} \dd v  \prec  (W^{-d}B_{u,0})^n \sup_{v\in[s,u]}\left(\Xi^{({\cal L-\cal K})}_{v,n-1} \right). \label{eq:alternatecase2}
			\end{align}

			Finally, combining \Cref{lem:STOeq_NQ} with \eqref{eq:alternatecase1} and \eqref{eq:alternatecase2}, we obtain that 
			\begin{align*}
				\sup_{v\in[s,u]} \wh\Xi^{({\cal L-\cal K})}_{v,\fn} &\prec (W^{-d}B_{u,0})^{\frac{1}{6}}\sup_{v\in [s,u]}
				\wh\Xi^{(\cal L-\cK)}_{v, n}  + \sup_{v\in[s,u]} \left((W^{-d}B_{u,0})^{-\frac{1}{4p}}(\Xi^{(\cal L)}_{v, {2\fn-1}})^{\frac 1 2}(\Xi^{(\cal L)}_{v, 4p})^{\frac{1}{4p}}\right)\\
				&+\sup_{v\in [s,u]}
				\left(\max_{n'=1}^{\fn-1} \Xi^{({\cal L-\cal K})}_{v,n'}+ \max_{n'=\fn-1}^{\fn+1} \Xi^{({\cal L})}_{v,n'} + \max_{n'=\lceil n/2\rceil + 1}^{n-1} \Xi^{({\cal L-\cal K})}_{v,n+2-n'} \p{\Xi^{({\cal L})}_{v,n_1'} \Xi^{({\cal L})}_{v,n_2'}}^{\frac 12 } \right),\nonumber
			\end{align*}
			which yields \eqref{am;asoi222} at each fixed $u\in[s,t]$ upon solving for \smash{$\sup_{v\in[s,u]} \wh\Xi^{({\cal L-\cal K})}_{v,\fn}$}. Then, applying a standard $N^{-C}$-net argument extends it uniformly to all $u\in [s,t]$. 
		\end{proof}

		\begin{proof}[\bf Proof of Lemma \ref{lem:iterations}]
			By the averaged local law \eqref{Gt_avgbound_flow}, we have \smash{$\wh\Xi_{u,1}^{(\cL-\cK)}\prec 1=:\Xi_{u,1}^{(\cL-\cK)}$} uniformly for all $u\in[s,t]$. Substituting this into \eqref{am;asoi222}, we obtain the following bound, also uniformly in $u\in [s,t]$ under the assumption $1-s\le \ilambda^2$: 
			\begin{align}\label{am;asoi333}
				\sup_{v\in[s,u]} \wh \Xi^{({\cal L-\cal K})}_{v,\fn}&\prec \sup_{v\in[s,u]} \left(\p{W^{-d}B_{u,0}}^{-\frac{1}{4p}}(\Xi^{(\cal L)}_{v, {2\fn-1}})^{\frac 1 2}(\Xi^{(\cal L)}_{v, 4p})^{\frac{1}{4p}}\right) \\
				&+\sup_{v\in [s,u]}
				\left( \max_{n'=2}^{\fn-1}\Xi^{({\cal L-\cal K})}_{v,n'}+\max_{n'=n-1}^{n+1}\Xi^{({\cal L})}_{v,\fn'}+\max_{n'=3}^{n-1} \Xi^{({\cal L-\cal K})}_{v,n+2-n'} \p{\Xi^{({\cal L})}_{v,n_1'} \Xi^{({\cal L})}_{v,n_2'}}^{\frac 12 } \right),\nonumber
			\end{align}
			where we also use that $\lceil n/2\rceil + 1\le n-1$ only when $n\ge 4$, in which case we have $\lceil n/2\rceil + 1\ge 3$. 
			
			To show \Cref{lem:iterations} using the bootstrap bound \eqref{am;asoi333}, we first claim that under the assumption \eqref{eq:iteration_induc}, the following bound holds uniformly in $u\in[s,t]$ 
			as long as we choose $p\ge 4$: 
			\begin{align} \label{xiu2n+2psi}
				\sup_{v\in[s,u]}\left((\ilambda^2W^d)^{\frac{1}{4p}}(\wh\Xi^{(\cal L)}_{v, {2\fn-1}})^{\frac 1 2}(\wh\Xi^{(\cal L)}_{v, 4p})^{\frac{1}{4p}}\right)  \prec \Psi(\fn,k;s,u) \ .
			\end{align} 
			Given the bound \eqref{xiu2n+2psi}, we obtain from \eqref{am;asoi333} that
			\begin{equation} 
				\sup_{v\in[s,u]}\wh\Xi_{v,\fn }^{(\mathcal{L}-\mathcal{K})} \prec \Psi(\fn,k;s,u) + 
				\sup_{v\in [s,u]}
				\left( \max_{n'=2}^{\fn-1}\Xi^{({\cal L-\cal K})}_{v,n'}+\max_{n'=n-1}^{n+1}\Xi^{({\cal L})}_{v,\fn'}+\max_{n'=3}^{n-1} \Xi^{({\cal L-\cal K})}_{v,n+2-n'} \p{\Xi^{({\cal L})}_{v,n_1'} \Xi^{({\cal L})}_{v,n_2'}}^{\frac 12 } \right)\label{sadui_w0}
			\end{equation}
			uniformly in $u\in[s,t]$. By the induction hypothesis \eqref{eq:iteration_induc}, we may choose the $\Xi^{({\cal L-\cal K})}$-parameters as 
			\be \nonumber
			\Xi^{({\cal L-\cal K})}_{v,n'}= \Psi(n',k;s,u)\le \Psi(n,k;s,u) \ \  \text{for}\ \ 2\le n' \le \fn-1, 
			\ee
			and, by \eqref{rela_XILXILK} and \eqref{eq:iteration_induc}, we can choose the $\Xi^{({\cal L})}$-parameters as
			\begin{align*}
				\Xi^{({\cal L})}_{v,n'}=\begin{cases}
					1+(\ilambda^2W^{d})^{-1} \Psi(n',k;s,u), \ & \text{for} \ \  n'\le n-1,\\  
					1+(\ilambda^2W^{d})^{-1} \Psi(n',k-1;s,u)\ & \text{for} \ \ n\le n'\le n+2. 
				\end{cases} 
			\end{align*}
			With these choices, we readily verify that  
			\[\max_{n'=n-1}^{n+1}\Xi^{({\cal L})}_{v,\fn'} \le  \Psi(\fn,k;s,u),\quad \text{and}\quad  \p{\Xi^{({\cal L})}_{v,n_1'} \Xi^{({\cal L})}_{v,n_2'}}^{\frac 12 } \lesssim 1+\p{{\eta_s}/{\eta_u}}^{n'}(\ilambda^2W^d)^{-\frac{k}{8}} \ \ \text{for} \ \ 3\le n'\le {n-1}.\]
			Substituting these bounds into \eqref{sadui_w0}, we obtain  
			\begin{align*}
				\sup_{v\in[s,u]}\wh\Xi_{v,\fn }^{(\mathcal{L}-\mathcal{K})}\prec \Psi(\fn,k;s,u) +  \max_{n'=3}^{\fn-1} \Psi(n+2-n',k;s,u)\bigg[1+\p{\frac{\eta_s}{\eta_u}}^{n'}(\ilambda^2W^d)^{-\frac{k}{8}}\bigg]\lesssim \Psi(\fn,k;s,u), 
			\end{align*}
		which concludes the proof of \Cref{lem:iterations}.

			It remains to prove \eqref{xiu2n+2psi}. For $n\in\{2,3\}$, using the a priori bound \eqref{sef8w483r324} for \smash{$\wh\Xi^{(\cal L)}_{v, {2\fn-1}}$ and $\wh\Xi^{(\cal L)}_{v, 4p}$}, along with the relation \eqref{rela_XILXILK}, we obtain for $p\ge 4$ that,
			\begin{align*} 
				\sup_{v\in[s,u]}\left((\ilambda^2W^d)^{\frac{1}{4p}}(\wh\Xi^{(\cal L)}_{v, {2\fn-1}})^{\frac 1 2}(\wh\Xi^{(\cal L)}_{v, 4p})^{\frac{1}{4p}}\right)  \prec (\ilambda^2W^d)^{\frac{1}{16}}\left( {\eta_s}/{\eta_u}\right)^{n}  \le \Psi(\fn,k;s,u).
			\end{align*} 
			We now consider the $n\ge 4$ case. Recall the $G$-chain defined in \eqref{defC=GEG}. Given an arbitrary $(2\fn-1)$-$G$-chain \smash{$\cL^{(2\fn-1)}_{v,\bsig,\ba}$} with 
			$$\bsig=(\sig,\sig_1,\ldots, \sig_{\fn-1}, \sig',\sig_1',\ldots, \sig_{\fn-2}'),\quad \ba=(b,a_1,\ldots,a_{\fn-1},b',a_1',\ldots,a_{\fn-2}'),$$ 
			we can write it is 
			\begin{align*}
				\cL^{(2\fn-1)}_{v,\bsig,\ba}=W^{-2d}\sum_{x\in[b],x'\in [b']} \left(\cal C^{(\fn)}_{v,\bsig_1,\ba_1}\right)_{xx'} \left(\cal C^{(\fn-1)}_{v,\bsig_2,\ba_2}\right)_{x'x} ,  
			\end{align*}
			where $\bsig_1=(\sig_1,\ldots, \sig_{\fn-1}, \sig')$, $\bsig_2=(\sig_1',\ldots, \sig_{\fn-2}',\sig)$, $\ba_1=([a_1],\ldots,[a_{\fn-1}])$, and $\ba_2=([a_1'],\ldots,[a_{\fn-2}'])$. Applying the Cauchy-Schwarz inequality gives
			\begin{equation}\label{suauwiioo1}
				\left|\cL^{(2\fn-1)}_{v,\bsig,\ba}\right|
				\le 
				\bigg( W^{-2d}\sum_{x\in[b],x'\in[b']}
				\left|\left({\cal C}^{(\fn)} _{v,\boldsymbol{\sigma}_1,\ba_1}\right)_{xx'} 
				\right|^2 \bigg)^{1/2}\bigg( W^{-2d}\sum_{x\in[b],x'\in[b']}
				\left|\left({\cal C}^{(\fn-1)} _{v,\boldsymbol{\sigma}_2,\ba_2}\right)_{x'x} 
				\right|^2 \bigg)^{1/2}.
			\end{equation}
			Following the argument for equation (5.118) in \cite{YY_25}, we obtain that 
			\begin{equation}\label{eq:boundtwochains}
				\begin{split}
					& W^{-2d}\sum_{x\in[b],x'\in[b']}
					\left|\left({\cal C}^{(\fn)} _{v,\boldsymbol{\sigma}_1,\ba_1}\right)_{xx'} 
					\right|^2 \le (\ilambda^2W^d) \Xi^{(\cL)}_{v,2l_1}\Xi^{(\cL)}_{v,2l_2}, \\  &W^{-2d}\sum_{x\in[b],x'\in[b']}
					\left|\left({\cal C}^{(\fn-1)} _{v,\boldsymbol{\sigma}_2,\ba_2}\right)_{x'x} 
					\right|^2 \le (\ilambda^2W^d) \Xi^{(\cL)}_{v,2l_3}\Xi^{(\cL)}_{v,2l_4},
				\end{split}
			\end{equation}
			where $l_1=\lceil n/2\rceil$, $l_2=\lfloor n/2\rfloor$, $l_3=\lceil (n-1)/2\rceil$, and $l_4=\lfloor (n-1)/2\rfloor$. 
			By the induction hypothesis \eqref{eq:iteration_induc} (noting $\max_i\p{2l_i}\le \fn+1$) and the relation \eqref{rela_XILXILK}, we have
			\begin{align}\label{auskoppw2.00}
				\wh\Xi_{v,2l_i}^{(\cal L)} \prec 1+ (\ilambda^2 W^{d})^{-1} \Psi(k-1,2l_i;s,u) \prec 1+ \p{{\eta_s}/{\eta_u}}^{2l_i-1} (\ilambda^2W^d)^{-(k-1)/8}.
			\end{align}
			Plugging \eqref{auskoppw2.00} into \eqref{eq:boundtwochains} and then applying \eqref{suauwiioo1}, we obtain for $k\ge 1$ that, 
			\begin{align}\label{auskoppw2}
				\sup_{v\in[s,u]}\big(\wh\Xi^{({\cal L})}_{v, 2\fn-1}\big)^{\frac{1}{2}}&\prec  \sup_{v\in[s,u]} (\ilambda^2W^d)^{1/2}\left[\prod_{i=1}^4\bigg(1+ \p{{\eta_s}/{\eta_u}}^{2l_i-1} (\ilambda^2W^d)^{-(k-1)/8}\bigg)\right]^{1/4} \nonumber\\
				&\lesssim (\ilambda^2W^d)^{1/2}\bigg(1+ \p{{\eta_s}/{\eta_u}}^{n} (\ilambda^2W^d)^{-(k-1)/8}\bigg).
			\end{align}
			On the other hand, by \eqref{sef8w483r324}, we have 
			\(\sup_{v\in[s,u]} (\wh\Xi^{(\cal L)}_{v, 4p})^{\frac{1}{4p}} \prec {\eta_s}/{\eta_u} .\)
			Combining it with \eqref{auskoppw2}, and using the condition \eqref{con_st_ind}, we conclude \eqref{xiu2n+2psi} when $p\ge 4$.  
		\end{proof}

		\begin{proof}[\bf Proof of Lemma \ref{lem: newPQ}] 
			We only prove the expansion \eqref{yurenAL} using Ward's identity \eqref{WI_calL}, while \eqref{yurenAK} follows from the same argument by using the identity \eqref{WI_calK} instead. 
			We first record the following consequence of \eqref{WI_calL}: for any $\bsig\in \{+,-\}^n$ with $\sig_i\ne \sig_{i+1}$ and any subset $A\subset \qqq{n}\setminus\{i\}$, 
			\begin{align}\label{y2ussz}
				\left(Q^{(A)} \circ P^{(i)}\circ {\cal L}^{(n)}\right)_{t,\boldsymbol{\sigma},\ba}
				= \frac{1}{ 2\ii N\eta_t}
				\left(\left(Q^{(A_{(i)})} \circ {\cal L}^{(n-1)}\right)_{t,\boldsymbol{\sigma}_+,\ba_+}- \left(Q^{(A_{(i)})}\circ {\cal L}^{(n-1)}\right)_{t,\boldsymbol{\sigma}_-,\ba_-}\right), 
			\end{align}
			where $A_{(i)}:=\{1\le j<i: j\in A\} \cup \{i\le j\le n-1: j+1\in A\}$, $\ba_\pm = (a_1,\ldots, a_{i-1}, a_{i+1},\ldots, a_n)$, and $\boldsymbol{\sigma}_\pm = (\sigma_1\cdots \sigma_{i-1}, \pm, \sigma_{i+2},\cdots \sigma_n)$. 
			We now prove \eqref{yurenAL} by induction in $n$ and using \eqref{y2ussz}. 
			
			First, it is easy to see that \eqref{yurenAL} trivially holds for $n=1$ and $|A|\in\{0,1\}$. 
			Next, suppose we have shown that there exists a decomposition \eqref{yurenAL} for any $n\le l-1$. We prove \eqref{yurenAL} for $n=l$. Given any $A\subset\qqq{n}$, if $A\supset I_{\mathrm{diff}}(\bsig)$, then \eqref{yurenAL} holds trivially with only one term on the RHS (i.e., the term $Q^{(A)}\circ {\cal L}^{(n)}$ itself). Otherwise, suppose there exists $i\in I_{\mathrm{diff}}(\bsig)\setminus A$, then we write
			\be\label{eq:y2ussz}
			\left(Q^{(A)}\circ {\cal L}^{(n)}\right)_{t,\boldsymbol{\sigma},\ba}= \left(Q^{(A\cup \{i\} )}\circ {\cal L}^{(n)}\right)_{t,\ba,\boldsymbol{\sigma}}+\left(Q^{(A )}\circ P^{(i)}\circ{\cal L}^{(n)}\right)_{t,\boldsymbol{\sigma},\ba}. 
			\ee
			Applying \eqref{y2ussz} to the second term on the RHS and using the induction hypothesis for $Q^{(A_{(i)})} \circ {\cal L}^{(n-1)}$, we can expand \smash{$(Q^{(A )}\circ P^{(i)}\circ{\cal L}^{(n)})_{t,\boldsymbol{\sigma},\ba}$} into an expression that matches the form of a summand on the RHS of \eqref{yurenAL}. 
			On the other hand, in the first term of \eqref{eq:y2ussz}, the number of elements in $I_{\mathrm{diff}}(\bsig)\setminus (A\cup\{i\})$ is reduced by 1 relative to $I_{\mathrm{diff}}(\bsig)\setminus A$. If $I_{\mathrm{diff}}(\bsig)\setminus (A\cup\{i\})=\emptyset$, then the induction is completed. Otherwise, we choose the next element $j\in I_{\mathrm{diff}}(\bsig)\setminus (A\cup\{i\})$ and repeat the same argument with $A$ and $i$ replaced by $A\cup\{i\}$ and $j$, respectively. Iterating this procedure at most $|I_{\mathrm{diff}}(\bsig)\setminus A|$ times, we finally express $Q^{(A)}\circ {\cal L}^{(n)}$ in the form \eqref{yurenAL}. This completes the induction, thereby concluding the proof of \Cref{lem: newPQ}. 
		\end{proof}
		
		\begin{proof}[\bf Proof of Lemma \ref{lem:STOeq_Qt_nonzero}]
			Given any $\bsig\in\{+,-\}^n$ and $\ba\in (\Zn)^n$, we expand $\cL^{(n)}_{t,\bsig,\ba}$ and $\cK^{(n)}_{t,\bsig,\ba}$ as in \eqref{yurenAL} and \eqref{yurenAK} with $A_n=I_{\mathrm{diff}}(\bsig)$. This yields 
			\begin{align}
				\p{\cal L-\cal K}^{(n)}_{t,\boldsymbol{\sigma},\ba}=Q^{(A_n)}\circ \p{\cal L-\cal K}^{(n)}_{t,\boldsymbol{\sigma},\ba}+\sum_\al \frac{\xi_{\al}}{(2\ii N\eta_t)^{n-k_\al}}   Q^{(A_\al)}\circ \p{\cal L-\cK}^{(k_\al)} _{t,\boldsymbol{\sigma}_{\al},\ba_{\al}}.\label{eq:expandQAempty}
			\end{align}
			For the $k_\al=1$ terms, using \eqref{normQA2} and the averaged local law \eqref{Gt_avgbound_flow}, we bound 
			\begin{align}\label{eq:kalpha1}
				\frac{\xi_{\al}}{(2\ii N\eta_t)^{n-k_\al}}   Q^{(A_\al)}\circ (\cal L-\cK)^{(k_\al)} _{t,\boldsymbol{\sigma}_{\al},\ba_{\al}} \prec (N\eta_t)^{-(n-1)}\cdot W^{-d}B_{t,0} \lesssim (W^{-d}B_{t,0})^n.
			\end{align}
			
			All other terms on the RHS of \eqref{eq:expandQAempty} involve loops of length $\ge 2$, and therefore satisfy equation \eqref{iisuwjyys}.
			Take the first term as an example. Using the evolution kernel estimate in \Cref{lem:sum_decay_nonzero}, the bounds \eqref{bEwGn}--\eqref{eq:MG_nloop}, and the assumption \eqref{Eq:L-KGt+IND} on \smash{$(\mathcal{L} - \mathcal{K})^{(\fn)}_{s, \boldsymbol{\sigma}, \ba}$}, we obtain that
			\begin{align}\label{sahwNQ_smalleta}
				&\frac{ Q^{(A_n)}\circ (\mathcal{L} - \mathcal{K})^{(n)}_{t, \boldsymbol{\sigma}, \ba}}{(W^{-d}B_{t,0})^{n}} \prec 
				1 + (W^{-d}B_{t,0})^{\frac{1}{6}} \int_{s}^t   \wh\Xi^{({\cal L-\cal K})}_{u,n} \frac{\dd u}{\eta_u} +   \int_{s}^t  \max_{n'=2}^{\fn-1} \Xi^{({\cal L-\cal K})}_{u,n'}\frac{\dd u}{\eta_u}  + \int_{s}^t \max_{n'=n-1}^{n+1} \Xi^{(\cal L)}_{u,n'}\frac{\dd u}{\eta_u}    \nonumber \\
				&+  \int_{s}^t \max_{n'=\lceil n/2\rceil + 1}^{n-1} \Xi^{({\cal L-\cal K})}_{u,n+2-n'} \p{\Xi^{({\cal L})}_{u,n_1'} \Xi^{(\cal L)}_{u,n_2'}}^{\frac 12 } \frac{\dd u}{\eta_u}   + \frac{1}{(W^{-d}B_{t,0})^{n}} \int_{s}^t \left(Q^{(A_n)}\circ \mathcal{U}^{(\fn)}_{u, t, \boldsymbol{\sigma}} \circ  \dd  \mathcal{E}^{M,(\fn)}_{u, \boldsymbol{\sigma}}\right)_{\ba}.
			\end{align}
			By Lemma \ref{lem:DIfREP}, together with \eqref{eq:MG_nloop} and \Cref{lem:sum_decay_nonzero}, we bound the martingale term as  
			\begin{align} \label{sahwNQ2}
				(W^{-d}B_{t,0})^{-n} \int_{s}^t \left(Q^{(A_n)}\circ \mathcal{U}^{(\fn)}_{u, t, \boldsymbol{\sigma}} \circ  \dd  \mathcal{E}^{M,(\fn)}_{u, \boldsymbol{\sigma}}\right)_{\ba}&\prec \p{W^{-d}B_{t,0}}^{-\frac{1}{4p}}\left\{\int_{s}^t 
				\eta_u^{-1}\Xi^{(\cal L)}_{u, {2\fn-1}}\p{\Xi^{(\cal L)}_{u, 4p}}^{\frac{1}{2p}}\dd u\right\}^{1/2} 
			\end{align} 
			for any fixed $p\ge 1$. Plugging it into \eqref{sahwNQ_smalleta} and performing the integral over $u$, we obtain 
			\begin{align}\label{am;asoiuw_smalleta}
				\frac{ Q^{(A_n)}\circ (\mathcal{L} - \mathcal{K})^{(n)}_{t, \boldsymbol{\sigma}, \ba}}{(W^{-d}B_{t,0})^{n}} &\prec \sup_{u\in [s,t]}
				\left((W^{-d}B_{t,0})^{\frac{1}{6}}\wh\Xi^{(\cal L-\cK)}_{u, n}+ \max_{n'=2}^{\fn-1} \Xi^{({\cal L-\cal K})}_{u,n'}+ \max_{n'=\fn-1}^{\fn+1} \Xi^{({\cal L})}_{u,n'} \right)\\
				&+\sup_{u\in [s,t]}
				\left(\max_{n'=\lceil n/2\rceil + 1}^{n-1} \Xi^{({\cal L-\cal K})}_{u,n+2-n'} \p{\Xi^{({\cal L})}_{u,n_1'} \Xi^{({\cal L})}_{u,n_2'}}^{\frac 12 } +\p{W^{-d}B_{t,0}}^{-\frac{1}{4p}}(\Xi^{(\cal L)}_{u, {2\fn-1}})^{\frac 1 2}(\Xi^{(\cal L)}_{u, 4p})^{\frac{1}{4p}}\right).\nonumber
			\end{align} 
			This bound also applies to each term \smash{$Q^{(A_\al)}\circ \p{\cal L-\cK}^{(k_\al)} _{t,\boldsymbol{\sigma}_{\al},\ba_{\al}}$} in \eqref{eq:expandQAempty}, with $n$ replaced by $k_\al$. Using the fact \smash{$(N\eta_t)^{-1}\le W^{-d}B_{t,0}$}, every summand of \eqref{eq:expandQAempty}, after rescaled by $(W^{-d}B_{t,0})^{-n}$, is controlled by the RHS of \eqref{am;asoiuw_smalleta}.
			Now, taking the maximum of both sides of \eqref{eq:expandQAempty} over $(\bsig,\ba)$ gives
			\begin{align*}
				\sup_{u\in[s,t]} \wh\Xi^{({\cal L-\cal K})}_{u,\fn} &\prec (W^{-d}B_{t,0})^{\frac{1}{6}}\sup_{u\in [s,t]}
				\wh\Xi^{(\cal L-\cK)}_{u, n}  + \sup_{u\in[s,t]} \left(\p{W^{-d}B_{t,0}}^{-\frac{1}{4p}}(\Xi^{(\cal L)}_{u, {2\fn-1}})^{\frac 1 2}(\Xi^{(\cal L)}_{u, 4p})^{\frac{1}{4p}}\right)\\
				&+\sup_{u\in [s,t]}
				\left(\max_{n'=2}^{\fn-1} \Xi^{({\cal L-\cal K})}_{u,n'}+ \max_{n'=\fn-1}^{\fn+1} \Xi^{({\cal L})}_{u,n'} + \max_{n'=\lceil n/2\rceil + 1}^{n-1} \Xi^{({\cal L-\cal K})}_{u,n+2-n'} \p{\Xi^{({\cal L})}_{u,n_1'} \Xi^{({\cal L})}_{u,n_2'}}^{\frac 12 } \right),\nonumber
			\end{align*}
			solving which yields \eqref{am;asoi222} at $u=t$. Obviously, the same argument applies uniformly to all $u\in [s,t]$. Finally, a standard $N^{-C}$-net argument extends \eqref{am;asoi222} uniformly to the entire interval $u\in [s,t]$. 
		\end{proof}

		\section{Step 5: Pointwise estimate for \texorpdfstring{$(\cL-\cK)$-loops}{L-Kloop}}\label{Sec:Step5}

		As in the proof for Step 3, by adding intermediate times if necessary, we can divide the proof into the following four cases: (i) $\ilambda^2/L^{2}\le 1- t\le 1-s \le \ilambda^2$, (ii) $\ilambda^2/L^{d}\le 1-t\le 1-s\le \ilambda^2/L^{2}$, (iii) $1- s\ge 1-t \ge \ilambda^2$, and (iv) $1-t\le 1-s\le \ilambda^2/L^{d}$. 
		In case (iv), $\ell_t=L$ and $B_{t,0}$ is dominated by the $(L^d|1-t|)^{-1}$ term, so the desired bound already follows from the conclusion \eqref{Eq:L-KGt-flow} in Step 4. Hence, we only need to focus on the first three cases in the following proof. Our goal is to remove the $(|1-s|/|1-u|)^{C_d}$ factor in \eqref{Eq:Gdecay_w}. 
		Our proofs below are all based on the integrated loop hierarchy in equation \eqref{int_K-LcalE} with $n=2$:
		\begin{align} \label{int_K-LcalE_n=2}
			(\mathcal{L} - \mathcal{K})^{(2)}_{t, \boldsymbol{\sigma}, \ba}= &
			\left(\mathcal{U}^{(2)}_{s, t, \boldsymbol{\sigma}} \circ (\mathcal{L} - \mathcal{K})^{(2)}_{s, \boldsymbol{\sigma}}\right)_{\ba}+ \int_{s}^t \left(\mathcal{U}^{(2)}_{u, t, \boldsymbol{\sigma}} \circ \mathcal{E}^{(\mathcal{L} - \mathcal{K}) \times (\mathcal{L} - \mathcal{K}),(2)}_{u, \boldsymbol{\sigma}}\right)_{\ba} \dd u  \nonumber \\
			&+ \int_{s}^t \left(\mathcal{U}^{(2)}_{u, t, \boldsymbol{\sigma}} \circ \mathcal{E}^{\Gc,(2)}_{u, \boldsymbol{\sigma}}\right)_{\ba} \dd u + \int_{s}^t \left(\mathcal{U}^{(2)}_{u, t, \boldsymbol{\sigma}} \circ \dd\mathcal{E}^{M,(2)}_{u, \boldsymbol{\sigma}}\right)_{\ba} . 
		\end{align}
		With the stability estimate \eqref{Eq:Gdecay_w} in hand, we can now show that the quadratic term $\mathcal{E}^{(\mathcal{L} - \mathcal{K}) \times (\mathcal{L} - \mathcal{K}),(2)}$ yields a negligible contribution. Using the light-weight estimate \eqref{eq:LW_conclusion_exp} and the martingale estimate \eqref{eq:MG_conclusion3} established in Step 2, we can also obtain sufficient control of the third and fourth terms on the RHS of \eqref{int_K-LcalE_n=2}.
		The remaining technical challenge arises from the contribution of the initial condition, i.e., the first term on the RHS of \eqref{int_K-LcalE_n=2}.
		In case (i), bounding this term requires leveraging a CLT-type cancellation mechanism, similar to that developed in \cite{DYYY25}.\footnote{Interestingly, in two dimensions \cite{DYYY25}, the CLT cancellation was used in Step~3 but not in Step~5, whereas in our case the situation is reversed. In lower dimensions, Step~5 becomes almost trivial once Steps~2 and~4 are established; however, in dimensions $d\ge 3$, the additional complexity in Step 5 arises once again due to the polynomial decay of the 2-$G$-loops.}
		In case (ii), we apply the zero-mode-removing operator introduced in \Cref{def;zero_mode_remove}. Finally, case (iii) can be regarded as a special instance of the lower-dimensional proofs for $d\in \{1,2\}$, as explained in \Cref{sec:Step5_larget}. 

		If $|1-s|/|1-t|\le (\log W)^{10}$, then the factor $(|1-s|/|1-u|)^{C_d}\le (\log W)^{10C_d}$ can be absorbed into the stochastic domination notation ``$\prec$". Hence, throughout the following proof, we always assume that 
		\be\label{eq:assmtlarge} |1-s|/|1-t| > (\log W)^{10} \implies t\ge 1-(\log W)^{-10}.\ee

		\subsection{The case \texorpdfstring{$\ilambda^2/L^{2}\le 1- t\le 1-s \le \ilambda^2$}{1-t>L-2}}
		In this case, we have \(B_{u,0}\asymp \ilambda^{-2}\) and $\ell_u\asymp \ilambda|1-u|^{-1/2}$ for all \(u\in[s,t].\)  
		In Step 2, we have established the following estimates 
		for any large constant $D>0$:  
		\be\label{eq:Step2_inputs}
		{\cal L}^{(2)}_{u,\bsig,(a_1,a_2)}\prec   W^{-d}\widetilde{\mathcal{T}}_{u,D}^{L}\p{|a_1-a_2|},\ \ ({\cal L}-{\cal K})^{(2)}_{u,\bsig,(a_1,a_2)}\prec  (\ilambda^2W^{d})^{-\frac16}\cdot   W^{-d}\widetilde{\mathcal{T}}_{u,D}^{L}(|a_1-a_2|),\ \ \forall u\in[s,t].
		\ee
		We now apply \eqref{eq:Step2_inputs} to \eqref{i2kk2zgg} with $\ell=L$. In this case, only the third term on the RHS remains nonzero, and it can be bounded as in \eqref{eq:i2kk2zgglast}. Consequently, we obtain that
		\be \label{S5WG+M000}
		\mathcal{E}_{u, \bsig, \ba}^{(\mathcal{L} - \mathcal{K}) \times (\mathcal{L} - \mathcal{K}),(2)} 
		\prec   \frac{(\ilambda^2W^{d})^{-\frac13}}{\eta_u}  \br{W^{-d}\widetilde{{\cal T}}^{L} _{u,D}(|a_1-a_2|) }.
		\ee
		Moreover, using \Cref{lem: EWGn2_N,lem: EMn2_N}, we get that for any $\bsig\in \{+,-\}^2$,
		\be \begin{aligned}\label{S5WG+M}
		{\cal E}^{\Gc,(2)}_{u,\bsig, \ba}&\prec \frac{ (\ilambda^2W^{d})^{-\frac12}}{\eta_u}   \br{W^{-d} \widetilde{  \cal T} ^{L}_{u, D}(|a_1-a_2|)},\\  
		(\mathcal{E} \otimes \mathcal{E})^{M,(2)}_{u,\bsig, \ba,\ba}&\prec \frac{ (\ilambda^2W^{d})^{-\frac12} }{\eta_u} \br{W^{-d}\widetilde{\cal T}^{L}_{u,D}\p{|a_1-a_2|}}^2.
		\end{aligned}\ee
		We recall the definition of $\cal U_{s,t,\bsig}^{(2)}$ in \eqref{def_Ustz} and notice the following identity (recall that $t\gtrsim1$ under \eqref{eq:assmtlarge}): 
		\be\label{eq:decompU}\frac{1 - s \cdot M^{(\sig_1,\sig_2)}S^{(\sB)}}{1 - t \cdot M^{(\sig_1,\sig_2)}S^{(\sB)}} = \frac{s}{t} + \frac{t-s}{t}\Theta^{(\sig_1,\sig_2)}_t.\ee
		If $\sig_1=\sig_2$, then $\Theta_{t}^{(\sig_1,\sig_2)}$ decay exponentially by \eqref{prop:ThfadC_short}. Using the estimates \eqref{S5WG+M000} and \eqref{S5WG+M}, the assumption \eqref{Eq:Gdecay+IND}, the decomposition \eqref{eq:decompU}, and the estimate \eqref{prop:ThfadC_short}, we obtain from \eqref{int_K-LcalE_n=2} that 
		\[ (\mathcal{L} - \mathcal{K})^{(2)}_{t, \boldsymbol{\sigma}, \ba} \prec (\ilambda^2W^{d})^{-\frac15}\cdot \br{W^{-d}\wT^{L}_{t,D}(|a_1-a_2|)}.\]
		Since the proof is considerably simpler than in the case $\sig_1\ne \sig_2$  discussed below, we omit the details.

		We now focus on the key challenge with $\sig_1\ne \sig_2$.
		When $\sig_1\ne \sig_2$, with the estimates in \eqref{S5WG+M} and using the decomposition \eqref{eq:decompU}, we can bound the second and third terms on the RHS of \eqref{int_K-LcalE_n=2} as
		\begin{align}
			\int_s^t\left(\mathcal{U}^{(2)}_{u, t, \boldsymbol{\sigma}} \circ \mathcal{E}_{u, \boldsymbol{\sigma}}^{(\mathcal{L}-\mathcal{K}) \times(\mathcal{L}-\mathcal{K}),(2)}\right)_{\mathbf{a}} \dd u&\prec (\ilambda^2W^{d})^{-\frac13}\int_s^t \frac{{\cal A}_{u,t, \ba} }{\eta_u} \dd u \prec (\ilambda^2W^{d})^{-\frac13}\sup_{u\in[s,t]}{\cal A}_{u,t, \ba},\label{jymwons-L-K}
			\\ 
			\int_s^t\left(\mathcal{U}^{(2)}_{u, t, \boldsymbol{\sigma}} \circ \mathcal{E}_{u, \boldsymbol{\sigma}}^{\Gc,(2)}\right)_{\mathbf{a}} \dd u
			&\prec (\ilambda^2W^{d})^{-\frac12}\int_s^t \frac{{\cal A}_{u,t,\ba}}{\eta_u} \dd u\prec (\ilambda^2W^{d})^{-\frac12}\sup_{u\in[s,t]}{\cal A}_{u,t, \ba},\label{jymwons-Gc}
		\end{align}
		where the two-dimensional tensor $\cal A$ is defined as 
		\begin{align}
			{\cal A}_{u,t, \ba}:=&~W^{-d}\widetilde{\cal T}^{L}_{u,D} \p{|a_1-a_2|}+(t-u)\sum_b \br{\Theta_{t,a_1b}^{(+,-)}\cdot  W^{-d}\widetilde{  \cal T}^{L}_{u,D}\p{|b-a_2|}
				+ W^{-d}\widetilde{  \cal T}^{L}_{u,D}\p{|a_1-b|}\cdot \Theta_{t, b a_2}^{(+,-)}} \nonumber\\  
			&~+(t-u)^2\sum_{b_1,b_2}\Theta_{t,a_1 b_1}^{(+,-)} \cdot  W^{-d}\widetilde{ \cal T}^{L}_{u,D}\p{|b_1-b_2|} \cdot \Theta_{t,b_2a_2}^{(+,-)},\quad \forall \ba=(a_1,a_2)\in (\Zn)^2. \label{eq:def_calA5}
		\end{align}
		Similarly, we write the martingale term as
		\begin{align}
			\int_{s}^t \left(\mathcal{U}^{(2)}_{u, t, \boldsymbol{\sigma}} \circ \dd \mathcal{E}^{M,(2)}_{u, \boldsymbol{\sigma}}\right)_{\ba} 
			&=\int_{s}^t \frac {u^2}{t^2}\dd \mathcal{E}^{M,(2)}
			_{u, \boldsymbol{\sigma}, {{\ba}}} 
			+ \left( \Theta_{t}^{(+,-)}\cdot \p{\int_{s}^t\frac{t-u}{t}
				\dd \mathcal{E}^{M,(2)}
				_{u, \boldsymbol{\sigma}}}\right)_{{\ba}}
			+ \left(\left(\int_{s}^t
			\frac{t-u}{t}\dd\mathcal{E}^{M,(2)}
			_{u, \boldsymbol{\sigma}}\right)\cdot\Theta_{t}^{(+,-)}  \right)_{{\ba}}\nonumber
			\\
			&\quad +
			\left(\Theta_{t}^{(+,-)}\cdot\left(\int_{s}^t
			\frac{(t-u)^2}{t^2}
			\dd\mathcal{E}^{M,(2)}
			_{u, \boldsymbol{\sigma}}\right)\cdot\Theta_{t}^{(+,-)}  \right)_{{\ba}}.\label{jymwons}
		\end{align}
		Combining the second bound in \eqref{S5WG+M} with the Burkholder-Davis-Gundy inequality (recall \eqref{aaswtghh}), we get   
		\begin{align*}
			\int_{s}^t (t-u)^r\cdot 
			\dd \mathcal{E}^{M,(2)}_{u, \boldsymbol{\sigma}, {{\ba}}} \prec (1-s)^r (\ilambda^2W^{d})^{-\frac14} \cdot W^{-d}    \widetilde{{\cal T}}^{L} _{t,D}\p{|a_1-a_2|},\quad \forall r\in\{0,1,2\}.  
		\end{align*}
		Plugging it into \eqref{jymwons} and using the $(\infty\to \infty)$-norm of $\Theta_t^{(+,-)}$ given by \eqref{eq:THETAinftinf}, we obtain 
		\begin{align}\label{uuwmskiow}
			\int_{s}^t \left(\mathcal{U}^{(2)}_{u, t, \boldsymbol{\sigma}} \circ \dd\mathcal{E}^{M,(2)}_{u, \boldsymbol{\sigma}}\right)_{{\ba}}
			\prec \frac{(1-s)^2}{(1-t)^2} \cdot (\ilambda^2W^{d})^{-\frac14} \max_{u\in[s,t]}{\cal A}_{u,t, \ba}.
		\end{align}
		Now, inserting the bounds \eqref{jymwons-L-K}, \eqref{jymwons-Gc}, and \eqref{uuwmskiow} into \eqref{int_K-LcalE_n=2} gives that 
		\begin{align}\label{iois,mtx}
			(\mathcal{L} - \mathcal{K})^{(2)}_{t, \boldsymbol{\sigma}, {\ba}} \prec &
			\left(\mathcal{U}^{(2)}_{s, t, \boldsymbol{\sigma}} \circ (\mathcal{L} - \mathcal{K})^{(2)}_{s, \boldsymbol{\sigma}}\right)_{{\ba}}+ \frac{(1-s)^2}{(1-t)^2}\cdot (\ilambda^2W^{d})^{-\frac14} \sup_{u\in [s,t]} {\cal A}_{u,t,{\ba}}  .
		\end{align}
		Applying the estimate \eqref{prop:ThfadC} to $\Theta_t^{(+,-)}$, along with the bound \eqref{TTT2}, we obtain that
		\begin{align}\label{uwp2,92kj}
			(1-u) \sum_b \Theta_{t,a_1b}^{(+,-)} \cdot \widetilde{  \cal T}^{L}_{u,D}\p{|b-a_2|} &\prec (1-u)\sum_b \cal T_{t}\p{|a_1-b|} \br{\cal T_{u}\p{|b-a_2|} +W^{-D}} \nonumber\\
			&\prec \cal T_{t}\p{|a_1-a_2|} + \frac{1-u}{1-t}W^{-D} \lesssim \widetilde{  \cal T}^{L}_{t,D-1}\p{|a_1-a_2|}.
		\end{align}
		Inserting this estimate into the definition of $\cal A$ in \eqref{eq:def_calA5}, we can bound ${\cal A}_{u,t,{\ba}}$ as   
		\begin{align}\label{uwftgwesj}
			\max_{u\in [s,t]} {\cal A}_{u,t,{\ba}}\prec \frac{1-s}{1-t}\cdot W^{-d}\widetilde{  \cal T}^{L}_{t,D-2}\p{|a_1-a_2|}.
		\end{align}
		Combining this with \eqref{iois,mtx}, we obtain, for any large constant $D>0$,
		\begin{align}\label{iois,mtx2}
			(\mathcal{L} - \mathcal{K})^{(2)}_{t, \boldsymbol{\sigma}, {\ba}} \prec &
			\left(\mathcal{U}^{(2)}_{s, t, \boldsymbol{\sigma}} \circ (\mathcal{L} - \mathcal{K})^{(2)}_{s, \boldsymbol{\sigma}}\right)_{{\ba}}+ (\ilambda^2W^{d})^{-\frac15} \cdot W^{-d}\widetilde{  \cal T}^{L}_{t,D}\p{|a_1-a_2|}.
		\end{align}
		For the first term on the RHS, we claim that
		\begin{align}\label{iksjuwjx0}
			\left(\mathcal{U}^{(2)}_{s, t, \boldsymbol{\sigma}} \circ (\mathcal{L} - \mathcal{K})^{(2)}_{s, \boldsymbol{\sigma}}\right)_{{\ba}} \prec (\ilambda^2W^{d})^{-\frac15}  \cdot W^{-d} {\cal T}_{t}\p{|a_1-a_2|}+W^{-D}.
		\end{align}
		Combining \eqref{iois,mtx2} and \eqref{iksjuwjx0}, we conclude \eqref{Eq:Gdecay_flow} for the case $\ilambda^2/L^{2}\le 1- t\le 1-s \le \ilambda^2$. 
		
		\begin{proof}[\bf Proof of \eqref{iksjuwjx0}]   
		Using the decomposition \eqref{eq:decompU}, we can expand the LHS of \eqref{iksjuwjx0} as 
			\begin{align}
				\left(\mathcal{U}^{(2)}_{s, t, \boldsymbol{\sigma}} \circ (\mathcal{L} - \mathcal{K})^{(2)}_{s, \boldsymbol{\sigma}}\right)_{{\ba}}=~&\frac{s^2}{t^2}\left(\mathcal{L} - \mathcal{K}\right)^{(2)}_{s, \boldsymbol{\sigma},{{\ba}}}
				+\frac{t-s}{t} \left(\Theta_{t}^{(+,-)} \cdot \left (\mathcal{L} - \mathcal{K}\right)^{(2)}_{s, \boldsymbol{\sigma}}\right)_{{\ba}}
				+\frac{t-s}{t} \left(\left (\mathcal{L} - \mathcal{K}\right)^{(2)}_{s, \boldsymbol{\sigma}}\cdot\Theta_{t}^{(+,-)}  \right)_{{\ba}} \nonumber\\
				&~+\frac{(t-s)^2}{t^2}  \left(\Theta_{t}^{(+,-)} \cdot\left (\mathcal{L} - \mathcal{K}\right)^{(2)}_{s, \boldsymbol{\sigma}}\cdot\Theta_{t}^{(+,-)}  \right)_{{\ba}} .\label{uwkxkisjwj0}
			\end{align}
		With the inductive hypothesis \eqref{Eq:Gdecay+IND} on $\left(\mathcal{L} - \mathcal{K}\right)_{s,\bsig,\ba}^{(2)}$ and using \eqref{uwp2,92kj} again, we get that 
			\be\label{uwkxkisjwj}
			(t-s) \left|\left(\Theta_{t}^{(+,-)} \cdot \left (\mathcal{L} - \mathcal{K}\right)^{(2)}_{s, \boldsymbol{\sigma}}\right)_{{\ba}}\right|+(t-s) \left| \left(\left (\mathcal{L} - \mathcal{K}\right)^{(2)}_{s, \boldsymbol{\sigma}}\cdot\Theta_{t}^{(+,-)}  \right)_{{\ba}}\right|\prec  (\ilambda^2W^{d})^{-\frac15} \cdot W^{-d}\widetilde{  \cal T}^{L}_{t,D}\p{|a_1-a_2 |}.
			\ee
		From \eqref{Eq:Gdecay+IND}, \eqref{uwkxkisjwj0}, and \eqref{uwkxkisjwj}, we obtain that  
			\begin{align}\label{ilowkidjsw}
				\left(\mathcal{U}^{(2)}_{s, t, \boldsymbol{\sigma}} \circ (\mathcal{L} - \mathcal{K})^{(2)}_{s, \boldsymbol{\sigma}}\right)_{{\ba}}&\prec
				(\ilambda^2W^{d})^{-\frac15} \cdot W^{-d}\widetilde{  \cal T}^{L}_{t,D}\p{|a_1-a_2|}\nonumber\\
				&+(1-s)^2 \left| \left(\Theta_{t}^{(+,-)} \cdot\left (\mathcal{L} - \mathcal{K}\right)^{(2)}_{s, \boldsymbol{\sigma}}\cdot\Theta_{t}^{(+,-)}  \right)_{{\ba}}\right| .
			\end{align}
			To show \eqref{iksjuwjx0}, it remains to prove the following estimate for $\ba=(a_1,a_2)$ and $\bsig\in \{(+,-),(-,+)\}$:   
			\begin{align}\label{iksjuwjx}
				(1-s)^2 \left(\Theta_t^{(+,-)} \cdot(\mathcal{L}-\mathcal{K})^{(2)}_{s, \bsig} \cdot \Theta_t^{(+,-)}\right)_{\ba}\prec (\ilambda^2W^{d})^{-\frac15}  \cdot W^{-d} {\cal T}_{t}\p{|a_1-a_2|}+W^{-D}.
			\end{align}

			Recall that we assume \eqref{eq:assmtlarge}, under which we have  
			\be\label{eq:ells_to_ellt}
			\ell_t\ge \ell_s\cdot (\log W)^{5}.
			\ee
			Due to the exponential decay of $\Theta_t^{(+,-)}$ and $({\cal L-\cal K})^{(2)}_{s,\bsig}$ given by \eqref{prop:ThfadC} and \eqref{Eq:Gdecay+IND}, respectively, we only need to focus on the following regime: 
			\be\label{eq:ells_to_ellt2}
			|a_1-a_2|\le \frac{1}{2}(\log W)^{3/2}\cdot \ell_t+
			(\log W)^{5/2}\cdot \ell_s \le (\log W)^{3/2}\cdot \ell_t;
			\ee
			otherwise the LHS of \eqref{iksjuwjx} is an error of order $W^{-D}$ for any large constant $D>0$. Under this condition, we have \smash{$\exp\big(-\p{|a_1-a_2|/\ell_t}^{1/2}\big)\prec 1$}, so proving \eqref{iksjuwjx} amounts to establishing the following bound:
			\be\label{iksjuwjx2} (1-s)^2 \left(\Theta_t^{(+,-)} \cdot(\mathcal{L}-\mathcal{K})^{(2)}_{s, \bsig} \cdot \Theta_t^{(+,-)}\right)_{\ba}\prec \frac{(\ilambda^2W^{d})^{-\frac65}}{|a_1-a_2|^{d-2}+1}. \ee
			To show this estimate, we write the LHS of \eqref{iksjuwjx2} as 
			\begin{align*}
				&\ \quad (1-s)^2 \sum_{b_1,b_2}\Theta_{t,a_1b_1}^{(+,-)}(\mathcal{L}-\mathcal{K})^{(2)}_{s, \bsig,(b_1,b_2)}  \p{\Theta_{t,b_2a_2}^{(+,-)}-\Theta_{t,b_1a_2}^{(+,-)}}+(1-s)^2 \sum_{b_1,b_2}\p{\Theta_{t,a_1b_1}^{(+,-)}\Theta_{t,a_2b_1}^{(+,-)}} (\mathcal{L}-\mathcal{K})^{(2)}_{s, \bsig,(b_1,b_2)} \\
				&=(1-s)^2 \sum_{b_1,b_2}\Theta_{t,a_1b_1}^{(+,-)}(\mathcal{L}-\mathcal{K})^{(2)}_{s, \bsig,(b_1,b_2)}  \p{\Theta_{t,b_2a_2}^{(+,-)}-\Theta_{t,b_1a_2}^{(+,-)}}+(1-s)^2 \sum_{b_1} \Theta_{t,a_1b_1}^{(+,-)} \Theta_{t,a_2b_1}^{(+,-)}  \frac{\im \tr\p{(G_s-m)E_{b_1}}}{W^d\eta_s} \\
				&=: f_{\ba} + g_{\ba}.
			\end{align*}
			Above, we have applied Ward's identities \eqref{WI_calL} and \eqref{WI_calK} to express $\sum_{b_2} (\mathcal{L}-\mathcal{K})^{(2)}_{s, \bsig,(b_1,b_2)}$ as 
			\be\label{eq:PcalBterm}
			\sum_{b_2} (\mathcal{L}-\mathcal{K})^{(2)}_{s, \bsig,(b_1,b_2)}=\frac{\im {({\cal L-\cal K})^{(1)}_{s,+, b_1}}}{W^d\eta_s}=\frac{\im \tr\p{(G_s-M)E_{b_1}}}{W^d\eta_s}\prec \frac{(\ilambda^2W^d)^{-1}}{W^{d}\eta_s}, \ee
			where we use the averaged local law \eqref{Gt_avgbound_flow} for $\tr\p{(G_s-M)E_{b_1}}$ in the last step. With \eqref{eq:PcalBterm}, we can bound $g_{\ba}$ as follows for any large constant $D>0$:
			\begin{align}
				g_{\ba} &\prec \frac{1-s}{\ilambda^2W^{2d}} \sum_{b_1} \Theta_{t,a_1b_1}^{(+,-)} \Theta_{t,a_2b_1}^{(+,-)}  \lesssim \frac{1-s}{\ilambda^2W^{2d}} \sum_{b_1} \cT_{t}(|a_1-b_1|)\cT_{t}(|a_2-b_1|) \nonumber\\
				&\lesssim \frac{1-s}{1-t} \p{\ilambda^2W^{d}}^{-1}\cdot W^{-d} \cT_{t}(|a_1-a_2|) \le \frac{(\ilambda^2W^{d})^{-\frac65} }{|a_1-a_2|^{d-2}+1},\label{eq:boundga}
			\end{align}
			where, in the second step, we apply the estimate \eqref{prop:ThfadC}, and in the third step, we use the bound \eqref{TTT2}.

			To conclude \eqref{iksjuwjx2}, it remains to show that 
			\be\label{iksjuwjx3} 
			f_{\ba}\prec \frac{(\ilambda^{2}W^d)^{-\frac65} }{|a_1-a_2|^{d-2}+1}. \ee
			To simplify notation, we abbreviate ${\cal B}:=(\mathcal{L} - \mathcal{K})^{(2)}_{s, \boldsymbol{\sigma}}$. By the assumption \eqref{Eq:Gdecay+IND}, we have
			\be\label{eq:propcalB} 
			{\cal B}_{b_1b_2}\prec \frac{(\ilambda^{2}W^d)^{-\frac65} }{|b_1-b_2|^{d-2}+1},\quad \text{and} \quad  {\cal B}_{b_1b_2}\prec W^{-D}\quad \text{whenever}\ \  |b_1-b_2|\ge (\log W)^3\ell_s ,
			\ee
			for any large constant $D>0$. 
			To show \eqref{iksjuwjx3}, we decompose $f_{\ba}$ into the following two parts:
			\begin{align*}
				f_{\ba}^{\mathrm{near}}&:=(1-s)^2\sum_{b_1: |b_1-a_1|\wedge |b_1-a_2| \le (\log W)^4\ell_s} \sum_{b_2} \Theta_{t,a_1b_1}^{(+,-)}\cal B_{b_1b_2}\p{\Theta_{t,b_2a_2}^{(+,-)}-\Theta_{t,b_1a_2}^{(+,-)}},\\  f_{\ba}^{\mathrm{far}}&:=(1-s)^2\sum_{b_1: |b_1-a_1|\wedge |b_1-a_2| > (\log W)^4\ell_s} \sum_{b_2} \Theta_{t,a_1b_1}^{(+,-)}\cal B_{b_1b_2}\p{\Theta_{t,b_2a_2}^{(+,-)}-\Theta_{t,b_1a_2}^{(+,-)}}.
			\end{align*}
			For the term $f_{\ba}^{\mathrm{near}}$, using \eqref{eq:propcalB} and \eqref{prop:ThfadC}, we get that for any constant $D>0$, 
			\begin{align*}
				f_{\ba}^{\mathrm{near}} &\prec  (1-s)^2\sum_{b_1}^* \sum_{b_2:|b_2-b_1|\le (\log W)^3\ell_s}\frac{\ilambda^{-2}}{|a_1-b_1|^{d-2}+1}\frac{(\ilambda^{2}W^d)^{-\frac65}}{|b_1-b_2|^{d-2}+1} \p{\frac{\ilambda^{-2}}{|b_2-a_2|^{d-2}+1}+\frac{\ilambda^{-2}}{|b_1-a_2|^{d-2}+1}} + W^{-D}
				\\
				&\prec  (1-s)^2\sum_{b_1}^* \frac{(\ilambda^{2}W^d)^{-\frac65}}{|a_1-b_1|^{d-2}+1}\cdot {\frac{\ilambda^{-4}\ell_s^2}{|b_1-a_2|^{d-2}+1}}
				\prec (\ilambda^{2}W^d)^{-\frac65} \cdot \frac{(1-s)^2 \ilambda^{-4}\ell_s^4}{|a_1-a_2|^{d-2}+1}  \lesssim \frac{(\ilambda^{2}W^d)^{-\frac65}}{|a_1-a_2|^{d-2}+1},
			\end{align*}
			where $\sum^*_{b_1}$ refers to the summation over $\{b_1:|b_1-a_1|\wedge |b_1-a_2| \le (\log W)^4\ell_s\}$. 
			To control the term \smash{$f_{\ba}^{\mathrm{far}}$}, we employ a CLT-type cancellation mechanism, extending the approach developed for 2D random band matrices in \cite{DYYY25}.

			\begin{lemma}\label{lem;CLT}
				In the setting of \Cref{lem:main_ind}, fix any $\ilambda^2/L^{2}\le 1-t \le 1-s\le \ilambda^2$ so that \eqref{con_st_ind} holds. Then, $f_{\ba}^{\mathrm{far}}$ satisfies the following estimate under the conditions \eqref{eq:ells_to_ellt} and \eqref{eq:ells_to_ellt2}:
				\be\label{iksjuwjx3_far} f_{\ba}^{\mathrm{far}}\prec \frac{(\ilambda^{2}W^d)^{-\frac65} }{|a_1-a_2|^{d-2}+1}. \ee
			\end{lemma}
			With this lemma, we conclude the estimate \eqref{iksjuwjx3}, which implies \eqref{iksjuwjx}, and hence completes the proof of \eqref{iksjuwjx0} for the case $\ilambda^2/L^{2}\le 1-t \le 1-s\le \ilambda^2$ together with \eqref{ilowkidjsw}. 
		\end{proof}

		\begin{proof}[\bf Proof of \Cref{lem;CLT}]
			We decompose $f_{\ba}^{\mathrm{far}}$ as its expectation $\E f_{\ba}^{\mathrm{far}}$ plus the fluctuation part $\IE f_{\ba}^{\mathrm{far}}$, where $\IE$ denotes $\mathbb {IE}:=1-\E$, and $\E f_{\ba}^{\mathrm{far}}$ and $\IE f_{\ba}^{\mathrm{far}}$ represent the expressions obtained by replacing $\cal B$ in $f_{\ba}^{\mathrm{far}}$ with $\E\cal B$ and $\IE \cal B$, respectively. 
			Due to the translation invariance and symmetry of $\E\cal B$, the expectation part $\E f_{\ba}^{\mathrm{far}}$ involves a second-order difference of the $\Theta^{(+,-)}$-propagator, which satisfies the improved bound \eqref{prop:BD2} that is summable in $a$. 
			For the fluctuation part $\IE f_{\ba}^{\mathrm{far}}$, the $\Theta^{(+,-)}$-propagator effectively transfers the estimate of $\IE \cal B$ from shorter scales of order $\ell_s$ to the larger scale $\ell_t$. Moreover, intuitively, the random variables \smash{$\IE \cal B_{b_1b_2}$ and $\IE \cal B_{b_1'b_2'}$} become asymptotically independent when $|b_1-b_1'|$ exceeds the typical decay scale $\ell_s$ of $\cal B$. 
			Thus, the term $\IE f_{\ba}^{\mathrm{far}}$ can be viewed as a superposition of roughly $\ell_t^d/\ell_s^d$ asymptotically independent, centered random variables, yielding an additional improvement of order through a CLT-type argument.

			We now proceed to the formal proof. 
			By the translation invariance and symmetry of our model on the block level, $\E \cal B$ is also translationally invariant and symmetric in the following sense:
			\[ \E \cal B(a+c,b+c)=\E\cal B(a,b),\quad \E \cal B(a,b)=\E\cal B(b,a),\quad \forall a,b,c\in \Zn.\]
			These imply that \( \E	\cal B(b_1,b_2)=\E \cal B(b_2,b_1)=\E\cal B(b_1,2b_1-b_2)\), with which we get 
			\begin{align*}
				\sum_{b_2} \E\cal B_{b_1b_2}\p{\Theta_{t,b_2a_2}^{(+,-)}-\Theta_{t,b_1a_2}^{(+,-)}}&=\sum_{b_2} \cal \E \cal B_{b_1,(2b_1-b_2)}\p{\Theta_{t,b_2a_2}^{(+,-)}-\Theta_{t,b_1a_2}^{(+,-)}} =\sum_{b_2} \E \cal B_{b_1b_2}\p{\Theta_{t,(2b_1-b_2)a_2}^{(+,-)}-\Theta_{t,b_1a_2}^{(+,-)}}.
			\end{align*}
			Thus, we can rewrite $\E f_{\ba}^{\mathrm{far}}$ as 
			\begin{align*}
				\E	f_{\ba}^{\mathrm{far}}&=(1-s)^2\sum_{b_1}^\star \sum_{b_2} \Theta_{t,a_1b_1}^{(+,-)}\E\cal B_{b_1b_2}\cdot \frac 12 \p{\Theta_{t,b_2a_2}^{(+,-)}+\Theta_{t,(2b_1-b_2)a_2}^{(+,-)} -2\Theta_{t,b_1a_2}^{(+,-)}},
			\end{align*}
			where $\sum^\star_{b_1}$ refers to the summation over $\{b_1:|b_1-a_1|\wedge |b_1-a_2| > (\log W)^4\ell_s\}$. Using \eqref{prop:ThfadC} and \eqref{prop:BD2}, along with the properties of $\cal B$ in \eqref{eq:propcalB}, we can bound the above expression by 
			\begin{align}
				\E	f_{\ba}^{\mathrm{far}} &\prec (1-s)^2\sum_{b_1}^\star \frac{\ilambda^{-2}}{|a_1-b_1|^{d-2}+1}\frac{\ilambda^{-2}\ell_s^2}{|a_2-b_1|^{d}+1}  \sum_{b_2:|b_2-b_1|\le (\log W)^3\ell_s} \frac{(\ilambda^{2}W^d)^{-\frac65}}{|b_1-b_2|^{d-2}+1} + W^{-D} \nonumber\\
				&\prec \frac{(\ilambda^{2}W^d)^{-\frac65}\cdot (1-s)^2\ilambda^{-4}\ell_s^4}{|a_1-a_2|^{d-2}+1} \lesssim \frac{(\ilambda^{2}W^d)^{-\frac65}}{|a_1-a_2|^{d-2}+1} \label{eq:boundEfar}
			\end{align}
			for arbitrarily large constant $D>0$.
			
			To deal with the term $\IE f_{\ba}^{\mathrm{far}}$, we need to explore the CLT cancellation within it as in \cite[Section 7]{DYYY25} and \cite[Appendix A.10]{RBSO1D}. By Markov's inequality and using $\ilambda^{4}(1-s)^{-2}\asymp\ell_s^4$, it suffices to prove the following $(2p)$-th moment bound for any fixed $p\in \N$:
			\be\label{eq:main_challenge3}
			\E\abs{\ilambda^4(1-s)^{-2}(\ilambda^{2}W^d)^{\frac65}\cdot  \IE f_{\ba}^{\mathrm{far}}}^{2p} \prec \left(\frac{\ell_s^4}{|a_1-a_2|^{d-2}+1}\right)^{2p} . 
			\ee
			Abbreviating \smash{$\Xi_{a_2}\p{b_1,b_2}:=\ilambda^2(\Theta_{t,b_2a_2}^{(+,-)}-\Theta_{t,b_1a_2}^{(+,-)})$} and $\mathsf B:=(\ilambda^{2}W^d)^{\frac65}\cal B$, we can write the LHS of \eqref{eq:main_challenge3} as 
			\be\label{eq:2p_product}\sum_{\bfb^{(1)},\ldots, \bfb^{(2p)}}^{\star} \prod_{k=1}^{2p}\p{\ilambda^2\Theta_{t}^{(+,-)} (a_1,b_1^{(k)})\cdot \Xi_{a_2}(b_1^{(k)},b_2^{(k)})} \cdot \E\bigg\{\prod_{k=1}^{p}   \IE \mathsf B_{b_1^{(k)}b_2^{(k)}} \cdot \prod_{k=p+1}^{2p} \IE \overline{\mathsf B}_{b_1^{(k)}b_2^{(k)}}\bigg\}+\OO(W^{-D})\ee
			for any large constant $D>0,$ where $\sum^\star$ refers to the summation of \smash{$\bfb^{(k)}=(b_1^{(k)},b_2^{(k)})$} over the regions 
			\be\label{eq:sumregionsforb}\left\{(b_1^{(k)},b_2^{(k)}):|b_1^{(k)}-a_1|\wedge |b_1^{(k)}-a_2| > (\log W)^4\ell_s,\ |b_1^{(k)}-b_2^{(k)}| \le (\log W)^3\ell_s\right\}.\ee 
			
			First, suppose the following pairing condition holds: 
			\be\label{eq:pairingcond}
			\text{for any }\ k\in \qqq{2p},\ \text{ there exists } \ l\ne k \ \text{ such that } \ |b^{(k)}_1-b^{(l)}_1|\le 10(\log W)^3\ell_s.\ee  
			We can divide $\qqq{2p}$ into a disjoint union of $r$ subsets $\qqq{2p}=\sqcup_{i=1}^r A_i,$ constructed such that for every $k\in A_i$, any $l\in \qqq{2p}$ satisfying \smash{$|b^{(k)}_1-b^{(l)}_1|\le 10(\log W)^3\ell_s$} also belongs to $A_i$. According to the pairing condition, we have $r\le p$. Without loss of generality, suppose every subset $A_i$ contains an index $k_i$. Subject to the condition \eqref{eq:pairingcond} and the above decomposition $\qqq{2p}=\sqcup_{i=1}^r A_i$ (the corresponding summation being denoted by \smash{$\sum^{\star_1}$}), we can bound \eqref{eq:2p_product} as follows:
			\begin{align}
				\label{eq:2p_product_pair}
				&~\sum^{\star_1}_{\bfb^{(1)},\ldots, \bfb^{(2p)}} \prod_{i=1}^{r}\left(\prod_{k\in A_i}\p{\ilambda^2\Theta_{t}^{(+,-)} (a_1,b_1^{(k)})\cdot \Xi_{a_1}(b_1^{(k)},b_2^{(k)})}\cdot \big|\IE \mathsf B_{b_1^{(k)}b_2^{(k)}}\big|\right) \\
				\prec &~\sum^{\star_1}_{\bfb^{(1)},\ldots, \bfb^{(2p)}} \prod_{i=1}^{r}\left( \frac{1}{\big|a_1-b_1^{(k_i)}\big|^{d-2}+1}\right)^{|A_i|}\p{\frac{\ell_s}{\big|a_2-b_1^{(k_i)}\big|^{d-1}+1}}^{|A_i|}\prod_{k\in A_i}\p{\frac{1}{\big|b_1^{(k)}-b_2^{(k)}\big|^{d-2}+1}} \nonumber\\
				\prec &~\sum^{\star_2}_{b_1^{(k_1)},\ldots, b_1^{(k_r)}} \prod_{i=1}^{r}\left( \frac{1}{\big|a_1-b_1^{(k_i)}\big|^{d-2}+1}\right)^{|A_i|}\p{\frac{\ell_s}{\big|a_2-b_1^{(k_i)}\big|^{d-1}+1}}^{|A_i|} (\ell_s^{d+2})^{|A_i|-1}\ell_s^2 \nonumber \\
				\lesssim &~\prod_{i=1}^{r}\left( \frac{1}{|a_1-a_2|^{d-2}+1}\right)^{|A_i|}\frac{\ell_s^d}{(\ell_s^{d-2})^{|A_i|}} \cdot (\ell_s^{d+2})^{|A_i|-1} \ell_s^2=\left(\frac{\ell_s^4}{|a_1-a_2|^{d-2}+1}\right)^{2p}.\nonumber 
			\end{align}
			Here, in the first step, we use the estimates \eqref{prop:ThfadC}, \eqref{prop:BD1}, and \eqref{eq:propcalB}, along with the conditions in \eqref{eq:sumregionsforb} and \eqref{eq:pairingcond}. In the second step, we take the summation over $\bfb^{(k)}$ for $k\in A_i\setminus\{k_i\}$ and over \smash{$b_2^{(k_i)}$} under the restrictions given by \eqref{eq:sumregionsforb} and \eqref{eq:pairingcond}, yielding a factor $(\ell_s^{d+2})^{|A_i|-1}\ell_s^2$ for each $i\in\qqq{r}$, and $\sum^{\star_2}$ refers to the summation under the constraints \smash{$|b_1^{(k_i)}-a_1|\wedge |b_1^{(k_i)}-a_2| > (\log W)^4\ell_s$} for $i\in\qqq{r}$. In the third step, we have used the following bound for any $k\ge 2$:
			\begin{align}\label{eq:simplecalculus}
				\sum_{b:|b-a_1|\wedge |b-a_2| > (\log W)^4\ell_s} \left( \frac{1}{|a_1-b|^{d-2}+1}\right)^{k}\p{\frac{\ell_s}{|a_2-b|^{d-1}+1}}^{k}\lesssim \p{\frac{1}{|a_1-a_2|^{d-2}+1}}^k \frac{\ell_s^d}{(\ell_s^{d-2})^k} . 
			\end{align}
			To see why this holds, consider the case $|b-a_1|\ge |b-a_2|$---the case $|b-a_1|\le |b-a_2|$ can be handled similarly. We can bound the corresponding summation by 
			\begin{align*}
				\left( \frac{1}{|a_1-a_2|^{d-2}+1}\right)^{k} \sum_{b:|b-a_1|\wedge |b-a_2| > (\log W)^4\ell_s} \p{\frac{\ell_s}{|a_2-b|^{d-1}+1}}^{k}\prec \left( \frac{1}{|a_1-a_2|^{d-2}+1}\right)^{k} \frac{\ell_s^d}{(\ell_s^{d-2})^k} , 
			\end{align*}
			where we use that $(|a_2-b|^{d-1}+1)^{-k}$ is summable when $d\ge 3$ and $k\ge 2$. This gives \eqref{eq:simplecalculus}.

			Now, with \eqref{eq:2p_product_pair}, we obtain \eqref{eq:main_challenge3} under the pairing condition \eqref{eq:pairingcond}. It remains to control \eqref{eq:2p_product} when \eqref{eq:pairingcond} does not hold. In fact, we can prove the following result: if there exists an isolated vertex $\bfb^{(i)}$ such that \smash{\(\min_{j:j\ne i}|b_1^{(i)}-b_1^{(j)}|\ge 10(\log W)^3 \ell_s\)}, then for any large constant $D>0$,
			\be\label{eq:bound_isolated}
			\E\bigg\{\prod_{k=1}^{p}   \IE \mathsf B_{b_1^{(k)}b_2^{(k)}} \cdot \prod_{k=p+1}^{2p} \IE \overline{\mathsf B}_{b_1^{(k)}b_2^{(k)}}\bigg\} \le W^{-D}. 
			\ee
			Since the proof of this bound is exactly the same as that for \cite[equation (7.39)]{DYYY25} and \cite[equation (A.112)]{RBSO1D} (which does not depend on the dimension $d$), we omit the details here. This completes the proof of \eqref{eq:main_challenge3}, and hence concludes \Cref{lem;CLT} together with \eqref{eq:boundEfar}. 
		\end{proof}
		
		\subsection{The case \texorpdfstring{$\ilambda^2/L^{d} \leq 1-t \leq 1-s \leq \ilambda^2/L^{2}$}{1-t>L-d}}
		
		The $\sig_1=\sig_2$ case is straightforward by analyzing equation \eqref{int_K-LcalE_n=2} with the estimates \eqref{Eq:Gdecay_w}, \eqref{eq:LW_conclusion_exp}, \eqref{eq:MG_conclusion3}, the decomposition \eqref{eq:decompU}, and the exponential bound \eqref{prop:ThfadC_short}. We therefore omit the details and focus on the case $\sig_1\ne \sig_2$ in the following.
		In this case, we first remove the zero mode from the $(\cL-\cK)^{(2)}$-loop using the zero-mode-removing operator introduced in \Cref{def;zero_mode_remove}. Once the zero mode is removed, the rescaled propagator \smash{$(1-s)\zTheta_t^{(+,-)}$} becomes summable by \eqref{prop:ThfadC0}.

		Applying Ward's identities \eqref{WI_calL} and \eqref{WI_calK}, we can express \smash{$(\mathcal{L} - \mathcal{K})_{t, \boldsymbol{\sigma}}^{(2)}$} as: 
		\begin{align}\label{zYU1}
			(\mathcal{L} - \mathcal{K})^{(2)}_{t, \boldsymbol{\sigma}, {\ba}} - \left[Q^{(1)}\circ(\mathcal{L} - \mathcal{K})^{(2)}_{t, \boldsymbol{\sigma}}\right]_{{\ba}}&=
			\frac{\im {({\cal L-\cal K})^{(1)} _{t,+, a_2}}}{N\eta_t}  \prec {\frac{(\ilambda^{2}W^{d})^{-1}}{N\eta_t}},
		\end{align}
		where, in the second step, we apply the averaged local law \eqref{Gt_avgbound_flow}. 
		Then, we analyze the term \smash{$Q^{(1)}\circ(\mathcal{L} - \mathcal{K})^{(2)}_{t, \boldsymbol{\sigma}}$} using equation \eqref{iisuwjyys} with $n=2$. By definition, we have  \smash{$Q^{(1)}\circ\Theta_t^{(+,-)}=\zTheta_t^{(+,-)}$}. Using \eqref{prop:ThfadC0}, we obtain the following counterparts to the equations \eqref{eq:THETAinftinf} and \eqref{uwp2,92kj}: 
		\begin{align}\label{ThetaBcirc_infint} 
			\|\zTheta_t^{(\sB)}\|_{\infty\to \infty} = \max_a \sum_{b}\big|\zTheta^{(+,-)}_{t,ab}\big| & \prec \ilambda^{-2}L^2, \\ \label{uwp2,92kj00}
			(1-u) \sum_b \zTheta_{t,a_1b}^{(\sB)} \cdot \widetilde{  \cal T}^{L}_{u,D}\p{|b-a_2|} &\prec \widetilde{  \cal T}^{L}_{t,D}\p{|a_1-a_2|}.
		\end{align}
		With these estimates in hand, and employing a similar argument to the one above (i.e., the one leading to \eqref{iois,mtx2} and \eqref{ilowkidjsw}), we can derive the following estimate for  \smash{$Q^{(1)}\circ(\mathcal{L} - \mathcal{K})^{(2)}_{t, \boldsymbol{\sigma}}$}: 
		\begin{align}\label{zYU2}
			\left[Q^{(1)}\circ(\mathcal{L} - \mathcal{K})^{(2)}_{t, \boldsymbol{\sigma}}\right]_{{\ba}}
			& \prec (\ilambda^{2}W^d)^{-\frac15}\cdot W^{-d}\widetilde{  \cal T}^{L }_{t,D}\p{|a_1-a_2|}+ (1-s)^2 \left(\zTheta_t^{(+,-)} \cdot(\mathcal{L}-\mathcal{K})^{(2)}_{s,  \boldsymbol{\sigma}} \cdot \Theta_t^{(+,-)}\right)_{\ba} .
		\end{align}
		To control the second term on the RHS, we apply the bounds \eqref{prop:ThfadC}, \eqref{prop:ThfadC0}, and \eqref{Eq:Gdecay+IND} to get that 
		\begin{align*}
			&~(1-s)^2 \left(\zTheta_t^{(+,-)} (\mathcal{L}-\mathcal{K})^{(2)}_{s,  \boldsymbol{\sigma}}  \Theta_t^{(+,-)}\right)_{\ba}\\
			\prec &~  (1-s)^2 \frac{(\ilambda^2 W^{d})^{-\frac15}}{W^{d}} \sum_{b_1,b_2}\frac{\ilambda^{-2}}{|a_1-b_1|^{d-2}+1}\p{\frac{\ilambda^{-2}}{|b_1-b_2|^{d-2}+1}+\frac{1}{L^d(1-s)}} \p{ \frac{\ilambda^{-2}}{|b_2-a_2|^{d-2}+1} + \frac{1}{L^d(1-t)}}  \\
			\prec &~ (1-s) \frac{(\ilambda^2 W^{d})^{-\frac15}}{W^{d}}  \sum_{b_1}\frac{\ilambda^{-2}}{|a_1-b_1|^{d-2}+1} \left( \frac{\ilambda^{-2}}{|b_1-a_2|^{d-2}+1}+\frac{1}{L^d (1-t)}\right) \\
			\prec &~ \frac{(\ilambda^2 W^{d})^{-\frac15}}{W^{d}} \left(\frac{(1-s)\ilambda^{-4}L^2}{|a_1-a_2|^{d-2}+1}+\frac{(1-s)\ilambda^{-2}L^2}{L^{d}(1-t)}\right) \lesssim (\ilambda^2 W^{d})^{-\frac15} \cdot W^{-d}\widetilde{\mathcal{T}}_{t,D}^{L}\p{ |a_1-a_2|},
		\end{align*}
		where we use $B_{s,0}\asymp \ilambda^{-2}$ in the first step, and $1-s\le \ilambda^2/L^{2}$ in the second and fourth steps. Plugging it into \eqref{zYU2} and using \eqref{zYU1}, we complete the proof of \eqref{Eq:Gdecay_flow} for the case $\ilambda^2/L^{d} \leq 1-t \leq 1-s \leq \ilambda^2/L^{2}$. 
		
		\subsection{The case \texorpdfstring{$1-t \ge \ilambda^2$}{1-t>lambda2}}\label{sec:Step5_larget}

		In this case, the length scale $\ell_u$ remains identically equal to 1 throughout the evolution $u\in[s,t]$ (recall \eqref{eq:ellt}). With the estimate \eqref{Eq:Gdecay_w} established in Step 2, the proof of \eqref{Eq:Gdecay+s<g_flow} is relatively straightforward and may be viewed as a special case of the argument in \cite[Section 5.3]{YY_25}, where decay estimates for 2-$G$-loops were derived in the context of 1D random band matrices. 
		The key reason that the argument of \cite{YY_25} applies in the regime $1-t \ge \ilambda^2$ is that the prefactor $(W^{-d}B_{u,0})^{2}$ in the target estimate \eqref{Eq:Gdecay+s<g_flow} is of order $(W^d|1-u|)^{-2}$. Unlike the regime $1-u\ll \ilambda^2$, this quantity has no polynomial dependence on $|a_1-a_2|$, precisely matching the setting in dimension 1. 
		In fact, our proof here is much simpler, since we have already established exponential decay in \eqref{Eq:Gdecay_w} and obtained the optimal (maximum) $(\cal L - \cal K)$-loop estimate \eqref{Eq:L-KGt-flow}. 
		
		Given $u\in[s,t]$ and a sufficiently large constant $D>0$, we introduce the following tail function to control the tail behavior of the $(\cL-\cK)$-loops: 
		\begin{align}   
			{T} _{u, D} (r) :=  (W^d|1-u|)^{-2}
			\exp(- r^{1/2})+W^{-D},\quad \forall r\ge 0 .\label{def_WTuD}
		\end{align} 
		Let ${\cal J}^* _{u,D}\ge 1$ be a \emph{deterministic control parameter} such that 
		\be\label{eq:def_new_J*}
		\max_{\ba=(a,b)} \max_{\bsig}{\big|\left({\cal L-\cal K}\right)^{(2)}_{u,\bsig, \ba}\big|}\big/{ T}_{u,D}\p{|a-b|} \prec {\cal J}^* _{u,D}.
		\ee
		With the tail function \eqref{def_WTuD} and the control parameter ${\cal J}^* _{u,D}$, we can obtain the following estimates on the second to fourth terms on the RHS of \eqref{int_K-LcalE_n=2}.

		\begin{lemma}\label{lem_dec_calE} In the setting of  \Cref{lem:main_ind}, suppose $1-t\ge \ilambda^2$ and the estimates \eqref{Gt_bound_flow}--\eqref{Eq:L-KGt-flow} hold. Then, for any $u\in [s,t]$, $\bsig\in\{+,-\}^2$, $\ba=(a_1,a_2)\in(\Zn)^2$, and large enough $D$ (such that $W^D\ge N$), the following estimates hold:  
			\begin{align}\label{res_deccalE_lk}
				& {\cal E}^{(\cL-\cK)\times(\cL-\cK),(2)}_{u, \boldsymbol{\sigma},\ba} \big/ 
				{T}_{u,D}(|a_1-a_2|) \prec  (1-u)^{-1}\cdot   \left(W^d|1-u|\right)^{-1}
				\left({\cal J}^{*}_{u,D} \right)^{2},\\
				&\cal E^{\Gc,(2)}_{u, \boldsymbol{\sigma},\ba}  \big/ 
				{T}_{u,D}(|a_1-a_2|)
				\prec (1-u)^{-1}\br{{\bf 1}\left(|a_1-a_2|\le (\log W)^{\frac32}\right)  +  \left(W^d|1-u|\right)^{-\frac12}  \p{{\cal J}^{*}_{u,D}}^{\frac32}  }.
				\label{res_deccalE_wG}
			\end{align} 
			Furthermore, suppose $\ba'=(a_1',a_2')\in (\Zn)^2$ satisfies that \(\max_{i=1}^2 |a_i-a_i'|\le (\log W)^{3/2}.\) Then, we have that 
			\begin{align}
				\label{res_deccalE_dif}
				\left(
				\mathcal{E}\otimes  \mathcal{E} \right)^{M,(2)}
				_{u, \bsig, \ba ,\ba'} \big/  
				{T}_{u,D}^2(|a_1-a_2|)
				& \prec (1-u)^{-1}\br{{\bf 1}\left(|a_1-a_2|\le 4(\log W)^{\frac32}\right)  +  \left(W^d|1-u|\right)^{-\frac12} \left({\cal J}^{*}_{u,D} \right)^{3}}.   
			\end{align} 
		\end{lemma}
		\begin{proof}
			The proof of this lemma is a special case of the argument for \cite[Lemma 5.7]{YY_25}, as explained above. In fact, many of the arguments can be simplified using the estimates \eqref{Gt_bound_flow}--\eqref{Eq:L-KGt-flow} established in the previous steps. We omit the details.
		\end{proof}
		
		Another key ingredient in the proof is the following evolution kernel estimate for ${\cal U}^{(2)}_{s,t,\boldsymbol{\sigma}}$.  
		
		\begin{lemma}	\label{TailtoTail}
			Suppose $1-s\ge 1-t\ge \ilambda^2$ and ${\cal A}$ satisfies that 
			$$
			|{\cal A}_{\ba}|\le T_{s,D}(|a_1-a_2|),\quad \forall \ba=(a_1,a_2)\in(\Zn)^2,
			$$
			for some constant $D>0$. Then, we have  
			\begin{align}
				\label{neiwuj} 
				\left({\cal U}^{(2)}_{s,t,\boldsymbol{\sigma}} \circ 
				{\cal A}\right)_{\ba} & \lesssim
				{\cal T}_{t,D}( |a_1-a_2|)+(|1-s|/|1-t|)^2\cdot W^{-D} . 
			\end{align}
		\end{lemma}
		\begin{proof}
			The estimate \eqref{neiwuj} can be proved easily with \eqref{sum_res_Ndecay}, the estimate \eqref{prop:ThfadC} on $\Theta$-propagators, and the following basic calculus fact:
			$\max_{a\in \mathbb R^d} \int_{x\in \mathbb R^d} \exp\big(-\sqrt{|a-x|}-\sqrt{|x|}+ \sqrt{|a|} \big)\dd x\lesssim 1.$  
			We omit the details. 
		\end{proof}
		
		Now, for an arbitrarily small constant $\e>0$, we define the stopping time
		\begin{align}\label{eq:def_TTT}
			T:=\inf \Big\{u\ge s:  \max_{\ba=(a,b)} \max_{\bsig}{\big|\left({\cal L-\cal K}\right)^{(2)}_{u,\bsig, \ba}\big|}\big/{ T}_{u,D}\p{|a-b|} \ge W^\e \Big\} .
		\end{align}
		Using \Cref{lem_dec_calE,TailtoTail}, we can establish the following lemma by analyzing the equation \eqref{int_K-LcalE_n=2}. 
		
		\begin{lemma}\label{lem:pf_step5} 
			In the setting of \Cref{lem:main_ind}, suppose $1-t\ge \ilambda^2$ and the estimates \eqref{Gt_bound_flow}--\eqref{Eq:L-KGt-flow} hold. Then, for sufficiently small constant $\e>0$, we have that $T\ge t$ with high probability. 
		\end{lemma}  
		\begin{proof}
			The proof of this lemma is analogous to, and in fact much simpler than, the proof of equation (2.76) in \cite[Section 5.3]{YY_25}, by using the estimates \eqref{res_deccalE_lk}--\eqref{res_deccalE_dif} and the evolution kernel estimate \eqref{neiwuj}. Hence, we omit the details of the proof. 
		\end{proof}
		
		By \Cref{lem:pf_step5}, we immediately obtain \eqref{Eq:Gdecay+s<g_flow}, since $\e$ is arbitrary. Moreover, this estimate is stronger than \eqref{Eq:Gdecay_flow}. This completes Step 5 in the case $1-s\ge 1-t\ge \ilambda^2$.

\section{Step 6: Expected 2-loop estimates}\label{subsection:step6}

Our proof of Step 6 relies on analyzing the expectation of the loop hierarchy in \eqref{int_K-LcalE_n=2}: 
\begin{align} \label{Eexpint_K-L}
	\E(\mathcal{L} - \mathcal{K})^{(2)}_{t, \boldsymbol{\sigma}, \ba}=
	\left(\mathcal{U}^{(2)}_{s, t, \boldsymbol{\sigma}} \circ \E(\mathcal{L} - \mathcal{K})^{(2)}_{s, \boldsymbol{\sigma}}\right)_{\ba}+ \int_{s}^t \left(\mathcal{U}^{(2)}_{u, t, \boldsymbol{\sigma}} \circ \E\mathcal{E}^{(\mathcal{L} - \mathcal{K}) \times (\mathcal{L} - \mathcal{K})}_{u, \boldsymbol{\sigma}} + \mathcal{U}^{(2)}_{u, t, \boldsymbol{\sigma}} \circ \E\mathcal{E}^{\Gc}_{u, \boldsymbol{\sigma}}\right)_{\ba} \dd u ,  
\end{align}
where we omit the superscript ``$(2)$" from some of the notation for brevity. 
In addition, we use the following expected averaged local law, which improves the (sharp) averaged local law \eqref{Gt_avgbound_flow} from Step 2 by an extra factor of $W^{-d}B_{u,0}$ arising from the expectation.

\begin{lemma}\label{lem:improve_exp_aver}
	In the setting of \Cref{lem:main_ind}, suppose the estimates \eqref{Gt_avgbound_flow} and \eqref{Eq:L-KGt-flow} hold uniformly in $u\in[s,t]$. Then, we have 
	\begin{align}\label{res_ELK_n=1} 
		\max_{a}  \left|\mathbb E\tr\p{ (G_u-M) E_{a}}\right|
		\prec (W^{-d}B_{u,0})^{2},\quad \forall  s\le u \le t .
	\end{align} 
\end{lemma}
\begin{proof}
	The proof of this lemma is the same as that for \cite[Lemma 5.15]{YY_25} by using \eqref{Gt_avgbound_flow} and \eqref{Eq:L-KGt-flow}. 
\end{proof}

For the first term on the RHS of \eqref{Eexpint_K-L}, $\E(\mathcal{L} - \mathcal{K})^{(2)}_{s, \boldsymbol{\sigma},\ba}$ satisfies \eqref{Eq:Gtlp_exp+IND}. For the $\E\mathcal{E}^{(\mathcal{L} - \mathcal{K}) \times (\mathcal{L} - \mathcal{K})}$ term, combining the pointwise bound \eqref{Eq:Gdecay_flow} from Step 5 with the maximum bound \eqref{Eq:L-KGt-flow} from Step 4 yields, for $1-u \ge \ilambda^2/L^{d}$ and any $\ba=(a_1,a_2)\in (\Zn)^2$,
\begin{align}
	\E{\cal E}^{(\cL-\cK)\times(\cL-\cK)}_{u,\boldsymbol{\sigma},\ba}&\prec 
	(W^{-d}B_{u,0})^{\frac{11}{5}} \sum_{b} B_{u,|a_1-b|}e^{-(|a_1-b|/\ell_u)^{1/2}}\lesssim (1-u)^{-1}(W^{-d}B_{u,0})^{\frac{11}{5}},\label{eq:Exp(L-K)1}
\end{align}
while for $1-u\le \ilambda^2/L^{d}$, the maximum bound \eqref{Eq:L-KGt-flow} gives 
\begin{align}
	\E{\cal E}^{(\cL-\cK)\times(\cL-\cK)}_{u,\boldsymbol{\sigma},\ba}&\prec W^d\cdot L^d(N|1-u|)^{-4} =(1-u)^{-1} (N|1-u|)^{-3}.\label{eq:Exp(L-K)2}
\end{align}
Finally, consider the light-weight term in \eqref{Eexpint_K-L}. By the definition \eqref{def_EwtG}, we may write 
\be\label{eq:LW-n=2}
\mathcal{E}^{\Gc}_{u, \boldsymbol{\sigma}, \ba}= {W}^d \sum_{k=1}^2 \sum_{\al, \beta\in \Zn}  
\tr\big ( \Gc_u(\sigma_k)  E_{\al} \big)
S^{\LK}_{\al\beta} 
\left( {\cut}^{(\beta)}_{k} \circ {\cal L}^{(2)}_{u, \boldsymbol{\sigma}, \ba} \right) . \ee
When $1-u\le \ilambda^2/L^{d}$, its expectation can be bounded as
\begin{align}
	\E\mathcal{E}^{\Gc}_{u, \boldsymbol{\sigma}, \ba}&= {W}^d \sum_{k=1}^2 \sum_{\al, \beta}  
	\br{\E \tr\big( \Gc_u(\sigma_k) E_{\al} \big) S^{\LK}_{\al\beta} \left( {\cut}^{(\beta)}_{k} \circ {\cal K}^{(2)}_{u, \boldsymbol{\sigma}, \ba} \right) + \E \tr \big ( \Gc_u(\sigma_k)  E_{\al} \big) S^{\LK}_{\al\beta} \left( {\cut}^{(\beta)}_{k} \circ \p{\cal L-\cK}^{(2)}_{u, \boldsymbol{\sigma}, \ba} \right)} \nonumber\\
	&\prec W^d\cdot L^d(N|1-u|)^{-4} =(1-u)^{-1} (N|1-u|)^{-3}, \label{eq:ExpLWn=2_smalleta}
\end{align}
where the second step uses \eqref{res_ELK_n=1} and \eqref{eq:bcal_k} (with $n=3$) to bound the first term, and \eqref{Gt_avgbound_flow} together with \eqref{Eq:L-KGt-flow} (with $n=3$) to bound the second. On the other hand, when $1-u\ge \ilambda^2/L^{d}$, we control \smash{$\E\mathcal{E}^{\Gc}_{u, \boldsymbol{\sigma}, \ba}$} as in the following lemma, which may be viewed as an analogue of \Cref{lem:LWterm} in the sense of expectation. Its proof uses the same diagrammatic techniques as in the proof of \Cref{lem:LWterm}, albeit in a considerably simpler form, and is therefore deferred to \Cref{subsec:pf-LWterm_EXP}.

\begin{lemma}[Expected light-weight estimate]\label{lem:LWterm_EXP}
	In the setting of \Cref{lem:main_ind}, suppose $1-t\ge \ilambda^2/L^{d}$ and the estimates \eqref{Gt_bound_flow}--\eqref{Eq:Gdecay_flow} hold. Then, for any $\bsig\in\{+,-\}^2$ and $\ba\in(\Zn)^2$, we have  
	\begin{align}\label{eq:ExpLWn=2}
		\E\mathcal{E}^{\Gc,(2)}_{t, \boldsymbol{\sigma}, \ba}\prec (1-t)^{-1} (W^{-d}B_{t,0})^{\frac{5}{2}}. 
	\end{align}
\end{lemma}
With the above estimates in place, we can now complete the proof of \eqref{Eq:Gtlp_exp_flow} using the tools developed in Step 3 (cf.~\Cref{Sec:Steps34}).
Here, we can obtain an additional small factor $(\ilambda^2W^d)^{-1/5}+W^{-d}B_{u,0}$ compared to \eqref{Eq:L-KGt-flow}, because the leading error in the integrated loop hierarchy \eqref{int_K-LcalE_n=2} arises from the martingale term, which vanishes after taking the expectation.

\begin{proof}[\bf Step 6: Proof of \eqref{Eq:Gtlp_exp_flow}]
	In the case $1-s\ge 1-t\ge \ilambda^2$, we have $B_{u,0}\asymp |1-u|^{-1}$, while in the case $1-t\le 1-s\le \ilambda^2/L^d$, we have $B_{u,0}\asymp (L^d|1-u|)^{-1}$. In these regimes, substituting \eqref{Eq:Gtlp_exp+IND} and \eqref{eq:Exp(L-K)1}--\eqref{eq:ExpLWn=2} into \eqref{Eexpint_K-L}, applying the evolution kernel estimate \eqref{sum_res_Ndecay} with $n=2$, and integrating over $u$, we obtain  
	\be\label{eq:ExpL-Keasy} \E(\mathcal{L} - \mathcal{K})^{(2)}_{t, \boldsymbol{\sigma}, \ba} \prec (W^{-d}B_{t,0})^{2}\p{(\ilambda^2W^d)^{-1/5}+W^{-d}B_{t,0}}
	.\ee
	It remains to deal with the regimes (i) $\ilambda^2/L^{2}\le 1- t\le 1-s \le \ilambda^2$, and (ii) $\ilambda^2/L^{d}\le 1-t\le 1-s\le \ilambda^2/L^{2}$. In these regimes, when $\sig_1=\sig_2$, substituting \eqref{Eq:Gtlp_exp+IND}, \eqref{eq:Exp(L-K)1}, and \eqref{eq:ExpLWn=2} into \eqref{Eexpint_K-L}, applying the evolution kernel estimate \eqref{sum_res_2_NAL}, and integrating over $u$, we obtain \eqref{eq:ExpL-Keasy}. 
	The remaining and more delicate case is when $\sig_1\ne \sig_2$. To handle this, we again employ the sum-zero operator from \Cref{Def:QtPt} in regime (i), and the zero-mode-removing operator from \Cref{def;zero_mode_remove} in regime (ii).
	
	\medskip
	
	\noindent
	\textbf{Regime (i)}: Using Ward's identities \eqref{WI_calL} and \eqref{WI_calK}, along with \eqref{res_ELK_n=1} and \eqref{eq:derv_Theta}, we obtain 
	\be\label{eq:EPL-K}
	\br{\cal P\circ \mathbb E (\mathcal{L} - \mathcal{K})^{(2)}_{t, \boldsymbol{\sigma}}}_{a_1}\dthn_{t,\ba}^{(2)}= \frac{\im \E \tr\p{(G_t -M)E_{a_1}}}{W^d\eta_t}\dthn_{t,\ba}^{(2)} \prec (W^{-d}B_{t,0})^{-3}.
	\ee
	To estimate $\cal Q_t\circ \mathbb E (\mathcal{L} - \mathcal{K})^{(2)}_{t, \boldsymbol{\sigma}, \ba}$, we take the expectation of equation \eqref{int_K-L+Q} with $\fn=2$, yielding
	\begin{align}\label{int_K-L+QE}
		& {\cal Q}_t\circ  \E(\mathcal{L} - \mathcal{K})^{(2)}_{t, \boldsymbol{\sigma}, \ba}   = 
		\left(\mathcal{U}^{(2)}_{s, t, \boldsymbol{\sigma}} \circ  {\cal Q}_s\circ \E(\mathcal{L} - \mathcal{K})^{(2)}_{s, \boldsymbol{\sigma}}\right)_{\ba} + \int_{s}^t \left(\mathcal{U}^{(2)}_{u, t, \boldsymbol{\sigma}} \circ  {\cal Q}_u\circ \E \mathcal{E}^{(\cL-\cK)\times(\cL-\cK),(2)}_{u, \boldsymbol{\sigma}}\right)_{\ba} \dd u \nonumber\\
		& + \int_{s}^t \left(\mathcal{U}^{(2)}_{u, t, \boldsymbol{\sigma}} \circ  {\cal Q}_u\circ \E\mathcal{E}^{\Gc,(2)}_{u, \boldsymbol{\sigma}}\right)_{\ba} \dd u  + \int_{s}^t 
		\left(\mathcal{U}^{(2)}_{u, t, \boldsymbol{\sigma}} 
		\circ \left( \left[{\cal Q}_u , \varTheta^{(2)}_{u,\boldsymbol{\sigma}} \right]\circ \E(\mathcal{L} - \mathcal{K})^{(2)}_{u, \boldsymbol{\sigma} }\right) \right)_{\ba} \dd u \nonumber\\
		& - \int_{s}^t \left(\mathcal{U}^{(2)}_{u, t, \boldsymbol{\sigma}} \circ \left\{\left[ {\cal P} \circ \E\left(\mathcal{L} - \mathcal{K}\right)^{(2)}_{u, \boldsymbol{\sigma}}\right] \partial_u\dthn_{u}^{(2)} \right\}\right)_{\ba} \dd u.
	\end{align}
	We now estimate the RHS of \eqref{int_K-L+QE}. For the first three terms, we apply \Cref{lem_+Q}, together with the estimates \eqref{Eq:Gtlp_exp+IND}, \eqref{eq:Exp(L-K)1}, and \eqref{eq:ExpLWn=2}, and use the improved evolution kernel estimate \eqref{sum_res_2}, which exploits the sum-zero and fast-decay properties. Integrating over $u$, we can bound them by $(W^{-d}B_{t,0})^{-2}[(\ilambda^2W^d)^{-1/5}+W^{-d}B_{t,0}]$.

	It remains to control the last two terms in \eqref{int_K-L+QE}. Arguing as in \eqref{eq:EPL-K}, and again using Ward's identity, \eqref{res_ELK_n=1}, and \eqref{eq:derv_Theta}, we find
	\be\label{eq:boundELKQ1}
	\left\|\left[ {\cal P} \circ \E\left(\mathcal{L} - \mathcal{K}\right)^{(2)}_{u, \boldsymbol{\sigma}}\right] \partial_u\dthn_{u}^{(2)}\right\|_\infty \prec (1-u)^{-1} (W^{-d}B_{u,0})^{-3}. 
	\ee 
	For the commutator term, using the same argument as in \eqref{A4} (with $n=2$), we obtain
	\begin{align}
		\big[{\cal Q}_u , \varTheta^{(2)}_{u,\boldsymbol{\sigma}} \big]\circ \E(\mathcal{L} - \mathcal{K})^{(2)}_{u, \boldsymbol{\sigma}, \ba} 
		&\prec (1-u)^{-1}\left\|\dthn^{(2)}_{u}\right\|_\infty \cdot   \left\| {\cal P} \circ \E\left(  \mathcal{L} - \mathcal{K} \right)^{(2)}_{u,\boldsymbol{\sigma}} \right\|_{\infty}\prec (1-u)^{-1} (W^{-d}B_{u,0})^{-3},\label{eq:boundcommutator}
	\end{align} 
	where the second step again follows from Ward's identity and the estimates \eqref{res_ELK_n=1} and \eqref{eq:derv_Theta}. 
	Applying \Cref{lem_+Q}, along with the bounds \eqref{eq:boundELKQ1} and \eqref{eq:boundcommutator}, and using the evolution kernel estimate \eqref{sum_res_2}, we conclude that the contribution of the last two terms in \eqref{int_K-L+QE} is bounded by $(W^{-d}B_{t,0})^3$. This completes the proof of \eqref{Eq:Gtlp_exp_flow} in case (i).
	
	\medskip
	
	\noindent
	\textbf{Regime (ii)}: We use the operator $Q^{(\{1,2\})}=Q^{(1)}\circ Q^{(2)}$ in \Cref{def;zero_mode_remove} and Ward's identity to express 
	\begin{align}
		(\cal L-\cK)_{t,\bsig,\ba}^{(2)}=  Q^{(\{1,2\})} \circ (\cal L-\cK)_{t,\bsig,\ba}^{(2)}  + \frac{\im \br{\tr\p{(G_t-M)(E_{a_1}+E_{a_2})} -N^{-1}\tr\p{G-M}}}{N\eta_t}
	\end{align}
	for $\sig_1\ne \sig_2$. By \eqref{res_ELK_n=1}, the second term is bounded by $(W^{-d}B_{t,0})^2\cdot (N\eta_t)^{-1}\le (W^{-d}B_{t,0})^3$. To estimate the first term, we take the expectation of equation \eqref{iisuwjyys} with $\fn=2$ and $A=\{1,2\}$, yielding
	\begin{align}\label{iisuwjyys_exp}
		Q^{(A )}\circ \E(\mathcal{L} - \mathcal{K})^{(n)}_{t, \boldsymbol{\sigma}, \ba} 
		&=\left(Q^{(\{1,2\})}\circ\mathcal{U}^{(2)}_{s, t, \boldsymbol{\sigma}}  \circ \E(\mathcal{L} - \mathcal{K})^{(2)}_{s, \boldsymbol{\sigma}}\right)_{\ba} + \int_{s}^t \left(Q^{(\{1,2\})}\circ \mathcal{U}^{(2)}_{u, t, \boldsymbol{\sigma}}  \circ \E \mathcal{E}^{(\cL-\cK)\times(\cL-\cK),(2)}_{u, \boldsymbol{\sigma}}\right)_{\ba} \dd u \nonumber\\
		& + \int_{s}^t \left(Q^{(\{1,2\})}\circ \mathcal{U}^{(2)}_{u, t, \boldsymbol{\sigma}} \circ \E\mathcal{E}^{\Gc,(2)}_{u, \boldsymbol{\sigma}}\right)_{\ba} \dd u .
	\end{align}
	Applying \eqref{normQA2}, together with the estimates \eqref{Eq:Gtlp_exp+IND}, \eqref{eq:Exp(L-K)1}, and \eqref{eq:ExpLWn=2}, and using the evolution kernel estimate \eqref{sum_res_Ndecay_nonzero}, we integrate over $u$ to obtain the desired bound: $(W^{-d}B_{t,0})^{-2}[(\ilambda^2W^d)^{-1/5}+W^{-d}B_{t,0}]$. This completes the proof of \eqref{Eq:Gtlp_exp_flow} in case (ii). 
\end{proof}

\section{Estimation of the light-weight term}\label{Sec:graph}

This section is devoted to proving the key estimates for the light-weight term $\cal E^{\Gc,(2)}$---namely, \Cref{lem:LWterm,lem: EWGn2_N} in Step 2. 
The proofs build on several crucial ideas and refined graphical tools developed in earlier works \cite{yang2021delocalization,yang2022delocalization,yang2021random}. 
We present the full details in the setting of the random band matrix model, while the modifications required for the block Anderson model are described separately in \Cref{subsec:LWchange-to-BA} below. 
Recall the light-weight term takes the form \eqref{eq:LW-n=2}. For $\bsig=(\sig',\sig)$ and $\ba=(a,b)$, we have 
\begin{align}
	\mathcal{E}^{\Gc}_{t, \boldsymbol{\sigma}, \ba}&= {W}^d \sum_{a_1, a_2}   S^{\LK}_{a_1a_2} 
	\tr \big(\Gc_t(\sig) E_{a_1}\big)\tr\big( G_t(\sig) E_{a_2} G_t(\sig) E_{b} G_t(\sig') E_{a}\big) +\p{(\sig,a)\leftrightarrow (\sig',b)} \label{eq:EGC}\\
	&= W^{-2d} \sum_{x\in [a],y\in [b]} (G_t(\sig'))_{yx} \sum_{a_1, a_2}S^{\LK}_{a_1a_2}\sum_{\al \in [a_2]}     
	\tr\big( \Gc_t(\sig) E_{a_1} \big) (G_t(\sig))_{x\al }(G_t(\sig))_{\al y} +\p{(\sig,a)\leftrightarrow (\sig',b)},\nonumber
\end{align} 
where $\p{(\sig,a)\leftrightarrow (\sig',b)}$ denotes the term obtained by exchanging $(\sig,a)$ and $(\sig',b)$ in the preceding term. 
In the setting of \Cref{lem:LWterm}, using \eqref{GiiGEX} and \eqref{GijGEX}, we can bound $(G_t(\sig'))_{yx}$ as 
\be\label{eq:directG1}
|(G_t(\sig'))_{yx}|\prec  \delta_{xy}+\Psi_t(|a-b|).
\ee 
Note that when $1-t\le \ilambda^2/L^{2}$, we have $\ell_t=L$, so the exponential factor in the definition of $\cal T_t$ in \eqref{defTUL} is always of order 1 for $r\lesssim L$. In this case, \Cref{lem: EWGn2_N} is an immediate consequence of \Cref{lem:LWterm}. Hence, for the proof of \Cref{lem: EWGn2_N}, we only need to focus on the $1-t> \ilambda^2/L^{2}$ case, where  
\[ W^{-d}\cal T_{t,D}^{\ell}(|a-b|)\asymp \br{\sT_{t}(|a-b|\wedge \ell)}^2 + W^{-D}.\]
Here, for simplicity of notation, we introduce the function $\sT_{t}:[0,\infty)\to [0,\infty)$ defined as: 
\[\sT_{t}(r):=\frac{(\ilambda^2+|1-t|)^{-1/2}}{W^{ d /2}(r+1)^{(d-2)/2}}\exp\p{-\frac{1}{2}\sqrt{{r}/{\ell_t}}},\quad \forall r\ge 0. \]
Then, in the setting of \Cref{lem: EWGn2_N} and for $1-t> \ilambda^2/L^{2}$, we have the following estimate by \eqref{GiiGEX} and \eqref{GijGEX}:
\be\label{eq:directG2}
|(G_t(\sig'))_{yx}|\prec \delta_{xy}+\sT_{t}(|a-b|\wedge \ell) + W^{-D} .
\ee 
With \eqref{eq:directG1} and \eqref{eq:directG2} in place, the proofs of \Cref{lem:LWterm,lem: EWGn2_N} reduce to estimating
\begin{align}\label{fxyG_sum}
	f_{xy}(G)&=W^{-d}\sum_{a_1, a_2}S^{\LK}_{a_1a_2}\sum_{{\al\in [a_2],\beta \in [a_1]}}\mathbf 1_{\al\notin\{x,y\}}
	(\Gc_t)_{\beta\beta} (G_t)_{x\al }(G_t)_{\al y} \equiv \sum_{\al\notin\{x,y\}}\sum_{\beta}S_{\al\beta}
	\Gc_{\beta\beta} G_{x\al}G_{\al y},
\end{align} 
where, without loss of generality, we take $\sig = +$ and abbreviate $G \equiv G_t$. 
We denote by \smash{$\wt f_{xy}(G)$} the corresponding contribution with $\al\in\{x,y\}$. 
Using \eqref{eq:directG1}, \eqref{eq:directG2}, and the averaged local law \eqref{GavLGEX}, we get 
\be\label{eq:recolterm} 
\wt f_{xy}(G) \prec \Psi^2_t(0)\cdot \p{\delta_{xy}+\Psi_t(|a-b|)}
\ee
in the setting of \Cref{lem:LWterm}, and 
\be\label{eq:recolterm2} 
\wt f_{xy}(G) \prec (W^{-d}B_{t,0}) \cdot \p{\delta_{xy} + \sT_{t}(|a-b|\wedge \ell)+ W^{-D}}
\ee
in the setting of \Cref{lem: EWGn2_N}. 
Combining \eqref{eq:recolterm} with \eqref{eq:directG1}, and averaging over $x\in[a]$ and $y\in [b]$, yields  
\begin{align}\label{eq:recoltermwt}
	W^{-2d} \sum_{x\in [a],y\in [b]} (G_t(\sig'))_{yx} \wt f_{xy}(G) \prec \Psi^2_t(0)\left[W^{-d}\delta_{ab}+\Psi_t^2(|a-b|)\right] \lesssim \Psi^2_t(0)\cdot \Psi_t^2(|a-b|), 
\end{align} 
which is bounded by the RHS of \eqref{eq:LW_conclusion}. Similarly, combining \eqref{eq:recolterm2} with \eqref{eq:directG2} gives
\begin{align}\label{eq:recoltermwt2}
	W^{-2d} \sum_{x\in [a],y\in [b]} (G_t(\sig'))_{yx} \wt f_{xy}(G) \prec (W^{-d}B_{t,0}) \cdot W^{-d} \wT_{t,D}^{\ell}(|a-b|), 
\end{align}
which is bounded by the RHS of \eqref{eq:LW_conclusion_exp}. Therefore, it remains to estimate the main term $f_{xy}(G)$.

We first claim the following $\max$-bound on $f_{xy}(G)$:
\begin{align}\label{eq:boundfxyGinf}
	|f_{xy}(G)|\prec \Psi_t^2 \sum_{\al}|G_{x\al}||G_{\al y}| \le \Psi_t^2 \p{{\im G_{xx}}/{\eta_t}}^{1/2}\p{{\im G_{yy}}/{\eta_t}}^{1/2}\prec \eta_t^{-1}\Psi_t^2,
\end{align}
where the first step uses the averaged local law \eqref{GavLGEX}, and the second applies Cauchy–Schwarz together with Ward’s identity. In the setting of \Cref{lem:LWterm}, we take $\Psi_t = \Psi_t(0)$, whereas in the setting of \Cref{lem: EWGn2_N}, we take $\Psi_t = (W^{-d}B_{t,0})^{1/2}$. 
Taking \Cref{lem:LWterm} as an example, our goal is to establish the sharper bound $|f_{xy}(G)|\prec \eta_t^{-1}\Psi_t(0)\Psi_t(|a-b|)$.
Thus, the estimate \eqref{eq:boundfxyGinf} lacks the long-edge factor $\Psi_t(|a-b|)$, which captures the spatial decay in $|a-b|$. This missing factor is a key technical obstacle—previously noted around \eqref{eq:introGc} in the introduction—because all tools developed within the stochastic flow and loop hierarchy frameworks fail to extract it. 
We have to exploit a \emph{global fluctuation averaging} mechanism in the sum over $\al$ in \eqref{fxyG_sum}, which can yield additional powers of $L^{-1}$ or \smash{$\ell_t^{-1}$} to compensate for the missing long edge.
To realize this mechanism, we estimate the high moments of $f_{xy}(G)$, adapting ideas from \cite{yang2021delocalization,yang2022delocalization,yang2021random}. 
By Markov's inequality, \Cref{lem:LWterm,lem: EWGn2_N} follow from the next estimates on the high moments of $f_{xy}(G)$.
\begin{lemma}\label{lem:LW_moment}
	In the setting of \Cref{lem:LWterm}, for any fixed $p\in 2\N$, there exists a constant $c>0$ depending on $p$ such that the following estimate holds:
	\be\label{eq:LW_moment}
	\E\big|f_{xy}(G)\big|^{p}\prec  \eta_t^{-p}[\Psi_t(0)]^p \cdot \br{\Psi_t\p{c|a-b|}}^p . 
	\ee
\end{lemma}
\begin{lemma}\label{lem:LW_moment_exp}
	In the setting of \Cref{lem: EWGn2_N}, assume that $1-t>\ilambda^2/L^{2}$. Then, for each fixed $p\in 2\N$, the following estimate holds for any large constant $D>0$:
	\be\label{eq:LW_moment_exp}
	\E\big|f_{xy}(G)\big|^{p}\prec \eta_t^{-p} (W^{-d}B_{t,0})^{p/2} \cdot \br{\sT_{t}(|a-b|\wedge \ell) }^{p}+W^{-D}. 
	\ee
\end{lemma}
\begin{proof}[\bf Proof of \Cref{lem:LWterm,lem: EWGn2_N}]
	Applying Markov’s inequality to \eqref{eq:LW_moment} (with arbitrarily large $p$) yields 
	\[ f_{xy}(G)\prec \eta_t^{-1} \Psi_t(0) \cdot \Psi_t\p{ |a-b|/\log W} \prec \eta_t^{-1} \Psi_t(0) \cdot \Psi_t\p{ |a-b|},\]
	where the second step uses the condition \eqref{eq:Psi}. Combining this bound with \eqref{eq:directG1} and \eqref{eq:recoltermwt}, we obtain \eqref{eq:LW_conclusion}.  
	For the proof of \Cref{lem: EWGn2_N}, as discussed below \eqref{eq:directG1}, it suffices to assume that $1-t>\ilambda^2/L^{2}$. In this case, applying Markov’s inequality to \eqref{eq:LW_moment_exp} yields, for any large constant $D>0$,
	\[ f_{xy}(G)\prec \eta_t^{-1} (W^{-d}B_{t,0})^{1/2} \cdot \br{\sT_{t}(|a-b|\wedge \ell) } + W^{-D}.\]
	Together with \eqref{eq:directG2} and \eqref{eq:recoltermwt2}, it gives \eqref{eq:LW_conclusion_exp}.
\end{proof}

The remainder of this section is devoted to proving \Cref{lem:LW_moment,lem:LW_moment_exp}. We begin with the proof of \Cref{lem:LW_moment}; the proof of \Cref{lem:LW_moment_exp} will follow the same general strategy, with several additional technical ingredients. Note that when $|a-b| \le (\log W)^{3/2}$, both \eqref{eq:LW_moment} and \eqref{eq:LW_moment_exp} follow directly from \eqref{eq:boundfxyGinf}. Thus, in the following proof, we restrict attention to the case
\be\label{eq:far_ab}
|a-b| > (\log W)^{3/2}.
\ee
Following the approach of \cite{yang2021delocalization,yang2022delocalization,yang2021random}, we represent resolvent expressions (such as $\big|f_{xy}(G)\big|^{p}$) as graphs, expand them using Gaussian integration by parts, and then bound the resulting diagrams according to their graphical structures.
In the next subsection, we introduce the basic graphical notations, including the concepts of vertices, edges, molecules, graph values, and scaling sizes, and define the basic graph expansions that will be used in the proof.

\subsection{Graphical tools and local expansions}\label{sec:graphs}

Our graphs are composed of matrix indices as vertices, and various types of edges representing different matrix entries. Specifically, the graphs will encode entries from the following matrices: $I, \ S, \ G, \ \Gc=G-M$, and $S^{\pm}$, where $S^{\pm}$ are defined by 
\be\label{eq:def-Spm}
S^+_{xy}(z):=W^{-d}\big(S^{\LK}\Theta^{(+,+)}(z)\big)_{ab} \quad \text{for}\quad x\in[a], y\in [b],\quad \text{and}\quad S^-(z) :=(S^+(z))^*, 
\ee
with the notations $\Theta^{(+,+)}(z)$ and $S^{\LK}$ specified earlier in \eqref{def:Theta} and \eqref{eq:variancematrix}. 
The graphical notation is defined in a general setting for an arbitrary random band matrix $H$ and its associated variance matrix $S$. As a special case, these definitions apply to our matrix flow $H_t$, where the variance matrix is given by $S_t=tS$, and the associated resolvent is denoted by $G_t\equiv (H_t-z_t)^{-1}$.  

\begin{definition}[Graphs] \label{def_graph1} 
	Given a graph with vertices and edges, we assign the following structures and call it a vertex-level graph.  
	\begin{itemize}
		\item {\bf Vertices:} Vertices represent matrix indices in our expressions. Every graph has some external or internal vertices: external vertices represent indices whose values are fixed, while internal vertices represent summation indices that will be summed over. 
		
		\item{\bf  Solid edges and weights:} We will use $(x,y)$ to denote a solid edge from vertex $x$ to vertex $y$. Every solid edge represents a resolvent entry. More precisely: 
		\begin{itemize}
			\item A blue (resp.~red) oriented solid edge from $x$ to $y$ represents a $G_{xy}$ (resp.~$\bar G_{xy}$) factor. 
			
			\item A blue (resp.~red) oriented solid edge with a circle ($\circ$) from $x$ to $y$ represents a $(G-M)_{xy}$ (resp.~$\overline {(G-M)}_{xy} $) factor.
			
			\item A $G_{xx}$ (resp.~$\bar G_{xx}$) factor will be represented by a blue (resp.~red) self-loop on the vertex $x$, while a $(G-M)_{xx}$ (resp.~$\overline {(G-M)}_{xx}$) factor will be represented by a blue (resp.~red) self-loop on the vertex $x$ with a circle ($\circ$). Following the convention in \cite{yang2021delocalization}, we will also call $G_{xx}$ and $\bar G_{xx}$ as blue and red {\bf weights}, and call $(G-M)_{xx}$ and $\overline {(G-M)}_{xx}$ as blue and red {\bf light-weights}.
			
		\end{itemize}
		We assign a $+$ charge to each blue solid edge and a $-$ charge to each red solid edge.		
		\item {\bf Waved edges:}
		\begin{itemize}
			\item A black waved edge between $x$ and $y$ represents an $S_{xy}$ factor. 
			
			\item A blue (resp.~red) waved edge between $x$ and $y$ represents an $S^+_{xy}$ (resp.~$S^-_{xy}$) factor.
		\end{itemize}
		
		\item {\bf Dotted edges:} A black dotted edge between $x$ and $y$ represents a factor $\mathbf 1_{x=y}$ and a $\times$-dotted edge represents a factor $\mathbf 1_{x\ne y}$. There is at most one dotted or $\times$-dotted edge between every pair of vertices.

		\item{\bf Coefficient:} There is a coefficient associated with every graph. Every coefficient is of order $\OO(1)$ and is a polynomial of $m$, $\overline m$, $m^{-1}$, $\overline m^{-1}$, $(1-m^2)^{-1}$, and $(1-\overline m^2)^{-1}$. 
		
	\end{itemize}	
\end{definition}

Edges between internal vertices are called \emph{internal edges}, while edges with at least one end at an external vertex are called \emph{external edges}. The orientations of non-solid edges do not matter because they all represent entries of symmetric matrices. 
The dotted edges are introduced solely for organizing the proof in certain steps; otherwise, we will almost always identify vertices connected by dotted edges. 
To each graph, we assign a \emph{value} as follows. For simplicity, throughout this paper, we will always abuse the notation by identifying a graph (a geometric object) with its value (an analytic expression). 

\begin{definition}[Values of graphs]\label{ValG} 
	Given a graph $\mathcal G$, we define its value as follows. We first take the product of all edge factors together with the coefficient associated with the graph $\cal G$. We then sum over all internal indices corresponding to internal vertices, while keeping the external indices fixed at their prescribed values.
	For a linear combination of graphs $\sum_i c_i \cal G_i$, we define its value in the natural way, as the linear combination of the values of the individual graphs $\cal G_i$.
\end{definition}

As noted in the previous works \cite{yang2021delocalization,yang2022delocalization,Xu:2024aa,yang2021random}, our graphs have a two-level structure: some \emph{local structures} varying on scales of order $W$, called {\bf molecules}, which are the equivalence classes of vertices connected through dotted or waved edges, along with a \emph{global structure of blocks} varying on scales up to $L$. 
Given a graph defined in \Cref{def_graph1}, if we ignore the inner structure within each molecule, we will get a {\bf molecular graph} with vertices being molecules. The molecular graph will be very useful in organizing the graphs and understanding their global structures. We now give its formal definition.

\begin{definition}[Molecules and molecular graphs]\label{def_poly}
	We partition the set of all vertices into a union of disjoint subsets called molecules. Two vertices belong to the same molecule if and only if they are connected by a path of dotted and waved edges. Every molecule containing at least one external vertex is called an external molecule; otherwise, it is an internal molecule. An edge is said to be inside a molecule if both of its ending vertices belong to this molecule. 
	Given a graph $\cal G$, we define its molecular graph, denoted by $ \cal G_{\cal M}$, as follows:
	\begin{itemize}
		\item merge all vertices in the same molecule and represent them by a vertex;
		
		\item keep all solid edges between molecules;
		
		\item discard all the other components in $\cal G$ (i.e., $\times$-dotted edges, edges inside molecules, and coefficients).
	\end{itemize}
\end{definition}

By the exponential decay of the waved edges---derived from the definitions of $S$ and the estimate \eqref{prop:ThfadC_short}---we see that up to an error of order $\exp(-c(\log W)^{3/2})$ for a constant $c>0$:  
\be\label{scalemole}
\hbox{$x$, $y$ are in the same molecule} \implies  |x-y|\le  W(\log W)^{3/2}.
\ee
We remark that molecular graphs are used solely to help with the analysis of graph structures, while all graph expansions will be applied exclusively to vertex-level graphs. 

\begin{definition}[Normal graphs]  \label{defnlvl0}  
	We say a graph $\cal G$ is \emph{normal} if it satisfies the following properties:
	\begin{itemize}
		\item[(i)] It contains at most $\OO(1)$ many vertices and edges.  
		
		\item[(ii)] There are no dotted edges between vertices.
		
		\item[(iii)] Every pair of vertices in the graph are connected by a $\times$-dotted edge \emph{if and only if} they are connected by a solid edge.
		
		\item[(vi)] Every weight is a light-weight. 
	\end{itemize}
\end{definition}

In a normal graph, every $G$ edge is off-diagonal, while all the diagonal $G$ factors are represented by weights. Given any graph with $\OO(1)$ vertices and edges, we can rewrite it as a linear combination of normal graphs via the following \emph{dotted edge partition} operation. 

\begin{definition}[Dotted edge partition] \label{dot-def}
	Given a graph $\cal G$, if there is at least one $G$-edge but no $\times$-dotted edge between a pair of vertices $\al$ and $\beta$, we write
	$1=\mathbf 1_{\al=\beta} + \mathbf 1_{\al \ne \beta} .$
	If there is a $\times$-dotted edge $\mathbf{1}_{\al \ne \beta}$ but no $G$-edge between $\al$ and $\beta$, we write 
	$\mathbf 1_{\al\ne \beta} =1 - \mathbf 1_{\al = \beta}.$
	Expanding the product of all such identities, we can express $\cal G$ as
	\be\label{odot}\cal G := \sum {\Dot} \cdot \cal G,\ee
	where each ${\Dot}$ is a product of dotted and $\times$-dotted edges together with a sign $\pm$. If $\Dot$ is ``inconsistent"---that is, if two vertices are connected both by a $\times$-dotted edge and by a path of dotted edges—then $\Dot \cdot \cal G = 0$.  
	For each consistent ${\Dot}$, we merge vertices connected by dotted edges in ${\Dot} \cdot \cal G$. In particular, if there is a $\times$-dotted edge between $\al$ and $\beta$, then all $G$ edges between them are off-diagonal; otherwise, the $G$-edges between them become weights once $\al$ and $\beta$ are merged. Finally, in each resulting graph, every diagonal factor $G_{xx}$ (resp.~$\overline{G}_{xx}$) is decomposed into a light-weight $(G_{xx}-m)$ plus a coefficient $m$ (resp. $(\overline{G}_{xx}-\overline m)$ plus $\overline m$). Taking the product over all such decompositions yields a linear combination of normal graphs.
	
\end{definition}

Given a normal graph, we can define its scaling size as in \Cref{def scaling}.

\begin{definition}[Scaling size]\label{def scaling} 
	We define the scaling size of a normal graph $\Gamma$ as follows:
	\begin{align}
		\size(\Gamma): =&~ (L^d)^{n_M(\Gamma)}\cdot (\Psi_t)^{n_S(\Gamma)} \cdot W^{-d\left(n_W(\Gamma)  - n_V(\Gamma)\right)}.
		\label{eq_defsize}
	\end{align}
	where $n_S(\Gamma)$, $n_W(\Gamma)$, $n_V(\Gamma)$, and $n_M(\Gamma)$ denote the numbers of solid edges (including light-weights), waved edges, internal vertices, and internal molecules, respectively.
	If a graph $\Gamma$ can be written as a sum of $\OO(1)$ normal graphs $\Gamma_k$, i.e., $\Gamma=\sum_k \Gamma_k$, then we define 
	\be\label{eq_defsizemax}\size(\Gamma):=\max_k \size(\Gamma_k).\ee
	As a convention, for any (possibly non-normal) graph $\Gamma$ with $\OO(1)$ vertices and edges, we define $\size(\Gamma)$ by first expanding $\Gamma$ into a sum of normal graphs via the dotted edge partition, and then applying \eqref{eq_defsizemax}.
\end{definition}

First, by \eqref{GiiGEX}, each off-diagonal solid edge or light-weight contributes a factor of $\OO_\prec(\Psi_t)$ under the assumption \eqref{initialGT2}, which accounts for the \smash{$(\Psi_t)^{n_S(\Gamma)}$} factor in \eqref{eq_defsize}. 
Next, every molecule contains a “free vertex” that can range over the entire lattice \smash{$\ZL$}, giving rise to an \smash{$N^{n_M(\Gamma)}$} factor. The remaining vertices in the molecule are confined (up to a sufficiently small error, by \eqref{scalemole}) to a \smash{$\OO(W(\log W)^{3/2})$}-neighborhood of the free vertex, yielding the factor \smash{$W^{d(n_V(\Gamma)-n_M(\Gamma))}$}.
Finally, each waved edge contributes a $W^{-d}$ factor, by the definition of $S$ and the bound for $S^\pm$: there exist constants $c,C>0$ such that 
\be\label{eq:estSpm-W}S^\pm_{xy}\le CW^{-d}e^{-c|x-y|/W},\ee
which follows from the estimate \eqref{prop:ThfadC_short}. 
With these considerations, we obtain the following claim.

\begin{claim}
	We have the bound \(\Gamma\prec \size(\Gamma)\) for any normal graph $\Gamma$ in the setting of \Cref{lem:LWterm} (with $\Psi_t=\Psi_t(0)$) or \Cref{lem: EWGn2_N} (with $\Psi_t=(W^{-d}B_{t,0})^{1/2}$).  
\end{claim}

In the proofs, we will only need to keep track of the number of factors of $\Psi_t$ and $W^{-d/2}$ appearing in the scaling size. To this end, we introduce a more convenient notion of \emph{scaling order}. 

\begin{definition}[Scaling order]\label{def scaling order} 
	We define the \emph{scaling order} of a normal graph $\Gamma$ as
	\be\label{eq:ordG} \ord(\Gamma):=n_S(\Gamma)+ 2 \left(n_W(\Gamma)  -  n_V(\Gamma)\right) . \ee
Since $\Psi_t\ge W^{-d/2}$, we have the trivial identity \(\size(\Gamma)\le (L^d)^{n_M(\Gamma)}(\Psi_t)^{\ord(\Gamma)}.\)
	For a general graph $\Gamma$ that can be written as a sum of $\OO(1)$ many normal graphs $\Gamma_k$, we define its scaling order as 
	\(\ord(\Gamma)=\min_k \ord(\Gamma_k).\) 
\end{definition}


We next state the graph expansion rules given in \cite{yang2021delocalization}. These expansions are stated for $G(z)=(H-z)^{-1}$, but they also apply to $G_t(z)$ with $G$ and $S$ replaced by $G_t$ and $S_t=tS$, respectively.

\begin{lemma}[Weight expansion, Lemma 3.5 of \cite{yang2021delocalization}] \label{ssl} 
	Suppose $f$ is a differentiable function of $G$. Then,
	\begin{align} 
		\Gc_{xx}  f(G)=_{\E}  m \sum_{  \al} S_{x\al}  \Gc_{xx}\Gc_{\al\al} f (G)  &+m^3 \sum_{  \al,\beta}S^{+}_{x\al} S_{\al \beta}  \Gc_{\al\al}\Gc_{\beta\beta} f (G) \nonumber\\
		-  m  \sum_{ \al} S_{x \al} G_{\al x}\partial_{ h_{ \al x}} f (G) &-  m^3 \sum_{ \al,\beta} S^{+}_{x\al}S_{\al \beta} G_{\beta \al}\partial_{ h_{ \beta\al}} f(G)  \, ,\label{Owx}\end{align}
	where ``$=_{\E}$" means ``equal in expectation". 
\end{lemma}

\begin{lemma}[Edge expansion, Lemma 3.10 of \cite{yang2021delocalization}] \label{Oe14}
	Suppose $f$ is a differentiable function of $G$. Consider 
	\be\label{multi setting}
	\cal G := \prod_{i=1}^{k_1}G_{x y_i}  \cdot  \prod_{i=1}^{k_2}\overline G_{x y'_i} \cdot \prod_{i=1}^{k_3} G_{ w_i x} \cdot \prod_{i=1}^{k_4}\overline G_{ w'_i x} \cdot f(G).
	\ee
	If $k_1\ge 1$, then we have that 
	\begin{align} 
		\cal G & =_{\E} m\delta_{xy_1} {\cal G}/{G_{xy_1}} + m   \sum_\al S_{x\al }\Gc_{\al \al} \cal G \label{Oe1x}\\
		&+\sum_{i=1}^{k_2} |m|^2  \left( \sum_\al S_{x\al }G_{\al y_1} \overline G_{\al y'_i}\right)\frac{\cal G}{G_{x y_1} \overline G_{xy_i'}}+ \sum_{i=1}^{k_3} m^2 \left(\sum_\al S_{x\al }G_{\al y_1} G_{w_i \al} \right)\frac{\cal G}{G_{xy_1}G_{w_i x}}      \nonumber \\
		& + \sum_{i=1}^{k_2}m  \Gc^-_{xx} \left( \sum_\al S_{x\al }G_{\al y_1} \overline G_{\al y'_i}\right)\frac{\cal G}{G_{x y_1} \overline G_{xy_i'}}  + \sum_{i=1}^{k_3} m  \Gc_{xx}  \left(\sum_\al S_{x\al }G_{\al y_1} G_{w_i \al} \right)\frac{\cal G}{G_{xy_1}G_{w_i x}}   \nonumber\\
		&+(k_1-1) m  \sum_\al S_{x\al } G_{x \al} G_{\al y_1}\frac{ \cal G}{G_{xy_1}}   + k_4 m   \sum_\al S_{x\al }\overline G_{\al x} G_{\al y_1}  \frac{\cal G}{G_{xy_1}} - m    \sum_\al S_{x\al } \frac{\cal G}{G_{x y_1}f(G)}G_{\al y_1}\partial_{ h_{\al x}}f (G)   \, .\nonumber
	\end{align}
	Here, the fractions are used to simplify the expression. For example, the fraction ${\cal G}/({G_{x y_1} \overline G_{xy_i'}})$ is the  graph obtained by removing the factor \smash{${G_{x y_1} \overline G_{xy_i'}}$} from the product in \eqref{multi setting}.
	We refer to the above expansion as the \emph{edge expansion with respect to $G_{xy_1}$}. The edge expansion with respect to other $G_{xy_i}$ can be defined in the same way. The edge expansions with respect to $\overline G_{x y'_i}$, $ G_{ w_i x}$, and $ \overline G_{ w'_i x}$ can be defined similarly by taking complex conjugates or matrix transpositions of \eqref{Oe1x}.   
\end{lemma}

\begin{lemma} [$GG$ expansion, Lemma 3.14 of \cite{yang2021delocalization}]\label{T eq0}
	Consider a graph $\cal G= G_{xy}   G_{y' x }  f (G)$, where $f$ is a differentiable function of $G$. We have that 
	\begin{align}
		\cal G& =_{\E} m\delta_{xy}G_{y' x}f(G)+  m^3  S^{+}_{xy} G_{y' y} f(G)  + m \sum_\al  S_{x\al} \Gc_{\al \al} \cal G + m^3 \sum_{\al,\beta}  S^{+}_{x\al}  S_{\al\beta} \Gc_{\beta \beta} G_{\al y}   G_{y'\al} f(G) \nonumber \\
		&  + m\Gc_{xx }    \sum_\al S_{x\al}G_{\al y}   G_{y'\al} f(G) + m^3 \sum_{\al,\beta}  S^+_{x\al} S_{\al\beta}  \Gc_{\al\al }  G_{\beta y}   G_{y'\beta} f(G)  \nonumber\\
		& - m \sum_{ \al}  S_{x\al}G_{\al y} G_{y' x} \partial_{ h_{\al x}}f(G) - m^3 \sum_{\al,\beta} S^{+}_{x\al} S_{\al\beta}G_{\beta y} G_{y' \al} \partial_{ h_{\beta \al}}f(G)  . \label{Oe2x}
	\end{align}
	The $\overline G \overline G$ expansion with respect to $\overline G_{xy}  \overline G_{y' x }$ can be derived by taking complex conjugate.
\end{lemma}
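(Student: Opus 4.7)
).}
My plan is to derive \eqref{Oe2x} via a two-step procedure: first decouple one of the two $G$-factors in the product $G_{xy}G_{y'x}$ using the resolvent identity, then apply Gaussian integration by parts (Stein's lemma) to the resulting $H$-factor, and finally organize the outcome by systematically separating $G_{\al\al}$ into $m+\Gc_{\al\al}$. The starting point is the identity $HG = zG + I$, which at entry $(x,y)$ reads $\sum_c H_{xc} G_{cy} = zG_{xy} + \delta_{xy}$. Using the self-consistent equation $z = -m^{-1}-m$ for the semicircle Stieltjes transform, this rearranges to
\begin{equation}\label{eq:resid-plan}
G_{xy} = m\delta_{xy} - m\sum_c H_{xc} G_{cy} - m^2 G_{xy}.
\end{equation}

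Next, I would multiply \eqref{eq:resid-plan} by $G_{y'x}f(G)$ and take expectation, then apply the Gaussian integration-by-parts formula $\E[H_{xc} F(H)] = S_{xc}\,\E[\partial_{h_{cx}} F(H)]$ to the $H$-linear term. The relevant derivatives are $\partial_{h_{cx}} G_{cy} = -G_{cc}G_{xy}$, $\partial_{h_{cx}} G_{y'x} = -G_{y'c}G_{xx}$, together with $\partial_{h_{cx}} f(G)$ which produces the last term of \eqref{Oe2x}. Collecting these yields, up to the factor $(1+m^2)$ on the left-hand side coming from the $-m^2 G_{xy}$ term in \eqref{eq:resid-plan},
\[
(1+m^2)\E[G_{xy}G_{y'x}f(G)] = m\delta_{xy}\E[G_{y'x}f(G)] + m^2 \sum_c S_{xc}\E[G_{cc} G_{xy} G_{y'x}f(G)] + \text{other terms}.
\]
The ``other terms'' contain the $G_{cy}G_{y'c}G_{xx}f(G)$ contribution from differentiating $G_{y'x}$, plus the $\partial_{h_{cx}}f(G)$ contribution. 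I would then perform a second iteration on the $m^2 \sum_c S_{xc} \E[G_{cc} G_{xy}G_{y'x}f(G)]$ term: write $G_{cc} = m + \Gc_{cc}$, absorb the $m$ part (which produces $m^3 \sum S_{xc} \E[\cdots]$) and cancel the $m^2$ prefactor on the LHS using $m^2\cdot m/(1+m^2) = \cdots$ together with the identity $m^{-1}=-(z+m)$. A cleaner way is to apply the same expansion once more to $G_{xy}$ inside this term, producing the ``+'' variance $S^+_{x\alpha}S_{\alpha\beta}$ that appears in \eqref{Oe2x}; indeed, the matrix $S^+ = m^2 S (1-m^2 S)^{-1}$ encodes the geometric series that arises when one iterates the resolvent identity and uses the local law $\Gc_{\alpha\alpha}\prec \Phi_t$ to truncate.

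The remaining task is bookkeeping: split every diagonal $G$-entry appearing after integration by parts as $m + \Gc$, retain the $m$-parts as leading deterministic coefficients (which combine into the $m^3 S^+_{x\alpha}S_{\alpha\beta}$ prefactors on lines~1--2 of \eqref{Oe2x}), and collect the $\Gc_{xx}$ and $\Gc_{\alpha\alpha}$ contributions as the explicit light-weight terms on line~3. The term $m^3 S^+_{xy} G_{y'y}f(G)$ arises specifically from the $\delta_{xy}$-like contribution in the second iteration, where a single $S$ becomes $S + m^2 S\cdot S + \cdots = S^+/m^2$ after resummation. The main obstacle, and where I would need to be careful, is the algebraic resummation that turns the iterated geometric series $\sum_k (m^2 S)^k$ into the single $S^+$ propagator while simultaneously matching the coefficients (powers of $m$) on both sides; this relies crucially on the identity $(1+m^2) = m^2(1-|m|^2)/(-|m|^2 \bar m^{-1}\cdot \ldots)$ together with the self-consistent relation $m + m^{-1} = -z$. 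Once these algebraic identities are matched, the displayed form \eqref{Oe2x} follows, and the $\bar G\bar G$ version comes from complex conjugation. No genuinely new technique is required beyond the two uses of Stein's lemma and the self-consistent equation for $m$; the subtlety is purely organizational.
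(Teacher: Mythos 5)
Your overall strategy is the right one and coincides with the derivation behind the cited Lemma 3.14 of \cite{yang2021delocalization} (the present paper does not reprove this lemma, it only imports it): start from $(H-z)G=I$ combined with $z=-m^{-1}-m$ to get \eqref{eq:resid-plan}, apply Gaussian integration by parts to the $H$-linear term, split every diagonal entry as $m+\Gc$, and resum the iterated leading term into a geometric series that produces $S^+$. Three details need fixing in a full write-up. First, the iteration that generates the $S^+_{x\al}S_{\al\beta}$ chain must be applied to the term $m^2\sum_\al S_{x\al}\,G_{\al y}G_{y'\al}f$ coming from $\partial_{h_{\al x}}G_{y'x}=-G_{y'\al}G_{xx}$ after extracting the $m$ from $G_{xx}$ — i.e.\ the term in which the apex of the $GG$ pair has moved from $x$ to $\al$ — not to ``$G_{xy}$ inside'' the $G_{cc}$-weighted term, which still has its apex at $x$ and whose sole role is to cancel the $m^2$ on the left-hand side (and to leave the light-weight $m\sum_\al S_{x\al}\Gc_{\al\al}\cal G$); your text is internally inconsistent on this point even though your description of the $\delta_{xy}$-resummation shows you have the right picture. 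Second, the resummation is an exact algebraic identity (each iterate either feeds the geometric series or terminates in a $\delta$, a $\Gc$, or a $\partial f$), so no local-law truncation of the series is needed. Third, to match the stated coefficients $m^3S^+_{x\al}$ one needs the convention $S^+=S(1-m^2S)^{-1}$, i.e.\ $m\sum_{k\ge1}(m^2S)^k=m^3S^+$; your normalization $S^+=m^2S(1-m^2S)^{-1}$ is off by $m^2$, and the displayed coefficient $m^2$ in front of $\sum_cS_{xc}\E[G_{cc}G_{xy}G_{y'x}f]$ should be $m$. None of these is a conceptual gap.
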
 

Corresponding to the three lemmas above, we can define graph operations that represent the weight, edge, and $GG$ expansions. Following \cite{yang2021delocalization}, we refer to all these operations as \emph{local expansions at a vertex $x$}, meaning that they do not create new molecules: every new vertex introduced by such an expansion is connected to $x$ by a path of dotted or waved edges. 
As discussed in \cite[Section 3]{yang2021delocalization}, these expansions decrease the scaling size (equivalently, increase the scaling order) of a graph. Moreover, relative to the original graph $\cal G$ prior to expansion, each new graph produced by an expansion either becomes ``smaller" by a factor of $\Psi_t$ in scaling size, or moves ``closer" to being locally standard (as defined in \Cref{deflvl1}), in one of the following ways:
\begin{itemize}
	
	\item It contains one fewer weight than $\cal G$.
	
	\item The solid-edge degree of a vertex is reduced by 2, a new vertex of degree 2 is introduced, and the degrees of all other vertices remain unchanged. Here, the degree refers only to the number of solid edges incident to the vertex.
	
	\item A pair of edges of the form $G_{xy} G_{y x }$ (resp.~$\overline G_{xy} \overline G_{yx}$) is replaced by $m^4 S^{+}_{xy}$ (resp.~$\overline m^4 S^{-}_{xy}$).
	
\end{itemize}
Consequently, by repeatedly applying the local expansions from Lemmas \ref{ssl}--\ref{T eq0}, any normal graph can be expanded into a sum of \emph{locally standard graphs}, defined as follows.

\begin{definition} [Locally standard graphs] \label{deflvl1}
	A graph is \emph{locally standard} if: 
	\begin{itemize}
		\item[(i)] It is a normal graph. 
		
		\item[(ii)] It has no self-loops (i.e., weights or light-weights) on vertices.
		
		\item[(iii)] Each internal vertex is either standard neutral, or it is not incident to any solid ($G$ or $\Gc$) edge.
	\end{itemize}
	Here, a vertex is called \emph{standard neutral} if it satisfies the following two properties: 
	\begin{itemize}
		\item It is connected to exactly two solid edges carrying opposite charges. 
		
		\item It has a neutral charge, where the \emph{charge} of a vertex is defined by counting the incoming and outgoing blue solid edges (with $+$ charge) and red solid edges (with $-$ charge): 
		\be\label{eq:neutralcharge}
		\qquad \#\{\text{incoming $+$ or outgoing $-$ solid edges}\}- \#\{\text{outgoing $+$ or incoming $-$ solid edges}\} .\ee
	\end{itemize}
	By definition, the two solid edges attached to a standard neutral vertex $x$ take the form $G_{xy}\overline G_{xy'}$ or $G_{yx}\overline G_{y'x}$.
\end{definition}

Simply speaking, \emph{locally standard graphs} are those in which each $G$ edge is paired with a unique $\overline G$ edge at internal vertices. As discussed in \cite[Section 3.4]{yang2021delocalization}, by repeatedly applying local expansions, any normal graph can be expanded into a sum of locally standard graphs, up to a sufficiently small error, as summarized in the following lemma. For the reader’s convenience (and for later use in the proof of \Cref{lem:localregular}), we will recall the local expansion strategy from \cite[Section 3]{yang2021delocalization} in \Cref{sec:pflocalregular}.

\begin{lemma}[Lemma 3.22 of \cite{yang2021delocalization}] \label{lvl1 lemma}
	Let $\Gamma$ be an arbitrary graph with $\OO(1)$ vertices and edges. For any large constant $D>0$, we can expand $\Gamma$ into a sum of $\OO(1)$ many locally standard graphs $\Gamma_\mu$:
	\begin{align}\label{expand lvl1}
		\Gamma =_{\E} \sum_\mu \Gamma_\mu + \Err,
	\end{align}
	where every $\Gamma_\mu$ has 
	scaling order $\ge \ord(\Gamma)$, and $\Err$ is a sum of graphs of scaling size $\OO(W^{-D})$.
\end{lemma}

\subsection{Examples and nested property}\label{sec:graphs_ideas}

To illustrate the ideas underlying the proof of Lemma \ref{lem:LW_moment}, we examine the simplest case with $p=2$. 

\begin{example}\label{example:p=2}
	Consider the second moment of $f_{xy}(G)$: 
	\be\label{eq:p=2graph}\E|f_{xy}(G)|^{2}=\E\sum_{\al_1,\al_2\notin\{x,y\}}\sum_{\beta_1,\beta_2}S_{\al_1\beta_1}S_{\al_2\beta_2} \p{
		\Gc_{\beta_1\beta_1} G_{x\al_1}G_{\al_1 y}}
	\cdot \p{\Gc^*_{\beta_2\beta_2}\overline G_{x\al_2}\overline G_{\al_2 y}},\ee
	which can be represented by the following graph: 
	\begin{center}\scalebox{0.8}{
			\begin{tikzpicture}
				\begin{feynman}
					\vertex (a) at (0,0);
					\fill[black] (a) circle (1pt) node[left=3pt]{$x$}; 
					\vertex (b) at (2,1); 
					\fill[black] (b) circle (1pt) node[right=2pt]{$\al_1$};
					\vertex [dot, fill] (c) at (4,0);
					\fill[black] (c) circle (1pt) node[right=3pt]{$y$};
					\vertex [dot, fill](d) at (2,-1);
					\fill[black] (d) circle (1pt) node[left=2pt]{$\al_2$};
					
					\vertex [dot, fill] (top) [above=0.8cm of b];
					\fill[black] (top) circle (1pt) node[right=2pt]{$\beta_1$};
					\vertex [dot, fill] (bot) [below=0.8cm of d];
					\fill[black] (bot) circle (1pt) node[left=2pt]{$\beta_2$};
					
					\vertex (tadtop) [left=0.8cm of top];
					\vertex (tadbot) [right=0.8cm of bot];
					
					\path [draw=blue,postaction={on each segment={mid arrow=blue}}]
					(a) -- (b) -- (c);
					\path [draw=red,postaction={on each segment={mid arrow=red}}]
					(a) -- (d) -- (c);
					\diagram*{      
						(b) -- [photon] (top) -- [half left, looseness=1.5, draw=blue] (tadtop) -- [half left, looseness=1.5, draw=blue] (top),
						(d) -- [photon] (bot) -- [half left, looseness=1.5, color=red] (tadbot) -- [half left, looseness=1.5, color=red] (bot),
					};
				\end{feynman}
		\end{tikzpicture}}
	\end{center}
	We expand \eqref{eq:p=2graph} using the local expansions from \Cref{sec:graphs}. As shown in \Cref{lvl1 lemma}, these expansions decrease the scaling size of the graphs, while improving their structure for our purposes. 
	More precisely, to obtain locally standard graphs, during the expansion process, a $-$ charged edge from the path $(x\to \al_2\to y)$ or from the light-weight at $\beta_2$ must be “pulled” to the molecule containing $\al_1$,\footnote{Pulling an edge $e=(\al,\beta)$ to a molecule $\Mol$ means replacing it by two edges, one connecting $\al$ to $\Mol$ and one connecting $\beta$ to $\Mol$.} whereas a $+$ charged edge from the path $(x\to \al_1\to y)$ or from the light-weight at $\beta_1$ must be pulled to the molecule containing $\al_2$. 
	As an illustration, consider the left panel of \Cref{fig:p=2expansion}, which shows a graph $\cal G$ obtained by applying two light-weight expansions from \Cref{ssl} ($\cal G$ is not yet locally standard, but it is already sufficient for our proof). 
	The two short edges $G_{\beta_1\gamma_1}$ and $G_{\beta_2\gamma_2}$ contribute a $[\Psi_t(0)]^2$ factor. For the rest of the graph, we examine its molecular graph $\cal G_1$ shown in the middle panel of \Cref{fig:p=2expansion}, where $\cal M_1$ and $\cal M_2$ denote the molecules containing $\al_1$ and $\al_2$, respectively. 
	Without loss of generality, suppose the two edges $(x,\cal M_1)$ provide the factor $[\Psi_t(c|a-b|)]^2$. For the remaining graph, we first sum over $\cal M_1$ and then over $\cal M_2$, applying the Cauchy–Schwarz inequality and Ward’s identity to obtain a factor $\eta_t^{-2}$. Altogether, this gives the desired bound in \eqref{eq:LW_moment} for $p=2$.

	In the right panel of \Cref{fig:p=2expansion}, we illustrate another possible molecular graph generated from the expansion of \eqref{eq:p=2graph}. For this graph, there are choices of long edges such that, after removing them, each summation order in the resulting graphs yields a poor bound. Nevertheless, by applying an additional AM–GM inequality, the graph still leads to the correct estimate. 
	To demonstrate this, suppose $(y,\cal M_1)$ and $(y,\cal M_2)$ are taken as long edges. Then, in the resulting graph $\cal G_2'$ (with these edges removed), both $\cal M_1$ and $\cal M_2$ have degree $3$. 
	Summing over either one of them eliminates three incident edges and leaves a graph with only a single solid edge. The subsequent summation produces a poor bound of order \smash{$N^{1/2}\eta_t^{-1/2}$} via Cauchy-Schwarz and Ward’s identity. 
	To handle this case, we apply the AM–GM inequality $2z_1z_2\le |z_1|^2+|z_2|^2$ to the two edges $(x,\cal M_1)$ and $(x, M_2)$, which yield two new graphs, one with a pair of edges $(x,\cal M_1)$ and one with a pair of edges $(x, \cal M_2)$. 
	In each of these graphs, the summations over $\cal M_1$ and $\cal M_2$ can be carried out in a proper order, yielding the $\eta_t^{-2}$ factor by applying Ward’s identity twice.

	\begin{figure}
		\begin{center}
			\begin{tabular}{m{4cm}m{4cm}m{4cm}}
				\scalebox{0.9}{
					\begin{tikzpicture}
						\begin{feynman}
							\vertex (L) at (-2,0);        
							\fill[black] (L) circle (2pt) node[left=2pt]{$x$};
							\vertex (R) at (2,0);         
							\fill[black] (R) circle (2pt) node[right=2pt]{$y$};
							
							\node (G1) at (-0.8,0) {$\cal G$};
							
							\vertex [dot] (a) at (0,1);   
							\fill[black] (a) circle (2pt) node[left=2pt]{$\al_1$};
							\vertex [dot] (a1) at (0,1.7);
							\fill[black] (a1) circle (2pt) node[right=2pt]{$\beta_1$};
							\vertex [dot] (a2) at (-0.7,1.7);
							\fill[black] (a2) circle (2pt) node[left=2pt]{$\gamma_1$};
							\vertex [dot] (b) at (1,1);   
							\vertex [dot] (c) at (0,-1);  
							\fill[black] (c) circle (2pt) node[right=2pt]{$\al_2$};
							\vertex [dot] (c1) at (0,-1.7); 
							\fill[black] (c1) circle (2pt) node[left=2pt]{$\beta_2$};
							\vertex [dot] (c2) at (0.7,-1.7);
							\fill[black] (c2) circle (2pt) node[right=2pt]{$\gamma_2$};
							\vertex [dot] (d) at (1,-1);  
							
							\vertex (ta) at (0.8,2.5);
							\vertex (tc) at (0.8,-2.5);
							
							\path [draw=blue,postaction={on each segment={mid arrow=blue}}](L) -- (a);
							\path [draw=blue,postaction={on each segment={mid arrow=blue}}](a) to [bend right] (c1);
							\path [draw=blue,postaction={on each segment={mid arrow=blue}}](c2) to [bend right] (R);
							\path [draw=blue,postaction={on each segment={mid arrow=blue}}](a1) to [bend right] (a2);
							\path [draw=red,postaction={on each segment={mid arrow=red}}](L) -- (a2);
							\path [draw=red,postaction={on each segment={mid arrow=red}}](a1) to [bend left] (c); 
							\path [draw=red,postaction={on each segment={mid arrow=red}}] (c) -- (R);
							\path [draw=red,postaction={on each segment={mid arrow=red}}](c2) to [bend left] (c1);
							
							\diagram*{
								
								(a) -- [photon] (a1), (a1) -- [photon] (a2), 
								(c) -- [photon] (c1), (c1) -- [photon] (c2),
								
							};
						\end{feynman}
				\end{tikzpicture}}
				& 
				\scalebox{0.9}{
					\begin{tikzpicture}
						\begin{feynman}
							\vertex (L) at (-2,0);       
							\fill[black] (L) circle (3pt) node[left=2pt]{$x$};
							\vertex (R) at (2,0);        
							\fill[black] (R) circle (3pt) node[right=2pt]{$y$};
							
							\node (G1) at (-0.8,0) {$\cal G_1$};
							
							\vertex [dot] (T) at (0,1);  
							\fill[black] (T) circle (3pt) node[above=2pt]{$\cal M_{1}$};
							\vertex [dot] (B) at (0,-1); 
							\fill[black] (B) circle (3pt) node[below=2pt]{$\cal M_2$};
							
							\path [draw=blue,postaction={on each segment={mid arrow=blue}}]
							(L) -- (T) -- (B) -- (R);
							\path [draw=red,postaction={on each segment={mid arrow=red}}]
							(L) to [bend left] (T) to [bend left] (B) to [bend right] (R);
						\end{feynman}
				\end{tikzpicture}}
				&
				\scalebox{0.9}{
					\begin{tikzpicture}
						\begin{feynman}
							\vertex (L) at (-2,0);       
							\fill[black] (L) circle (3pt) node[left=2pt]{$x$};
							\vertex (R) at (2,0);        
							\fill[black] (R) circle (3pt) node[right=2pt]{$y$};
							
							\node (G1) at (-0.8,0) {$\cal G_2$};
							
							\vertex [dot] (T) at (0,1);  
							\fill[black] (T) circle (3pt) node[above=2pt]{$\cal M_{1}$};
							\vertex [dot] (B) at (0,-1); 
							\fill[black] (B) circle (3pt) node[below=2pt]{$\cal M_2$};
							
							\path [draw=blue,postaction={on each segment={mid arrow=blue}}]
							(L) -- (B) -- (T) -- (R);
							\path [draw=red,postaction={on each segment={mid arrow=red}}]
							(L) to (T) to [bend left] (B) to (R);
						\end{feynman}
				\end{tikzpicture}}
			\end{tabular}
		\end{center}
		\caption{Possible expansions of \eqref{eq:p=2graph} and the corresponding molecular graph. The $\times$-dotted edges in the first graph have been omitted.
		}
		\label{fig:p=2expansion}
	\end{figure}
	
\end{example}

To prove \Cref{lem:LW_moment}, we need to extend the analysis for the $p=2$ example to arbitrarily large $p\in 2\N$. In the original graph $|f_{xy}(G)|^p$, there are $p$ edge-disjoint paths from $x$ to $y$, comprising a total of $2p$ solid edges. Heuristically, to obtain locally standard graphs with local expansions, each path from $x$ to $y$ must be pulled at least once, thereby increasing its length by 1. 
Now, the key graphical property required for our proof consists of the following two components: (i) We can identify at least $p$ ``long edges"---that is, edges of length $\gtrsim |x-y|$---in the graph. (ii) In the remaining graph, after removing these long edges, there exists a specific order of summation over the internal vertices such that, at each step, a sufficient number of edges (at least two per summation) remain to apply the Cauchy-Schwarz inequality and Ward's identity to control the summation. 
If there is no requirement of property (i), this problem was addressed in  \cite{yang2021random} using a structure called  ``\emph{nested property}" of the graph. 
In \Cref{lem:Anp_key} below, we will extend this concept to inductively select both long solid edges and a summation order to guarantee the bound  \eqref{eq:LW_moment}.

\subsection{Proof of Lemma \ref{lem:LW_moment}}

	For the proof of \Cref{lem:LW_moment}, we begin by expanding $\big|f_{xy}(G)\big|^{p} = [f_{xy}(G)]^{p/2}[\overline{f_{xy}(G)}]^{p/2}$ into a sum of locally standard graphs, as in \Cref{lvl1 lemma}. By carefully tracking the local expansion process, we can show that each locally standard graph satisfies the graphical properties in the following lemma.

	\begin{lemma}\label{lem:localregular}
		Given any large constant $D>0$, we can expand $\E\big|f_{xy}(G)\big|^{p}$ into a linear combination of $\OO(1)$ many \emph{locally standard graphs} $\Gamma_{\mu,xy}$ in the sense of equal expectation:
		\be\label{eq:local_Gs}
		\big|f_{xy}(G)\big|^{p} =_{\E} \sum_{\mu}  \Gamma_{\mu,xy} + \Err,
		\ee
		where $\Err$ denotes a sum of $\OO(1)$ many graphs of scaling size $\OO(W^{-D})$.
		Moreover, each graph $\Gamma_{\mu,xy}$ on the RHS satisfies the following properties under the assumption \eqref{eq:far_ab} (recall that, up to an exponentially small error $\exp(-c(\log W)^{3/2})$, $x$ and $y$ do not belong to the same molecule by \eqref{scalemole}):
		\begin{enumerate}
			\item[(1)] $\Gamma_{\mu,xy}$ is a locally standard graph with external vertices $x$ and $y$, with the corresponding external molecules denoted by $\cal M_x$ and $\cal M_y$, respectively.
			
			\item[(2)] $\Gamma_{\mu,xy}$ contains $0\le q\le p$ many internal molecules, denoted by $\cal M_{i}$ with $i\in \qqq{q}$. Moreover, within each (internal or external) molecule $\Mol_i$, $i\in\{1,\ldots, q, x, y\}$, the number of waved edges (denoted by $n_W(\Mol_i)$) and the number of vertices (denoted by $n_V(\Mol_i)$) satisfy the relation 
			\be\label{eq:MolVW}
			n_V(\Mol_i) \le n_W(\Mol_i) + 1.
			\ee
			
			\item[(3)] There are at least $p$ many edge-disjoint paths $\fP_i$, $i\in\qqq{p}$, in the molecular graph that connect the molecules $\cal M_x$ and $\cal M_y$ via solid edges (and possibly passing through the internal molecules). 
			
			\item[(4)] For each internal molecule $\cal M_i$, $i\in \qqq{q}$, there are at least two paths $\fP_k\ne \fP_l$ passing through it on the molecular graph. As a consequence, each $\Mol_i$ has degree at least 4, i.e., 
			\be\label{eq:deg_mole}
			\deg(\Mol_i)\ge 4,\quad i\in \qqq{q}.\ee
			
			\item[(5)] For each $A\subset \qqq{q}$, the number of paths passing through 
			$\{\Mol_i: i\in A\}$ is at least $|A|$, that is,
			$$
			\left|\left\{j: \mathrm V(\fP_j) \cap  \{\Mol_i: i\in A\}\ne \emptyset\right\}\right|\ge |A| ,$$
			where $\mathrm V(\fP_j)$ denotes the subset of vertices (i.e., molecules) in the path $\fP_j$ on the molecular graph.
			
			\item[(6)] The scaling order of $\Gamma_{\mu,xy}$ satisfies that
			\be\label{eq:sizeGammamu}
			\ord(\Gamma_{\mu,xy}) \ge 2p .  
			\ee
		\end{enumerate}
		
	\end{lemma}

	The proof of this lemma is straightforward by applying the local expansions \Cref{ssl,Oe14,T eq0}, and we defer the details to \Cref{sec:pflocalregular}. 
	The path properties (3)--(5) in \Cref{lem:localregular} serve as the key structural ingredients for controlling the graphs $\Gamma_{\mu,xy}$ in \eqref{eq:local_Gs}.
	To see them, note that in the initial graph, there are $p$ edge-disjoint paths from $x$ to $y$, corresponding to the $p$ factors $f_{xy}(G)$ or \smash{$\overline{f_{xy}(G)}$}. Each path passes through a distinct internal molecule, and each internal molecule is attached to two solid edges of the same color. From the expansion rules in \Cref{ssl,Oe14,T eq0}, one can verify that these paths are preserved throughout the expansion process, which ensures properties (3) and (5).
	Furthermore, to obtain locally standard graphs via local expansions, each internal molecule must either (i) pull in an edge of a different color from another path or molecule, (ii) merge with another internal molecule connected by differently colored edges, or (iii) merge with an external molecule. In case (iii), the internal molecule disappears from $\Gamma_{\mu,xy}$, whereas in cases (i) and (ii), at least two paths will pass through it.

	To streamline the estimation process, we now simplify the structure of $\Gamma_{\mu,xy}$ through its underlying molecular graph.
	Specifically, we bound them by corresponding ``\emph{auxiliary graphs}", which are obtained as quotient graphs by collapsing each molecule into a single vertex. 
	More precisely, for each internal molecule $\Mol_i$, $i\in \qqq{q}$, we select a representative vertex $\al_i$ within the molecule, referred to as the ``center" of the molecule. Similarly, for the two external molecules, we set $x$ and $y$ as their respective centers. The auxiliary graph associated with $\Gamma_{\mu,xy}$ is then defined on the set of block-level vertices $[x]$, $[y]$, and $[\al_i]$ (for $i\in \qqq{q}$): 
	
	\begin{definition}[Block-level vertices] \label{def: BM2}
		Recall that $\ZL$ is partitioned into $n^d$ blocks as defined in \eqref{eq:blockIa}. For any $x\in \Z_L^d$, we use $[x]$ to denote the block that contains $x$. 
		With slight abuse of notation, we also view these blocks as vertices in the lattice $\Zn$. In other words, $[x]$ may be interpreted both as a vertex in $\Zn$ and as a subset of vertices in $\ZL$. 
	\end{definition}

	For clarity, throughout the following proof we use the notation $[\cdot]$ to indicate vertices in the auxiliary graph and the lattice $\Zn$, distinguishing them from vertices in the original graphs defined in \Cref{def_graph1} and from those in the lattice $\ZL$. 
	To define the auxiliary graph formally, we still need to define its edges. 
	For simplicity of notation, we will use ``$\al\sim_{\Mol} \beta$" to mean that ``vertices $\al$ and $\beta$ belong to the same molecule". 
	For any $\beta_i\sim_{\Mol}\al_i$, it suffices to assume that  
	\be\label{yixi}
	|\al_i-\beta_i|\le W(\log W)^{1+\e_0}
	\ee
	for a small enough constant $\e_0\in (0,1/10)$; otherwise the graph is smaller than $\exp(-\Omega((\log W)^{1+\e_0}))\le W^{-D}$ for any constant $D>0$. Combining \Cref{lem_GbEXP} with \eqref{yixi}, we can bound the solid edges between different molecules as follows for any large constant $D>0$:  
	\be\label{eq:Gbyxi}|(G-M)_{xy}|\prec \Psi_{t}(0)\delta_{xy}+\xi([x],[y])+W^{-D},\ee
	where the variables $\xi(\cdot,\cdot)$ are defined as 
	\be\label{eq:xia1a2} [\xi([a_1],[a_2])]^2:=\sum_{\bfb:\|\bfb-\ba\|_\infty\le (\log W)^{1+2\e_0}} \max_{\bsig\in\{(-,+),(+,-)\}}\cL^{(2)}_{t,\bsig,\bfb} + W^{-d}\mathbf 1(|[a_1]-[a_2]|\le (\log W)^{1+2\e_0}).\ee
	With Ward's identity and the properties of $\Psi_t$ given by \eqref{eq:Psi}, we readily see the following properties for the $\xi$ variables. 
	\begin{claim}
		In the setting of \Cref{lem:LWterm}, 
		for any $[a_1],[a_2]\in \Zn$, the following estimates hold:
		\be\label{eq:Gbyxi2}
		\xi([a_1],[a_2]) \prec \Psi_t(|[a_1]-[a_2]|), \quad \sum_{[a_2]}[\xi([a_1],[a_2])]^2 \prec (W^d\eta_t)^{-1}. 
		\ee
	\end{claim}
	
	Now, we define the auxiliary graphs formed with edges representing the $\xi$ variables. 
	
	\begin{definition}[Auxiliary graphs]\label{def_auxgraph}
		Let $\cal G_{xy}$ be an arbitrary normal graph with $q$ internal molecules $\Mol_i$, $i\in\qqq{q}$, and two external molecules $\Mol_x$ and $\Mol_y$ containing $x$ and $y$, respectively. We fix the centers $\al_i\in\Mol_i$ for $i\in \qqq{q}$. We then define the auxiliary graph $\cal G^{\aux}_{[x][y]}$ of $\cal G_{xy}$ as follows:
		\begin{itemize}
			\item The vertices are $[\al_i]$, $i\in \{1,\ldots,q,x,y\}$, where we adopt the convention $\al_x\equiv x$ and $\al_y\equiv y$.
			\item Corresponding to each solid edge between molecules $\Mol_i$ and $\Mol_j$ in $\cal G_{xy}$, we introduce a (black) solid edge between $[\al_i]$ and $[\al_j]$ in $\cal G^{\aux}_{[x][y]}$, representing the factor $\xi([\al_i],[\al_j])$. (Since $\xi$ is symmetric, these edges are not oriented.)
		\end{itemize}
		In this way, we obtain the auxiliary graph \smash{$\cal G^{\aux}_{[x][y]}$}. Its value is defined analogously to \Cref{ValG}, namely, as the product of all solid edges followed by summation over all internal vertices $[\al_i]$, $i\in \qqq{q}$. Finally, we define the scaling order of \smash{$\cal G^{\aux}_{[x][y]}$} in the same manner as in \eqref{eq:ordG}:
		\be\label{eq:ordGaux} \ord(\cal G^{\aux}_{[x][y]}):=\#\{\text{solid edges in } \cal G^{\aux}_{[x][y]}\}-2\#\{\text{internal vertices in } \cal G^{\aux}_{[x][y]}\}.\ee
	\end{definition}
	
	Now, the estimation of $\Gamma_{\mu, xy}$ can be reduced to bounding its auxiliary graph.
	
	\begin{lemma}\label{GtoAG}
		Given a locally standard graph $\Gamma_{\mu,xy}$ from \eqref{eq:local_Gs} that satisfies properties (1)--(6) in \Cref{lem:localregular}, we define its auxiliary graph $\Gamma^{\aux}_{\mu, [a][b]}$ (recall that $x\in[a]$ and $y\in[b]$) and the corresponding scaling order as in \Cref{def_auxgraph}. For any large constant $D>0$, we have 
		\be\label{G_by_auxG} 
		\Gamma_{\mu, xy} \prec  (\Psi_t)^{\ord(\Gamma_{\mu, xy}) - \ord(\Gamma^{\aux}_{\mu, [a][b]})}  \cdot W^{qd}\Gamma^{\aux}_{\mu, [a][b]} + W^{-D}\, .
		\ee
	\end{lemma}
	
	\begin{proof}
		As in \Cref{def_auxgraph}, the vertices of \smash{$\Gamma^{\aux}_{\mu,[a][b]}$} are denoted by $[\al_i]$, $i\in\{1,\ldots,q,x,y\}$, corresponding to the molecule centers $\al_i$ in $\Gamma_{\mu,xy}$. 
		First, by the definition of $S$ together with the estimate for $S^\pm$ in \eqref{eq:estSpm-W}, and recalling \eqref{yixi}, we may bound a waved edge between $\beta_i$ and $\gamma_i$ inside molecule $\Mol_i$ by
		$$ \OO_\prec\p{W^{-d}\mathbf 1_{|\beta_i-\al_i|\vee |\gamma_i-\al_i|\le W(\log W)^{1+\e_0}} + e^{-c(\log W)^{1+\e_0}}}.$$
		Second, by \Cref{lem_GbEXP}, solid edges within molecules can be bounded by $\Psi_t$, and every solid edge between vertices $\beta_i\in \Mol_i$ and $\beta_j\in \Mol_j$ can be bounded by
		\[ \Big(\sum_{|[a']-[\beta_i]|\le 1,|[b']-[\beta_j]|\le 1} \max_{\bsig\in \{(+,-),(-,+)\}]} \cL^{(2)}_{t,\bsig,([a'],[b'])}(z)+ W^{-d}\mathbf 1_{|[\beta_i]-[\beta_j]|\le 1}\Big)^{1/2} \lesssim \xi([\al_i],[\al_j]), \]
		where we again use \eqref{yixi} for $\beta_i\sim_{\Mol}\al_i$ and $\beta_j\sim_{\Mol}\al_j$.
		
		With these bounds, and after summing over all \emph{non-center} vertices inside the molecules $\Mol_i$, $i\in \qqq{q}$, we obtain (cf. the notations in \Cref{def scaling})
		\begin{align*}
			\Gamma_{\mu, xy} \prec (\Psi_t)^{n_S(\Gamma_{\mu, xy})-n_S(\Gamma^{\aux}_{\mu, [a][b]})}W^{-d(n_{W}(\Gamma_{\mu, xy}) - n_V(\Gamma_{\mu, xy})+q)} \cdot W^{qd}\Gamma^{\aux}_{\mu, [a][b]}.
		\end{align*}
		Here, we have also used \eqref{eq:MolVW} in obtaining the factor $W^{-d(n_{W}(\Gamma_{\mu, xy}) - n_V(\Gamma_{\mu, xy})+q)}$, while the compensating factor $W^{qd}$ reflects the difference between summing the molecule centers $\al_i$ (in $\ZL$ for $\Gamma_{\mu,xy}$) and summing the corresponding block representatives $[\al_i]$ (in $\Zn$ for \smash{$\Gamma^{\aux}_{\mu,[a][b]}$}). Finally, using $\Psi_t\ge W^{-d/2}$ together with the definitions \eqref{eq:ordG} and \eqref{eq:ordGaux}, we conclude \eqref{G_by_auxG}.
	\end{proof}
	
	To conclude \Cref{lem:LW_moment}, it remains to bound the auxiliary graph $\Gamma^{\aux}_{\mu, [a][b]}$. 

	\begin{lemma}\label{lem:Anp} 
		In the setting of \Cref{GtoAG}, there exists a constant $c>0$ (depending on $p$) such that 
		\be\label{eq:bddGamma_aux}
		\Gamma^{\aux}_{\mu, [a][b]} \prec \left({W^d\eta_t}\right)^{-q} \cdot [\Psi_t(c|[a]-[b]|)]^p \cdot (\Psi_t)^{\ord(\Gamma^{\aux}_{\mu, [a][b]})-p}.
		\ee
	\end{lemma}
	
	Before turning to the proof of \Cref{lem:Anp}, we first use it to complete the proof of \Cref{lem:LW_moment}.
	
	\begin{proof}[\bf Proof of \Cref{lem:LW_moment}]
		Using \Cref{lem:Anp,GtoAG}, we bound the locally standard graphs in \eqref{eq:local_Gs} as
		\begin{align*}
			\Gamma_{\mu, xy} \prec  \eta_t^{-q}\cdot (\Psi_t)^{\ord(\Gamma_{\mu, xy}) -p} \cdot [\Psi_t(c|[a]-[b]|)]^p  + W^{-D} \le \eta_t^{-p}\cdot \Psi_t^{p} \cdot [\Psi_t(c|[a]-[b]|)]^p  + W^{-D},
		\end{align*}
		where we use \eqref{eq:sizeGammamu} in the second step. This concludes \eqref{eq:LW_moment} by taking $\Psi_t=\Psi(0)$.
	\end{proof}

	\subsection{Proof of Lemma \ref{lem:Anp}}\label{sec:pf_Anp}

	This subsection is devoted to establishing the key technical result, \Cref{lem:Anp}. Its proof relies crucially on the path properties (3)--(5) of $\Gamma_{\mu, xy}$ stated in \Cref{lem:localregular}, which also hold for the auxiliary graph \smash{$\Gamma^{\aux}_{\mu, [a][b]}$}. We refer to any graph satisfying these path properties as a \emph{nested graph}. 
	By \Cref{def_auxgraph} of auxiliary graphs together with the bounds in \eqref{eq:Gbyxi2}, \Cref{lem:Anp} follows directly from the following combinatorial result concerning nested graphs.

	\begin{lemma}[Bounding nested graphs]\label{lem:Anp_key}  
		Let $\cal G_{\ba\bfb}$ be a graph with $2p$ external vertices, denoted by $\ba=(a_1,\ldots,a_p)\in (\Zn)^p$ and $\bfb=(b_1,\ldots,b_p)\in (\Zn)^p$, together with $0\le q\le p$ internal vertices. A solid edge between vertices $\al$ and $\beta$ represents a nonnegative random variable $\xi_{\al\beta}=\xi_{\beta\al}$ (not necessarily the specific random variables appearing in \eqref{eq:xia1a2}) that satisfies the bounds
		\be\label{eq:Gbyxi3}
		\xi_{\al\beta} \prec \Psi_t(|\al-\beta|), 
		\quad \sum_{\beta}|\xi_{\al\beta}|^2 \prec (W^d\eta_t)^{-1}, \quad \forall \al,\beta\in \Zn.
		\ee
		We assume that the graph contains no self-loops. Suppose $\cal G_{\ba\bfb}$ satisfies the following path properties:
		\begin{enumerate}
			\item[(1)] There are $p$ edge-disjoint paths $\fP_i$, $i\in\qqq{p}$, such that each path $\fP_i$ connects the pair $a_i$ and $b_i$. 
			
			\item[(2)] Each internal vertex $\al_i$, for $i\in \qqq{q}$, is traversed by at least two distinct paths $\fP_k\ne \fP_l$. As a consequence, each vertex $\al_i$ has degree at least 4: 
			\be\label{eq:deg_mole_aux}
			\deg(\al_i)\ge 4,\quad \forall i\in \qqq{q}.\ee
			
			\item[(3)] For any subset $A\subset \qqq{q}$, the number of paths passing through the internal vertices $\{\al_i: i\in A\}$ is at least $|A|$; that is,
			$\left|\left\{j: \mathrm V(\fP_j) \cap  \{\Mol_i: i\in A\}\ne \emptyset\right\}\right|\ge |A| ,$
			where $\mathrm V(\fP_j)$ denotes the subset of vertices in the path $\fP_j$.
		\end{enumerate}
		Then, there exists a constant $c>0$ such that 
		\begin{equation}\label{adsuu_orig}
			\cal G_{\ba\bfb} \prec (W^d\eta_t)^{-q}\cdot \Psi_t^{\ord(\cal G_{\ba\bfb})-p}\cdot \prod_{i=1}^p  \Psi_t(c|a_i-b_i|) ,
		\end{equation}
		where $\ord(\cal G_{\ba\bfb})$ is defined as in \eqref{eq:ordGaux}, namely \(\ord(\cal G_{\ba\bfb}):=\#\{\text{solid edges in } \cal G_{\ba\bfb}\}-2q.\)
	\end{lemma}
	
	\begin{proof}[\bf Proof of \Cref{lem:Anp}]
		By \Cref{def_auxgraph} of auxiliary graphs, the bounds in \eqref{eq:Gbyxi2}, and the path properties (3)--(5) from \Cref{lem:localregular}, the auxiliary graph \smash{$\Gamma^{\aux}_{\mu, [a][b]}$} satisfies the assumptions of \Cref{lem:Anp_key}. Consequently, \eqref{eq:bddGamma_aux} becomes a special case of \eqref{adsuu_orig}, with $a_1=\cdots=a_p=[a]$ and $b_1=\cdots=b_p=[b]$. 
	\end{proof}

In the proof of \Cref{lem:Anp_key}, we will select the ``long edges" and determine the nested summation order of internal vertices via an inductive approach. 
Specifically, we assume that \Cref{lem:Anp_key} holds for any graph with $(q-1)$ internal vertices. Then, given a graph with $q$ internal vertices that satisfies the assumptions of \Cref{lem:Anp_key}, we provide an algorithm to identify the long edges and determine the first vertex in a valid nested order, where the long edges have been removed.
We show that summing over this vertex yields the desired factors of $(W^d\eta_t)^{-1}$ and $\Psi_t(c|a_i-b_i|)$, and that the resulting graph still satisfies the assumptions of \Cref{lem:Anp_key}, now with $(q-1)$ internal vertices. (Note that in this reduced graph, the ending external vertices may change. This is precisely why we work with more general graphs allowing arbitrary external vertices, rather than restricting ourselves to the original external vertices $[a]$ and $[b]$.) Therefore, we can apply the inductive hypothesis to complete the argument.

Our induction argument relies critically on the path properties assumed in \Cref{lem:Anp_key}. However, these properties may be violated once some long edges are removed. To clarify the graphical structure and streamline the presentation of our proof, instead of deleting the chosen long edges entirely, we replace them with a new type of edges, called \emph{ghost edges}:
\begin{itemize}
	\item A {\bf ghost edge} is a black dashed edge between two vertices, representing a factor of $1$. 
\end{itemize}
Ghost edges do not contribute to the value of a graph (and hence do not affect its scaling order); rather, they are introduced solely to preserve the connectedness of paths in our graphs. As a consequence, they will be counted when we define the degrees of vertices, i.e., 
\[ \deg(\al)=\#\{\text{solid and ghost edges connected with }\al\}.\]
When we want to refer to the degree of solid edges for a vertex $\al$, we will use the notation $\deg_s(\al)$.  
The main advantage of ghost edges is that they allow us to maintain the essential path properties---namely properties (1)--(3) in \Cref{lem:Anp_key}---throughout the proof. In particular, they simplify the induction: without ghost edges, we would need to distinguish between paths depending on whether long edges had been removed from them. 
Now, \Cref{lem:Anp_key} is an easy corollary of the following more general result for graphs with ghost edges.

\begin{lemma}\label{lem:Anp_key_gh}  
	Suppose $\cal G_{\ba\bfb}$ is a nested graph that may contain ghost edges. Assume it satisfies the setting of \Cref{lem:Anp_key}, including properties (1)--(3) therein, with the convention that paths may also contain ghost edges. Furthermore, suppose that each path $\fP_i$, $i\in\qqq{p}$, contains at most one ghost edge, which---if present---must appear as an ending edge of the path. Here, the ending edges of a path $\fP_i$ are its first and last edges, necessarily incident to an external vertex $a_i$ or $b_i$. Then, there exists a constant $c>0$ such that
	\begin{equation}\label{adsuu22}
		\cal G_{\ba\bfb} \prec (W^d\eta_t)^{-q}\cdot \Psi_t^{\ord(\cal G_{\ba\bfb})-n_{\ngh}(\cal G_{\ba\bfb})}\cdot \prod_{i=1}^p  [\Psi_t(c|a_i-b_i|)]^{\mathbf 1(\fP_i \text{ has no ghost edge})} ,
	\end{equation}
	where $n_{\ngh}$ denotes the number of paths that have no ghost edge, i.e., 
	\(n_{\ngh}(\cal G_{\ba\bfb}):=\sum_{i=1}^p \mathbf 1(\fP_i \text{ has no ghost edge}) .\)
\end{lemma}

\begin{proof}[\bf Proof of \Cref{lem:Anp_key_gh}] 
We prove the estimate \eqref{adsuu22} by induction on $q$, the number of internal molecules. Suppose we aim to establish \eqref{adsuu22} for a target graph \smash{${\cal G}_{\ba\bfb}$} satisfying the assumptions of \Cref{lem:Anp_key_gh}. Moreover, assume that \smash{${\cal G}_{\ba\bfb}$} contains $K\in \N$ many solid edges. Our induction hypothesis is the following: the estimate \eqref{adsuu22} holds for all nested graphs satisfying the assumptions of \Cref{lem:Anp_key_gh}, with $k$ internal molecules and at most $K$ solid edges. (Note that we do not assume that the number of paths is smaller than $p$.)
In the following proof, we use the notation $(\alpha, \beta)$ to denote either a solid or a ghost edge between vertices $\alpha$ and $\beta$. A path from $\alpha$ to $\beta$ is denoted by $(\alpha \too \beta)$, where the double arrow indicates that intermediate vertices may be present along the path.
More generally, a path of the form
	$(\alpha_1 \too \alpha_2 \too \cdots \too \alpha_k)$ represents a sequence of edges from $\alpha_1$ to $\alpha_2$, then $\alpha_2$ to $\alpha_3$, and so on.
	
	First, the estimate \eqref{adsuu22} is trivial in the case $q=0$, by \eqref{eq:Gbyxi3}. Now, let $1\le q\le p$, and assume the induction hypothesis holds for all $0\le k \le q-1$. We prove that \eqref{adsuu22} holds for every graph satisfying the assumptions of \Cref{lem:Anp_key_gh}, with $q$ internal molecules and at most $K$ solid edges.
	Given such a graph $\cal G_{\ba\bfb}$, we partition the summation region over $\bal=(\al_1,\ldots,\al_q)$ into at most $2^{pq}$ subregions, according to whether each vertex $\al_i$ is closer to $a_j$ or to $b_j$ with $i\in\qqq{q}$ and $j\in \qqq{p}$:
	\[ \bD_{\bpi}\subset (\Zn)^q:=\left\{\bal: |\al_i-a_j|\le |\al_i-b_j| \ \text{if}\ \pi_{i,j}=0, \text{ and } |\al_i-a_j| > |\al_i-b_j| \ \text{if}\ \pi_{i,j}=1\right\},\]
	for $\bpi=(\pi_{1,1},\ldots, \pi_{1,p},\ldots, \pi_{q,1},\ldots, \pi_{q,p}) \in \{0,1\}^{pq}$. The union of these subregions covers the entire space \smash{$(\Zn)^q$}. Then, it suffices to prove that for each fixed $\bpi \in \{0,1\}^{pq}$,  
	\begin{align}\label{kwuyayw}
		\sum_{\bal\in \bD_{\bpi}} \cal G_{\ba\bfb}(\bal) \prec (W^d\eta_t)^{-q}\cdot \Psi_t^{\ord(\cal G_{\ba\bfb})-n_{\ngh}(\cal G_{\ba\bfb})}\cdot \prod_{i=1}^p  [\Psi_t(c|a_i-b_i|)]^{\chi(\fP_i)} ,
	\end{align} 
	where $\cal G_{\ba\bfb}(\bal)$ denotes the graph obtained by fixing the internal vertices in $\bal$ as external vertices, and we abbreviate $\chi(\fP_i)\equiv \mathbf{1}\left(\fP_i \text{ has no ghost edge}\right)$. 
	
	For the proof of \eqref{kwuyayw}, we look at the ending edges of the $p$ paths. Suppose $(a_j,\al_i)$ is an ending edge of a path $\fP_j$ (note that $\fP_j$ connects $a_j$ to $b_j$ through $\al_i$, but there may be no direct solid edge between $b_j$ and $\al_i$). On the region $\bD_{\bpi}$, the solid edge $(a_j,\al_i)$ falls into one of the following four categories: 
	\begin{enumerate}
		\item[{\bf A1}:] The path $\fP_j=(a_j\too b_j)$ has no ghost edge. Moreover, for $\bal\in \bD_{\bpi}$, the vertex $\al_i$ is closer to $a_j$ than to $b_j$, i.e., $|\al_i-a_j|\le  |\al_i-b_j|$. In particular, this implies $|\al_i-b_j|\ge |a_j-b_j|/2$. 
		
		\item[{\bf A2}:] The path $\fP_j=(a_j\too b_j)$ has no ghost edge. Moreover, for $\bal\in \bD_{\bpi}$, the vertex $\al_i$ is closer to $b_j$ than to $a_j$, i.e., $|\al_i-a_j| > |\al_i-b_j|$. In particular, this implies $|\al_i-a_j|\ge |a_j-b_j|/2$. 
		
		\item[{\bf B1}:] The path $\fP_j=( a_j \too b_j )$ contains a ghost edge, which is not $( a_j , \al_i )$. 
		
		\item[{\bf B2}:] The path $\fP_j=( a_j \too b_j )$ contains a ghost edge $( a_j , \al_i )$.
		
	\end{enumerate}
	By symmetry, we may also classify ending edges attached to $b_j$ by switching the roles of $a_j$ and $b_j$ in the definitions above. The only difference is in the classification of type-A1 (resp.~type-A2) edges: in this case, we require $|\al_i-b_j|< |\al_i-a_j|$ (resp.~$|\al_i-b_j|\ge |\al_i-a_j|$) instead.

	First, we notice that if an ending edge $\p{a_j,\al_i}$ is a type-A2 edge, then it contributes a factor $\Psi_t(|a_j-b_j|/2)$ by \eqref{eq:Gbyxi3}. In this case, we can bound the original graph $\cal G_{\ba\bfb}$ by the product of a new graph \smash{$\wt{\cal G}_{\ba\bfb}$}, obtained by replacing the edge $\p{a_j,\al_i}$ with a ghost edge, and the factor $\Psi_t(|a_j-b_j|/2)$. It is easy to see that if the bound \eqref{kwuyayw} holds for \smash{$\wt{\cal G}_{\ba\bfb}$}, then it also holds for the original graph. 
	We perform this replacement for every A2 edge that appears in the paths $\fP_i$, $i\in\qqq{p}$.  
	After performing all such replacements, it suffices to prove \eqref{kwuyayw} for the resulting graph.
	With a slight abuse of notation, we continue to denote this modified graph by $\cal G_{\ba\bfb}$, which now satisfies the condition: 
	\be\label{eq:noA2}
	\text{there is no A2 edge in }\cal G_{\ba\bfb}.
	\ee
	We now proceed to prove the estimate \eqref{kwuyayw}, assuming \eqref{eq:noA2} and the induction hypothesis. The proof is organized by classifying cases according to the number and type of ending edges attached to internal vertices.
	
	\medskip
	\paragraph{\bf (I) Two A1/B1 edges: the simple case}
	
	We first consider the case where an internal vertex is connected to two or more ending edges of type A1 or B1. That is, suppose there exists an internal vertex in the graph connected to at least two such edges---either two A1 edges, two B1 edges, or one of each. Without loss of generality, let this vertex be $\al_q$. To illustrate the idea, we begin with a simple scenario where $\deg(\al_q)=4$, so exactly two paths pass through $\al_q$. We will address the more general case $\deg(\al_q)\ge 4$ in Case (II). 
	
	By definition, the two ending edges connected to $\al_q$ must belong to distinct paths. 
	Without loss of generality, assume these edges are $(a_1,\al_q)$ and $(a_2,\al_q)$, which belong to paths $\fP_1$ and $\fP_2$, respectively. We now define a new graph \smash{$\cal G^{\mathrm{new}}_{\ba'\bfb}\equiv \cal G^{\mathrm{new}}_{\ba'(\al_q),\bfb}$} as follows: it has external vertices 
	\[\ba'=(a_1',a_2',a_3,\ldots,a_p,a_1)\quad \text{and}\quad \bfb=(b_1,\ldots,b_p,a_2), \quad \text{where}\ \ a_1'=\al_q, \ a_2'=\al_q,\] 
	and internal vertices $\bal'=(\al_1,\ldots,\al_{q-1})$, i.e., we remove $\al_q$ from the internal vertex set. 
	First, all edges not belonging to paths $\fP_1$ and $\fP_2$ remain unchanged, so the paths $\fP_i$ for $i\in\qqq{3,p}$ stay the same in $\cal G^{\mathrm{new}}_{\ba'\bfb}$. Second, we define new paths $\fP'_1=(\al_q\too b_1)$ and $\fP_2'=(\al_q\too b_2)$ obtained by removing the edges $(a_1,\al_q)$ and $(a_2,\al_q)$ from $\fP_1$ and $\fP_2$, respectively. 
	Finally, we introduce an auxiliary path $(a_1\too a_2)$ consisting of a single ghost edge between $a_1$ and $a_2$. The purpose of this auxiliary path is to ensure consistency with the induction hypothesis, which requires that every external vertex is an ending point of some path. (We cannot simply remove $a_1$ and $a_2$ from the graph, since some path $\fP_i$ may pass through them.)
	
	The new graph $\cal G^{\mathrm{new}}_{\ba'\bfb}$ produced by this construction also satisfies the assumptions of \Cref{lem:Anp_key_gh}, but it has one fewer internal vertex and two fewer solid edges than the original graph $\cal G_{\ba\bfb}$. Hence, by the induction hypothesis, we can bound it by 
	\begin{align}
		\sum_{\bal'} \cal G^{\mathrm{new}}_{\ba'\bfb}(\bal') \prec &~ \frac{ \Psi_t^{\ord(\cal G^{\mathrm{new}}_{\ba'\bfb})-n_{\ngh}(\cal G^{\mathrm{new}}_{\ba'\bfb})}}{(W^d\eta_t)^{q-1}}\cdot  \prod_{i=1}^2[\Psi_t(c|\al_q-b_i|)]^{\chi(\fP_i')} \cdot \prod_{i=3}^p  [\Psi_t(c|a_i-b_i|)]^{\chi(\fP_i)} \nonumber\\
		\prec &~ (W^d\eta_t)^{-(q-1)}\cdot \Psi_t^{\ord(\cal G_{\ba\bfb})-n_{\ngh}(\cal G_{\ba\bfb})}\cdot  \prod_{i=1}^p  [\Psi_t(c|a_i-b_i|/2)]^{\chi(\fP_i)} \label{eq:induc_Ggraph}
	\end{align}
	for a constant $c>0$. In the second step, we use that 
	\be\label{eq:change_of_order}\ord(\cal G^{\mathrm{new}}_{\ba'\bfb})=\ord(\cal G_{\ba\bfb}),\quad n_{\ngh}(\cal G_{\ba\bfb})=n_{\ngh}(\cal G^{\mathrm{new}}_{\ba'\bfb}),\ee 
	since $\al_q$ is now treated as an external vertex. Moreover, we have also used that 
	\be\label{eq:change_of_order2}\prod_{i=1}^2[\Psi_t(c|\al_q-b_i|)]^{\chi(\fP_i')}\le \prod_{i=1}^2[\Psi_t(c|a_i-b_i|/2)]^{\chi(\fP_i)} \ee
	by the definitions of A1 and B1 edges. With \eqref{eq:induc_Ggraph}, we can bound the LHS of \eqref{kwuyayw} by 
	\begin{align}\label{kwuyayw_case1}
		\sum_{\bal\in \bD_{\bpi}} \cal G_{\ba\bfb}(\bal) &\prec (W^d\eta_t)^{-(q-1)}\cdot \Psi_t^{\ord(\cal G_{\ba\bfb})-n_{\ngh}(\cal G_{\ba\bfb})}\cdot \prod_{i=1}^p  [\Psi_t(c|a_i-b_i|/2)]^{\chi(\fP_i)} \sum_{\al_q}\xi_{a_1\al_q}\xi_{a_2\al_q} \nonumber\\
		&\prec (W^d\eta_t)^{-q}\cdot \Psi_t^{\ord(\cal G_{\ba\bfb})-n_{\ngh}(\cal G_{\ba\bfb})}\cdot  \prod_{i=1}^p  [\Psi_t(c|a_i-b_i|/2)]^{\chi(\fP_i)} ,
	\end{align}
	where we use the Cauchy-Schwarz inequality and \eqref{eq:Gbyxi3} in the second step. This concludes \eqref{kwuyayw} in case (I).

	\medskip
	\paragraph{\bf (II) Two A1/B1 edges: general case} 
	
	We now consider a more general case than Case (I), where there is an internal vertex, say $\al_q$, connected to at least two ending edges of type A1 or B1, and where $\deg(\al_q)\ge 4$. Without loss of generality, assume that the two ending edges connected to $\al_q$ are $(a_1,\al_q)$ and $(a_2,\al_q)$, belonging to paths $\fP_1$ and $\fP_2$, respectively. As before, we define the new graph \smash{$\cal G^{\new}_{\ba'\bfb'}\equiv \cal G^{\new}_{\ba'(\al_q),\bfb'(\al_q)}$} obtained from \smash{$\cal G_{\ba\bfb}$} by removing the two edges $(a_1,\al_q)$ and $(a_2,\al_q)$. 
	
	The new graph has external vertices \(\al_q, a_1,a_2, a_3, \ldots, a_p,  b_1, \ldots,  b_p\) and internal vertices $\al_1,  \ldots,  \al_{q-1}$. To apply the induction hypothesis, we need to ensure that the structure of the paths in $\cal G^{\new}_{\ba' \bfb'}$ satisfies the assumptions in \Cref{lem:Anp_key_gh}.  
	For each path $\fP_j$, $j\in \qqq{3,p}$, in the original graph, suppose it takes the form 
	\be\label{eq:jthpath}( a_j \too  \al_q \too \al_q \too \cdots \too  \al_q \too b_j ),\ee 
	where each occurrence of $\alpha_q$ is explicitly listed. Here, $(a_j \too \alpha_q)$ denotes the initial segment of $\fP_j$ connecting $a_j$ to the first occurrence of $\alpha_q$, while $(\alpha_q \too b_j)$ denotes the final segment connecting the last occurrence of $\alpha_q$ to $b_j$. Each intermediate segment $(\alpha_q \too \alpha_q)$ represents a connection between two consecutive appearances of $\alpha_q$ along the path. Let $k_j$ denote the total number of times $\alpha_q$ appears in the path $\fP_j$. 
	Correspondingly, in $\cal G^{\mathrm{new}}_{\ba'\bfb'}$, we define the following $k_j+1$ paths for each $j$: for each $1 \leq r \leq k_j + 1$, the $(j,r)$-th path $\fP'_{j,r}$ is the $r$-th segment of $\fP_j$, connecting $a_{j,r}$ to $b_{j,r}$, where  
	\begin{equation}
		a_{j,r}:=  \begin{cases} a_j, & \text{if} \ j=1\\
			\al_q, & \text{if} \ j>1
		\end{cases},\quad  b_{j,r}:=  \begin{cases} \al_q,  &\text{if} \ j\le k_j\\
			b_j, &\text{if} \ j= k_j +1
		\end{cases}. 
	\end{equation}
	Next, for the first two paths $\fP_j$, $j \in \{1, 2\}$, assume they also take the form \eqref{eq:jthpath}, with $k_j$ appearances of $\al_q$. After removing the edges $(a_1,\al_q)$ and $(a_2,\al_q)$, we define $k_j$ paths in $\cal G^{\mathrm{new}}_{\ba'\bfb'}$ for each $j\in\{1,2\}$. Specifically, for each $1 \leq r \leq k_j$, the $(j,r)$-th path $\fP'_{j,r}$ is the $r$-th segment of $\fP_j$, connecting $a_{j,r}$ to $b_{j,r}$, where  
	\begin{equation}
		a_{j,r}:= \al_q,\quad  b_{j,r}:=  \begin{cases} \al_q,  &\text{if} \ j< k_j\\
			b_j, &\text{if} \ j= k_j
		\end{cases}. 
	\end{equation}
	Finally, we introduce an auxiliary path $(a_1\too a_2)$ consisting of a single ghost edge between $a_1$ and $a_2$. 
	
	
	Now, we let $\ba':=\{a_{j,r}:1\le j \le p, 1\le r \le k_j+\mathbf 1_{j\ge 3}\}\cup\{a_1\}$ and $\bfb':=\{b_{j,r}:1\le j \le p, 1\le r \le k_j+\mathbf 1_{j\ge 3}\}\cup\{a_2\}$. It is easy to see that the graph $\cal G^{\new}_{\ba'\bfb'}$ defined in this way satisfies the assumptions in \Cref{lem:Anp_key_gh}, but with one fewer internal vertex and two fewer solid edges than the original graph $\cal G_{\ba\bfb}$. 
	Therefore, using the induction hypothesis, we can bound it in a similar way as \eqref{eq:induc_Ggraph}, that is,
	\begin{align}\label{eq:bound_new_graph}
		\sum_{\bal'} \cal G^{\mathrm{new}}_{\ba'\bfb'}(\bal') \prec (W^d\eta_t)^{-(q-1)}\cdot \Psi_t^{\ord(\cal G_{\ba\bfb})-n_{\ngh}(\cal G_{\ba\bfb})}\cdot  \prod_{i=1}^p  [\Psi_t(c|a_i-b_i|/2)]^{\chi(\fP_i)} ,
	\end{align}
	where $\bal'=(\al_1,\ldots,\al_{q-1})$, and in the derivation of this bound, we have used \eqref{eq:change_of_order} and \eqref{eq:change_of_order2}, and that 
	$$
	\prod_{r=1}^{k_j+1} [\Psi_t(c|a_{j,r}- b_{j,r}|)]^{\chi(\fP'_{j,r})} \le [\Psi_t (c|a_{j}- b_{j}|/2)]^{\chi(\fP_{j})} [\Psi_t(0)]^{\sum_{r=1}^{k_j+1}\chi(\fP'_{j,r})- \chi(\fP_{j})},\quad \forall j\in\qqq{3,p}.
	$$
	Finally, using \eqref{eq:bound_new_graph}, we can complete the proof of \eqref{kwuyayw} for Case (II), following a similar argument to the one used in \eqref{kwuyayw_case1}.

	\medskip
	\paragraph{\bf (III) Two B2 edges}
	In this case, we assume there exists an internal vertex, say $\al_q$, that is connected to two B2 ending edges and two solid edges, i.e., $\deg_s(\al_q)=2$. By definition, these B2 edges do not belong to the same path. Without loss of generality, assume that these two B2 edges are $(a_1, \al_q)$ and $(a_2, \al_q)$, belonging to paths $\fP_1$ and $\fP_2$, respectively. 
	Additionally, suppose the two solid edges connected to $\al_q$ are $(\al_q, \beta_1)$ and $(\al_q, \beta_2)$, which belong to paths $\fP_1$ and $\fP_2$, respectively. Here, $\beta_1$ and $\beta_2$ may represent external vertices in $\ba, \bfb$ or internal vertices in $\bal' = (\al_1, \ldots, \al_{q-1})$.

	We now define a new graph $\cal G^{\new}_{\ba\bfb}$ as follows: it still has the external vertices $\ba$ and $\bfb$, and the internal vertices $\bal'$. The edges that do not belong to the paths $\fP_1$ and $\fP_2$ remain unchanged in the new graph $\cal G^{\mathrm{new}}_{\ba\bfb}$. For paths $\fP_1$ and $\fP_2$, we modify them as follows:
	\begin{itemize}
		\item We create a new path $\fP_1'=(a_1 \too b_1)$ obtained by removing the solid edge $(\al_q, \beta_1)$ from $\fP_1$ and adding a ghost edge $(a_1, \beta_1)$ to the path.
		
		\item Similarly, we create a new path $\fP_2'=(a_2 \too b_2)$ obtained by removing the solid edge $(\al_q, \beta_2)$ from $\fP_2$ and adding a ghost edge $(a_2, \beta_2)$ to the path.
	\end{itemize}
	The reader can refer to \Cref{Fig:construct2B2} for an illustration of this construction. 
	
	\begin{figure}[h]
		\centering
		\begin{tikzpicture}
			
			\coordinate (a1) at (0, 0.5);
			\coordinate (a2) at (0, -0.5);
			\coordinate (al) at (1, 0);
			\coordinate (b1) at (2, 0.5);
			\coordinate (b2) at (2, -0.5);
			\coordinate (c1) at (3, 1);
			\coordinate (c2) at (3, -1);
			\coordinate (d1) at (5, 0.5);
			\coordinate (d2) at (5, -0.5);
			\coordinate (L1) at (6, 0.5);
			\coordinate (L2) at (6, -0.5);
			
			\fill (a1) circle (1.5pt) node[above=2pt]{$a_1$};
			\fill (a2) circle (1.5pt) node[below=2pt]{$a_2$};
			\fill (b1) circle (1.5pt)  node[above=2pt]{$\beta_1$};
			\fill (b2) circle (1.5pt) node[below=2pt]{$\beta_2$};
			\fill (al) circle (1.5pt) node[above=2pt]{$\al_q$};
			\fill (c1) circle (1.5pt) node[right=16pt]{$\fP_1$};
			\fill (c2) circle (1.5pt) node[right=16pt]{$\fP_2$};
			\fill (d1) circle (1.5pt);
			\fill (d2) circle (1.5pt);
			\fill (L1) circle (1.5pt) node[above=2pt]{$b_1$};
			\fill (L2) circle (1.5pt) node[below=2pt]{$b_2$};

			\draw (a1) [dashed]-- (al); 
			\draw (al) -- (b1) -- (c1); 
			\draw (c1) [dotted]-- (d1); 
			\draw (d1) -- (L1); 
			\draw (a2) [dashed]-- (al);
			\draw (al) -- (b2) -- (c2); 
			\draw (c2) [dotted]-- (d2); 
			\draw (d2) -- (L2); 
		\end{tikzpicture}
		\hspace{10pt}
		\begin{tikzpicture}
			
			\coordinate (a1) at (0.5, 0.5);
			\coordinate (a2) at (0.5, -0.5);
			\coordinate (b1) at (2, 0.5);
			\coordinate (b2) at (2, -0.5);
			\coordinate (c1) at (3, 1);
			\coordinate (c2) at (3, -1);
			\coordinate (d1) at (5, 0.5);
			\coordinate (d2) at (5, -0.5);
			\coordinate (L1) at (6, 0.5);
			\coordinate (L2) at (6, -0.5);
			
			\fill (a1) circle (1.5pt) node[above=2pt]{$a_1$};
			\fill (a2) circle (1.5pt) node[below=2pt]{$a_2$};
			\fill (b1) circle (1.5pt)  node[above=2pt]{$\beta_1$};
			\fill (b2) circle (1.5pt) node[below=2pt]{$\beta_2$};
			
			\fill (c1) circle (1.5pt) node[right=16pt]{$\fP_1'$};
			\fill (c2) circle (1.5pt) node[right=16pt]{$\fP_2'$};
			\fill (d1) circle (1.5pt);
			\fill (d2) circle (1.5pt);
			\fill (L1) circle (1.5pt) node[above=2pt]{$b_1$};
			\fill (L2) circle (1.5pt) node[below=2pt]{$b_2$};
			
			\draw (a1) [dashed]-- (b1);
			\draw (b1) -- (c1); 
			\draw (c1) [dotted]-- (d1); 
			\draw (d1) -- (L1); 
			\draw (a2) [dashed]-- (b2);
			\draw (b2) -- (c2); 
			\draw (c2) [dotted]-- (d2); 
			\draw (d2) -- (L2); 
		\end{tikzpicture}
		\caption{Illustration of the construction of paths $\fP_1'$ and $\fP_2'$ (right panel) from $\fP_1$ and $\fP_2$ (left panel).}\label{Fig:construct2B2}
	\end{figure}

	Note that $\cal G^{\mathrm{new}}_{\ba\bfb}$ defined above is also a graph satisfying the assumptions in \Cref{lem:Anp_key_gh}, but with one fewer internal vertex and two fewer solid edges compared to the original graph $\cal G_{\ba\bfb}$. Therefore, by the induction hypothesis, we can bound it by 
	\begin{align*}
		\sum_{\bal'} \cal G^{\mathrm{new}}_{\ba\bfb}(\bal') \prec &~ (W^d\eta_t)^{-(q-1)}  \cdot \Psi_t^{\ord(\cal G^{\mathrm{new}}_{\ba\bfb})-n_{\ngh}(\cal G^{\mathrm{new}}_{\ba\bfb})} \cdot  \prod_{i=3}^p  [\Psi_t(c|a_i-b_i|)]^{\chi(\fP_i)} \nonumber\\
		= &~ (W^d\eta_t)^{-(q-1)}\cdot \Psi_t^{\ord(\cal G_{\ba\bfb})-n_{\ngh}(\cal G_{\ba\bfb})}\cdot  \prod_{i=1}^p  [\Psi_t(c|a_i-b_i|)]^{\chi(\fP_i)}, \end{align*}
	where, in the second step, we use that $\ord(\cal G^{\mathrm{new}}_{\ba\bfb})=\ord(\cal G_{\ba\bfb})$, $n_{\ngh}(\cal G^{\mathrm{new}}_{\ba\bfb})=n_{\ngh}(\cal G_{\ba\bfb})$, and $\chi(\fP_1)=\chi(\fP_2)=0$. 
	Thus, we can bound the LHS of \eqref{kwuyayw} as follows: 
	\begin{align}\label{kwuyayw_case3}
		\sum_{\bal\in \bD_{\bpi}} \cal G_{\ba\bfb}(\bal) &\prec \sum_{\bal'} \cal G^{\mathrm{new}}_{\ba\bfb}(\bal')\sum_{\al_q}\xi_{\al_q\beta_1}\xi_{\al_q\beta_2} \prec (W^d\eta_t)^{-1}\sum_{\bal'} \cal G^{\mathrm{new}}_{\ba\bfb}(\bal') \nonumber\\
		&\prec (W^d\eta_t)^{-q}\cdot \Psi_t^{\ord(\cal G_{\ba\bfb})-n_{\ngh}(\cal G_{\ba\bfb})}\cdot  \prod_{i=1}^p  [\Psi_t(c|a_i-b_i|/2)]^{\chi(\fP_i)} ,
	\end{align}
	where we use the Cauchy-Schwarz inequality and \eqref{eq:Gbyxi3} in the second step. This implies \eqref{kwuyayw} in Case (III).

	\paragraph{\bf (IV) B1 edge + B2 edge}

	Now, suppose Case (II) does not hold. In Case (II), we have already addressed all situations where an internal vertex is connected to at least two A1/B1 edges. Therefore, each internal vertex in the graph is now connected by at most one A1/B1 edge. However, note that there are a total of $2p$ ending edges, and the number of B2 edges is at most $p$ (since there are at most $p$ ghost edges across the $p$ paths). Thus, our graph must contain at least $p$ A1/B1 edges and at most $p$ internal vertices. By the pigeonhole principle, the graph $\cal G_{\ba\bfb}$ must satisfy the following properties:
	\begin{itemize}
		\item[(1)] It contains $q=p$ internal vertices, each of which is connected by exactly one A1/B1 edge. 
		
		\item[(2)] Each path $\fP_i$ for $i\in \qqq{p}$ contains exactly one ghost edge---specifically, a B2 ending edge---which in particular implies that the graph contains no A1 edges. 
		
	\end{itemize}
	Hence, the estimate \eqref{kwuyayw} now reduces to 
	\begin{align}\label{kwuyayw_ng}
		\sum_{\bal\in \bD_{\bpi}} \cal G_{\ba\bfb}(\bal) \prec (W^d\eta_t)^{-p}\cdot \Psi_t^{\ord(\cal G_{\ba\bfb})}.
	\end{align} 
	The key to proving \eqref{kwuyayw_ng} is to construct a \emph{nested order of summation} for the $p$ internal vertices, as in \cite{yang2021random}. 
	That is, at each step of the summation---when summing over a given internal vertex according to this order---there must be at least two solid edges connected to the vertex being summed over.
	
	Suppose there exists an internal vertex $\al_i$ with $\deg_s(\al_i)=2$. Then, by the property \eqref{eq:deg_mole_aux}, this vertex must be connected by exactly two ghost edges. This situation has already been covered in Case (III). It remains to consider the case where
	\be\label{eq:degali}
	\deg_{s} (\al_i)\ge 3,\quad \forall i \in \qqq{p}.
	\ee
	By property (1) above, each internal vertex $\al_i$ is connected by one B1 solid edge. Without loss of generality, suppose this B1 edge connects $\al_i$ to $a_i$.
	Aside from this B1 edge, suppose that $\al_i$ is connected to $0\le k_i\le \deg_{s} (\al_i)-1$ external vertices, denoted by $\{c_{i,j}:j\in \qqq{k_i}\}$ (allowing for repetitions), and to $s_i=\deg_{s} (\al_i)-1-k_i$ internal vertices.

	We remove all ghost edges and external solid edges (i.e., those connected to external vertices) and decompose the resulting graphs into distinct connected components. 
	In other words, in the resulting graph, two vertices belong to the same component if and only if they are connected by a path \emph{consisting solely of internal solid edges}. 
	Without loss of generality, assume $\cal G_{\bal_r}$ is one such component, with internal vertices $\bal_r=(\al_1,\ldots, \al_r)$ for some $1\le r \le p$. We claim that 
	\begin{align}\label{kwuyayw_ng_tree}
		\sum_{\bal_r} \cal G_{\bal_r} \prod_{i=1}^r \Big(\xi_{\al_i a_i} \prod_{j=1}^{k_i}\xi_{\al_i c_{i,j}}\Big)  \prec (W^d\eta_t)^{-r} \cdot \Psi_t^{\ord(\cal G_{\bal_r})+r+\sum_{i=1}^r k_i}.
	\end{align}
	Applying this estimate to each connected component of $\cal G_{\ba\bfb}$, taking the product of the resulting bounds, and using the relation $ \ord(\cal G_{\ba\bfb})= \sum \ord(\cal G_{\bal_r}) + k_e$ (where $k_e\ge p$ denotes the total number of external solid edges not contained in the connected components), we obtain the desired bound \eqref{kwuyayw_ng}. 
	
	To establish \eqref{kwuyayw_ng_tree}, we first treat the case where there exists an index $i$ with $k_i \ge 1$. Without loss of generality, assume $k_1\ge 1$, so that $\al_1$ is incident to at least two external solid edges. Then, we identify the nested order by fixing a spanning tree $\mathbb T$ of $\cal G_{\bal_r}$ rooted at $\al_1$. We sum over the internal vertices according to the structure of $\mathbb T$, starting from the leaves and moving toward the root. 
	At each step, if $\al_i$ is a leaf with $i\ne 1$, then it is attached to at least two solid edges—namely, the edge in the tree and the external edge $(\al_i,a_i)$.
	Summing over $\al_i$ yields a factor $(W^d \eta_t)^{-1}$ by Cauchy-Schwarz and \eqref{eq:Gbyxi3}, together with additional edges that contribute $\Psi_t$ factors. 
	Removing $\al_i$ from the graph produces a smaller spanning tree, to which we apply the same procedure. 
	Iterating this procedure until only the root $\al_1$ remains, we then sum over $\al_1$ and obtain one final factor $(W^{d}\eta_t)^{-1}$ (along with additional $\Psi_t$ factors), since $\al_1$ is also incident to at least two external solid edges. This completes the proof of \eqref{kwuyayw_ng_tree} in the case $\max_i k_i \ge 1$.

	It remains to consider the case where $k_i\equiv 0$ for all $i\in\qqq{r}$. Then, by condition \eqref{eq:degali}, each internal vertex is connected to other internal vertices by at least two solid edges. Assume without loss of generality that $\al_1$ and $\al_2$ are connected in $\cal G_{\bal_r}$. Then, we bound the LHS of \eqref{kwuyayw_ng_tree} as 
	\begin{align*}
		\sum_{\bal_r} \cal G_{\bal_r} \prod_{i=1}^r \xi_{\al_i a_i} \le \sum_{\bal_r} \cal G_{\bal_r} \p{\p{\xi_{\al_1 a_1}}^2+\p{\xi_{\al_2 a_2}}^2} \prod_{i=3}^r \xi_{\al_i a_i}.
	\end{align*}
	By symmetry, it suffices to bound the term involving $\p{\xi_{\al_1 a_1}}^2$. In this case, we again determine the nested order using a spanning tree $\mathbb T$ of $\cal G_{\bal_r}$ with root $\al_1$.\footnote{For the term involving $\p{\xi_{\al_2 a_2}}^2$, we use a spanning tree rooted at $\al_2$; the rest of the argument is identical.} 
At each step, when summing over a leaf of $\mathbb T$—say $\al_i$ with $i\notin\{1,2\}$—the vertex $\al_i$ is incident to at least two solid edges: one along the tree and one external edge $(\al_i,a_i)$. 
Summing over $\al_i$ yields a factor of $(W^d \eta_t)^{-1}$ by Cauchy-Schwarz and \eqref{eq:Gbyxi3}, together with additional edges that are bounded by $\Psi_t$ factors. Removing $\al_i$ leaves a smaller spanning tree, and we continue inductively. 

On the other hand, suppose that at some step, we need to sum over a leaf vertex $\al_2$. There are three cases to consider:
\begin{itemize}
	\item[(i)] If $\al_2$ is not the child vertex of $\al_1$, then $\al_2$ is connected by at least two solid edges: one edge in the tree and one external edge $(\al_1, \al_2)$. Summing over these edges gives a factor of $(W^d \eta_t)^{-1}$, by Cauchy-Schwarz and \eqref{eq:Gbyxi3}, along with additional edges that are bounded by $\Psi_t$ factors. 
	
	\item[(ii)] If $\al_2$ is the child vertex of $\al_1$ and there are at least two solid edges between them, we can again sum over $\al_2$ to get a factor of $(W^d\eta_t)^{-1}$, along with some additional $\Psi_t$ factors. 
	
	\item[(iii)] Finally, suppose $\al_2$ is the child vertex of $\al_1$ in the spanning tree $\mathbb T$, but there is only one solid edge between them. 
	In this case, by \eqref{eq:degali} and the fact that $k_1=0$, we know that $\al_1$ must be connected to another internal vertex, say $\al_j$ with $j \notin \{1, 2\}$. Therefore, we can find a new spanning tree, $\mathbb{T}'$, where $\al_1$ has $\al_j$ as its child. Under this configuration, we are effectively reduced to case (i) again: when summing over the vertices from the leaves of $\mathbb{T}'$ toward the root $\al_1$, the vertex $\al_2$ will eventually be summed over as a leaf that is not the child of $\al_1$ at some step.
	
\end{itemize}
The reader may refer to \Cref{Fig:construct_trees} for an illustration of cases (i) and (iii). In the left panel, $\al_2$ is a leaf of the black spanning tree and is \emph{not} the child of $\al_1$. In contrast, in the right panel, we switch the roles of $\al_2$ and $\al_3$, so that $\al_2$ becomes the child of $\al_1$ in the original tree. For this case, we select a different black spanning tree in the right graph, ensuring that $\al_2$ is no longer the child of $\al_1$ in the new tree. 
\begin{figure}[h]
	\centering
	\begin{tikzpicture}[scale=2, every node/.style={circle, fill=black, inner sep=1.5pt}]
		\node (A) at (0,0) {};       
		\node[anchor=south,draw=none,fill=none] at (A) {$\al_1$};
		\node (B) at (0,-0.5) {};  
		\node[anchor=north,draw=none,fill=none] at (B) {$\al_3$};
		\node (C) at (-0.7,-0.9) {};     
		\node (D) at (-1,-1.5) {};  
		\node (E) at (-0.5,-1.5) {};    
		\node (F) at (0.7,-0.9) {};  
		\node[anchor=north,draw=none,fill=none] at (F) {$\al_2$};
		
		\node (A1) [left of = A, distance=4pt] {};
		\node[anchor=south,draw=none,fill=none] at (A1) {$a_1$};
		\node (B1) [left of = B, distance=4pt] {};  
		\node (C1) [left of = C, distance=4pt] {};
		\node (D1) [left of = D, distance=4pt] {};
		\node (E1) [right of = E, distance=4pt] {};
		
		\draw[black, thick] (A) -- (B) -- (C) -- (D);
		\draw[black, thick] (C) -- (E);
		\draw[black, thick] (B) -- (F);
		\path [draw=blue, thick] (A) to [bend left] (A1); 
		\path [draw=blue, thick] (A) to [bend right] (A1); 
		\draw[blue, thick] (B) -- (B1);
		\draw[blue, thick] (C) -- (C1);
		\draw[blue, thick] (D) -- (D1);
		\draw[blue, thick] (E) -- (E1);
		\draw[purple, thick] (A) -- (F);
		
	\end{tikzpicture}
	\hspace{10pt}
	\begin{tikzpicture}[scale=2, every node/.style={circle, fill=black, inner sep=1.5pt}]
		\node (A) at (0,0) {};       
		\node[anchor=south,draw=none,fill=none] at (A) {$\al_1$};
		\node (B) at (0,-0.5) {};  
		\node[anchor=north,draw=none,fill=none] at (B) {$\al_2$};
		\node (C) at (-0.7,-0.9) {};     
		\node (D) at (-1,-1.5) {};  
		\node (E) at (-0.5,-1.5) {};    
		\node (F) at (0.7,-0.9) {};
		\node[anchor=north,draw=none,fill=none] at (F) {$\al_3$};
		
		\node (A1) [left of = A, distance=4pt] {};
		\node[anchor=south,draw=none,fill=none] at (A1) {$a_1$};
		\node (C1) [left of = C, distance=4pt] {};
		\node (D1) [left of = D, distance=4pt] {};
		\node (E1) [right of = E, distance=4pt] {};
		\node (F1) [right of = F, distance=4pt] {};

		\draw[purple, thick] (A) -- (B);
		\draw[black, thick] (A) -- (F);
		\draw[black, thick] (C) -- (D);
		\draw[black, thick] (C) -- (B);
		\draw[black, thick] (C) -- (E);
		\draw[black, thick] (B) -- (F);
		\path [draw=blue, thick] (A) to [bend left] (A1); 
		\path [draw=blue, thick] (A) to [bend right] (A1); 
		\draw[blue, thick] (C) -- (C1);
		\draw[blue, thick] (D) -- (D1);
		\draw[blue, thick] (E) -- (E1);
		\draw[blue, thick] (F) -- (F1);
		
	\end{tikzpicture}
	\caption{Illustration of the spanning trees in case (i) (left panel) and case (iii) (right panel). Blue edges represent external edges connected to external vertices (here blue is used solely for illustration and does not indicate the $+$ charge of $G$-edges as in \Cref{def_graph1}); purple edges represent the solid edges between $\al_1$ and $\al_2$; and black edges represent the spanning tree on the internal vertices.}\label{Fig:construct_trees}
\end{figure}

After summing over all the non-root vertices $\al_2, \dots, \al_r$, we are left with the vertex $\al_1$ connected to two solid edges, $(\al_1, a_1)$. Summing over this yields a factor of $(W^d \eta_t)^{-1}$, by \eqref{eq:Gbyxi3}. This completes the proof of \eqref{kwuyayw_ng_tree} in the case $\max_i k_i =0$. 
\end{proof}

\subsection{Proof of Lemma \ref{lem:LW_moment_exp}}\label{subsec_pf_LW_moment_exp}

Note that when $|a-b|\le (\log W)^{3/2}\ell_t$ or $\ell\le (\log W)^{3/2}\ell_t$, $\sT_{t}(|a-b|\wedge \ell)$ does not exhibit exponential decay. In this regime, \Cref{lem:LW_moment_exp} follows directly from \Cref{lem:LW_moment}. Therefore, for the remainder of the proof, it suffices to establish \eqref{eq:LW_moment_exp} under the conditions  
\be\label{eq:far_ab_K}
|a-b|> (\log W)^{3/2}\ell_t,\quad \text{and}\quad (\log W)^{3/2}\ell_t\le \ell \le (\log W)^{10}\ell_t.
\ee
For this purpose, we decompose $f_{xy}(G)$ into two parts:
\begin{align*}
f_{xy}^{>\ell}(G)&=W^{-d}\sum_{a_1:|a_1-a|\vee |a_1-b|>\ell}\sum_{a_2}S^{\LK}_{a_1a_2}\sum_{\substack{\al\in[a_2],\beta \in[a_1],\\ \al\notin\{x,y\}}}
\Gc_{\beta\beta} G_{x\al }G_{\al y},\\
f_{xy}^{\le \ell}(G)&=W^{-d}\sum_{a_1:|a_1-a|\vee |a_1-b|\le \ell}\sum_{a_2}S^{\LK}_{a_1a_2}\sum_{\substack{\al\in[a_2],\beta \in[a_1],\\ \al\notin\{x,y\}}}
\Gc_{\beta\beta} G_{x\al }G_{\al y}. 
\end{align*}
\Cref{lem:LW_moment_exp} follows immediately from the next two estimates on \(f_{xy}^{>\ell}(G)\) and \(f_{xy}^{\le \ell}(G)\) with $\Psi_t = (W^{-d}B_{t,0})^{1/2}$:

\begin{lemma}\label{lem:LW_moment_exp_far}
In the setting of \Cref{lem:LW_moment_exp}, for any fixed $p\in 2\N$, the following estimate holds for any large constant $D>0$:
\be\label{eq:LW_moment_exp_far}
\E\big|f_{xy}^{>\ell}(G)\big|^{p}\prec \frac{1}{\eta_t^p} \Psi_t^p \cdot \br{\sT_{t}(\ell) }^{p}+W^{-D}. 
\ee
\end{lemma}
\begin{lemma}\label{lem:LW_moment_exp_near}
In the setting of \Cref{lem:LW_moment_exp}, for any fixed $p\in 2\N$, the following estimate holds for any large constant $D>0$:
\be\label{eq:LW_moment_exp_near}
\E\big|f_{xy}^{\le \ell}(G)\big|^{p}\prec \frac{1}{\eta_t^p} \Psi_t^p \cdot \br{\sT_{t}(|a-b|\wedge \ell) }^{p}+W^{-D}. 
\ee
\end{lemma}

First, the proof of \Cref{lem:LW_moment_exp_far} is similar to that for \Cref{lem:LW_moment}.
\begin{proof}[\bf Proof of \Cref{lem:LW_moment_exp_far}]
First, similar to \Cref{lem:localregular}, we expand $\big|f_{xy}^{>\ell}(G)\big|^{p}$ into a sum of locally standard graphs satisfying properties (1)--(6) listed there. Next, we bound each locally standard graph using its associated auxiliary graph, constructed as in \Cref{def_auxgraph}.
We then estimate the auxiliary graphs using a result analogous to \Cref{lem:Anp_key}. More precisely, suppose \smash{$\cal G^{\aux}_{\ba\bfb}$} is a nested graph that satisfies the assumptions of \Cref{lem:Anp_key}. In addition, assume that all internal vertices lie in the domain  $\bD_{>\ell}:=\{c \in \Zn:|a_i-c|\vee |b_i-c|> \ell\}$. Then, we aim to show that 
\begin{equation}\label{adsuu33}
	\cal G^{\aux}_{\ba\bfb} \prec (W^d\eta_t)^{-q}\cdot \Psi_t^{\ord(\cal G^{\aux}_{\ba\bfb})-p}\cdot \br{\sT_{t}(\ell) }^{p}+W^{-D} .
\end{equation}
In the proof of \Cref{lem:Anp_key_gh}, each path contributed at least one factor of $\Psi_t(c|a_i-b_i|)$ because, roughly speaking, every path contains at least one long ending edge. In the current setting, we use the fact that each path $\fP_i$ must contain at least one ``long" ending edge of length $>\ell$, since all internal vertices are restricted to lie in the domain $\bD_{>\ell}$. 
Each such long edge provides a factor of $\sT_t(\ell)$, and we can treat it as a type-A2 edge. Replacing these A2 edges with ghost edges in all paths $\fP_i$, $i\in \qqq{p}$, yields a total factor of $\br{\sT_{t}(\ell)}^{p}$. For the remaining graph---denoted by \(\cal G^{\new}_{\ba\bfb}\)---we need to establish the following bound for any constant $D>0$:
\begin{equation}\label{adsuu44}
	\cal G^{\new}_{\ba\bfb} \prec (W^d\eta_t)^{-q}\cdot \Psi_t^{\ord(\cal G^{\new}_{\ba\bfb})} +W^{-D} .
\end{equation}
The proof of this estimate follows exactly the same argument as in \Cref{lem:Anp_key_gh}, and we omit the details.
\end{proof}

For the proof of \Cref{lem:LW_moment_exp_near}, we need a new argument that makes use of the exponential decay in $\sT_t$. 
\begin{proof}[\bf Proof of \Cref{lem:LW_moment_exp_near}]
Similar to \Cref{lem:localregular}, we expand $\big|f_{xy}^{\le \ell}(G)\big|^{p}$ into a sum of locally standard graphs satisfying properties (1)--(6) listed there. 
Next, we bound each locally standard graph using its associated auxiliary graph, constructed as in \Cref{def_auxgraph}. This yields a class of auxiliary graphs \smash{$\cal G^{\aux}_{[a][b]}$} satisfying the assumptions of \Cref{lem:Anp_key} with $[a_i]=[a]$ and $[b_i]=[b]$ for $i\in \qqq{p}$. Moreover, all internal vertices of the graph lie in the domain $\bD_{\le \ell}:=\{c \in \Zn:|a-c|\vee |b-c|\le \ell\}$. It remains to bound such graphs as follows: 
\begin{equation}\label{adsuu_exp}
	\cal G^{\aux}_{[a][b]} \prec (W^d\eta_t)^{-q}\cdot \Psi_t^{\ord(\cal G^{\aux}_{[a][b]})-p}\cdot \br{\sT_{t}(|[a]-[b]|\wedge \ell)}^{p}+W^{-D} .
\end{equation}

We first control \smash{$\cal G^{\aux}_{[a][b]}$} by bounding each solid edge, say $\xi([\al],[\beta])$, by its upper bound $\sT_t(|[\al]-[\beta]|\wedge \ell)+W^{-D}$. 
Discarding the negligible error term containing $W^{-D}$ factors, we obtain a new graph, denoted by \smash{${\mathscr G}_{[a][b]}$}, which has the same graphical structure as \smash{$\cal G^{\aux}_{[a][b]}$} but with each solid edge representing a $\sT_t$ factor instead. Hence, to prove \eqref{adsuu_exp}, it suffices to establish that
\begin{equation}\label{adsuu_exp2}
	\mathscr G_{[a][b]} = \sum_{\bal=([\al_1],\ldots,[\al_q])\in (\bD_{\le \ell})^q}\mathscr G_{[a][b]}(\bal) \prec (W^d\eta_t)^{-q}\cdot \Psi_t^{\ord(\mathscr G_{[a][b]})-p}\cdot \br{\sT_{t}(|[a]-[b]|\wedge \ell)}^{p} ,
\end{equation}
where, recall, \smash{$\mathscr G_{[a][b]}(\bal)$} denotes the graph obtained by fixing the external vertices to $\bal$, and $\ord(\mathscr G_{[a][b]})$ is defined in \eqref{eq:ordGaux}.
To show \eqref{adsuu_exp2}, we sum over the internal vertices in $\bal$ one by one. Unlike in the proof of \Cref{lem:Anp_key}, the order of summation here is arbitrary; for definiteness, we follow the order $[\al_1],\ldots,[\al_q]$. At each step, we apply the following key estimate.
\begin{claim}\label{claim:TTk}
	For any $k \ge 2$, the following bound holds with $\Psi_t=(W^{-d}B_{t,0})^{1/2}$:\footnote{This estimate can be viewed as an extension of \Cref{lem:propT} in the regime $1-t \ge \ilambda^2/L^2$, except that here we obtain a ``$\prec$'' bound, rather than the ``$\lesssim$'' bound in \eqref{TTT2}, due to the presence of additional logarithmic factors.}
	\begin{align}\label{eq:key_T_reudce}
		\sum_{[\al]\in \bD_{\le \ell}} \prod_{i=1}^k \br{\sT_{t}(|[x_i]-[\al]|\wedge \ell) \cdot \sT_{t}(|[y_i]-[\al]|\wedge \ell) } \prec (W^{d}\eta_t)^{-1} \cdot {\Psi_t^{k-2}} \cdot   \prod_{i=1}^k \sT_{t}(|[x_i]-[y_i]|\wedge \ell) .
	\end{align}
\end{claim}

We defer the proof of \Cref{claim:TTk} to \Cref{pf:claim_TTk}, where it is derived from elementary calculus estimates. 
The estimate \eqref{eq:key_T_reudce} shows that when summing over an internal vertex $[\al]$, each path of the form $([x_i]\to [\al]\to [y_i])$ consisting of two solid edges through $[\al]$ is effectively replaced by a single solid edge $[x_i]\to [y_i]$ in the resulting graph. We refer to this as the ``path-preserving phenomenon". In addition, the summation over $[\al]\in \bD_{\le \ell}$ contributes a factor $(W^d\eta_t)^{-1}$ from each such pair of solid edges, along with the factor $\Psi_t^{k-2}$ reflecting the reduction in scaling order. Note that the case $k=2$ is critical---the estimate \eqref{eq:key_T_reudce} fails when $k=1$. Figure~\ref{Fig:construct_keepT} illustrates the path-preserving phenomenon in the critical case $k=2$.

\begin{figure}[h]
	\centering
	\begin{tikzpicture}
		\coordinate (L) at (-2, 0);
		\coordinate (x1) at (-1.3, 0.3);
		\coordinate (x2) at (-1.3,- 0.3);
		\coordinate (a1) at (0, 0.5);
		\coordinate (a2) at (0, -0.5);
		\coordinate (al) at (1, 0);
		\coordinate (b1) at (2, 0.5);
		\coordinate (b2) at (2, -0.5);
		\coordinate (d1) at (3.5, 0.4);
		\coordinate (d2) at (3.5, -0.4);
		\coordinate (R) at (4.8, 0);
		
		\fill (L) circle (1.5pt) node[left=2pt]{$[a]$};
		\fill (x1) circle (1.5pt) node[above=2pt]{};
		\fill (x2) circle (1.5pt) node[below=2pt]{};
		\fill (a1) circle (1.5pt) node[above=2pt]{$[x_1]$};
		\fill (a2) circle (1.5pt) node[below=2pt]{$[x_2]$};
		\fill (b1) circle (1.5pt)  node[above=2pt]{$[y_1]$};
		\fill (b2) circle (1.5pt) node[below=2pt]{$[y_2]$};
		\fill (al) circle (1.5pt) node[above=2pt]{$[\al]$};
		\fill (d1) circle (1.5pt);
		\fill (d2) circle (1.5pt);
		\fill (R) circle (1.5pt) node[right=2pt]{$[b]$};
		
		\draw (L) -- (x1); 
		\draw (L) -- (x2); 
		\draw (a1) [dotted]-- (x1); 
		\draw (a2) [dotted]-- (x2); 
		\draw (a1) -- (al); 
		\draw (a2) -- (al); 
		\draw (al) -- (b1); 
		\draw (b1) [dotted]-- (d1); 
		\draw (d1) -- (R); 
		\draw (a2) [dashed]-- (al);
		\draw (al) -- (b2);
		\draw (b2) [dotted]-- (d2); 
		\draw (d2) -- (R); 
	\end{tikzpicture}
	\hspace{10pt}
	\begin{tikzpicture}
		\coordinate (L) at (-2, 0);
		\coordinate (x1) at (-1.3, 0.3);
		\coordinate (x2) at (-1.3,- 0.3);
		\coordinate (a1) at (0, 0.5);
		\coordinate (a2) at (0, -0.5);
		\coordinate (b1) at (1.5, 0.5);
		\coordinate (b2) at (1.5, -0.5);
		\coordinate (d1) at (3, 0.4);
		\coordinate (d2) at (3, -0.4);
		\coordinate (R) at (4.3, 0);
		
		\fill (L) circle (1.5pt) node[left=2pt]{$[a]$};
		\fill (x1) circle (1.5pt) node[above=2pt]{};
		\fill (x2) circle (1.5pt) node[below=2pt]{};
		\fill (a1) circle (1.5pt) node[above=2pt]{$[x_1]$};
		\fill (a2) circle (1.5pt) node[below=2pt]{$[x_2]$};
		\fill (b1) circle (1.5pt)  node[above=2pt]{$[y_1]$};
		\fill (b2) circle (1.5pt) node[below=2pt]{$[y_2]$};
		\fill (d1) circle (1.5pt);
		\fill (d2) circle (1.5pt);
		\fill (R) circle (1.5pt) node[right=2pt]{$[b]$};
		
		\draw (L) -- (x1); 
		\draw (L) -- (x2); 
		\draw (a1) [dotted]-- (x1); 
		\draw (a2) [dotted]-- (x2); 
		\draw (a1) -- (b1); 
		\draw (a2) -- (b2); 
		\draw (b1) [dotted]-- (d1); 
		\draw (d1) -- (R); 
		\draw (b2) [dotted]-- (d2); 
		\draw (d2) -- (R); 
	\end{tikzpicture}
	\caption{Illustration of the path-preserving phenomenon in applying \eqref{eq:key_T_reudce}.}\label{Fig:construct_keepT}
\end{figure}


We perform the summations of $\mathscr G_{[a][b]}$ over the internal vertices by generating a sequence of new graphs that consistently satisfy conditions (1)--(3) in \Cref{lem:Anp_key} with $[a_i]\equiv [a]$ and $[b_i]\equiv [b]$. To illustrate this procedure, consider the summation over $[\al_1]$. Suppose there are $k_1$ two-edge paths passing through $[\al]$ in $\mathscr G_{[a][b]}$. More precisely, assume that each path $\fP_i$ can be decomposed as
\[([a_i]\too [x_{1,1}]\to [\al_1] \to [y_{1,1}] \too [x_{1,2}]\to[\al_1]\to [y_{1,2}]\too \cdots \too [x_{1,r_{i}}]\to[\al_1]\to [y_{1,r_{i}}]\too [b_i]),\]
where we omit intermediate vertices on the path and only list those vertices $[x_{1,i}]$ and $[y_{1,i}]$ that are directly connected to $[\al_1]$, and $r_i\ge 0$ is a non-negative integer. Then, we apply \eqref{eq:key_T_reudce} to the summation 
\begin{align}\nonumber
	\sum_{[\al_1]\in \bD_{\le \ell}} \prod_{i=1}^p\prod_{j=1}^{r_i}  \br{\sT_{t}(|[x_{i,j}]-[\al_1]|\wedge \ell) \cdot \sT_{t}(|[y_{i,j}]-[\al_1]|\wedge \ell)} ,
\end{align}
which yields the bound
\[\sum_{[\al_1]}\mathscr G_{[a][b]}(\bal) \prec (W^d\eta_t)^{-1} \Psi_t^{k_1-2} \cdot \mathscr G^{(1)}_{[a][b]}([\al_2],\ldots,[\al_q]) ,\] 
where $k_1$ is defined as $k_1=\sum_{i=1}^p r_i$, 
and \smash{$\mathscr G^{(1)}_{[a][b]}$} is a new graph obtained by removing the vertex $[\al_1]$ from $\mathscr G_{[a][b]}$ and replacing each pair of solid edges $([x_{i,j}]\to[\al_1]\to [y_{i,j}])$ by $([x_{i,j}], [y_{i,j}])$.  
It is easy to see that the new graph \smash{$\mathscr G^{(1)}_{[a][b]}$} also satisfies conditions (1)--(3) in \Cref{lem:Anp_key}. Next, summing over the vertex $[\al_2]$ in \smash{$\mathscr G^{(1)}_{[a][b]}$} gives another graph \smash{$\mathscr G^{(2)}_{[a][b]}([\al_3],\ldots,[\al_q])$} that again satisfies conditions (1)--(3) in \Cref{lem:Anp_key}, along with a factor $(W^d\eta_t)^{-1} \Psi_t^{k_2-2}$ for some $k_2\ge 2$. 
Continuing this procedure, after summing over all internal vertices, we can bound the LHS of \eqref{adsuu_exp2} as  
\[\sum_{\bal\in (\bD_{\le \ell})^q}\mathscr G_{[a][b]}(\bal)\prec (W^d\eta_t)^{-q}\cdot \Psi_t^{\ord(\mathscr G_{[a][b]})-p}\cdot \mathscr G^{(q)}_{[a][b]},\]
where \smash{$\mathscr G^{(q)}_{[a][b]}$} is a graph consisting solely of $p$ solid edges between $[a]$ and $[b]$, with no internal vertices. 
Since each solid edge is bounded by $\sT_{t}(|[a]-[b]|\wedge \ell)$, this yields \eqref{adsuu_exp2}. Combining this with \eqref{G_by_auxG} completes the proof of \Cref{lem:LW_moment_exp_near}. 
\end{proof}

\section{Extension to the block Anderson model}\label{sec:ext-to-BA}

The proof of \Cref{MR:decol_BA} for the block Anderson model is based on the following flow framework.

\begin{lemma}[Lemma 3.3 of \cite{RBSO1D}]\label{zztE_BA}
For the block Anderson model, fix any $\ilambda>0$ and $z\in \mathbb C_+$ with $\im z\in (0, 1]$ and $|\re z|\le  2-\kappa$. We choose 
\be\label{eq:t0E0_BA}t_0=\frac{\im m(z,\ilambda)}{\im m(z,\ilambda)+ \im z},\quad E= \frac{t_0 \re z -(1-t_0)\re m(z,\ilambda)}{\sqrt{t_0}},\quad \ilambda_0=\sqrt{t_0}\ilambda \, .\ee
Then, we have that 
\begin{equation}\label{eq:zztE_BA}
	\sqrt{t_0}m(E,\ilambda_0)=m(z,\ilambda), \ \  z_{t_0}(E,\ilambda_0) = \sqrt{t_0}z, \ \  \sqrt{t_0}M(E,\ilambda_0)=M(z,\ilambda), \ \ G(z,\ilambda) \stackrel{d}{=} \sqrt{t_0} G_{t_0;E,\ilambda_0},
\end{equation} 
where recall that $G(z,\ilambda)=(H-z)^{-1}=(V+\ilambda\Psi-z)^{-1}$. 
\end{lemma}

In the following proof, we fix a target spectral parameter $z=\hat{E}+\ii \eta\in \mathbf D_{\kappa,\e}$ for an arbitrarily small constant $\e>0$, where the spectral domain $\mathbf D_{\kappa,\e}$ is now defined as 
\be\label{eq:spectral_domainBA}
\mathbf D_{\kappa,\e}:=\{z=\hat{E}+\ii\eta \in \C_+: |\hat{E}|\le e_\ilambda -\kappa, N^{-1+\e}\le \eta\le 1\}.
\ee
Accordingly, we choose the parameters $t_0$, $E$, and $\ilambda_0$ as specified in \eqref{eq:t0E0_BA}. 
Again, for simplicity of presentation, unless we want to emphasize the dependence on the flow parameters $E$ and $g_0$, we will omit them from various notations, such as $z_t(E,\ilambda_0)$, $E_t(E,\ilambda_0)$, $\eta_t(E,\ilambda_0)$, $m(E,\ilambda_0)$, $M(E,\ilambda_0)$, and $G_{t;E,\ilambda_0}\equiv G_t$. 
In the flow framework of \Cref{zztE_BA}, we can establish an analogue of \Cref{lem:main_ind} for the block Anderson model.

\begin{theorem}\label{lem:main_ind_BA} 
In the setting of \Cref{MR:decol_BA}, fix any $z=\hat{E}+\ii \eta\in \mathbf D_{\kappa,\e}$ and consider the flow framework in \Cref{zztE_BA}. Suppose the estimates \eqref{Eq:L-KGt+IND}--\eqref{Eq:Gtlp_exp+IND} hold at some fixed $s\in [0,t_0]$. Then, there exists a constant $0<\fc_d \le 10^{-2}$ such that for any $s< t < 1$ satisfying \eqref{con_st_ind}, the estimates \eqref{Eq:L-KGt}--\eqref{Gt_bound} hold. In addition, if $1-t\ge \ilambda^2$, then the estimate \eqref{Eq:Gdecay+s<g} holds.
\end{theorem}

With \Cref{lem:main_ind_BA} in hand, we can establish \Cref{MR:decol_BA} by induction on $t$.  

\begin{proof}[\bf Proof of \Cref{MR:decol_BA}]
Fix $z=\hat{E}+\ii \eta\in \mathbf D_{\kappa,\e}$, and choose the flow as in \Cref{zztE_BA}. By \Cref{lem:main_ind_BA}, applying induction on $t$ from $t=0$ to $t=t_0$ yields the estimates \eqref{Eq:L-KGt}--\eqref{Gt_bound} at $t=t_0$. Using \eqref{eq:zztE_BA}, \eqref{eq:BtBt}, and \eqref{Kn2sol}, we see that these estimates together imply the entrywise local law \eqref{G_bound}, the averaged local law \eqref{G_bound_ave}, and the quantum diffusion estimates \eqref{eq:diffu1}--\eqref{Meq:QdS2} for each fixed $z$. To extend these estimates uniformly to all $z\in \mathbf D_{\kappa,\e}$, we apply a standard $N^{-C}$-net and perturbation argument. Finally, the delocalization estimate \eqref{eq:psikLinfty}, the QUE estimates \eqref{Meq:QUE} and \eqref{Meq:QUE2}, and the bulk universality \eqref{eq:universality} then follow as consequences, as shown in \Cref{subsec:main}.
\end{proof}

The proof of \Cref{lem:main_ind_BA} follows the same six-step strategy outlined below \Cref{lem:main_ind}. We now explain how the arguments in \Cref{Sec:Steps12}--\ref{Sec:graph} for the random band matrix model extend to the block Anderson model. Our proof relies on the properties of the $\Theta$-propagators stated in \Cref{lem_propTH}, together with the following properties of $m(z)$ (defined in \eqref{self_m}) and the matrix $M^{\LK}$ (defined in \eqref{def_G0}).

\begin{lemma}
\label{lem:propM}
For the block Anderson model, under the condition \eqref{eq:WO}, $m$ and $M^{\LK}$ satisfy the following properties for any $|E|\le e_g - \kappa$ (recall that $E$ is the spectral parameter in the flow \eqref{eq:t0E0_BA}):

\begin{enumerate}
	
	\item[(1)] {\bf Translation invariance}: For any $a,b,r\in \Zn$, we have $M^{\LK}_{a+r,b+r}= M^{\LK}_{ab}$ and $M^{\LK}_{aa}\equiv m$.

	\item[(2)] {\bf Ward's identity}: We have \(|m|\le 1\), 
	\(\im m\gtrsim 1\), and 
	\be\label{eq:WardM}
	\sum_{b}|M^{\LK}_{ab}|^2=1,\quad \forall a\in \Zn \, .
	\ee
		
	\item[(3)] {\bf Combes–Thomas bound}: There exists a constant $C>0$ (depending only on $d$ and $\kappa$) such that the following estimate holds for $\ilambda<(2C)^{-1}$: 
	\begin{align}		\label{Mbound_AO}
		C^{-1}\ilambda \mathbf 1(a\sim b)\le |M^{\LK}_{ab}|\le (C\ilambda)^{|a-b|}. 
	\end{align}
	Furthermore, for $\ilambda\ge (2C)^{-1}$, there exists a constant $c>0$ such that the following  bound holds:
	\begin{align}\label{Mbound_AO2}
		|M^{\LK}_{ab}|\le c^{-1}\exp\p{-c|a-b|}. 
	\end{align}
\end{enumerate}
\end{lemma}
\begin{proof}
Property (1) follows directly from the translation invariance of the matrix $\Psi^{\LK}$ in \eqref{eq:Psi3D}. The bound $\im m \gtrsim 1$ is a consequence of \cite[Lemma 3.5]{LeeSchSteYau2015}, while Ward’s identity \eqref{eq:WardM} follows by taking the imaginary part of the equation \smash{$m=M^{\LK}_{aa}=(\ilambda \Psi^{\LK} - E - m)^{-1}_{aa}$}. The identity \eqref{eq:WardM} gives directly $|m|\le 1$. 
For sufficiently small $\ilambda$, the estimates in \eqref{Mbound_AO} are obtained from the Taylor expansion 
\[ M^{\LK} = -\sum_{k=0}^\infty (E+m)^{-k-1}(\ilambda \Psi^{\LK})^k.\]
For $\ilambda$ of order 1, \eqref{Mbound_AO2} is given by the classical Combes–Thomas estimate (see, e.g., \cite[Theorem 10.5]{Aizenman_book}).
\end{proof}

Another key ingredient in the proof of \Cref{lem:main_ind_BA} is the following analogue of \Cref{lem_GbEXP}. 

\begin{lemma}\label{lem_GbEXP_BA}
In the setting of \Cref{lem:main_ind_BA}, suppose the estimates in \eqref{initialGT2} hold for a deterministic control parameter $W^{-d/2}\le \Psi_t \le W^{-\e_0}$. 
Furthermore, suppose there exist deterministic control parameters $0<\Phi_t(a,b)\le W^{-\e_0}$ such that  
\be\label{eq:def_Psit}
{\cal L}^{(2)}_{t, (-,+), (a,b)}\prec [\Phi_t(a,b)]^2,\quad \forall a,b\in \Zn .
\ee
\begin{itemize}
	\item[(a)] {\bf Local laws}: The following entrywise and averaged local laws hold: 
	\begin{align}\label{GiiGEX_BA} 
		\|G_t-M\|_{\max} \prec \Psi_t, \quad \max_a\left|\tr \p{ \left(G_t - M\right)E_{a}}\right| \prec \Psi_t^2  . 
	\end{align}
		
	\item[(b)] {\bf Entrywise decay estimate}:
	There exists a constant $c_{\ilambda}>0$ such that, for any large constant $D>0$ and all \(a,b\in \Zn \), the following estimate holds:
	\begin{align}\label{GijGEX_BA}
		\max_{x\in [a], y \in [b]} |(G_t-M)_{xy}|  \prec & \sum_{a', b' \in \Zn}  \Phi_t(a',b') e^{-c_\ilambda(|a'-a|+|b'-b|)}  + \Psi_t e^{-c_{\ilambda}|a-b|}+W^{-D} .
	\end{align}
\end{itemize}
\end{lemma}
\begin{proof}
This lemma was proved as Lemma 6.1 in \cite{RBSO1D} under the condition $\ilambda\le W^{-\e}$ for a small constant $\e>0$. The same arguments, however, apply verbatim to our setting with the bounds in \eqref{Mbound_AO} and \eqref{Mbound_AO2}.
\end{proof}

\noindent{\bf Proof of Step 1 for \Cref{lem:main_ind_BA}.}
The proof of Step 1 depends on the following continuity estimates in \Cref{lem_ConArg_BA}. To make the dependence on the spectral parameter $z$ and the coupling parameter $\ilambda$ explicit, we denote the resolvent by \(G_t(z_t, \ilambda)=(V_t+\ilambda\Psi-z_t)^{-1}\) and the corresponding $G$-loop by \smash{${\cal L}^{(\fn)}_{t, \boldsymbol{\sigma}, \ba}(z_t, \ilambda)$}, where $z_t\equiv z_t(E,\ilambda)$ is defined in \eqref{eq:zt}. 

\begin{lemma}\label{lem_ConArg_BA}
Fix any \( \e\le s \leq t \leq 1\) for a constant \(\e > 0\). Given any $\ilambda$ satisfying the condition \eqref{eq:WO}, let $\ilambda_s:=\ilambda\cdot \sqrt{s/t}$. Then, in the flow setting given by \Cref{def_flow} and \Cref{zztE_BA}, we have the following continuity estimates for the $G$-loops and (generalized) resolvent entries. 
\begin{enumerate}
	
	\item Assume that the bound \eqref{55} holds at time \(s\) for the loops ${\cal L}^{(\fn)}_{s, \boldsymbol{\sigma}, \ba}\p{z_s,\ilambda_s}$ for each fixed \(\fn \in \mathbb{N}\).  
	Then, for any $\fn\ge 2$, we have 
	\begin{align}\label{res_lo_bo_eta_BA}
		\max_{\boldsymbol{\sigma}, \ba} 
		\left|{\cal L}_{t, \boldsymbol{\sigma}, \ba}^{(\fn)}\p{z_t,\ilambda}\right| \prec
		\left(\frac{\eta_{s}}{\eta_{t}}\cdot W^{-d}B_{s,0}\right)^{\fn-1} {\max_{a}\tr\p{ \im G_t(z_t,\ilambda) E_{a}}} .
	\end{align}
	
	\item Given any deterministic unit vectors $\bv,\bw\in \C^N$, suppose 
	\be\label{eq:ImGs}\im (G_s)_{\bv\bv}(z_s,\ilambda_s)\lesssim 1,\quad \im (G_s)_{\bw\bw}(z_s,\ilambda_s)\lesssim 1, \quad \left|(G_s)_{\bv\bw}(z_s,\ilambda_s)\right|\lesssim 1,
	\ee
	with high probability, where we adopt the simplified notation of generalized matrix entries: given a matrix $\cal A$ and any vectors $\bv,\bw$, we denote $ \cal A_{\bv\bw}:= \bv^* \cal A\bw$. Then, the following estimates hold with high probability at time $t$: 
	\be\label{eq:ImGt}
	\im (G_t)_{\bv\bv}(z_t,\ilambda)\lesssim {\eta_s}/{\eta_t},\quad (G_t)_{\bv\bw}(z_t,\ilambda)\lesssim {\eta_s}/{\eta_t}.
	\ee
	
\end{enumerate}

\end{lemma}
\begin{proof}
The proof of this lemma follows exactly the same argument as that of Lemma 7.1 in \cite{RBSO1D}, except that the factor $W^{-d}B_{s,0}$ in \eqref{res_lo_bo_eta_BA} corresponds to $(W^d\ell_s^d\eta_s)^{-1}$ therein.
\end{proof}

With \Cref{lem_GbEXP_BA,lem_ConArg_BA}, Step 1 of the proof of \Cref{lem:main_ind_BA} for the block Anderson model (i.e., the proof of \eqref{lRB1} and \eqref{Gtmwc}) is the same as that in \cite[Section 7.1]{RBSO1D}. Hence, we omit the details. 

\medskip
\noindent{\bf Proof of Step 2 for \Cref{lem:main_ind_BA}.} 
In Step 2, the technical lemmas—\Cref{lem:newKLK,lem:LWterm,lem: EWGn2_N,lem: EMn2_N}—remain valid in the setting of the block Anderson model, except that the estimate \eqref{eq:MG_conclusion3} is modified as follows. Let ${\cal J}^{\ell}_{t,D}\ge W^{-d}$ be a \emph{deterministic} control parameter such that \smash{\(\wh{\cal J}^{\ell}_{t,D} \prec {\cal J}^{\ell}_{t,D}.\)}
Then, \eqref{eq:MG_conclusion3} continues to hold with \smash{$\wh{\mathcal{J}}_{t,D}^{\ell}$} replaced by \smash{${\mathcal{J}}_{t,D}^{\ell}$}; that is, 
\be\label{eq:MG_conclusion3_BA}
(\mathcal{E} \otimes \mathcal{E})^{M}_{t,\bsig, \ba,\ba} \prec \frac{1}{\eta_t} \left[\left(W^{-d}B_{t,0}\right)^{1/2} +  \left({\mathcal{J}}_{t,D}^{\ell}\right)^3\right]\cdot \left(W^{-d} \wT^{\ell}_{t,D}(|a-b|)\right)^2.
\ee
Assume that \eqref{Eq:Gdecay_w} holds. We combine \eqref{Kn2sol}, \eqref{Mbound_AO} (or \eqref{Mbound_AO2}), and \eqref{prop:ThfadC} to obtain \eqref{eq:kn2sol_decay}. Together with \eqref{Eq:Gdecay_w}, this yields \eqref{eq:L2_decay}. Then, applying \Cref{lem_GbEXP_BA} gives the desired estimates \eqref{Gt_bound_flow} and \eqref{Gt_avgbound_flow}.
Finally, we prove \eqref{Eq:Gdecay_w}. Using the same argument as between \eqref{lokis2} and \eqref{eq:L-K2max}, we obtain that for all $u\in[s,t]$,
\be\label{eq:L-K2max_BA} \max_{\bsig,\ba}\left|\left({\cal L}-{\cal K}\right)^{(2)}_{u,\bsig,\ba}\right|\prec 
\p{\frac{1-s}{1-u}}^{C_0+\frac54}(W^{-d}B_{u,0})^{\frac 5 4} \le (W^{-d}B_{u,0})^{\frac1 5}\cdot \p{W^{-d} \widetilde{\cal {T}}^{\ell^{(0)}}_{u,D}\p{|a-b|}},
\ee
where $\ell^{(0)}=0$ as chosen in \eqref{ksjjuw}. Next, we implement a similar inductive argument as that below \eqref{ksjjuw}: assume that \eqref{kwr3juw} holds for length scales $K_u$ satisfying \eqref{eq:monotone_Ku}.
Moreover, suppose we have the initial estimate
\be\label{eq:Gronwall_dervJuD_BA}
\wh{\mathcal{J}}_{u,D}^{K_u} \prec \mathcal{J}_{u,D}^{K_u}, \ \ \ \forall u\in [s,t], \quad \text{where}\quad \mathcal{J}_{u,D}^{K_u}\equiv (W^{-d}{B}_{t,0})^{\frac{1}{50}}.    
\ee
We then define the stopping time 
\be\label{eq:def2_stopping_BA} 
\tau :=t\wedge T,\quad \text{with}\quad T:=\inf\left\{u\ge s: \wh{\cal J}_{u,D}^{K_u} \ge (W^{-d}B_{u,0})^{\frac1{100}} \right\}.\ee
By \Cref{lem: EWGn2_N}, the estimate \eqref{eq:boundGcterm} remains valid.
On the other hand, using \eqref{eq:MG_conclusion3_BA} together with \Cref{lem:DIfREP}, the estimate \eqref{eq:boundmgterm} becomes
\be\label{eq:boundmgterm_BA}
\begin{split}
\int_s^\tau \dd \mathcal{E}^{M}_{u, \boldsymbol{\sigma}, {\ba}} &\prec  \Big[\left(W^{-d}B_{\tau,0}\right)^{1/4}  + \sup_{u\in[s,\tau]}\p{{\cal J}_{u,D}^{K_u}}^{3/2}\Big]\cdot \left(W^{-d} \wT^{K_\tau}_{\tau,D}(|a-b|)\right) .\end{split}
\ee
Applying the same reasoning as below \eqref{eq:boundmgterm} and invoking Grönwall’s inequality, we obtain
\be\label{eq:Gronwall_dervJuD_BA2}
\wh{\mathcal{J}}_{u,D}^{K_u} \prec\left(\frac{1-s}{1-u}\right)^{C_0}\br{(W^{-d} {B} _{t,0})^{1/5} + \p{{\cal J}_{t,D}^{K_t}}^{3/2} },\quad \forall u\in [s,\tau].  
\ee
This implies that $T\ge t$ with high probability, so \eqref{eq:Gronwall_dervJuD_BA2} holds for all $u\in [s,t]$. We then take the RHS of \eqref{eq:Gronwall_dervJuD_BA2} as the new control parameter $\mathcal{J}_{u,D}^{K_u}$ and repeat the above argument. Iterating this procedure $\OO(1)$ times yields the improved bound 
\be\label{eq:Gronwall_dervJuD_BA3}
\wh{\mathcal{J}}_{u,D}^{K_u} \prec\left(|1-s|/|1-u|\right)^{C_0}(W^{-d} {B} _{t,0})^{1/5},\quad \forall u\in [s,t].  
\ee
Next, we redefine the scale $K_u'$ as in \eqref{eq:def_ell1}. Under this choice, using \eqref{eq:Gronwall_dervJuD_BA3}, we can re-establish the initial estimate \eqref{eq:Gronwall_dervJuD_BA} with $K_u$ replaced by $K_u'$, provided the constant $\fc_d$ in \eqref{con_st_ind} is chosen sufficiently small.
Finally, repeating the above procedure $\OO(1)$ more times yields \eqref{Eq:Gdecay_w} at $u=t$. The same argument clearly applies to all $u\in [s,t]$. 

It remains to justify the validity of \Cref{lem:newKLK,lem:LWterm,lem: EWGn2_N,lem: EMn2_N} for the block Anderson model. First, the proof of \Cref{lem:newKLK} in \Cref{subsec:pf_lem:newKLK} carries over with only a minor modification: in the derivation of \eqref{uu2j2ois}, we additionally make use of \eqref{Mbound_AO} or \eqref{Mbound_AO2}. 
The proofs of \Cref{lem:LWterm,lem: EWGn2_N} require more substantial changes in the graphical tools, which will be detailed in \Cref{subsec:LWchange-to-BA} below. 
Finally, the proof of \Cref{lem: EMn2_N} (with \eqref{eq:MG_conclusion3} replaced by \eqref{eq:MG_conclusion3_BA}) follows the same argument as in \Cref{subsec:pf_lem: EMn2_N}, with the following adjustments:

\begin{proof}[\bf Proof of \Cref{lem: EMn2_N} for the block Anderson model]
Most of the arguments in \Cref{subsec:pf_lem: EMn2_N} carry over directly to the block Anderson model, provided we can establish the following resolvent estimate under the assumption \eqref{eq:LW_assm}: for any $a,b\in \Zn$ and $x\in[a]$, $y\in [b]$, 
\begin{equation}\label{eq_resolventunderpoly}
	|(G_t-M)_{xy}| \prec \Psi_t(|a-b|).
\end{equation}
To prove \eqref{eq_resolventunderpoly}, we apply \eqref{GijGEX_BA} to obtain
\begin{align*}
	|(G_t-M)_{xy}|   \prec \sum_{|a'-a|\vee |b'-b| \le (\log W)^{3/2}}\Psi_t(|a'-b'|)e^{-c_\ilambda(|a'-a|+|b'-b|)} + \Psi_t(0)e^{-c_{\ilambda}|a-b|} + W^{-D} \prec \Psi_t(|a-b|) ,  
\end{align*}
where $D>0$ is an arbitrarily large constant. Here, in the second step, we have used the conditions in \eqref{eq:Psi} to obtain that 
\be\label{eq:Psirelations}\begin{aligned}
	&\Psi_t(|a'-b'|)\prec \Psi_t(|a-b|) ,\quad \Psi_t(|a-b|)\gtrsim C_1^{-1}\p{|a-b|+1}^{-C_2}\Psi_t(0)\ge W^{-D},\\  
	&\Psi_t(0)e^{-c_{\ilambda}|a-b|} \lesssim C_1^{-1}(|a-b|+1)^{-C_2}\Psi_t(0) \lesssim \Psi_t(|a-b|).
\end{aligned}
\ee
Under the stronger assumption \eqref{eq:LW_assm_exp}, a parallel argument yields the following analogue of \eqref{eq_resolventunderpoly}:
\begin{equation}\label{eq_resolventunderexp}
	|(G_t-M)_{xy}| \prec \big(W^{-d} \wT_{t,D}^{\ell}(|a-b|) \big)^{1/2},\quad \forall x\in [a], y\in [b].
\end{equation}

We now prove \eqref{eq:MG_conclusion} using the estimate \eqref{eq_resolventunderpoly}. As in \eqref{eq:S1+S2}, we need to bound 
\[\cal S_1:= W^d \sum_{|c-b|> |c-a|}  \mathcal{L}^{(6)}_{t, (\boldsymbol{\sigma}\otimes \bsig)^{(1)}, \ba(c,c)},\quad \cal S_2:= W^d \sum_{|c-b|\le |c-a|} \mathcal{L}^{(6)}_{t, (\boldsymbol{\sigma}\otimes \bsig)^{(1)}, \ba(c,c)}. \]
By symmetry, it suffices to establish \eqref{eq:MG_conclusion} for $\mathcal{S}_1$. 
In this case, we apply \eqref{eq_sym_loop_bound} with $c'=c$ to get that 
\begin{equation}\label{eq_S1_bound_BA}
	\mathcal{S}_1 \lesssim \frac{1}{\eta_t}\max_{|c-b| \ge |a-b|/2} \left(\mathcal{L}^{(4)}_{t,\boldsymbol{\sigma}^{(\alt)}, (c,b,c,b)}\right)^{1/2} \cdot  \max_{\sig\in\{+,-\}} \big|\mathcal{L}^{(3)}_{t, (\sigma,\sig_2,-\sig_2), (a,b,a)} \big|. 
\end{equation}
We write the $(c,b,c,b)$-loop as 
\[\mathcal{L}^{(4)}_{u,\bsig^{(\alt)}, (c,b,c,b)}=W^{-4d} \sum_{z',z''\in[c]}\sum_{y_1,y_2\in [b]} G_{y_1 z'}(\sig_1) G_{z'y_2}(-\sig_1) G_{y_2 z''} (\sig_1)G_{z''y_1}(-\sig_1).\]
Expanding each resolvent entry as 
\be\label{eq:G+G-M}
G_{\al \beta}(\sig)=M_{\al \beta}(\sig)+(G-M)_{\al\beta}(\sig),\quad \forall \al,\beta\in \{y_1,y_2,z',z''\}, \ \sig\in\{+,-\},\ee 
we bound every $(G-M)_{\al\beta}$ using \eqref{eq_resolventunderpoly} and each $M_{\al \beta}$ using \eqref{Mbound_AO} or \eqref{Mbound_AO2}. This yields (with $\e>0$ a small constant): 
\begin{align*}
	\mathcal{L}^{(4)}_{u,\bsig^{(\alt)}, (c,b,c,b)} &\prec \Psi_t^4(|b-c|)+\br{W^{-d}\Psi_t^3(|b-c|)+ W^{-2d}\Psi_t^2(|b-c|) + W^{-3d}\Psi_t(|b-c|)+W^{-3d}} e^{-\e|b-c|} \\
	& \lesssim \Psi_t^4(|b-c|) \lesssim \Psi_t^4(|a-b|),
\end{align*}
where the second step uses a bound analogous to \eqref{eq:Psirelations}, and the last step follows from $|c-b|\ge |a-b|/2$ together with the condition \eqref{eq:Psi}. 
Inserting this bound into \eqref{eq_S1_bound_BA}, we obtain the estimate \eqref{eq:reduce4to3} again. To handle the 3-loop on the RHS of \eqref{eq:reduce4to3}, we again expand each resolvent entry as in \eqref{eq:G+G-M}, bound every $(G-M)_{\al\beta}$ with \eqref{eq_resolventunderpoly}, and each $M_{\al \beta}$ with \eqref{Mbound_AO} or \eqref{Mbound_AO2}. This gives the same bound as in \eqref{eq:reduce4_bdd3}, where the factor $ \Psi_t^2(|a-b|)$ comes from the entries $G_{x'y}$ and $G_{yx}$ between blocks $a$ and $b$, while the short leg $G_{xx'}$ contributes a factor $\Psi_t(0)+W^{-d}\lesssim\Psi_t(0)$ by \eqref{GiiGEX_BA}. (Specifically, $(G-M)_{xx'}$ contributes $\Psi_t(0)$, while the deterministic part $M_{xx'}$ yields a factor $W^{-d}$.) 
Substituting \eqref{eq:reduce4_bdd3} into \eqref{eq:reduce4to3} concludes  \eqref{eq:MG_conclusion}.

Next, the exponential decay bound \eqref{eq:MG_conclusion3_BA} can be established by an argument analogous to that used for \eqref{eq:MG_conclusion3} below \eqref{eq:cutoff_scales}. 
First, it follows directly from the polynomial decay bound \eqref{eq:MG_conclusion} whenever \smash{$|a-b| \le \ell_t^{\dag}$}. It therefore remains to consider the case \eqref{eq:largea-b}, where we must control the three terms \smash{\(\widetilde{\mathcal{S}}_i\)}, $i\in \{1,2,3\}$, appearing in \eqref{eq;S123} with $c'=c$. The terms \smash{$\widetilde{\mathcal{S}}_1$ and $\widetilde{\mathcal{S}}_2$} can be bounded by the same argument used between \eqref{eq:pointwise_loop2} and \eqref{eq:boundwtS_1}, except that here we apply \eqref{eq_resolventunderexp} in place of the estimates \eqref{GijGEX} and \eqref{eq:LW_assm_exp} employed there.
For the term \smash{$\wt {\cal S}_3$}, we bound all legs of the original $6$-$G$-loop directly using \eqref{GijGEX_BA} together with the control parameter \smash{$\cal J_{t,D}^{\ell}$}. In this setting, \eqref{eq_L2-J} remains valid with \smash{$\wh{\cal J}_{t,D}$} replaced by \smash{$\cal J_{t,D}^{\ell}$}, where the $2$-$\cK$-loop \smash{$\mathcal{K}^{(2)}_{t,(-,+),(a,b)}$} defined in \eqref{Kn2sol} is of order $\OO(W^{-D})$ thanks to the exponential decay of the $\Theta$-propagators in \eqref{prop:ThfadC} and the decay of the $2$-$M$-loop due to \eqref{Mbound_AO} or \eqref{Mbound_AO2}. Using \eqref{eq_L2-J} (with \smash{$\wh{\cal J}_{t,D}^{\ell}$} replaced by $\cal J_{t,D}^{\ell}$) and an argument analogous to that below \eqref{eq_resolventunderpoly}, we obtain that for any $x\in [a]$ and $y\in [b]$ with $|a-b|\gtrsim\ell_t^*$,
\begin{equation}\label{eq_resolventunderexp2}
|(G_t-M)_{xy}| \prec \big(W^{-d} \wT_{t,D}^{\ell}(|a-b|) \big)^{1/2} \implies |(G_t)_{xy}| \prec \big(W^{-d} \wT_{t,D}^{\ell}(|a-b|) \big)^{1/2},
\end{equation}
where the implication uses the bound \eqref{Mbound_AO} or \eqref{Mbound_AO2}.
Finally, combining \eqref{eq_resolventunderexp2} with the same reasoning as below \eqref{eq_L2-J}, we conclude the exponential decay bound \eqref{eq:MG_conclusion3_BA}.
\end{proof}

\noindent{\bf Proof of Steps 3--6 for \Cref{lem:main_ind_BA}.} 
The proofs in \Cref{Sec:Steps34,Sec:Step5,subsection:step6} extend verbatim to the block Anderson model, except for the arguments in \Cref{sec:Step5_larget} and the proof of \Cref{lem:LWterm_EXP}. In \Cref{sec:Step5_larget}, the random band matrix case relies on \Cref{lem_GbEXP} and follows the approach of \cite[Section 5.3]{YY_25}, whereas the corresponding arguments for the block Anderson model are based on \Cref{lem_GbEXP_BA} and parallel those in \cite[Section 7.3]{RBSO1D}. We therefore omit the details. 
The proof of \Cref{lem:LWterm_EXP} is deferred to \Cref{subsec:pf-LWterm_EXP} below.

\appendix

\section{Proofs of auxiliary graphical lemmas}

\subsection{Proof of \texorpdfstring{\Cref{lem:LWterm_EXP}}{Lemma ExpLW}}\label{subsec:pf-LWterm_EXP}

Recall the light-weight term in \eqref{eq:EGC}. To prove \Cref{lem:LWterm_EXP}, it suffices to establish 
\begin{align}
	W^{-2d}\sum_{a_1, a_2}S^{\LK}_{a_1a_2} \sum_{x\in [a],y\in [b],\al \in [a_2]}\E       
	\tr\big( \Gc_t E_{a_1} \big)  (G_t(\sig))_{yx} (G_t)_{x\al }(G_t)_{\al y} \prec \eta_t^{-1}(W^{-d}B_{t,0})^{5/2}\label{eq:ELW_term}
\end{align} 
for any $\sig\in\{+,-\}$. For brevity, write $G_t\equiv G$, and assume $\sig=-$ in the following proof. The case $\sig=+$ is analogous. 
We first focus on the random band matrix model. Performing the $GG$ expansion in \eqref{Oe2x} with respect to $G_{x\al}G_{\al y}$, we can expand the LHS of \eqref{eq:ELW_term} as 
\begin{align*}
	&~mW^{-2d}\sum_{a_1}S^{\LK}_{a_1 b} \sum_{x\in [a],y\in [b]}\E  \tr \big(\Gc E_{a_1} \big) |G_{xy}|^2 \tag{$I_1$} \\
	+&~m^3 W^{-2d}\sum_{a_1, a_2}S^{\LK}_{a_1a_2} \sum_{x\in [a],y\in [b],\al \in [a_2]} S^+_{\al y} \E       
	\tr \big(\Gc E_{a_1} \big) |G_{xy}|^2 \tag{$J_1$}\\
	+&~m W^{-2d}\sum_{a_1, a_2}S^{\LK}_{a_1a_2} \sum_{x\in [a],y\in [b],\al \in [a_2]}\sum_{\beta}\E       
	\tr \big(\Gc E_{a_1} \big) (S_{\al\beta} \Gc_{\beta\beta})  G_{x\al }G_{\al y} \overline G_{xy} \tag{$I_2$}\\
	+&~m^3 W^{-2d}\sum_{a_1, a_2}S^{\LK}_{a_1a_2} \sum_{x\in [a],y\in [b],\al \in [a_2]}\sum_{\beta,\gamma}S^{+}_{\al\beta}\E       
	\tr \big(\Gc E_{a_1} \big) (S_{\beta\gamma} \Gc_{\gamma\gamma})  G_{x\beta }G_{\beta y} \overline G_{xy} \tag{$J_2$}\\
	+&~m W^{-2d}\sum_{a_1, a_2}S^{\LK}_{a_1a_2} \sum_{x\in [a],y\in [b],\al \in [a_2]}\sum_{\beta}\E       
	\tr \big(\Gc E_{a_1} \big) (\Gc_{\al\al} S_{\al\beta})G_{x\beta }G_{\beta y} \overline G_{xy} \tag{$I_3$}\\
	+&~m^3 W^{-2d}\sum_{a_1, a_2}S^{\LK}_{a_1a_2} \sum_{x\in [a],y\in [b],\al \in [a_2]}\sum_{\beta,\gamma}S^{+}_{\al\beta} \E       
	\tr \big(\Gc E_{a_1} \big) (\Gc_{\beta\beta} S_{\beta\gamma})G_{x\gamma }G_{\gamma y} \overline G_{xy} \tag{$J_3$}\\
	-&~m W^{-2d}\sum_{a_1, a_2}S^{\LK}_{a_1a_2} \sum_{x\in [a],y\in [b],\al \in [a_2]}\sum_{\beta}S_{\al\beta} \E       
	G_{x\al }G_{\beta y} \partial_{\beta\al}\left(\tr \big(\Gc E_{a_1} \big) \overline G_{xy}\right) \tag{$I_4$}\\
	-&~m^3 W^{-2d}\sum_{a_1, a_2}S^{\LK}_{a_1a_2} \sum_{x\in [a],y\in [b],\al \in [a_2]}\sum_{\beta,\gamma}S^+_{\al\beta}S_{\beta\gamma} \E      
	G_{x\beta}G_{\gamma y} \partial_{\gamma\beta}\left(\tr \big(\Gc E_{a_1} \big) \overline G_{xy}\right) \tag{$J_4$},
\end{align*} 
where we recall that $\partial_{\beta\al}\equiv \partial_{(H_t)_{\beta\al}}$. We will only estimate the terms $I_i$ for $i\in\qqq{4}$; the terms $J_i$ can be treated in exactly the same way by using \eqref{eq:def-Spm}, together with the bound \eqref{prop:ThfadC_short}.

For the term $I_1$, we have
\begin{align}
	I_1&= m \sum_{a_1}S^{\LK}_{a_1 b} \E \tr \big(\Gc E_{a_1} \big)\cal L^{(2)}_{t,(-,+),(a,b)} \nonumber\\
	&=m \sum_{a_1}S^{\LK}_{a_1 b} \E \tr \big(\Gc E_{a_1} \big)\cal K^{(2)}_{t,(-,+),(a,b)} +  m \sum_{a_1}S^{\LK}_{a_1 b} \E \tr \big(\Gc E_{a_1} \big) (\cal L-\cK)^{(2)}_{t,(-,+),(a,b)} \prec (W^{-d}B_{t,0})^3, \label{eq:termI1}
\end{align}
where in the last step, we use \eqref{eq:bcal_k} and \eqref{res_ELK_n=1} to bound the first term, and \eqref{Gt_avgbound_flow} together with \eqref{Eq:L-KGt-flow} to bound the second.
For $I_2$, applying the averaged local law \eqref{Gt_avgbound_flow} to both $\tr( \Gc E_{a_1} )$ and $\sum_\beta S_{\al\beta} \Gc_{\beta\beta}$ yields
\begin{align}
	I_2&\prec (W^{-d}B_{t,0})^2 \cdot W^{-2d}\sum_{a_1, a_2}S^{\LK}_{a_1a_2} \sum_{x\in [a],y\in [b],\al \in [a_2]}\E  |G_{x\al}||G_{\al y}| |G_{xy}| \nonumber\\
	&\lesssim (W^{-d}B_{t,0})^2 \cdot W^{-2d}\sum_{x\in [a],y\in [b]} \sum_\al \E  |G_{x\al}||G_{\al y}| |G_{xy}| \nonumber\\
	&\prec \eta_t^{-1}(W^{-d}B_{t,0})^2 \cdot W^{-2d}\sum_{x\in [a],y\in [b]}\E|G_{xy}| \prec \eta_t^{-1}(W^{-d}B_{t,0})^{5/2},\label{eq:termI2}
\end{align}
where in the third step we use Cauchy–Schwarz together with Ward’s identity, and in the last step the local law \eqref{Gt_bound_flow} to control the averages over $x\in[a]$ and $y\in[b]$. The term $I_3$ can be handled in exactly the same way. It remains to estimate $I_4$. Splitting according to which factor the partial derivative acts on, we obtain
\begin{align*}
	I_4&=m W^{-3d}\sum_{a_1, a_2,a_3}S^{\LK}_{a_1a_2}S^{\LK}_{a_2a_3} \sum_{x\in [a],y\in [b],\al \in [a_2],\beta\in[a_3]}  \E  \tr\big( \Gc E_{a_1} \big) |G_{x\al}|^2 |G_{\beta y}|^2 \\
	&\quad +m W^{-4d}\sum_{a_1, a_2, a_3}S^{\LK}_{a_1a_2}S^{\LK}_{a_2a_3} \sum_{x\in [a],y\in [b],\gamma\in[a_1],\al \in [a_2],\beta\in[a_3]} \E       
	G_{x\al }G_{\al\gamma}G_{\gamma\beta}G_{\beta y}  \overline G_{xy} \\
	&= mW^d\sum_{a_1, a_2,a_3}S^{\LK}_{a_1a_2}S^{\LK}_{a_2a_3} \E  \tr\big( \Gc E_{a_1} \big) \cL^{(2)}_{t,(-,+),(a,a_2)}\cL^{(2)}_{t,(-,+),(a_3,b)} +mW^d \sum_{a_1, a_2, a_3}S^{\LK}_{a_1a_2}S^{\LK}_{a_2a_3} \E \cL^{(5)}_{t,\bsig_5,\ba_5} \\
	&=:I_{41}+I_{42},
\end{align*}
where $\bsig_5=(+,+,+,+,-)$ and $\ba_5=(a_2,a_1,a_3,b,a)$. For $I_{41}$, we decompose each 2-$G$-loop as $(\cL-\cK)^{(2)}+\cK^{(2)}$, giving 
\begin{align}
	I_{41}&= mW^d\sum_{a_1, a_2,a_3}S^{\LK}_{a_1a_2}S^{\LK}_{a_2a_3} \E  \tr\big( \Gc E_{a_1} \big)  \cL^{(2)}_{t,(-,+),(a,a_2)}\cK^{(2)}_{t,(-,+),(a_3,b)} \nonumber\\
	&\quad +mW^d\sum_{a_1, a_2,a_3}S^{\LK}_{a_1a_2}S^{\LK}_{a_2a_3} \E  \tr\big( \Gc E_{a_1} \big)  \cL^{(2)}_{t,(-,+),(a,a_2)}(\cL-\cK)^{(2)}_{t,(-,+),(a_3,b)}\nonumber\\ 
	&= mW^d\sum_{a_1, a_2,a_3}S^{\LK}_{a_1a_2}S^{\LK}_{a_2a_3} \E  \tr\big( \Gc E_{a_1} \big) \cL^{(2)}_{t,(-,+),(a,a_2)}\cK^{(2)}_{t,(-,+),(a_3,b)}+ \OO_\prec\p{(W^{-d}B_{t,0})^3}\cdot W^d\sum_{ a_2}\E \cL^{(2)}_{t,(-,+),(a,a_2)} \nonumber\\
	&= mW^d\sum_{a_1, a_2,a_3}S^{\LK}_{a_1a_2}S^{\LK}_{a_2a_3} \E  \tr\big( \Gc E_{a_1} \big) \cK^{(2)}_{t,(-,+),(a,a_2)}\cK^{(2)}_{t,(-,+),(a_3,b)}\nonumber\\
	&\quad +mW^d\sum_{a_1, a_2,a_3}S^{\LK}_{a_1a_2}S^{\LK}_{a_2a_3} \E  \tr\big( \Gc E_{a_1} \big) (\cL-\cK)^{(2)}_{t,(-,+),(a,a_2)}\cK^{(2)}_{t,(-,+),(a_3,b)}+ \OO_\prec\p{\eta_t^{-1}(W^{-d}B_{t,0})^3} \nonumber\\
	&\prec (W^{-d}B_{t,0})^3\cdot W^d\sum_{a_3}\cK^{(2)}_{t,(-,+),(a_3,b)} + \eta_t^{-1}(W^{-d}B_{t,0})^3 \prec \eta_t^{-1}(W^{-d}B_{t,0})^3.\label{eq:termI41}
\end{align}
Here, we use \eqref{Eq:L-KGt-flow} and \eqref{Gt_avgbound_flow} in the second step; Ward’s identity \eqref{WI_calL} for \smash{$\sum_{a_2} \cL^{(2)}_{t,(-,+),(a,a_2)}$} and the local law \eqref{Gt_bound_flow} in the third; \eqref{Eq:L-KGt-flow}, \eqref{eq:bcal_k}, and \eqref{res_ELK_n=1} in the fourth; and Ward’s identity \eqref{WI_calK} for \smash{$\sum_{a_3}\cK^{(2)}_{t,(-,+),(a_3,b)}$} in the last. 
Finally, for the term $I_{42}$, we rewrite it as an average over the graphs $\cal G_{xy}$: 
\begin{align}\label{eq;I42inG}
	I_{42}=mW^{-2d} \sum_{x\in [a],y\in [b]} \E\cal G_{xy},\quad \text{where}\quad \cal G_{xy}:=\sum_{\gamma,\al,\beta}S_{\gamma\al}S_{\al\beta} \p{G_{x\al }G_{\al\gamma}G_{\gamma\beta}G_{\beta y}  \overline G_{xy}}.
\end{align}
We claim the following bound for $\E\cal G_{xy}$: 
\begin{align}\label{eq;EGxy:x=y}
	\E\cal G_{xy} \prec \mathbf 1_{x = y}\cdot  \eta_t^{-1} (W^{-d}B_{t,0})^{2}+\mathbf 1_{x\ne y}\cdot \eta_t^{-1}(W^{-d}B_{t,0})^{5/2}. 
\end{align}
Substituting this estimate into \eqref{eq;I42inG} gives
\[ I_{42} \prec \eta_t^{-1} (W^{-d}B_{t,0})^{5/2}.\]
Together with \eqref{eq:termI1}--\eqref{eq:termI41}, this completes the proof of \eqref{eq:ELW_term}.

Finally, we prove the estimate \eqref{eq;EGxy:x=y}. To this end, we apply the $GG$-expansion from \Cref{T eq0} at the non-standard neutral vertices $\al,\gamma,\beta$, which expands $\cal G_{xy}$ into a collection of new graphs:
\begin{align*}
	\cal G_{xy}=_{\E} \sum_\mu \Gamma_{\mu,xy} \; .
\end{align*}
By \Cref{GtoAG}, each $\Gamma_{\mu,xy}$ can be bounded through its auxiliary graph \smash{$\Gamma^{\aux}_{\mu,[a][b]}$} as in \eqref{G_by_auxG}, with $q\in\{0,1\}$ internal molecules and $\Psi_t=(W^{-d}B_{t,0})^{1/2}$. 
If an auxiliary graph contains an internal molecule, then that molecule is attached to at least two solid edges corresponding to the $\xi$-variables defined in \eqref{eq:xia1a2}. Applying Cauchy–Schwarz together with \eqref{eq:Gbyxi2} therefore gives
\[ \Gamma^{\aux}_{\mu, [a][b]} \prec (W^d\eta_t)^{-q}(\Psi_t)^{\ord(\Gamma^{\aux}_{\mu, [a][b]})}. \]
Combining this with \eqref{G_by_auxG} implies that 
\begin{align}\label{Gammamuxy}
	\Gamma_{\mu, xy} \prec \eta_t^{-1} (W^{-d}B_{t,0})^{\frac12\ord(\Gamma_{\mu, xy})} .
\end{align}
With this estimate in hand, to conclude \eqref{eq;EGxy:x=y} it remains to show that the scaling order of each $\Gamma_{\mu,xy}$ satisfies
\be\label{eq:sizeGammamu_E}
\ord(\Gamma_{\mu,xy}) \ge 4\cdot \mathbf 1_{x= y}+5\cdot \mathbf 1_{x\ne y} , 
\ee
where $x$ and $y$ are regarded as external vertices.

To obtain \eqref{eq:sizeGammamu_E}, we decompose $\cal G_{xy}$ into four parts $\cal G^{(i)}_{xy}$, $i\in\{1,2,3,4\}$, corresponding to the cases: (1) $\al=\gamma=\beta$, (2) $\al=\gamma\ne \beta$, (3) $\al\ne \gamma = \beta$, and (4) $\al\ne \gamma$ and $\gamma\ne \beta$. It is easy to see that \smash{$\ord(\cal G^{(1)}_{xy})$} already satisfies the lower bound in \eqref{eq:sizeGammamu_E}. Next, in case (2), we write 
\[ \cal G^{(2)}_{xy}= m\sum_{\al\ne\beta}S_{\al\al}S_{\al\beta} \p{G_{x\al }G_{\al\beta}G_{\beta y}  \overline G_{xy}}+ \sum_{\al\ne\beta}S_{\al\al}S_{\al\beta} \Gc_{\al\al} \p{G_{x\al }G_{\al\beta}G_{\beta y}  \overline G_{xy}}=:\cal G^{(21)}_{xy}+\cal G^{(22)}_{xy}.\]
The second graph \smash{$\cal G^{(22)}_{xy}$} already meets the lower bound in \eqref{eq:sizeGammamu_E}, while the first graph \smash{$\cal G^{(21)}_{xy}$} has scaling order $3\cdot \mathbf 1_{x= y}+4\cdot \mathbf 1_{x\ne y} .$ To raise its scaling order, we expand \smash{$\cal G^{(21)}_{xy}$} using the $GG$ expansion \eqref{Oe2x} at the vertex $\al$. A direct inspection shows that every graph produced by this expansion has strictly larger scaling order than \smash{$\cal G^{(21)}_{xy}$}, and thus satisfies \eqref{eq:sizeGammamu_E}.
Since the check is straightforward, we omit the details.
Case (3) is handled in exactly the same manner as case (2).
Finally, in case (4), the graph \smash{$\cal G^{(4)}_{xy}$} has scaling order $2\cdot \mathbf 1_{x= y}+3\cdot \mathbf 1_{x\ne y}.$ To reach \eqref{eq:sizeGammamu_E}, we apply the $GG$-expansion \eqref{Oe2x} at both vertices $\al$ and $\beta$. Each application of \eqref{Oe2x} strictly increases the scaling order, so the two expansions raise it by at least 2 in total.
This yields \eqref{eq:sizeGammamu_E} and thus completes the proof of \eqref{eq;EGxy:x=y}. Again, as the verification is routine, we omit the details.

The proof of \Cref{lem:LWterm_EXP} for the block Anderson model is analogous to the argument above, except that the $GG$-expansion in \eqref{GGGamma} below (for the block Anderson model) is used in place of \eqref{Oe2x} (for random band matrices). Hence, we omit the details.

\subsection{Proof of \Cref{lem:localregular}}\label{sec:pflocalregular}

Our local expansion strategy leading to the proof of \Cref{lem:localregular} proceeds as follows. Given a graph $\Gamma$ that is not locally standard, we first find all weights in it, and apply the weight expansion \eqref{Owx} to remove them one by one. 
Once the graph is free of weights, find all vertices connected to more than two solid edges, and apply the edge expansion \eqref{Oe1x} iteratively to reduce their degrees.  
After removing weights and ensuring each vertex is connected to at most two solid edges, identify all vertices that are not standard neutral---that is, vertices connected to two $G$ edges or two $\overline G$ edges---and apply the $GG$ expansion \eqref{Oe2x} to correct them.
It is important to note that after each expansion, the resulting graphs may require earlier steps to be re-applied. For instance, an edge expansion might introduce new weights, which must be removed via weight expansions before proceeding with further edge or $GG$ expansions. 
We now formally state the local expansion rules. Throughout the following proof, when we refer to the degree of a vertex $\al$, we mean \emph{the number of solid edges connected to it}, excluding any solid self-loops (i.e., weights).

\begin{strategy}[Local expansion strategy]\label{strat_local}
	Given an arbitrarily large constant $D>0$, we apply the following local expansion strategy. 
	\begin{itemize}
		\item[Step 1:] Given an input graph, we apply the \emph{weight expansion} $\cal O_{\mathrm{weight}}$ as follows. If the graph contains no weights (neither regular nor light), then $\cal O_{\mathrm{weight}}$ is a null operation, and the graph is passed to the next step. Otherwise, we select one of its weights, say on vertex $\al$. If this is a light-weight, we apply the expansion \eqref{Owx} directly. If it is a regular weight, i.e., $G_{\al\al}$ or \smash{$\overline G_{\al\al}$}, we decompose it as a sum of a light-weight and a factor of $m$ or $\overline m$: \smash{$(G_{\al\al}-m)+m$} or \smash{$(\overline G_{\al\al}-\overline m)+\overline m$}, and then apply the expansion \eqref{Owx} to the resulting light-weight term.  
		For each new graph generated by the weight expansion, we apply the operations in \Cref{dot-def} to write it as a sum of normal graphs. For each normal graph $\cal G$, if its scaling size is sufficiently small:
		\be\label{eq:smallsize}
		\size(\cal G)\le W^{-D},
		\ee
		then we send it directly to the output; if it contains no weights, we pass it to Step 2; if it still contains weights, we restart Step 1.

		\item[Step 2:] At this stage, the input graph has no weights. We apply the edge expansion $\cal O_{\mathrm{edge}}$ as follows. If there exists a vertex $\al$ whose degree is $\notin\{0,2\}$, or whose charge is not neutral (recall the definition in \eqref{eq:neutralcharge}),\footnote{If $\al$ is attached to two $+$ solid edges and has non-neutral charge, then the two edges incident to it must be of the form $G_{xy}G_{xy'}$ or $G_{yx}G_{y'x}$. Such configurations cannot be expanded using the $GG$ expansion \eqref{Oe2x}, and therefore must first be removed via the edge expansion \eqref{Oe1x}.} then we apply the edge expansion \eqref{Oe1x} at $\al$. 
		For each resulting graph, we again apply the operations in \Cref{dot-def} to express it as a sum of normal graphs. Those satisfying the small-size condition \eqref{eq:smallsize} are sent to the output, while the rest are sent back to Step 1.
		If all internal vertices have degree $\in \{0,2\}$ and are neutral in charge, then $\cal O_{\mathrm{edge}}$ is a null operation, and the graph is passed to Step 3.  
		
		\item[Step 3:] Now, the input graph has no weights, and all internal vertices have degree $\in\{0,2\}$ and neutral charge. We apply the $GG$ expansion $\cal O_{GG}$ as follows. 
		If there exists a vertex $\al$ connected to two solid edges of the same charge (i.e., two $G$ edges or two $\overline G$ edges), we apply the expansion \eqref{Oe2x} at $\al$.
		Each resulting graph is expanded into a sum of normal graphs using the operations in \Cref{dot-def}. Those satisfying \eqref{eq:smallsize} are sent to the output, while the rest are sent back to Step 1.
		If all internal vertices are standard neutral, then $\cal O_{GG}$ is a null operation, and the graph is sent to the output. 
		
	\end{itemize}
\end{strategy}

We now apply \Cref{strat_local} to complete the proof of \Cref{lem:localregular}.

\begin{proof}[\bf Proof of \Cref{lem:localregular}]
	By \Cref{lvl1 lemma}, we obtain the expansion of $\big|f_{xy}(G)\big|^{p}$ as in \eqref{eq:local_Gs}, where the graphs $\Gamma_{\mu,xy}$ trivially satisfy property (1). For property (2), note that the original graph 
	\be\label{eq:originGamma}\Gamma_{xy}=\big|f_{xy}(G)\big|^{p}= \sum_{\bal,\bbeta}\prod_{i=1}^p \left[S_{\al_i\beta_i} \mathbf 1_{x\ne \al_i}\mathbf 1_{y\ne \al_i} \cdot 
	\Gc_{\beta_i\beta_i} G_{x\al_i}G_{\al_i y}\right]\ee
	contains $p$ internal molecules, corresponding to the vertices $\bal=(\al_1,\ldots, \al_p)$ and $\bbeta=(\beta_1,\ldots, \beta_p)$. During the expansions, the number of internal molecules never increases; it may decrease when two molecules---internal or external---are merged due to new dotted or waved edges created in the process. Consequently, $\Gamma_{\mu,xy}$ may contain strictly fewer molecules than $\Gamma_{xy}$. In particular, if all internal molecules merge with external ones, then $\Gamma_{\mu,xy}$ contains no internal molecules, in which case $q=0$. Finally, \eqref{eq:MolVW} follows directly from the definition of a molecule, since all vertices within a molecule are connected by paths of waved edges. 
	
	For properties (3)--(5), note that the original graph $\Gamma_{xy}$ contains $p$ edge-disjoint paths between $\Mol_x$ and $\Mol_y$, each passing through an internal molecule $\Mol_i$ that contains the vertex $\al_i$, $i\in \qqq{p}$. We denote the path through $\Mol_i$ by $\fP_i$.
	During the expansions, molecules may merge, but for notational convenience, we retain their original labels: if molecules $\Mol_i$ and $\Mol_j$ merge, we continue to refer to the resulting molecule as both  $\Mol_i$ and $\Mol_j$. Similarly, although the paths $\fP_i$ may change during the expansion, we keep their names and regard each $\fP_i$ as the path associated with $\Mol_i$. 
	Checking \Cref{ssl,Oe14,T eq0}, we observe that these paths never disappear under local expansions. More precisely, if $e$ is a solid edge between two distinct molecules $\cal M$ and $\cal M'$, then in any local expansion either: 
	\begin{itemize}
		\item  the edge $e$ remains unaffected on the molecular graph; or  
		
		\item $e$ is replaced by two new edges $e_1$ and $e_2$, which still form a connected path between $\Mol$ and $\Mol'$ (though this connectedness may fail at the vertex level); or  
		
		\item the molecules $\Mol$ and $\Mol'$ merge, in which case $e$ disappears on the molecular graph, but the connectedness between $\Mol$ and $\Mol'$ holds trivially. 
		
	\end{itemize}
	From this observation, properties (3) and (5) follow.
	
	For property (4), let $\Mol$ be an internal molecule in $\Gamma_{\mu,xy}$. If $\Mol$ arises from merging at least two internal molecules, say $\Mol_i$ and $\Mol_j$, then both associated paths $\fP_i$ and $\fP_j$ pass through $\Mol$.
	If the previous scenario does not occur, then we are in the case $\Mol=\Mol_i$, which corresponds to the following path in the original graph $\Gamma_{xy}$ (assuming, without loss of generality, that the path carries a $+$ charge):
	\[ \sum_{\al_i,\beta_i}S_{\al_i\beta_i} \mathbf 1_{x\ne \al_i}\mathbf 1_{y\ne \al_i}\cdot \Gc_{\beta_i\beta_i} G_{x\al_i}G_{\al_i y}.\] 
	To ensure that $\Gamma_{\mu,xy}$ is locally standard, the molecule $\Mol_i$ must pull in some $\overline G$ edges from other paths or molecules during the expansions. 
	If a $\overline G$-edge from path $\fP_j$ is pulled to $\Mol_i$, then $\fP_j$ also passes through $\Mol_i$. Similarly, if a $\overline G$-edge inside $\Mol_j$ is pulled to $\Mol_i$, then the two new edges between $\Mol_i$ and $\Mol_j$ can be incorporated into path $\fP_j$, so that $\fP_j$ again passes $\Mol_i$. In either case, property (4) holds.

	Finally, we examine the scaling order of $\Gamma_{\mu,xy}$. For the original graph $\Gamma_{xy}$, the scaling order is  
	\be\label{eq:initial_scaling} \ord(\Gamma_{xy}) =
	p, \ee 
	which follows from the presence of $2p$ off-diagonal solid edges, $p$ light-weights, $p$ waved edges, and $2p$ internal vertices. We now track how the scaling order evolves during the expansions. 
	For clarity, we refer to the light-weights \smash{$\Gc_{\beta_i\beta_i}$} in the original graph \eqref{eq:originGamma} as \emph{distinguished light-weights}, and the vertices $\al_i$ in \eqref{eq:originGamma} as \emph{distinguished vertices}, each incident to two solid edges of the same charge. 
	To obtain locally standard graphs, these distinguished light-weights and vertices must be removed one by one through local expansions.\footnote{A distinguished vertex is removed once its local structure---two incident solid edges of the same charge---is broken. This can occur either by merging it with other vertices or by applying the $GG$-expansion \eqref{Oe2x} at the vertex.}
	We will show that removing any distinguished light-weight or vertex increases the scaling order by at least 1/2. Let $\cal G_0$ be a normal graph before a weight expansion, where \emph{no} two distinguished vertices are connected to each other through solid edges. Let $\cal G_1$ denote a new normal graph obtained after a local expansion.

	We first consider the weight expansions. Suppose we apply the expansion \eqref{Owx} to a distinguished light-weight \smash{$\Gc_{ww}$} in $\cal G_0$. In the first two terms on the RHS of \eqref{Owx}, the number of light-weights increases by 1, which in turn increases the scaling order of the new graph by one: $\ord(\cal G_1)\ge \ord(\cal G_0)+1$. It remains to analyze the graphs generated by the last two terms on the RHS of \eqref{Owx}.
	Consider the third term as an example. Without loss of generality, assume that the derivative $\partial_{\al w}$ acts on either a solid edge $G_{\beta_1\beta_2}$ or a light-weight \smash{$\Gc_{\beta_1\beta_2}$} (with $\beta_1=\beta_2$) of positive charge, pulling it into the molecule containing $w$. (The case of a negative-charge edge/light-weight is treated analogously.) This yields the graph
	\[\cal G'_1=m \sum_{\beta_1,\beta_2} \sum_{ \al} S_{w\al} G_{\al w} G_{\beta_1\al}G_{w\beta_2} \cal G'(\beta_1,\beta_2),\]
	where $\cal G'(\beta_1,\beta_2)$ denotes the graph obtained from $\cal G_0$ by removing the light-weight $\Gc_{ww}$ together with the solid edge $G_{\beta_1\beta_2}$ or light-weight \smash{$\Gc_{\beta_1\beta_2}$}, and by setting the vertices $\beta_1$ and $\beta_2$ as external (note that either $\beta_1$ or $\beta_2$ may already be an external vertex $x$ or $y$). Assigning the dotted edge partition to $\cal G'_1$, we get the graph $\cal G_1$, in which each of the three new solid edges $G_{\al w},$ $ G_{\beta_1\al},$ and $G_{w\beta_2}$ may be either off-diagonal or diagonal (in the latter case, certain vertices are merged). More precisely:
	\begin{itemize}
		\item[(i)] If all edges $G_{\al w}$, $G_{\beta_1\al}$, and $G_{w\beta_2}$ are off-diagonal, then $\ord(\cal G_1)\ge \ord(\cal G_0)+1$. If \smash{$\Gc_{\beta_1\beta_2}$} is a distinguished light-weight (necessarily with $\beta_1=\beta_2$), then in $\cal G_1$, we designate $\beta_1$ as a \emph{distinguished vertex}, which is incident to two solid edges of the same charge. In this case, $n_{\rm{lw}}(\cal G_1)\ge n_{\rm{lw}}(\cal G_0)-2$ and $n_{\rm{dv}}(\cal G_1)\ge n_{\rm{dv}}(\cal G_0)+1$, where $n_{\rm{lw}}$ and $n_{\rm{dv}}$ denote the number of distinguished light-weights and distinguished vertices in the graph, respectively. Otherwise, if \smash{$\Gc_{\beta_1\beta_2}$} is not a distinguished light-weight, then $n_{\rm{lw}}(\cal G_1)\ge n_{\rm{lw}}(\cal G_0)-1$ and $n_{\rm{dv}}(\cal G_1)\ge n_{\rm{dv}}(\cal G_0)$. In either case, we have the relation
		\be\label{eq:relateG1G0} \ord(\cal G_1)+ n_{\rm{dv}}(\cal G_1)+ n_{\rm{lw}}(\cal G_1) \ge \ord(\cal G_0) + n_{\rm{dv}}(\cal G_0)+ n_{\rm{lw}}(\cal G_0).\ee

		\item[(ii)] If $\beta_1\ne \beta_2$, and both $G_{\beta_1\al}$ and $G_{w\beta_2}$ are diagonal in $\cal G_1$, then necessarily $\beta_1=\al\ne w=\beta_2$. In this case, the parameters satisfy 
		\[\qquad \quad 
		n_W(\cal G_1) = n_W(\cal G_0)+1, \quad  n_V(\cal G_1) \le   n_V(\cal G_0)-1 , \quad n_S(\cal G_1) \ge n_S(\cal G_0)-1,\quad n_{\rm{lw}}(\cal G_1) = n_{\rm{lw}}(\cal G_0)-1. \]
		By definition \eqref{eq:ordG}, this implies \(\ord(\cal G_1)\ge \ord(\cal G_0) + 3\). Moreover, \(n_{\rm{dv}}(\cal G_1)\ge n_{\rm{dv}}(\cal G_0)-1\), with equality if $\beta_1$ or $\beta_2$ is a distinguished vertex. Thus, the relation \eqref{eq:relateG1G0} still holds.

		\item[(iii)] If $\beta_1=\beta_2$ and both $G_{\beta_1\al}$ and $G_{w\beta_2}$ are diagonal in $\cal G_1$, then compared to $\cal G_0$, the graph $\cal G_1$ loses two light-weights, namely $\Gc_{ww}$ and $\Gc_{\beta_1\beta_1}$. In this case, the parameters satisfy
		\[\qquad \quad
		n_W(\cal G_1) = n_W(\cal G_0)+1,\quad n_V(\cal G_1) \le n_V(\cal G_0)-1, \quad n_S(\cal G_1) \ge n_S(\cal G_0)-2, \quad n_{\rm{dv}}(\cal G_1)= n_{\rm{dv}}(\cal G_0). \]
		By definition \eqref{eq:ordG}, this implies \(\ord(\cal G_1)\ge \ord(\cal G_0) +2\), and hence the relation \eqref{eq:relateG1G0} still holds.

		\item[(iv)] Suppose only one of $G_{\beta_1\al}$ and $G_{w\beta_2}$ is diagonal. If $G_{\al w}$ is off-diagonal in $\cal G_1$, then  
		\[n_W(\cal G_1) = n_W(\cal G_0)+1,\quad n_V(\cal G_1) \le   n_V(\cal G_0) , \quad n_S(\cal G_1) \ge n_S(\cal G_0). \]
		By definition \eqref{eq:ordG}, this implies \(\ord(\cal G_1)\ge \ord(\cal G_0) +2\). If neither $\beta_1$ nor $\beta_2$ is distinguished, this immediately yields the relation \eqref{eq:relateG1G0}.
		Otherwise, if one of them is a distinguished vertex (which necessarily means $\beta_1\ne\beta_2$), then $n_{\rm{lw}}(\cal G_1)\ge n_{\rm{lw}}(\cal G_0)-1$ and $n_{\rm{dv}}(\cal G_1)\ge n_{\rm{dv}}(\cal G_0)-1$, which still gives the relation \eqref{eq:relateG1G0}.

		\item[(v)] Suppose only one of $G_{\beta_1\al}$ and $G_{w\beta_2}$ is diagonal. If $G_{\al w}$ is diagonal in $\cal G_1$, then  
		\[\qquad \quad 
		n_W(\cal G_1) = n_W(\cal G_0)+1,\quad n_V(\cal G_1) \le   n_V(\cal G_0) - 1 , \quad n_S(\cal G_1) \ge n_S(\cal G_0) - 1,\quad n_{\rm{lw}}(\cal G_1) \ge n_{\rm{lw}}(\cal G_0)-2. \]
		By definition \eqref{eq:ordG}, this implies \(\ord(\cal G_1)\ge \ord(\cal G_0) +3.\) Moreover, \(n_{\rm{dv}}(\cal G_1)\ge n_{\rm{dv}}(\cal G_0)-1\), with equality if $\beta_1$ or $\beta_2$ is distinguished. Thus, the relation \eqref{eq:relateG1G0} still holds.
		
	\end{itemize}
	It is not hard to see that the same reasoning applies to graphs arising from the fourth term on the RHS of \eqref{Owx}. 
	Hence, after removing all $p$ distinguished light-weights, the relation \eqref{eq:relateG1G0} holds between any graph $\cal G$ arising in the expansions and the original graph $\Gamma_{xy}$. Together with \eqref{eq:initial_scaling}, this implies
	\be\label{eq:ordGinter} \ord(\cal G) \ge 3p - n_{\rm{dv}}(\cal G),\ee
	where $n_{\rm{dv}}(\cal G) \le 3p/2$, because $n_{\rm{dv}}$ can increase only in case (i), namely when the expansion of a distinguished light-weight pulls in another distinguished light-weight. 
	
	It remains to expand such graphs $\cal G$ using the edge expansion \eqref{multi setting} and the $GG$ expansion \eqref{Oe2x}. A direct check shows that the removal of each distinguished vertex increases the scaling order by at least 1/2. The worst case occurs in a $GG$ expansion involving two distinguished vertices: in this case, two distinguished vertices may disappear, but the scaling order increases by 1. Since this is a straightforward case-by-case counting argument as above, we omit the details. Thus, for a locally standard graph $\Gamma_{\mu,xy}$ obtained from the local expansions of $\cal G$, we have 
	\[ \ord(\Gamma_{\mu,xy})\ge \ord(\cal G) + n_{\rm{dv}}(\cal G)/2 \ge 3p - n_{\rm{dv}}(\cal G)/2 > 2p,\]
	where the second inequality follows from \eqref{eq:ordGinter} and the last strict inequality uses $n_{\rm{dv}}(\cal G) \le 3p/2$. This concludes \eqref{eq:sizeGammamu}. 
\end{proof}

\begin{remark}
	With a more careful analysis, one can show that the removal of each distinguished vertex increases the scaling order by at least 1. This would yield the stronger bound $\ord(\Gamma_{\mu,xy}) \ge 3p$, which in turn improves \eqref{eq:LW_moment} by replacing the factor $[\Psi_t(0)]^p$ with $[\Psi_t(0)]^{2p}$. We do not pursue this refinement, however, since the bottleneck of the main proof lies instead in the martingale estimate of \Cref{lem: EMn2_N} (see \Cref{rmk_bottleneck}).
\end{remark}

\subsection{Proof of \texorpdfstring{\Cref{lem:LWterm,lem: EWGn2_N}}{LemGraph1} for the block Anderson model}\label{subsec:LWchange-to-BA}

Analogous to the random band matrix model, the proofs of \Cref{lem:LWterm,lem: EWGn2_N} for the block Anderson model also require establishing \Cref{lem:LW_moment,lem:LW_moment_exp}. To this end, we introduce new graphical notations and corresponding local expansion rules. 
In the setting of the block Anderson model, we continue to define the matrices $S^\pm$ as in \eqref{eq:def-Spm}, with \smash{$\Theta^{(+,+)}(z)$} defined in \eqref{def:Theta_BA} and $S^{\LK}=I_{L^d}$. 
In addition to the components introduced in \Cref{def_graph1}, we introduce additional types of edges that represent the matrix entries of $\Psi$ and $M$, specific to this model: 
\begin{itemize}			
	
	\item {\bf $\Psi$-dotted edge}: A black dotted edge labeled $\Psi$ between vertices $x$ and $y$ represents a factor $\ilambda\Psi_{xy}$.
	
	\item {\bf $M$-dotted edge}: A blue (resp.~red) dotted edge  labeled $M$ between vertices $x$ and $y$ represents a factor $M_{xy}$ (resp.~$\bar M_{xy}$).
	
\end{itemize}
With these new types of dotted edges, the definitions of molecules and molecular graphs remain the same as in \Cref{def_poly}. However, graphs in the block Anderson model carry an additional level of structure, which we refer to as \emph{atoms}. More precisely, our graphs exhibit microscopic structures within atoms, which are equivalence classes of vertices connected via dotted edges. These atoms form mesoscopic structures within each molecule, and the global (macroscopic) structure is represented by the molecular graph.

\begin{definition}[Atoms and atomic graphs]\label{def_atom}
	We partition the set of all vertices in a graph into disjoint subsets called atoms. Two vertices belong to the same atom if and only if they are connected by a path consisting entirely of dotted edges---that is, any combination of dotted edges, $\Psi$-dotted edges, or $M$-dotted edges.
	An atom is called external if it contains at least one external vertex; otherwise, it is called internal. 
	
	Given a graph $\cal G$, we define its atomic graph as follows:
	\begin{itemize}
		\item Merge all vertices belonging to the same atom into a single vertex. 
		
		\item Retain all solid and waved edges that connect different atoms. 
		
		\item Discard all other components within $\cal G$, including $\times$-dotted edges, edges between vertices within the same atom, and coefficients.
	\end{itemize}
\end{definition}

By the definition of (regular) dotted edges, the form of the $\Psi$-dotted edges in \eqref{eq:Psi3D}, and the bounds for the $M$-dotted edges in \eqref{Mbound_AO} or \eqref{Mbound_AO2}, we deduce that, up to an error of order $e^{-c(\log W)^{1+\e_0}}$ for any $\e_0>0$, 
\be\label{scaleatom}
\hbox{$x$, $y$ belong to the same atom} \implies  x-W[x] = y-W[y],\ \text{and} \ |[x]-[y]|\le (\log W)^{1+\e_0}.
\ee
Here, $[x],[y]\in \Zn$ are the block-level vertices as defined in \Cref{def: BM2}. 
The concept of atoms is introduced for two main purposes: first, to define the scaling size and scaling order of our graphs; and second, to enable a structural comparison between the block Anderson model and the random band matrix model. 
In particular, under the atomic graph formalism, the atomic graphs in the block Anderson model correspond directly to the vertex-level graphs in the random band matrix setting, where the $M$ matrices reduce to scalars. As a result, all statements and arguments concerning molecular graphs from the previous proofs for the random band matrix model carry over verbatim to the block Anderson model.

\begin{definition}[Normal graphs]  \label{defn_normalBA}  
	We say a graph is \emph{normal} if it satisfies the following properties:
	\begin{itemize}
		\item[(i)] It contains at most $\OO(1)$ many vertices and edges.
		
		\item[(ii)] There are no regular dotted edges between vertices (note that $\Psi$- or $M$-dotted edges are allowed).
		
		\item[(iii)] Every solid edge carries a $\circ$ (i.e., it represents an entry of \smash{$\Gc$ or $\Gc^*$}). In particular, all weights are light-weights.
		
	\end{itemize}
\end{definition}

Given an arbitrary graph with $\OO(1)$ many vertices and edges, we can decompose it into a linear combination of normal graphs by expanding each $G_{xy}$ (resp.~$G^*_{xy}$) edge into a \smash{$\Gc_{xy}$} edge plus an $M_{xy}$ edge (resp.~a \smash{$\Gc^*_{xy}$} edge plus an $M^*_{xy}$ edge), and by merging any vertices connected by regular dotted edges. For a normal graph, we define its scaling size in the same way as in \Cref{def scaling}, except that we replace the number of internal vertices with the number of internal atoms.

\begin{definition}[Scaling size and scaling order]\label{def scalingBA} 
We define the scaling size of a normal graph $\Gamma$ as 
	\begin{align}
		\size(\Gamma): = (L^d)^{n_M(\Gamma)}\cdot(\Psi_t)^{n_S(\Gamma)} \cdot W^{-d\left(n_W(\Gamma) - n_A(\Gamma) \right)},
		\label{eq_defsize_BA}
	\end{align} 
	where $n_S(\Gamma)$, $n_W(\Gamma)$, $n_A(\Gamma)$, and $n_M(\Gamma)$ denote the numbers of solid edges (including light-weights), waved edges, internal atoms, and internal molecules, respectively. The scaling order of $\Gamma$ is then defined as
	\be\label{eq:ordG_BA} \ord(\Gamma):=n_S(\Gamma)+ 2 \left(n_W(\Gamma)  -  n_A(\Gamma)\right) . \ee
	If $\Gamma$ can be expressed as a sum of $\OO(1)$ many normal graphs $\Gamma_k$, i.e., $\Gamma=\sum_k \Gamma_k$, we define its scaling size as in \eqref{eq_defsizemax}, and its scaling order by
	\(\ord(\Gamma)=\min_k \ord(\Gamma_k).\)
\end{definition}
Next, we state the local expansion rules for the block Anderson model, as given in \cite{yang2024Del}.  

\begin{lemma}[Basic expansion, Lemma B.9 of \cite{yang2024Del}]\label{lanlw}
	In the setting of the block Anderson model, let $f$ be a differentiable function of $G$. Then, we have the expansion
	\begin{align} \label{eq:BE}
		\Gc_{xy} f(G) =_{\E}  &
		\sum_{\al,\beta}  M_{x\al} S_{\al \beta}
		\Gc_{\beta\beta} G _{\al y} f(G)
		- \sum_{\al,\beta}M_{x\al} S_{\al \beta} G _{\beta y} 
		\partial_{h_{ \beta\al}} f(G) .
	\end{align}
\end{lemma}

The purpose of the basic expansion is to expand any graph as a sum of graphs in which every vertex has a solid-edge degree $\in \{0,2\}$. If a vertex still carries self-loops (i.e., weights), we then apply the following weight expansion.

\begin{lemma} [Weight expansion, Lemma B.10 of \cite{yang2024Del}]\label{lem_lweight}
	In the setting of the block Anderson model, let $f$ be a differentiable function of $G$. Then, we have the expansion
	\begin{align} \label{eq:LW}
		\Gc _{xx} f(G)  
		=_{\E} &\sum_{y}(1+M^+S^+)_{xy}
		\Big(   \sum_{\al,\beta}  M_{y\al} S_{\al \beta} \Gc_{ \al y} 
		\Gc_{\beta\beta} f(G)
		- \sum_{\al,\beta}M_{y\al} S_{\al \beta} G _{\beta y} 
		\partial_{h_{ \beta\al}} f(G)\Big) ,
	\end{align}
	where $M^+$ is the $N\times N$ matrix with entries $M^+_{xy}:=M_{xy}M_{yx}$. 
\end{lemma}

If there are vertices incident to two solid edges of the same charge, we use the following $GG$ expansion.

\begin{lemma}[$GG$ expansion, Lemma B.11 of \cite{yang2024Del}]
	In the setting of the block Anderson model, let $f$ be a differentiable function of $G$. Then, we have the expansion
	\begin{align} 
		&\Gc _{y'x}\Gc _{xy} f(G) 
		=_\E \sum_\beta S^+ _{x\beta } M _{\beta y}M_{y'\beta}f(G)
		+ \sum_{ \beta}S^+_{x\beta}\left(\Gc _{\beta y}M_{y'\beta}+M_{\beta y}\Gc _{y'\beta}\right)f(G)\label{GGGamma}
		\\ & +\sum_{w,\al,\beta}\left(1+M^+S^+\right)_{xw}M_{w\al} S_{\al \beta}\left(   
		\Gc_{\beta\beta} G _{\al y} \Gc _{y'w}f(G)
		+ \Gc _{\al w}G _{\beta y}G_{y'\beta}
		\bigGamma -  G _{\beta y} \Gc _{y'w}
		\partial_{h_{ \beta\al}}   f(G)\right) .\nonumber
	\end{align}
\end{lemma}

As explained in \cite[Appendix B]{yang2024Del}, repeated applications of the above local expansions to an arbitrary normal graph yield a sum of $\OO(1)$ \emph{locally standard graphs} (see \Cref{deflvl1}), subject to the additional requirement that all solid edges are \smash{$\Gc$} edges. This allows us to establish an analogue of \Cref{lvl1 lemma} for the block Anderson model. 
With the above preparations, we can now proceed to the proof of \Cref{lem:LW_moment,lem:LW_moment_exp} for the block Anderson model.
Since the argument is very similar to that in \Cref{Sec:graph}, we only sketch the proof and omit the repetitive details.

\begin{proof}[\bf Proof of \Cref{lem:LW_moment,lem:LW_moment_exp}]
	We begin by applying the local expansion to the graph $|f_{xy}(G)|^p$. This allows us to establish the same result as in \Cref{lem:localregular} for the block Anderson model, including all properties (1)--(6). In fact, due to our earlier discussion, the molecular graphs in the block Anderson setting share the exact same structure as those in the random band matrix model. As a result, all path properties stated in \Cref{lem:localregular} continue to hold in the current context.

	Next, we define the auxiliary graph in a similar manner as in \Cref{def_auxgraph}, with minor modifications that we now describe. Let $\cal G_{xy}$ be a locally standard graph obtained from the local expansions, with $q$ internal molecules $\Mol_i$, $i\in\qqq{q}$, and two external molecules $\Mol_x$ and $\Mol_y$, as in \Cref{def_auxgraph}. For each $i\in \{1,\ldots, q,x,y\}$, we fix a center $\al_i\in \Mol_i$, and we also choose centers $x_{i,j}$ for the atoms $\cal A_{i,j}\subset\Mol_i$, where $j\in\qqq{k_i}$ and $k_i$ denotes the number of atoms in $\Mol_i$. Without loss of generality, we set $\al_i\equiv x_{i,1}$ and assume that $\al_i$ is connected to other molecules via solid edges.
	(By the definition of locally standard graphs, the solid-edge degree of each $\al_i$ is at least 2.)
	If $\p{\beta\to \beta'}$ is a solid edge connecting the atoms $\cal A_{i,j}$ and $\cal A_{i',j'}$, then by \eqref{scaleatom}, the form of the $\Psi$-dotted edges in \eqref{eq:Psi3D}, and the exponential decay estimates \eqref{Mbound_AO} or \eqref{Mbound_AO2} for the $M$-dotted edges, we obtain
	\be\label{Gyiyjatom}
	\big|(\Gc_t)_{\beta\beta'}\big|\prec \zeta(x_{i,j},x_{i',j'}):=\sum_{|[a]|+|[b]|\le (\log W)^{1+\e_0}}\big|(\Gc_{t})_{x_{i,j}+W[a],x_{i',j'}+W[b]}\big| + W^{-D}
	\ee
	for any large constant $D>0$. This quantity satisfies bounds analogous to \eqref{eq:Gbyxi2}:
	\be\label{eq:Gbyxi2_BA}
	\zeta(\al,\beta) \prec \Psi_t(|[\al]-[\beta]|),\quad \sum_{\al\in \ZL}(|\zeta(\al,\beta)|^2+|\zeta(\beta,\al)|^2) \prec \eta_t^{-1},
	\ee
	where the first estimate follows from \Cref{lem_GbEXP_BA}, and the second from Ward’s identity for $G$ together with \eqref{Mbound_AO} or \eqref{Mbound_AO2} for $M$. If $\p{\beta,\beta'}$ is a waved edge connecting the atoms $\cal A_{i,j}$ and $\cal A_{i',j'}$, then by \eqref{scaleatom}, the definition of $S$ in \eqref{bandcwV}, and the estimate \eqref{prop:ThfadC_short}, we obtain
	\be\label{Gyiyjatom2}
	\big|S_{\beta\beta'}\big|+ \big|S^+_{\beta\beta'}\big|\prec W^{-d}\mathbf 1\p{|x_{i,j}-x_{i',j'}|\le W(\log W)^{1+\e_0}} + W^{-D}.\ee

	We first define a graph $\cal G^{a}_{xy}$ as an ``atomic reduction" of $\cal G_{xy}$:
	\begin{itemize}
		\item Its vertices are the atom centers $x_{i,j}$ for $i\in\{1,\ldots,q,x,y\}$ and $j\in\qqq{k_i}$.
		
		\item For each solid edge in $\cal G_{xy}$ from the atom $\cal A_{i,j}$ to the atom $\cal A_{i',j'}$, we introduce an oriented solid edge from $x_{i,j}$ to $x_{i',j'}$ in $\cal G^{a}_{xy}$, representing the factor $\zeta(x_{i,j},x_{i',j'})$.
		
		\item For each waved edge in $\cal G_{xy}$ connecting the atoms $\cal A_{i,j}$ and $\cal A_{i',j'}$, we introduce a waved edge between $x_{i,j}$ and $x_{i',j'}$ in $\cal G^{a}_{xy}$, representing the factor $W^{-d}\mathbf 1 \left(|x_{i,j}-x_{i',j'}|\le W(\log W)^{1+\e_0}\right)$.
		
	\end{itemize}
	Next, we further simplify the structure of $\cal G^{a}_{xy}$ and define the auxiliary graph $\cal G^{\aux}_{xy}$ as follows.
	First, we remove all solid and waved edges that lie entirely within individual molecules of $\cal G^{a}_{xy}$.
	Second, for each molecule $\Mol_i$, we retain only those vertices (including the center $\al_i$) that connect to other molecules through solid edges; all remaining vertices inside $\Mol_i$ are discarded. 
	Third, for each retained vertex $x_{i,j}\ne \al_i$ inside $\Mol_i$, we add a new waved edge connecting $x_{i,j}$ to $\al_i$, representing the factor $W^{-d}\mathbf 1(|x_{i,j}-\al_i|\le W(\log W)^{1+2\e_0})$.
	In other words, the molecular structure is reduced to the following simplified form:
	\be  \nonumber
	\parbox[c]{0.8\linewidth}{
		\begin{center}
			\scalebox{0.9}{
				\begin{tikzpicture}
					\begin{feynman}
						
						\vertex [dot] (a1) at (0,0.7);
						\fill[black] (a1) circle (2pt) node at (0.2,0.5) {$\al_i$};
						\vertex [dot] (a) at (0,0);   
						\fill[black] (a) circle (2pt) node[left=2pt]{};
						\vertex [dot] (a2) at (-0.7,0.7);
						\fill[black] (a2) circle (2pt) node[left=2pt]{$x_{i,2}$};
						\vertex [dot] (a3) at (0,1.4);   
						\fill[black] (a3) circle (2pt) node[left=2pt]{$x_{i,3}$};
						\vertex [dot] (a4) at (0.7,0.7);   
						\fill[black] (a4) circle (2pt) node[right=2pt]{$x_{i,4}$};
						\vertex (a5) at (0.5,0.2) {$\cdots$};
						
						\diagram*{
							
							(a) -- [photon] (a1), (a1) -- [photon] (a2), (a1) -- [photon] (a3), (a1) -- [photon] (a4)
							
						};
					\end{feynman}
			\end{tikzpicture}}
	\end{center}}\ee
	For the auxiliary graph $\cal G^{\aux}_{xy}$ constructed above, we define its scaling order by
	\be\label{eq:ordGaux_BAM} \ord(\cal G^{\aux}_{xy}):=\#\{\text{solid edges in } \cal G^{\aux}_{xy}\}+2\#\{\text{waved edges in } \cal G^{\aux}_{xy}\}-2\#\{\text{internal vertices in } \cal G^{\aux}_{xy}\}.
	\ee
	From the construction, together with \eqref{eq:Gbyxi2_BA} and by repeating the argument used in the proof of \Cref{GtoAG}, we can control $\cal G_{xy}$ via its auxiliary graph as
	\be\label{G_by_auxG_BA} 
	\cal G_{xy} \prec  (\Psi_t)^{\ord(\cal G_{xy}) - \ord(\cal G_{xy}^{\aux})}  \cdot \cal G^{\aux}_{xy} + W^{-D}\, .
	\ee
	Finally, using \eqref{eq:Gbyxi2_BA}, the auxiliary graph $\cal G^{\aux}_{xy}$ can be bounded by an argument parallel to that in \Cref{sec:pf_Anp}.
	In particular, the argument there, when applied to the molecular graph of $\cal G^{\aux}_{xy}$, provides a systematic procedure for selecting the long solid edges between molecules and for establishing a nested summation order over the internal molecules.
	The only difference from the random band matrix case is that the ``summation over a molecule" here also includes summing over certain non-center vertices within the molecule. However, only two solid edges incident to $\Mol_i$ are used in the summation step, while every other solid edge contributes either a long-edge factor or a $\Psi_t$-factor.
	Applying the Cauchy-Schwarz inequality together with the bound \eqref{eq:Gbyxi2_BA} to these two solid edges, the summation over all vertices in the molecule $\Mol_i$ produces the desired factor of $\eta_t^{-1}$.
	With this modification, the argument of \Cref{sec:pf_Anp} carries over verbatim, completing the proof of \Cref{lem:LW_moment} for the block Anderson model.

	For the proof of \Cref{lem:LW_moment_exp}, the bounds in \eqref{eq:Gbyxi2_BA} remain valid with $\Psi_t(|[\al]-[\beta]|)=\sT_t(|[\al]-[\beta]|\wedge\ell)+W^{-D}$. Then, by following the same reasoning as in \Cref{subsec_pf_LW_moment_exp}, we complete the proof of \Cref{lem:LW_moment_exp} for the block Anderson model.
\end{proof}

\section{Proofs of some deterministic estimates}

This appendix is devoted to proving several deterministic estimates used in the main proof.

\subsection{Proof of \texorpdfstring{\Cref{lem_propTH}}{DeterLemma}}\label{sec:pf_propTH}


Property 1 follows from the underlying symmetry of $S^{\LK}$ in the random band matrix model, where $M^{(\sig_1,\sig_2)}$ is a scalar matrix, or from the symmetry of $M^{(\sig_1,\sig_2)}$ in the block Anderson model, where \smash{$S^{\LK}$} is the identity matrix. Property 2 is a consequence of the translation invariance of the matrix $M^{(\sig_1,\sig_2)}S^{\LK}$. Property 3 follows from the fact that \smash{$\Theta^{(\sig_1,\sig_2)}_t$} is a rational function of $S^{\LK}$ in the random band matrix model, and from that $S^{\LK}=I_{L^d}$ in the block Anderson model. 
To prove property 4, we expand \smash{$\Theta^{(\sig_1,\sig_2)}_t$} using the Taylor series
\be\label{eq;Taylor}
\Theta^{(\sig_1,\sig_2)}_t= \sum_{k=0}^\infty   t^k \big(M^{(\sig_1,\sig_2)}S^{(\sB)}\big)^k.\ee
Since $|M_{ab}^{(\sig_1,\sig_2)}|\le M_{ab}^{(+,-)}$ for both models, it follows that for any $\sig_1,\sig_2\in\{+,-\}$,
\[ |\Theta_{t,ab}^{(\sig_1,\sig_2)}| \le  \sum_{k=0}^\infty t^k \big(M^{(+,-)}S^{(\sB)}\big)_{ab}^k  = \Theta_{t,ab}^{(+,-)}.\]
The estimate \eqref{eq:THETAinftinf} then follows directly from this inequality and the identity $\sum_{b}\Theta_{t,ab}^{(+,-)}\equiv (1-t)^{-1}$, which holds because \smash{$M^{(+,-)}S^{(\sB)}$} is doubly stochastic. This uses $|m|=1$ in the random band matrix model, and Ward's identity \eqref{eq:WardM} in the block Anderson model.

To prove \eqref{prop:ThfadC_short} for the random band matrix model, we use the following shifted Taylor expansion:
\be\label{eq:shiftTaylor} \Theta_t^{(+,+)} = \frac{1}{1+\e}\sum_{k=0}^\infty \p{\frac{t m^2 S^{(\sB)} + \e}{1+\e}}^k, \ee
where $\e>0$ is a positive constant. As shown in \cite[Lemma 4.2]{bourgade2019random}, one has $\|(tm^2 S^{(\sB)} + \e)/(1+\e)\|_{\infty\to \infty} \le 1-c$ for some constant $c>0$ depending on $\e$ and $\kappa$. Applying this estimate to the expansion \eqref{eq:shiftTaylor} and noticing that the off-diagonal entries of $\p{t m^2 S^{(\sB)} + \e}/\p{1+\e}$ contain a $\ilambda^2$ factor (by the definition \eqref{eq:variancematrix}), we derive the bound \eqref{prop:ThfadC_short}. For the block Anderson model, we instead use the Taylor expansion 
\begin{align}
	\left(1-tM^{(+,+)}\right)^{-1}_{0a} &=\left[\left(1-tm^2\right)I - t M'\right]^{-1}_{0a} = \sum_{k=0}^{\infty}\left(1-tm^2\right)^{-(k+1)}
	\left(t M'\right)^{k}_{0a},\label{eq:expMLn}
\end{align}
where $M':=M^{(+,+)}- m^2I$ is obtained from $M^{(+,+)}$ by setting its diagonal entries to zero.
By property (2) of \Cref{lem:propM}, there exists a constant $\e>0$ such that $|1-tm^2|\ge \e$ and
\be\label{eq:off_diagM}
\|M'\|_{\infty\to \infty}= \sum_{a\ne 0}|M'_{0a}| = 1 - |m|^2 \le (1-\e)|1-tm^2| ,\quad \forall t\in[0,1].
\ee
Applying this bound to \eqref{eq:expMLn}, we deduce the following estimate: there exists a constant $C>0$ such that for any $\delta>0$,
\begin{align}
	\Big|\big(1-tM^{(+,+)}\big)^{-1}_{0a}\Big| &\le  \sum_{0\le k\le \delta |a|}\left(1-tm^2\right)^{-(k+1)}
	\left(t M'\right)^{k}_{0a} + \e^{-1}\sum_{k> \delta |a|}(1-\e)^k \nonumber\\
	&\le \sum_{0\le k\le \delta |a|} C^k \exp(-c|a|) +\e^{-2}(1-\e)^{\delta|a|} \le C' e^{-c' |a|}.\label{eq:expMLn2}
\end{align}
In the second inequality, we use the exponential decay of $M'$ by \eqref{Mbound_AO} or \eqref{Mbound_AO2}, together with the convolution bound $\sum_{b}e^{-c|a_1-b|-c|a_2-b|}\lesssim e^{-c|a_1-a_2|}$ for all $a_1,a_2\in \Zn$, and in the third inequality we choose $\delta>0$ sufficiently small so that $C^{\delta}\le e^{-c/2}$. The constants $c', C'$ depend only on $C,\ c,\ \e$, and $\delta$. The bound \eqref{eq:expMLn2} yields \eqref{prop:ThfadC_short} in the regime $\ilambda\gtrsim 1$. When $\ilambda\ll 1$, we return to \eqref{eq:expMLn} and note that $\|M'\|_{\infty\to\infty}\lesssim \lambda^2$ by \eqref{Mbound_AO}. In this case, for $a\ne 0$, the same argument as in \eqref{eq:expMLn2} applies, leading to the bound \eqref{prop:ThfadC_short}.

The bounds \eqref{prop:ThfadC} and \eqref{prop:BD1}–\eqref{prop:ThfadC0} follow directly from \eqref{prop:ThfadC_short} in the case $\sigma_1=\sigma_2$. It therefore remains to consider the case $\sigma_1\neq \sigma_2$. In this case, the estimates \eqref{prop:ThfadC0} and \eqref{prop:BD2} were proved in \cite{yang2024Del} by analyzing the Fourier series of \smash{$\zTheta_t^{(+,-)}$} through a standard summation-by-parts argument.\footnote{In \cite{yang2024Del}, the assumption $\ilambda\ll 1$ was imposed; however, the same argument (in fact, slightly simpler) applies when $\ilambda\gtrsim 1$.} 
Specifically, \eqref{prop:ThfadC0} is proved in Lemma 3.1 of \cite{yang2024Del}, while \eqref{prop:BD2} appears as equation (E.19) therein. Although the bound \eqref{prop:BD1} is not stated explicitly in \cite{yang2024Del}, its proof proceeds analogously to those of \eqref{prop:ThfadC0} and \eqref{prop:BD2}, by applying the same summation-by-parts technique to the corresponding Fourier expansion. We therefore omit the details.
These bounds \eqref{prop:BD1}--\eqref{prop:ThfadC0} have also been derived for dimension $d=2$ in \cite[Lemma 3.10]{RBSO1D}, where the summation-by-parts argument is explained in Appendix~B. The same reasoning extends directly to dimensions $d\ge 3$.

Finally, it remains to prove the bound \eqref{prop:ThfadC} for the case $\sig_1\ne \sig_2$. Its proof is similar to that of \cite[Lemma 2.14]{DYYY25}, using the Taylor expansion, along with the random walk representation of \smash{$(S^{(\sB)})^k$} (for the random band matrix model,) or \smash{$(M^{(+,-)})^k$} (for the block Anderson model). 
More precisely, we consider the random walk $\{X_k:k\ge 0\}$ on $\Z^d$, with transition probabilities \smash{$\big\{p(0,a)=S_{0a}^{(\sB)}:a\in \Z^d\big\}$} for the random band matrix model, or \smash{$\big\{p(0,a)=M^{(+,-)}_{0a}:a\in \Z^d\big\}$} for the block Anderson model.  
First, given any $a\in \Z^d\setminus\{0\}$, by applying the Bernstein inequality to $X_k\cdot \wh a$ with $\wh a$ denoting the unit vector $a/\|a\|_2$, we can derive the following large deviation estimate: there exist constants $c, C>0$ (which does not depend on $a$) such that 
\[ \P_0(X_k=a) \le C\exp\p{-c \p{\frac{|a|^2}{\ilambda^2k}\wedge |a|}},\quad \forall a\in \Z^d,\ k\ge 1 . \]
On the other hand, using the local CLT for $X_k$ (see e.g., \cite[Section 2]{Lawler_book}), we obtain that 
\[ \P_0(X_k=a) \le 1\wedge \p{C (\ilambda^2k)^{-d/2}},\quad \forall a\in \Z^d,\ k\ge 1 . \]
Combining the above two bounds and using the argument below equation (8.3) of \cite{DYYY25}, we obtain the large deviation bound
\[ \P_0(X_k=a) \le C\p{ (\ilambda^2k)^{-d/2}\wedge 1}\exp\p{-c \p{\frac{|a|^2}{\ilambda^2k}\wedge |a|}},\quad \forall a\in \Z^d,\ k\ge 1,  \]
for some constants $c,C>0$. This estimate allows us to control $(M^{(+,-)}S^{(\sB)})_{0a}^k$ by projecting the random walk onto the torus $\Zn$. Applying these estimates to the expansion \eqref{eq;Taylor} and summing over the resulting terms, we can derive the desired bound \eqref{prop:ThfadC}.
Since the argument closely follows that in \cite[Section 8]{DYYY25}, we omit the details. In fact, the proof here is somewhat simpler than in \cite{DYYY25}, because $d=2$ is the critical dimension, where logarithmic corrections (e.g., $\log L$) appear. In contrast, for $d\ge 3$, all relevant series (or integrals) are summable, resulting in a dimension-dependent constant $C_d$ in \eqref{prop:ThfadC}.

\subsection{Proofs of evolution kernel estimates}\label{ks}

In this subsection, we present the proofs of \Cref{lem:sum_Ndecay,lem:sum_decay,lem:sum_decay_nonzero}. 
Parts of the proofs parallel those in \cite{YY_25,DYYY25,RBSO1D} for random band matrices and for the block Anderson model in dimensions 1 and 2. However, certain key arguments must be adapted to handle the higher-dimensional setting $d \ge 3$. We therefore outline the proofs of the evolution kernel estimates, emphasizing the modifications needed compared to the arguments in \cite{YY_25,DYYY25,RBSO1D}. 

\begin{proof}[\bf Proof of \Cref{lem:sum_Ndecay}]
	Notice the following decomposition:
	\be\label{eq:decompUalt}\frac{1 - s \cdot M^{(\sig_i,\sig_{i+1})}S^{(\sB)}}{1 - t \cdot M^{(\sig_i,\sig_{i+1})}S^{(\sB)}} =1+ \Xi^{(i)},\quad \text{where}\quad \Xi^{(i)}:=(t-s)\cdot M^{(\sig_i,\sig_{i+1})}S^{\LK} \Theta_{t}^{(\sig_i,\sig_{i+1})} .\ee
	Using \eqref{eq:THETAinftinf}, along with \eqref{eq:WardM} in the case of block Anderson model, we obtain that
	\be\label{Xi_infint} \|\Xi^{(i)}\|_{\infty\to \infty} = \max_{a}\sum_b |\Xi^{(i)}_{ab}| \le \p{t-s}\|\Theta^{(\sig_i,\sig_{i+1})}_{t}\|_{\infty\to \infty} \le \frac{t-s}{1-t}. \ee
	Together with \eqref{eq:decompUalt}, this implies that 
	$$\left\|\frac{1 - s \cdot M^{(\sig_i,\sig_{i+1})}S^{(\sB)}}{1 - t \cdot M^{(\sig_i,\sig_{i+1})}S^{(\sB)}}\right\|_{\infty\to \infty}\le \frac{1-s}{1-t} .$$
	With this estimate, we conclude \eqref{sum_res_Ndecay} immediately using the definition \eqref{def_Ustz}. 
\end{proof}

\begin{proof}[\bf Proof of \Cref{lem:sum_decay}]
	With the decomposition \eqref{eq:decompUalt}, we can express \smash{$\mathcal{U}^{(\fn)}_{s, t, \boldsymbol{\sigma}} \circ \mathcal{A}$} as
	\begin{align}\label{eq:decomp_U2}
		\left(\mathcal{U}^{(\fn)}_{s, t, \boldsymbol{\sigma}} \circ \mathcal{A}\right)_{\ba}&=\sum_{\mathbf b\in (\Zn)^\fn} \prod_{i=1}^\fn \left( \delta_{a_ib_i}+\Xi^{(i)}_{a_ib_i} \right)  \cdot \mathcal{A}_{\mathbf b} =\sum_{\mathbf b\in (\Zn)^\fn}\sum_{A\subset \qqq{\fn}} \prod_{i\in A} \delta_{a_ib_i} \cdot \prod_{i\in A^c} \Xi^{(i)}_{a_ib_i}  \cdot \mathcal{A}_{\mathbf{b}}\, .
	\end{align}
	By the estimate \eqref{prop:ThfadC} (along with \eqref{Mbound_AO} or \eqref{Mbound_AO2} in the case of block Anderson model), we have that
	\be\label{eq:decayXi}
	\Xi^{(i)}_{a_ib_i}\lesssim (1-s)\frac{(\ilambda^2+|1-t|)^{-1}}{(|a_i-b_i|+1)^{d-2}} e^{-c |a_i-b_i|/ {\ell}_t} 
	\ee
	for a constant $c>0$. We claim that for any subset $A$ with $|A|=k\in \qqq{1,\fn}$, 
	\be\label{sum_res_1_red0}
	\sum_{\mathbf b\in (\Zn)^\fn} \prod_{i\in A} \delta_{a_ib_i} \cdot \prod_{i\in A^c} \Xi^{(i)}_{a_ib_i}  \cdot \mathcal{A}_{\boldsymbol{b}} \le W^{C\e}\p{\frac{\ilambda^2+|1-s|}{\ilambda^2+|1-t|}}^{\fn-k} \|{\cal A}\|_{\infty}+ W^{-D+C },
	\ee
	for a constant $C$ that does not depend on $\e$ or $D$, while for $A= \emptyset$, we claim that 
	\be\label{sum_res_1_red}
	\sum_{\mathbf b\in (\Zn)^\fn} \prod_{i=1}^\fn \Xi^{(i)}_{a_ib_i}  \cdot \mathcal{A}_{\boldsymbol{b}} \le W^{C\e} \frac{\ell_t^2}{\ell_s^2}\p{\frac{\ilambda^2+|1-s|}{\ilambda^2+|1-t|}}^{\fn} \|{\cal A}\|_{\infty}+ W^{-D+C}.
	\ee
	Note that combining \eqref{sum_res_1_red0} and \eqref{sum_res_1_red} concludes the proof of \eqref{sum_res_1}.   
	
	To show the estimate \eqref{sum_res_1_red0}, we assume that $A=\qqq{1,k}$ without loss of generality. With the notations $\mathbf a'=(a_{1},\ldots,a_{k})$ and $\mathbf b'=(b_{k+1},\ldots,b_{\fn})$, we can bound the LHS of \eqref{sum_res_1_red0} as 
	\begin{align*}
		\sum_{\mathbf b'\in (\Zn)^{\fn-k}} \prod_{i=k+1}^\fn  \Xi^{(i)}_{a_ib_i}  \cdot \mathcal{A}_{\ba',\mathbf{b}'} &\lesssim  \frac{\left(1-s\right)^{\fn-k}}{(\ilambda^2+|1-t|)^{n-k}} \|\cal A\|_\infty \sum_{\mathbf b'} \prod_{i=k+1}^n\frac{\mathbf 1\left(|b_i-a_1|\le W^\e\ell_s \right)}{|a_i-b_i|^{d-2}+1} +  W^{-D+(\fn-k)}\\
		&\lesssim (W^{2\e})^{\fn-k}\left( \frac{\ell_s^2|1-s|}{\ilambda^2+|1-t|} \right)^{\fn-k} \|\cal A\|_\infty  + W^{-D+(\fn-k)} \\
		& \lesssim \p{W^{2\e}\frac{\ilambda^2+|1-t|}{\ilambda^2+|1-s|}}^{\fn-k} \|\cal A\|_\infty  + W^{-D+(\fn-k)} , 
	\end{align*}
	where we have used \eqref{eq:decayXi}, the decay property \eqref{deccA0} for $\cal A_{\mathbf b}$, and 
	\be\label{eq:1-sells2}(1-s)\ell_s^2\asymp \ilambda^2+|1-s|,\quad \text{for}\quad s\le 1-\ilambda^2/L^{2},\ee 
	along with the condition $(1-t)/(1-s)\ge W^{-1}$ and the bound \eqref{Xi_infint} in getting the $W^{-D+(\fn-k)}$ term. This concludes \eqref{sum_res_1_red0} for any constant $C>2(\fn-k)$.  
	For \eqref{sum_res_1_red}, with the notation $\mathbf b'=(b_{2},\ldots,b_{\fn})$, we get that 
	\begin{align}
		\sum_{\mathbf b}\prod_{i=1}^{\fn} \Xi^{(i)}_{a_ib_i} \cdot \mathcal{A}_{\mathbf b} &= \sum_{\mathbf b}\prod_{i=1}^{\fn} \Xi^{(i)}_{a_ib_i} \cdot \mathcal{A}_{\mathbf b}\cdot  \mathbf 1\left(\max_{i\ne j}|b_i-b_j|\le W^\e\ell_s \right) + \OO\left( W^{-D+\fn}\right) \nonumber\\
		&\lesssim \frac{(1-s)^{\fn-1}}{(\ilambda^2+|1-t|)^{n-1}} \|\cal A\|_\infty \sum_{b_1} \left|\Xi^{(1)}_{a_1b_1}\right| \sum_{\mathbf b'} \prod_{i=2}^n \frac{\mathbf 1\left(|b_i-b_1|\le W^\e\ell_s \right)}{|a_i-b_i|^{d-2}+1} +  W^{-D+\fn} \nonumber\\
		&\lesssim W^{2(\fn-1) \e}\frac{1-s}{1-t}\left(\frac{\ell_s^2|1-s|}{\ilambda^2+|1-t|}\right)^{\fn-1} \|\cal A\|_\infty + W^{-D+\fn} \nonumber\\
		&\lesssim  W^{2(\fn-1) \e}\frac{\ell_t^2}{\ell_s^2}\left(\frac{\ilambda^2+|1-s|}{\ilambda^2+|1-t|}\right)^{\fn} \|\cal A\|_\infty + W^{-D+\fn}, \label{sum_res_deriv_red}
	\end{align} 
	where in the first step, we use the decay property \eqref{deccA0} for $\cal A_{\mathbf b}$ and the bound \eqref{Xi_infint} (in getting the $W^{-D+n}$ term); in the second step, we apply \eqref{eq:decayXi} for $i\in\qqq{2,\fn}$; in the third step, we take the summation over $\mathbf b'$ and apply \eqref{Xi_infint}; in the last step, we use \eqref{eq:1-sells2} again.  This gives \eqref{sum_res_1_red} for any constant $C>2(\fn-1)$.

	For the estimates \eqref{sum_res_2_NAL} and \eqref{sum_res_2}, 
	we notice that when $|A| \ge 1$, \eqref{sum_res_1_red0} already gives a good enough bound. 
	Hence, we only need to focus on the case where $A=\emptyset$. 
	First, for Case I, due to \eqref{prop:ThfadC_short}, we can use the better bound \smash{$\sum_{b_1}|\Xi^{(1)}_{a_1b_1}|=\OO(1)$} in the third step of \eqref{sum_res_deriv_red}, which leads to \eqref{sum_res_2_NAL}. 
	
	Next, for the estimate \eqref{sum_res_2} in Case II, we need to show that
	\be\label{sum_res_2_red}
	\sum_{\mathbf b\in (\Zn)^\fn} \prod_{i=1}^\fn \Xi^{(i)}_{a_ib_i}  \cdot \mathcal{A}_{\boldsymbol{b}} \le W^{C_\fn\e}\p{\frac{\ilambda^2+|1-s|}{\ilambda^2+|1-t|}}^{\fn}  \|{\cal A}\|_{\infty} +W^{-D+C_\fn}.
	\ee
	It suffices to assume that $\sigma_i\ne \sigma_{i+1}$ for all $i\in \qqq{\fn}$. 
	We decompose $\Zn$ into the following two regions:
	\[S_{\mathrm{far}}:=\Big\{b\in \Zn:\min_{i=1}^n|b-a_i|>  W^{2\e}\ell_s\Big\},\quad S_{\mathrm{near}}:=\Big\{b\in \Zn:\min_{i=1}^n|b-a_i|\le W^{2\e}\ell_s\Big\}.\]
	Following a similar argument as in \eqref{sum_res_deriv_red}, and using that \smash{$\sum_{b_1\in S_{\mathrm{near}}} |\Xi^{(1)}_{a_1b_1}|\lesssim W^{4\e}(\ilambda^2+|1-s|)/(\ilambda^2+|1-t|)$} by \eqref{eq:decayXi} and \eqref{eq:1-sells2}, we obtain
	\begin{align}
		\sum_{b_1\in S_{\mathrm{near}}} \sum_{\bfb'}\prod_{i=1}^{\fn} \Xi^{(i)}_{a_ib_i} \cdot \mathcal{A}_{\mathbf b} \lesssim W^{2(\fn+1) \e} \left(\frac{\ilambda^2+|1-s|}{\ilambda^2+|1-t|}\right)^{\fn}\|\cal A\|_\infty + W^{-D+\fn}. \label{sum_res_deriv_red2}
	\end{align} 
	It remains to control the sum over $b_1\in S_{\mathrm{far}}$. In this case, we decompose $\Xi^{(i)}$ as 
	\be\nonumber \Xi^{(i)}_{a_ib_i}=\Xi^{(i)}_{a_ib_1} + \Delta \Xi^{(i)}_{a_i;b_1b_i},\quad\text{with}\quad \Delta \Xi^{(i)}_{a_i;b_1b_i}:=\Xi^{(i)}_{a_ib_i}-\Xi^{(i)}_{a_ib_1}. \ee
	Then, we expand the LHS of \eqref{sum_res_2_red} as 
	\begin{align} \label{eq:decompXii}
		\sum_{A: A\subset \qqq{2,\fn}}f(A),\quad \text{with}\quad 
		f(A):=\sum_{b_1\in S_{\mathrm{far}}}\sum_{\bfb'\in (\Zn)^{\fn-1}}\prod_{i\in A^c}  \Xi^{(i)}_{a_ib_1} \cdot \prod_{i\in A} \Delta \Xi^{(i)}_{a_i;b_1b_i}  \cdot \mathcal{A}_{\mathbf{b}}. 
	\end{align}
	By the sum-zero property \eqref{sumAzero}, the leading term with $A=\emptyset$ vanishes. For the remaining terms, we will use  \eqref{eq:decayXi} to control the factors \smash{$\Xi^{(i)}_{a_ib_1}$}, and apply \eqref{eq:decayXi} and \eqref{prop:BD1} (along with \eqref{Mbound_AO} or \eqref{Mbound_AO2} in the case of block Anderson model) to control the factors 
	\smash{$\Delta \Xi^{(i)}_{a_i;b_1b_i}$} for $|a_i-b_1|\ge W^{2\e}\ell_s$ and $|b_i-b_1|\le W^{\e}\ell_s$:
	\be\label{eq:Xibb}
	\Xi^{(i)}_{a_ib_1}\lesssim \frac{W^{-(d-1)\e}}{\ell_s^d}\left(\frac{\ilambda^2+|1-s|}{\ilambda^2+|1-t|}\right),\   \Delta \Xi^{(i)}_{a_i;b_1b_i} \lesssim  \frac{1-s}{\ilambda^2+|1-t|}\frac{|b_i-b_1|}{|a_i-b_1|^{d-1}}  \prec \frac{W^{-d\e}}{\ell_s^d}\left(\frac{\ilambda^2+|1-s|}{\ilambda^2+|1-t|}\right),
	\ee
	where we have also used $2(d-2)\ge d-1$ and \eqref{eq:1-sells2} in the derivation. 
	Without loss of generality, suppose $2\notin A$. Then, using the estimates in \eqref{eq:decayXi} and \eqref{eq:Xibb}, along with the decay property \eqref{deccA0} for $\cal A$, by a similar argument as in \eqref{sum_res_deriv_red}, we can bound $f(A)$ by 
	\begin{align}
		f(A)&\lesssim W^{\e (n-1)} \frac{(1-s)(\ilambda^2+|1-s|)^{n-2}}{(\ilambda^2+|1-t|)^{n-1}}  \|\cal A\|_\infty 
		\sum_{b_1\in S_{\mathrm{far}}} \left|\Xi^{(1)}_{a_1b_1}\right| \frac{ (W^\e\ell_s)^{d+1}}{|a_2-b_1|^{d-1}} + W^{-D+\fn}\nonumber\\
		&\lesssim  W^{(\fn+d+1) \e }\frac{(1-s)^2\ell_s^{d+1}(\ilambda^2+|1-s|)^{n-2}}{(\ilambda^2+|1-t|)^{n}}  \|\cal A\|_\infty  
		\sum_{b_1\in S_{\mathrm{far}}} \frac{\exp(-c|a_1-b_1]|/\ell_t)}{|a_1-b_1|^{d-2} \cdot |a_2-b_1|^{d-1}} + W^{-D+\fn} \nonumber\\
		&\lesssim  W^{(\fn+d+1)\e } \|\cal A\|_\infty \cdot  \frac{(1-s)^2\ell_s^{d+1}}{(W^{2\e}\ell_s)^{d-3}}\frac{(\ilambda^2+|1-s|)^{n-2}}{(\ilambda^2+|1-t|)^{n}} + W^{-D+\fn} \nonumber\\
		&\lesssim W^{(\fn+4)\e }  \p{\frac{\ilambda^2+|1-s|}{\ilambda^2+|1-t|}}^{\fn} \|\cal A\|_\infty+ W^{-D+\fn}. 
		\label{eq:bddfA}
	\end{align}
	This concludes the estimate \eqref{sum_res_2_red}, which further concludes \eqref{sum_res_2}. 
\end{proof}


\begin{proof}[\bf Proof of \Cref{lem:sum_decay_nonzero}]
	Let $\mathbf e\in \C^{L^d}$ denote the unit vector with $\mathbf e(a)\equiv L^{-d/2}$, and define the projection matrix ${\mathsf{Proj}}_{\mathbf e^{\perp}}$ on to the orthogonal complement of $\mathbf e$:
	\( {\mathsf{Proj}}_{\mathbf e^{\perp}} =I-\mathbf e\mathbf e^\top.\) 
	When $\sig_i=\sig_{i+1}$, we have
	\be\label{eq:samecolor} \|\Xi^{(i)}\|_{\infty\to\infty} \lesssim t-s,\quad \text{and}\quad \|{\mathsf{Proj}}_{\mathbf e^{\perp}}\cdot  \Xi^{(i)}\|_{\infty\to\infty} \lesssim t-s,\ee
	by using the definition of $\Xi^{(i)}$ in \eqref{eq:decompUalt} and the estimate \eqref{prop:ThfadC_short}. 
	On the other hand, when $\sig_i\ne \sig_{i+1}$, we have  \[{\mathsf{Proj}}_{\mathbf e^{\perp}}\cdot  \Xi^{(i)}= (t-s) {\mathsf{Proj}}_{\mathbf e^{\perp}}\cdot M^{(+,-)}S^{\LK}\Theta_t^{(+,-)} = (t-s)M^{(+,-)}S^{\LK} \zTheta_t^{(+,-)}.\] 
Then, using \eqref{prop:ThfadC0}  (along with \eqref{eq:WardM} in the case of block Anderson model), we get 
	\be\label{eq:diffcolor}
	\|{\mathsf{Proj}}_{\mathbf e^{\perp}}\cdot  \Xi^{(i)}\|_{\infty\to\infty} \lesssim (t-s) \max_{a}\sum_{b}\big|\zTheta_{t,ab}^{(+,-)}\big| \prec (1-s)\cdot \ilambda^{-2}L^2 \le 1,
	\ee
	where we use $1-s\le \ilambda^2/L^{2}$ in the last step.
	Plugging \eqref{eq:samecolor} and \eqref{eq:diffcolor} into \eqref{eq:decompUalt}, we obtain that 
		\[ \left\|{\mathsf{Proj}}_{\mathbf e^{\perp}}\cdot  \frac{1 - s \cdot M^{(\sig_i,\sig_{i+1})}S^{(\sB)}}{1 - t \cdot M^{(\sig_i,\sig_{i+1})}S^{(\sB)}}\right\|_{\infty\to\infty}\lesssim 1\]
	for all $i\in \qqq{n}$. With this bound, we readily conclude the proof of \eqref{sum_res_Ndecay_nonzero}. 
\end{proof}

\subsection{Proof of \Cref{lem:propT}}\label{sec:pfpropT}
First, we consider the case $1-t \le  1-u \le \ilambda^2/L^{2}$, where we have $\ell_t=\ell_u=L$. In this case, the exponential factor is of order 1, and we have 
\begin{align}
	{\cal T}_{u}(|a-c|)\cdot {\cal T}_{t}(|c-b|) &\lesssim \sum_{c} \left(\frac{\ilambda^{-2}}{|a-c|^{d-2}+1}+\frac{1}{L^d|1-u|}\right)\left(\frac{\ilambda^{-2}}{|c-b|^{d-2}+1}+\frac{1}{L^{d}|1-t|}\right) \nonumber
	\\\nonumber
	&\lesssim  \frac{\ilambda^{-4}L^2}{|a-b|^{d-2}+1} + \frac{\ilambda^{-2}}{L^{d-2}|1-t|}+\frac{1}{1-u}\frac{1}{L^d |1-t|} 
	\\
	&\lesssim   \frac{1}{1-u}\left(
	\frac{\ilambda^{-2}}{|a-b|^{d-2}+1}+\frac{1}{L^d |1-t|}\right)
	\lesssim   \frac{1}{1-u} {\cal T}_{t}(|a-b|),\label{eq:smalletacase}
\end{align}   
where, in the third and fourth steps, we use that $1-u\le \ilambda^2/L^2$. Next, we consider the case $1-u\ge 1-t \ge \ilambda^2/L^{2}$, where we have $\ell_u \le \ell_t=\max(\ilambda(1-t)^{-1/2},1)\le L$. In this case, the term $(L^d|1-t|)^{-1}$ can always be neglected in the function $\cT_t$. Then, we get that
\begin{align}
	&~	 {{\cal T}_{u}(|a-c|)\cdot {\cal T}_{t}(|c-b|)}\big/{{\cal T}_{t}(|a-b|)} \nonumber\\
	\lesssim &~ \frac{1}{\ilambda^2+|1-u|}\Bigg\{1 + \sum_{c\notin \{a,b\}}  \frac{(|a-b|+1)^{d-2}}{|a-c|^{d-2}|c-b|^{d-2}  }\exp \left(-\frac{\sqrt{|a-c|}+
		(\ell_u/\ell_t)^{1/2}(\sqrt{|c-b|}-\sqrt{|a-b|})}{\ell_u^{1/2}} \right)\Bigg\} \nonumber\\
	\lesssim &~ {\ell_u^2}/\p{\ilambda^2+|1-u|} \lesssim |1-u|^{-1},\label{eq:largeetacase}
\end{align}
where, in the second step, we use the following basic calculus fact for $d\ge 3$ and any $0\le \e \le 1$:
$$
\max_{a\in \mathbb R^d} \int_{x\in \mathbb R^d}\frac{|a|^{d-2}}{|a-x|^{d-2}\cdot |x|^{d-2}}\exp\left(-\sqrt{|a-x|}-\e \p{\sqrt{|x|}-\sqrt{|a|}} \right)\dd x\le C_d$$
for a constant $C_d>0$. Combining the two cases \eqref{eq:smalletacase} and \eqref{eq:largeetacase} completes the proof of \eqref{TTT2}. 

\subsection{Proof of \Cref{claim:TTk}}\label{pf:claim_TTk}
To prove the estimate \eqref{eq:key_T_reudce}, we begin by partitioning the summation region over $[\al]$ into $2^{2k}$ subregions according to whether each $|[x_i]-[\al]|$ or $|[y_i]-[\al]|$ is larger than $\ell$ or not. Namely, we define
\[ \bD_{\le \ell,\bsig}:=\left\{[\al]:  |[w_{i}]-[a]|\le \ell \ \text{if}\ \sig_i=0, \text{ and } |[w_{i}]-[a]|>\ell \ \text{if}\ \sig_i=1, \ \forall i\in\qqq{2k} \right\},\]
where $\bsig=(\sig_1,\ldots, \sig_{2k}) \in \{0,1\}^{2k}$, and $[w_{2i-1}]=[x_i]$ and $[w_{2i}]=[y_i]$ for $i\in\qqq{k}$. It therefore suffices to show that, for each fixed $\bsig \in \{0,1\}^{2k}$,
\begin{align}\label{eq:key_T_reudce_pf}
\sum_{[\al]\in \bD_{\le \ell,\bsig}} \prod_{i=1}^k \br{\sT_{t}(|[x_i]-[\al]|\wedge \ell) \cdot \sT_{t}(|[y_i]-[\al]|\wedge \ell) } \prec (W^d\eta_t)^{-1} \Psi_t^{k-2}  \prod_{i=1}^k \sT_{t}(|[x_i]-[y_i]|\wedge \ell) .
\end{align} 
We now analyze three cases, depending on how many paths consist entirely of “short edges”, i.e., edges of length $\le \ell$.
 
\medskip
\noindent{\bf 1.} Assume that there are at least two indices $i$ such that $|[x_i]-[\al]|\vee|[y_i]-[\al]|\le \ell$. Without loss of generality, suppose this condition holds for $1\le i \le r$, and $|[x_i]-[\al]|\vee|[y_i]-[\al]|> \ell$ for $r+1\le i \le k$, where $2\le r\le k$. For $1\le i\le r$, we have  
\be\label{eq:TtTt}\sT_{t}(|[x_i]-[\al]|)\sT_{t}(|[y_i]-[\al]|)\lesssim \sT_{t}(|[x_i]-[y_i]|) 
\cdot 
\frac{(W^{-d}B_{t,0})^{1/2}}{(|[x_i]-[\al]| \wedge |[y_i]-[\al]|+1)^{(d-2)/{2}}}, \ee
and for $r+1\le i \le k$, we have
\be\label{eq:KtKt}\sT_{t}(|[x_i]-[\al]|)\sT_{t}(|[y_i]-[\al]|)\lesssim  \sT_{t}(\ell) \cdot (W^{-d}B_{t,0})^{1/2} . \ee
Using these two estimates, we can bound the LHS of \eqref{eq:key_T_reudce_pf} by
\begin{align*}
&~(W^{-d}B_{t,0})^{k/2} \prod_{i=1}^{k} \sT_{t}(|[x_i]-[y_i]|\wedge \ell) \cdot \sum_{[\al]\in \bD_{\le \ell,\bsig}} \prod_{i=1}^r (|[x_i]-[\al]| \wedge |[y_i]-[\al]|+1)^{-\frac{d-2}{2}} \\
\lesssim &~\ell^2 (W^{-d}B_{t,0})^{k/2} \prod_{i=1}^{k} \sT_{t}(|[x_i]-[y_i]|\wedge \ell) \prec (W^d\eta_t)^{-1} \Psi_t^{k-2}  \prod_{i=1}^{k} \sT_{t}(|[x_i]-[y_i]|\wedge \ell) ,
\end{align*}
where in the first step, we use
\[ \sum_{[\al]\in \bD_{\le \ell,\bsig}} \prod_{i=1}^r (|[x_i]-[\al]| \wedge |[y_i]-[\al]|+1)^{-\frac{d-2}{2}} \lesssim  \sum_{i=1}^r \sum_{[\al]\in \bD_{\le \ell}}(|[x_i]-[\al]| \wedge |[y_i]-[\al]|+1)^{-(d-2)}\lesssim \ell^2,  \]
and in the second step, we use the facts that $\ell\le (\log W)^{10}\ell_t$ and $\ell_t^2B_{t,0}\lesssim |1-t|^{-1} \lesssim\eta_t^{-1}$ when $1-t\ge \ilambda^2/L^{2}$.

\medskip
\noindent{\bf 2.} Suppose there is exactly one index $i$ such that $|[x_i]-[\al]|\vee|[y_i]-[\al]|\le \ell$.  
Without loss of generality, assume that $|[x_1]-[\al]|\vee|[y_1]-[\al]|\le \ell$, and $|[y_2]-[\al]|\ge \ell$. Then, applying \eqref{eq:TtTt} for $i=1$, and \eqref{eq:KtKt} for $3\le i \le k$, we can bound the LHS of \eqref{eq:key_T_reudce_pf} as
\begin{align*}
&~(W^{-d}B_{t,0})^{k/2} \prod_{i=1}^{k} \sT_{t}(|[x_i]-[y_i]|\wedge \ell) \cdot \sum_{[\al]\in \bD_{\le \ell,\bsig}}  (|[x_1]-[\al]| \wedge |[y_1]-[\al]|+1)^{-\frac{d-2}{2}}(|[x_2]-[\al]| \wedge \ell +1)^{-\frac{d-2}{2}} \\
\lesssim &~\ell^2 (W^{-d}B_{t,0})^{k/2} \prod_{i=1}^{k} \sT_{t}(|[x_i]-[y_i]|\wedge \ell) \prec (W^{d}\eta_t)^{-1} \Psi_t^{k-2} \prod_{i=1}^{k} \sT_{t}(|[x_i]-[y_i]|\wedge \ell) .
\end{align*}

\medskip
\noindent{\bf 3.} Finally, assume that for all $i\in\qqq{k},$ we have $|[x_i]-[\al]|\vee|[y_i]-[\al]|> \ell$. Without loss of generality, suppose $|[y_1]-[\al]|\ge \ell$ and $|[y_2]-[\al]|\ge \ell$. Then, applying \eqref{eq:KtKt} for $3\le i \le k$, we can bound the LHS of \eqref{eq:key_T_reudce_pf} by 
\begin{align*}
&~(W^{-d}B_{t,0})^{k/2} \prod_{i=1}^{k} \sT_{t}(|[x_i]-[y_i]|\wedge \ell) \cdot \sum_{[\al]\in \bD_{\le \ell,\bsig}}  (|[x_1]-[\al]| \wedge \ell+1)^{-\frac{d-2}{2}}(|[x_2]-[\al]| \wedge \ell +1)^{-\frac{d-2}{2}} \\
\lesssim &~\ell^2 (W^{-d}B_{t,0})^{k/2} \prod_{i=1}^{k} \sT_{t}(|[x_i]-[y_i]|\wedge \ell) \prec (W^{d}\eta_t)^{-1} \Psi_t^{k-2} \prod_{i=1}^{k} \sT_{t}(|[x_i]-[y_i]|\wedge \ell) .
\end{align*}

\medskip

By combining all three cases, we conclude that \eqref{eq:key_T_reudce_pf} holds, which, in turn, implies \eqref{eq:key_T_reudce}.

\subsection{Basic properties of  \texorpdfstring{$\cal K$}{K}-loops}\label{Sec:CalK}

In this subsection, we collect several basic properties of the ${\cal K}$-loops used in the analysis of the loop hierarchy and apply them to establish the $\cK$-loop bounds stated in \Cref{ML:Kbound,lem_wardineq_K}. The results presented here are higher-dimensional analogues (in dimensions $d \ge 3$) of those in \cite[Section 3]{YY_25} and \cite[Section 4]{RBSO1D}. We begin by introducing a dimension-independent \emph{tree representation formula} for $\cal K$-loops, first discovered in \cite{YY_25} for random band matrices and later extended to the block Anderson model in \cite{RBSO1D}. This tree representation is constructed using the notion of \emph{canonical partitions of polygons}. Roughly speaking, a canonical partition of an oriented polygon $\mathcal{P}_{\ba}$ is a partition in which each edge of the polygon is in one-to-one correspondence with each region in the partition. 

\begin{definition}[Canonical partitions]\label{def:canpnical_part}
Fix $n\ge 3$ and let $\cal P_{\ba}$ be an oriented polygon with vertices $\ba=(a_1,a_2, \ldots ,a_\fn)$ arranged in a (counterclockwise) cyclic order,  where we adopt the cyclic convention that $a_i=a_j$ if and only if $i=j\mod n$. Let $(a_{k-1},a_k)$ denote the $k$-th side of $\cal P_{\ba}$. A planar partition of the polygonal domain enclosed by $\cal P_{\ba}$ is called {\bf canonical} if the following properties hold:
\begin{itemize}
	\item Every sub-region in the partition is also a polygonal domain. 
	\item There is a one-to-one correspondence between the edges of the polygon and the sub-regions, where every side $(a_{k-1},a_k)$ belongs to exactly one sub-region, denoted by $R_k$, and each sub-region contains exactly one side of $\cal P_{\ba}$. 
	
	\item Every vertex $a_k$ of $\cal P_{\ba}$ belongs to exactly two regions, $R_k$ and $R_{k+1}$ (with the convention $R_{\fn+1}=R_1$). 
	
\end{itemize}
Note that given a canonical partition, by removing the $\fn$ sides of the polygon ${\cal P}_{\ba}$, the remaining interior edges form a tree, with the leaves being the vertices of ${\cal P}_{\ba}$. Following the definitions in \cite{YY_25}, we define the equivalence classes of all such trees under graph isomorphism, and denote the collection of equivalence classes by $\TSP({\cal P}_{\ba})$. 
We will consider each element of $\TSP({\cal P}_{\ba})$ as an abstract tree structure rather than as an equivalence class, and call it a \emph{canonical tree partition}.
\end{definition}

In a canonical tree partition $\Gamma \in \TSP({\cal P}_{\ba})$, we call an edge that contains exactly one external vertex $a_k$ an \emph{external edge}, and an edge connecting two internal vertices an \emph{internal edge}. Two regions $R_k$ and $R_l$ are said to be \emph{neighbors} if they share a common side, which may be either an external or an internal edge. In the case of an external edge, we necessarily have $k-l = \pm 1 \pmod{\fn}$, and we refer to $R_k$ and $R_l$ as \emph{trivial neighbors}; otherwise, they are called \emph{nontrivial neighbors}.
Given $\bsig \in \{+,-\}^n$, we assign charges to the subregions as follows: each subregion $R_k$ carries the charge of the edge $(a_{k-1},a_k)$, which is given by $\sig_k$. An illustration is provided in the left panel of \Cref{example}, which shows a canonical tree partition $\Gamma \in \TSP({\cal P}_{\ba})$ of a polygon with six vertices, where $R_4$ and $R_6$ form a pair of nontrivial neighbors. We note that the figures in this section are all reproduced from \cite{RBSO1D}.

In the context of random band matrices, we assign a value to $\Gamma$ according to the following rule.

\begin{definition}
Given any $t\in [0,1)$ and $\bsig\in\{+,-\}^\fn$, we define the values of the edges in $\Gamma$ as follows:
\begin{enumerate}
	\item If $e = \p{a_k, b}$ is an external edge lying between regions $R_k$ and $R_{k+1}$, then we define 
	\begin{equation}\label{f-external}
		f_{t,\bsig}\p{e} \coloneqq \Theta^{(\sig_k,\sig_{k+1})}_{t}(a_k,b).
	\end{equation}
	\item If $e = \p{b_1, b_2}$ is an internal edge lying between regions $R_k$ and $R_{l}$, then 
	\begin{equation}\label{f-internal}
		\begin{aligned}
			f_{t,\bsig}\p{e}
			&\coloneqq \big(\Theta^{(\sig_k,\sig_l)}_{t}-I\big) \p{b_1, b_2} = m(\sig_k)m(\sig_{l})\cdot \big(tS^{\LK} \Theta^{(\sig_k,\sig_l)}_{t}\big)\p{b_1,b_2}.
		\end{aligned}
	\end{equation}
\end{enumerate}
Then, we assign a value $\Gamma^{(\fn)}_{t,\bsig,\ba}$ to  $\Gamma$ as:
\begin{equation}\label{M-graph-value-unsummed}
	\Gamma^{(\fn)}_{t,\bsig,\ba} \coloneqq \p{\prod_{i=1}^\fn m(\sig_i)} \cdot \sum_{\mathbf b} \prod_{e} f_{t,\bsig}\p{e}    \, ,
\end{equation}
where $\mathbf b=(b_1,\ldots,b_{r})$ denotes the internal vertices in $\Gamma$ and $e$ denotes all the edges in $\Gamma$. 
\end{definition}

With these definitions, we recall the tree representation formula of the $\mathcal{K}$-loops in Lemma 3.4 of \cite{YY_25}. Recall that the formulas for the $\cal K$-loops of length 2 and 3 have been given in \eqref{Kn2sol} and \eqref{Kn3sol}.

\begin{lemma}[Lemma 3.4 of \cite{YY_25}]\label{tree-representation}
In the setting of random band matrices, for any $\fn\ge 4$, $t\in[0,1)$, $\bsig\in \{+,-\}^\fn$, and $\ba\in (\Zn)^\fn$, we have the following representation formula for $\cal K$-loops:
\begin{equation}\label{eq_Ktree}
	\cK_{t,\bsig,\ba}^{(\fn)}
	=W^{-d(\fn-1)} \sum_{\Gamma \in \TSP\p{\mathcal{P}_{\ba}}} \Gamma^{(\fn)}_{t,\bsig,\ba}.
\end{equation}
\end{lemma}

When extending \eqref{eq_Ktree} to the block Anderson model, certain factors of $m$ must be replaced by entries of the matrix $M$. A canonical procedure for this replacement is described in \cite[Section 4]{RBSO1D}. To formulate it, we first extend \Cref{def:canpnical_part} to include loops that contain \emph{$M$-edges}. As the name suggests, these edges correspond to entries of $M$, while the remaining edges in our graphs are left \emph{unlabeled} (i.e., without label $M$): external edges represent entries of $\Theta_t$, and internal edges represent entries of $tS^{\LK}\Theta_t$.

\begin{definition}[Canonical partitions with $M$-loops]\label{m-loop-tsp}
Let $\Gamma \in \TSP\p{\mathcal{P}_{\ba}}$ be a canonical tree partition of the oriented polygon $\mathcal{P}_{\ba}$, and denote the internal vertices of $\Gamma$ by $\mathbf b = \p{b_1, \ldots, b_r}$. We define the graph {$\Gamma_{M}$} by replacing each $b_i$ with an $M$-loop in the following way.
\begin{enumerate}
	\item Consider the subgraph $(V^{\p{b_i}}, E^{\p{b_i}})$ of all vertices in $\Gamma$ connected to $b_i$. More precisely, we let
	\begin{equation}\label{b_i-subgraph}
		V^{\p{b_i}} \coloneqq \{b_i,c_1, \ldots, c_{k_i}\},
		\quad \text{and}\quad E^{\p{b_i}} = \{\p{c_1, b_i}, \ldots, \p{c_{k_i}, b_i}\}
	\end{equation}
	denote the subsets of all vertices (including $b_i$) and edges connected to $b_i$. Reordering the $c_k$'s if necessary, we can ensure that $\p{c_1, \ldots, c_{k_i}}$ form a loop without any crossing edges and with vertices arranged in counterclockwise order.
	
	\item Next, we construct a new graph $(\widetilde{V}^{\p{b_i}}, \widetilde{E}^{\p{b_i}})$ with vertices and edges
	\begin{align}\label{eq:M-loopexample}
		\qquad      \widetilde{V}^{\p{b_i}} &= \{b_{i,1}, \ldots, b_{i,k_i}, c_1, \ldots, c_{k_i}\},\ \
		\widetilde{E}^{\p{b_i}}
		=\{\p{c_j, b_{i,j}} :  j \in \qqq{k_i}\} \cup \{\p{b_{i,j}, b_{i,j+1}; M}: j \in \qqq{k_i}\}, 
	\end{align}
	where $e = \p{e_i, e_f; M}$ refers to an edge labeled with $M$, and we adopt the cyclic convention that $b_{i,k_i+1}=b_{i,1}$. We will call $\p{e_i, e_f; M}$ as an \emph{$M$-edge}. 
	
	\item Lastly, we replace the subgraph $(V^{\p{b_i}}, E^{\p{b_i}})$ with $(\widetilde{V}^{\p{b_i}}, \widetilde{E}^{\p{i}})$.
\end{enumerate}

\begin{figure}[h]\label{b_i-loop-replacement}
	\centering
	\scalebox{0.9}{
		\begin{tikzpicture}
			\coordinate (bi) at (0, 0);
			
			\fill (bi) circle (1pt) node[left=4pt]{$b_i$};
			\foreach \i/\t in {1/72, 2/144, 3/216, 4/288, 5/360} {
				\fill (\t:2) circle (1pt);
				\draw (bi) -- (\t:2);
				\node at (\t+72:2.4) {$c_{\i}$};
			}
		\end{tikzpicture}
		$\quad$
		\begin{tikzpicture}
			\node at (-3, 0) {$\to$};
			\node at (0, 0) {$M$};
			\foreach \i/\t in {1/72, 2/144, 3/216, 4/288, 5/360} {
				\fill (\t:2) circle (1pt);
				\fill (\t:1) circle (1pt);
				\draw (\t:1) -- (\t:2);
				\draw (\t:1) -- (\t+72:1);
				\node at (\t+72:2.4) {$c_{\i}$};
				\node at (\t+92:1.3) {$b_{i,\i}$};
			}
	\end{tikzpicture}}
	\caption{Replacing $b_i$ with an $M$-loop with 5 sides.}\label{Fig:Mloop}
\end{figure}

In \Cref{Fig:Mloop}, we illustrate the above procedure for an example with $k_i=5$. Repeating these steps for each internal vertex $b_i$, $i\in\qqq{r}$, we get a graph \smash{$\Gamma_{M}$}, which we will refer to as the \emph{$M$-graph corresponding to $\Gamma$}. Note that the order in which we replace $b_i$'s by $M$-loops does not matter, and every boundary edge $\p{a_{k-1}, a_k}$ still belongs to exactly one polygonal region in the $M$-graph \smash{$\Gamma_{M}$}. 
With a slight abuse of notation, we still use $R_k$ to denote the sub-region containing $\p{a_{k-1}, a_k}$ in \smash{$\Gamma_{M}$}, and assign the charge $\sig_k$ of $\p{a_{k-1}, a_k}$ to $R_k$. 

\end{definition}

We refer readers to \Cref{example} for an example of a canonical tree partition $\Gamma$ and its $M$-graph $\Gamma_{M}$. 
\begin{figure}[h]
\centering
\scalebox{0.9}{
	\begin{tikzpicture}
		\coordinate (b1) at (-1, 0);
		\coordinate (b2) at (1, 0);
		
		\fill (b1) circle (1pt);
		\fill (b2) circle (1pt);
		
		\foreach \i/\t in {1/60, 2/120, 3/180, 4/240, 5/300, 6/360} {
			\coordinate (a\i) at (\t+90:2.5);
			\fill (a\i) circle (1pt);
			\draw (a\i) [dashed]-- (\t+150:2.5);
			\node at (\t+90:2.85) {$a_{\i}$};
			\node at (\t+60:2.5) {$\sigma_{\i}$};
		}
		
		\draw (a6) -- (b1);
		\draw (a1) -- (b1);
		\draw (a2) -- (b1);
		\draw (a3) -- (b1);
		
		\draw (a4) -- (b2);
		\draw (a5) -- (b2);
		
		\draw (b1) -- (b2);
		
		\node at (0, -3.5) {$\Gamma$};
	\end{tikzpicture}
	\hspace{1in}
	\begin{tikzpicture}
		\foreach \i/\t in {1/60, 2/120, 3/180, 4/240, 5/300, 6/360} {
			\coordinate (a\i) at (\t+90:2.5);
			\fill (a\i) circle (1pt);
			\draw (a\i) [dashed]-- (\t+150:2.5);
			\node at (\t+90:2.85) {$a_{\i}$};
			\node at (\t+60:2.5) {$\sigma_{\i}$};
		}
		
		\coordinate (c11) at ($(b1)!0.45!(a6)$);
		\coordinate (c12) at ($(b1)!0.45!(a1)$);
		\coordinate (c13) at ($(b1)!0.45!(a2)$);
		\coordinate (c14) at ($(b1)!0.45!(a3)$);
		\coordinate (c15) at ($(b1)!0.45!(b2)$);
		
		\coordinate (c21) at ($(b2)!0.25!(b1)$);
		\coordinate (c22) at ($(b2)!0.45!(a4)$);
		\coordinate (c23) at ($(b2)!0.45!(a5)$);
		
		\foreach \i/\nc in {1/5, 2/3} {
			\foreach \j in {1, 2, ..., \nc} {
				\fill (c\i\j) circle (1pt);
			}
		}
		
		\foreach \i/\nc in {1/5, 2/3} {
			\draw (c\i1)
			\foreach \j in {1, ..., \nc} {
				-- (c\i\j)
			}
			-- cycle;
		}
		
		\draw (c11) -- (a6);
		\draw (c12) -- (a1);
		\draw (c13) -- (a2);
		\draw (c14) -- (a3);
		
		\draw (c21) -- (c15);
		
		\draw (c22) -- (a4);
		\draw (c23) -- (a5);
		
		\node at ($0.2*(c11) + 0.2*(c12) + 0.2*(c13) + 0.2*(c14) + 0.2*(c15)$) {$M$};
		\node at ($0.333*(c21) + 0.333*(c22) + 0.333*(c23)$) {$M$};
		
		\node at (0, -3.5) {$\Gamma_{{M}}$};
\end{tikzpicture}}
\caption{Example of $\Gamma \in \TSP\protect\p{\mathcal{P}_{\ba}}$ and its corresponding $M$-graph $\Gamma_{M}$.}\label{example}
\end{figure}

\begin{definition}\label{M-graph-value-definition}
Given an $M$-graph \smash{$\Gamma_{M}$}, we assign a value to $\Gamma$ according to the following rule. For any $t\in[0,1)$ and $\bsig\in\{+,-\}^\fn$, we define the values of the edges in $\Gamma_{M}$ as follows. 
\begin{enumerate}
	\item If $e = \p{a_k, b}$ is an external edge lying between regions $R_k$ and $R_{k+1}$, then we define
	\begin{equation}\label{f-external2}
		g_{t,\bsig}\p{e} \coloneqq \Theta^{\p{\sig_{k}, \sig_{k+1}}}_{t}(a_k,b) \, .
	\end{equation}
	
	\item If $e = \p{b_1, b_2}$ is an internal unlabeled edge lying between regions $R_k$ and $R_{l}$, then we define
	\begin{equation}\label{f-internal2}
		g_{t,\bsig}\p{e}
		\coloneqq \big(tS^{\LK}\Theta^{(\sig_k,\sig_{l})}_{t}\big)\p{b_1,b_2} .
	\end{equation}
	
	\item Corresponding to each loop of $M$-edges as in \eqref{eq:M-loopexample}, we its value as 
	\begin{equation}\label{eq:Mloop_defgen}
		\cal F\p{b_i} \equiv W^{(k_i-1)d} \cal M_{\bsig(b_i),\ba(b_i)}^{(k_i)}, 
	\end{equation}
	where we recall the $M$-loop defined in \eqref{eq:KMloop}. 
	Here, the charges $\sig_j(b_i)$ and vertices $a_j(b_i)$ are fixed according to the following rule: for each $j \in \qqq{k_i}$, the edge $\p{b_{i,j-1} \to b_{i,j}}$ (with the cyclic convention that $b_{i,0}=b_{i,k_i}$) belongs to exactly one region $R_{k_j}$. Then, we set ${\sigma}_j{\p{b_i}} = {\sigma}_{k_j}$ and $a_j{\p{b_i}} = b_{i,j}$. 
	
\end{enumerate}
Now, we assign a value $\Gamma^{(\fn)}_{M;t,\bsig,\ba}(z)$ to $\Gamma_M$ as:
\begin{equation}\label{M-graph-value-unsummed2}
	\Gamma^{(\fn)}_{M;t,\bsig,\ba}(z) \coloneqq \sum_{i=1}^r \sum_{j=1}^{k_i} \sum_{b_{i,j} \in \Zn}  \prod_{\text{unlabeled }e} g_{t,\bsig}(e) \cdot \prod_{i=1}^r \cal F(b_i) \,  ,
\end{equation}
where $b_{i,j}$'s denote the internal vertices in $\Gamma_M$ and $e$ denotes all unlabeled (i.e., non-$M$) edges.  
\end{definition}

It is straightforward to verify that the definition in \eqref{M-graph-value-unsummed2} is consistent with \eqref{M-graph-value-unsummed} in the case of random band matrices, where $M(\sig) = m(\sig) I$. Furthermore, in the block Anderson model, the tree representation formula \eqref{eq_Ktree} extends as follows.

\begin{lemma}[Lemma 4.16 of \cite{RBSO1D}]\label{tree-representation_BA}
In the setting of the block Anderson model, for any $\fn\ge 4$, $t\in[0,1)$, $\bsig\in \{+,-\}^\fn$, and $\ba\in (\Zn)^\fn$, we have the following representation formula for $\cal K$-loops:
\begin{equation}\label{eq:tree_rep2}
	\cK_{t,\bsig,\ba}^{(\fn)}
	=W^{-d(\fn-1)} \sum_{\Gamma \in \TSP\p{\mathcal{P}_{\ba}}} \Gamma^{(\fn)}_{M;t,\bsig,\ba}.
\end{equation}
\end{lemma}

Next, we recall the molecule structure of $\mathcal{K}^{(\fn)}_{t,\boldsymbol{\sigma}, \mathbf{a}}$ given in Section 3.3 of \cite{YY_25}. We split the $M$-graphs $\Gamma_M$ with $\Gamma\in \TSP(\mathcal{P}_{\mathbf{a}})$ according to which edges of the tree are ``long''---we refer to a \smash{$\Theta_t^{(\sig_1,\sig_2)}$}-edge as a \emph{long edge} if $\sig_1\ne \sig_2$, and a \emph{short edge} otherwise. We adopt this terminology because, according to \eqref{prop:ThfadC} and \eqref{prop:ThfadC_short}, the short edge decays on a scale of order 1, which is much shorter than the decay scale $\ell_t$ for a long edge. Given any $\bsig \in \{+, -\}^{n}$, define the subset of long internal edges (i.e., the boundary between two nontrivial neighbors of different charges) as 
\[
{\cal F}_{\mathrm{long}}(\Gamma, \bsig) := \left\{\{k,l\} \in \Z^{\mathrm{off}}_\fn: R_k\cap R_l\neq \emptyset ,\ \sigma_k\ne \sigma_l \right\},
\]
where we define the subset 
\[
\Z^{\mathrm{off}}_\fn :=\left\{\{k, \ell\} | 1 \leq k < \ell \leq \fn,\, k - \ell \pmod \fn \notin \{1,-1\}\right\}.
\]
Given any subset \(\pi \subset \mathbb{Z}_\fn^{\mathrm{off}}\), we use \(\TSP(\mathcal{P}_{\ba}, \boldsymbol{\sigma}, \pi) := \big\{\Gamma \in \TSP(\mathcal{P}_{\ba}) : {\cal F}_{\text{long}}(\Gamma, \boldsymbol{\sigma}) = \pi\big\}\) to represent the subset of $\Gamma$ such that \(\pi\) labels the pairs of all non-trivial neighbors in $\Gamma$ (note $\pi$ can be $\emptyset$). Then, we define 
\begin{align}\label{eq:defKpi}
\cK^{\p{\pi}}\p{t,\bsig,\ba}
\coloneqq W^{-d(\fn-1)}\sum_{\Gamma \in \TSP\p{\mathcal{P}_{\ba}, \bsig, \pi}} \Gamma^{\p{\fn}}_{M;t,\bsig,\ba}  .
\end{align}
Note that $\mathcal{K}^{(n)}_{t,\boldsymbol{\sigma}, \mathbf{a}}$ can be decomposed as 
\begin{equation}\label{eq_K-Kpi}
\mathcal{K}^{(n)}_{t, \boldsymbol{\sigma}, \mathbf{a}} = W^{-d(n-1)}\sum_{\pi\subset \mathbb{Z}_n^{\rm{off}}} \mathcal{K}^{(\pi)}(t, \boldsymbol{\sigma}, \mathbf{a}).
\end{equation}
Moreover, we have the following molecule decomposition of $\mathcal{K}^{(\pi)}$ given in equation (3.53) of \cite{YY_25}:
\begin{equation}\label{eq:molecule-Kpi}
\mathcal{K}^{(\pi)}(t, \boldsymbol{\sigma}, \mathbf{a}) = \sum_{\mathbf{b},\mathbf{c}} \prod_{k=1}^n \Theta^{(\sig_k,\sig_{k+1})}_{t,a_k b_k} \cdot \prod_{k=1}^{r-1} \left(tS^{(\sB)} \Theta_t^{(+,-)}\right)_{c_{2k-1} c_{2k}} \cdot \prod_{k=1}^r \Sigma^{(\pi)}(t, \boldsymbol{\sigma}^{(k)}, \mathbf{b}^{(k)}).
\end{equation}
Here, each term $\Sigma^{(\pi)}(t, \boldsymbol{\sigma}^{(k)}, \mathbf{b}^{(k)})$ represents a \emph{molecule}, defined as a maximal subgraph consisting solely of $M$-loops and \emph{short (unlabeled) internal edges}.\footnote{
With a slight abuse of notation, we again use the term ``molecule” here. This definition is in the same spirit as that in \Cref{def_poly}, although the graph considered here is different from the one in \Cref{def_graph1}.
} 
In other words, if each molecule is collapsed into a single vertex, the resulting quotient graph contains only long internal edges connecting different molecules, along with external edges, which may be either short or long. 
In \eqref{eq:molecule-Kpi}, we assume there are $r$ molecules in total. The terms \smash{$tS^{(\sB)} \Theta_t^{(+,-)}$} correspond to the long internal edges $(c_{2k-1},c_{2k})$ between molecules, while the terms \smash{$\Theta^{(\sig_k,\sig_{k+1})}_{t,a_k b_k}$} correspond to the external edges $(a_k,b_k)$. The vectors $\mathbf b=(b_1,\ldots, b_n)$ and $\mathbf c=(c_1, c_2,\ldots, c_{2r-1},c_{2r})$ denote the internal vertices that serve as endpoints of these external and long internal edges, respectively. Furthermore, $\boldsymbol{\sigma}^{(k)}$ and $ \mathbf{b}^{(k)}$ specify the charges and distinguished vertices associated with the $k$-th molecule. All other internal vertices within the molecules are implicitly summed over.

Using the estimate (\ref{prop:ThfadC_short}), we can derive the following exponential decay estimate on ``pure loops" where all charges are identical.

\begin{lemma}\label{lem_pureloop} 
For $\bsig \in \{+, -\}^n$ with $\sigma_1 = \sigma_2 = \cdots = \sigma_n$, there exist constants $c_n,C_n>0$ such that 
\begin{equation}\label{res_pureKes}
	\left|\mathcal{K}^{(n)}_{t, \boldsymbol{\sigma}, \mathbf{a}}\right|\le C_n W^{-d(n-1)} \exp\left(-c_n\max_{i,j}|a_i-a_j|\right).
\end{equation} 
\end{lemma}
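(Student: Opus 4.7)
The plan is to combine the tree representation of Lemma \ref{tree-representation} with the strong exponential decay \eqref{prop:ThfadC_short}, which applies precisely to propagators of the form $\Theta^{(\sB)}_{tm^2}$ and $\Theta^{(\sB)}_{t\bar m^2}$. When $\sigma_1 = \cdots = \sigma_n = \sigma$, one has $m_k m_{l} = m(\sigma)^2 \in \{m^2, \bar m^2\}$ for every $k, l$, so \emph{every} edge weight $f_{t,\bsig}(e)$ in Definition \ref{M-graph-value-definition} involves either $\Theta^{(\sB)}_{tm(\sigma)^2}$ (for an external edge) or $\Theta^{(\sB)}_{tm(\sigma)^2} - I$ (for an internal edge). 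By \eqref{prop:ThfadC_short}, both kernels satisfy the uniform exponential bound $|f_{t,\bsig}(e)(x,y)| \le C_\kappa \exp(-c_\kappa |x-y|)$, with constants depending only on $\kappa$; the identity subtraction only modifies the diagonal entry and can be absorbed into $C_\kappa$. Since $|\TSP(\mathcal{P}_{\ba})|$ is a purely combinatorial constant depending only on $n$, by \eqref{eq_Ktree} it suffices to establish
\[
|\Gamma^{(n)}_{t, \bsig, \ba}| \le \widetilde C_n \exp(-\widetilde c_n D), \qquad D := \max_{i,j} |a_i - a_j|,
\]
for each $\Gamma \in \TSP(\mathcal{P}_{\ba})$.

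The key step in proving this tree-integral estimate is to select $i^*, j^*$ with $|a_{i^*} - a_{j^*}| = D$ and let $P$ be the unique path in $\Gamma$ joining the leaves $a_{i^*}$ and $a_{j^*}$, with edges $e_1, \ldots, e_k$ and internal vertices $u_1, \ldots, u_{k-1}$ along it. I then split each edge weight on $P$ as $|f(e_l)| \le (C_\kappa^{1/2} e^{-c_\kappa |e_l|/2}) \cdot (C_\kappa^{1/2} e^{-c_\kappa |e_l|/2})$. The first half-factors harvest the global decay: by the triangle inequality $\sum_l |e_l| \ge |a_{i^*} - a_{j^*}| = D$, so their product is at most $C_\kappa^{k/2} e^{-c_\kappa D / 2}$, regardless of the locations of $u_1, \ldots, u_{k-1}$. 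The second half-factors, combined with the untouched edge weights off $P$, are then summed over all internal vertices, bounded successively by one-dimensional exponential sums $\sum_x e^{-c_\kappa |x-y|/2} \le C'$, giving a total constant factor depending only on $n$ and $\kappa$.

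The final ingredient is a uniform bound on the contribution of the sub-trees of $\Gamma$ hanging off $P$ after integrating out their internal vertices. This is a standard consequence of exponential summability: for any rooted tree with edge weights satisfying $|f(x,y)| \le C_\kappa e^{-c_\kappa |x-y|}$, one may sum out internal vertices one leaf at a time, each step contributing a bounded $\ell^1$-factor, so the total is uniformly bounded independently of the root. Assembling these pieces yields $|\Gamma^{(n)}_{t,\bsig,\ba}| \le \widetilde C_n e^{-\widetilde c_n D}$, and combining with the prefactor $W^{-d(n-1)}$ from \eqref{eq_Ktree} and the combinatorial bound on $|\TSP(\mathcal{P}_{\ba})|$ produces the claimed \eqref{res_pureKes}. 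The main nuisance is bookkeeping rather than conceptual difficulty: one must verify that the constants $c_n, C_n$ depend only on $n, d, \kappa$ and not on the size of $\Gamma$, which follows since the total number of vertices in any $\Gamma \in \TSP(\mathcal{P}_{\ba})$ is itself bounded by a function of $n$ alone.
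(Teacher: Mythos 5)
Your proof is correct and follows the same route as the paper: the paper's own argument is simply the two-sentence observation that in the tree representation \eqref{eq_Ktree} every edge of a pure loop is short (a $\Theta^{(\sB)}_{tm^2}$ or $\Theta^{(\sB)}_{t\bar m^2}$ kernel), so \eqref{prop:ThfadC_short} gives the claim. Your path-splitting and subtree-summation steps are the standard bookkeeping the paper leaves implicit, and they are carried out correctly.
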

\begin{proof}
In the tree representation \eqref{eq:tree_rep2}, each term $\Gamma^{(n)}_{t,\boldsymbol{\sigma},\ba}$ on the RHS contains only $M$-edges and short unlabeled edges. Using the exponential decay from \eqref{prop:ThfadC_short}, along with the bound \eqref{Mbound_AO} or \eqref{Mbound_AO2} for $M$-edges in the setting of the block Anderson model, we obtain the desired estimate. 
\end{proof}

We are now ready to prove the key bounds on $\cK$-loops, stated in \Cref{ML:Kbound}. The proof is analogous to that of Lemma 3.11 in \cite{YY_25}, but requires additional modifications to handle the higher-dimensional setting $d \ge 3$. For the reader’s convenience, we provide the proof below.

\begin{proof}[\bf Proof of \Cref{ML:Kbound}]
By \eqref{eq_K-Kpi}, it suffices to show that for any $\pi \subset \Z^{\rm{off}}_n$, we have
\begin{equation}\label{eq:K-pi-bound}
	\left|\mathcal{K}^{(\pi)}(t, \boldsymbol{\sigma}, \mathbf{a})\right| \prec B_{t,0}^{n-1}.
\end{equation}
We prove the above bound by induction on the number of molecules $r$ in $\pi$. In order to carry out the induction step, we will prove \eqref{eq:K-pi-bound} for a slightly generalized quantity $\tilde{\mathcal{K}}^{(\pi)}$, defined by
\begin{equation}\label{eq:wtKpi}
	\tilde{\mathcal{K}}^{(\pi)}(t, \boldsymbol{\sigma}, \mathbf{a}) = \sum_{\mathbf{b},\mathbf{c}} \prod_{k=1}^n \tilde{\Theta}_{t,a_k b_k} \cdot \prod_{k=1}^{r-1} \left(tS^{(\sB)}\Theta^{(+,-)}_{t}\right)_{c_{2k-1} c_{2k}} \cdot \prod_{k=1}^r \Sigma^{(\pi)}(t, \boldsymbol{}{\sigma}^{(k)}, \mathbf{b}^{(k)}),
\end{equation}
where each $\tilde{\Theta}_{t}$ represents either $\Theta^{(\sig,\sig')}_{t}$ for some $\sig,\sig'\in\{+,-\}$ or $tS^{\LK}\Theta^{(+,-)}_{t}$.  

We start with the single molecule case with $\pi=\emptyset$. In this case, we can write 
\begin{equation*}
	\tilde{\mathcal{K}}^{(\emptyset)}(t, \boldsymbol{\sigma}, \mathbf{a}) = \sum_{\mathbf{b}} \prod_{i=1}^n \tilde{\Theta}_{t,a_i b_i} \cdot \Sigma^{(\emptyset)}(t, \boldsymbol{\sigma}, \mathbf{b}).
\end{equation*}
First, the $M$-edges and short unlabeled edges in $\tilde{\mathcal{K}}$, including all edges contained within the molecule, have constant size and fast exponential decay outside their constant range by \eqref{prop:ThfadC_short}, together with \eqref{Mbound_AO} or \eqref{Mbound_AO2} in the block Anderson model. Thus, the molecule $\Sigma^{(\emptyset)}$ has constant size and fast exponential decay as well:
\begin{equation}\label{eq:molecule-decay}
	\left|\Sigma^{(\emptyset)}(t, \boldsymbol{\sigma}, \mathbf{b})\right| \le C\exp\left\{-c\max_{i,j} |b_i-b_j|\right\}
\end{equation}
for some constants $c,C>0$. Additionally, if we have a pure loop (i.e., $\boldsymbol{\sigma}$ consists entirely of the same charge), then every edge in $\tilde{\mathcal{K}}^{(\emptyset)}$ is short, so we have 
\begin{equation*}
	\left|\tilde{\mathcal{K}}^{(\emptyset)}(t, \boldsymbol{\sigma}, \mathbf{a})\right| \le C\exp\left\{-c\max_{i,j} |a_i-a_j|\right\} \lesssim B_{t,0}^{n-1}.
\end{equation*}
If $\boldsymbol{\sigma}$ is not a pure loop, then without loss of generality, we can assume that $\sigma_1 \neq \sigma_2$. In this case, we have 
\begin{equation*}
	\tilde{\mathcal{K}}^{(\emptyset)}(t, \boldsymbol{\sigma}, \mathbf{a}) = \sum_{b_1} \tilde{\Theta}_{t,a_1b_1} \sum_{\mathbf{b}\setminus \{b_1\}} \Sigma^{(\emptyset)}(t, \boldsymbol{\sigma}, \mathbf{b}) \prod_{i=2}^n  \tilde{\Theta}_{t,a_i b_i}.
\end{equation*}
We now claim that 
\begin{equation}\label{eq:ind-step-bound}
	\sum_{b_1} \bigg|\sum_{\mathbf{b}\setminus \{b_1\}} \Sigma^{(\emptyset)}(t, \boldsymbol{\sigma}, \mathbf{b}) \prod_{i=2}^n \tilde{\Theta}_{t,a_i b_i}\bigg| \prec B_{t,0}^{n-2}.
\end{equation}
Combining this estimate with the bound \eqref{prop:ThfadC} for the $\Theta$-propagators, we obtain that 
\begin{align*}
	\left|\tilde{\mathcal{K}}^{(\emptyset)}(t, \boldsymbol{\sigma}, \mathbf{a})\right| 
	&\lesssim B_{t,0}\sum_{b_1} \bigg|\sum_{\mathbf{b}\setminus \{b_1\}} \Sigma^{(\emptyset)}(t, \boldsymbol{\sigma}, \mathbf{b}) \prod_{i=2}^n \tilde{\Theta}_{t,a_i b_i}\bigg| \prec B_{t,0}^{n-1}.
\end{align*}
This concludes \eqref{eq:K-pi-bound} for the case $\pi=\emptyset$.

To prove the bound \eqref{eq:ind-step-bound}, we consider two cases depending on whether (i) $\boldsymbol{\sigma}$ is non-alternating, or (ii) $\boldsymbol{\sigma}$ is alternating, where $\sig_j\ne \sig_{j+1}$ for all $j\in \qqq{n}$.  

\medskip
{\noindent\bf (i)} If $\boldsymbol{\sigma}$ in non-alternating, then there exists $j\in\llbracket 2,n\rrbracket$ such that $\sigma_j = \sigma_{j+1}$. The corresponding short edge has fast exponential decay on the constant scale by \eqref{prop:ThfadC_short}: 
\begin{equation}\label{eq:shortexternal}
	\left|\tilde{\Theta}_{t,a_jb_j}\right| \le C\exp\{-c|a_i - b_i|\}.
\end{equation}
For the remaining $(n-2)$ factors \smash{$\tilde{\Theta}_{t,a_i b_i}$}, we can bound them pointwise using \eqref{prop:ThfadC} as
\be\label{eq:pointwise_Theta}
\left|\tilde{\Theta}_{t,a_ib_i}\right| \lesssim B_{t,0}, \quad \forall i \in\llbracket 2,n \rrbracket \setminus \{j\}.
\ee
The above two bounds, together with the exponential decay for the molecule weight in \eqref{eq:molecule-decay}, allow us to bound the summation on the LHS of \eqref{eq:ind-step-bound} as $B_{t,0}^{n-2}$.

\medskip
{\noindent\bf (ii)} It remains to handle the challenging case of alternating $\boldsymbol{\sigma}$. In this setting, in addition to the constant-range exponential decay, we must also use the sum-zero property of molecule weights:
\begin{equation}\label{eq:Sigma-empty-sum-zero}
	\sum_{\mathbf{b}\setminus \{b_1\}} \Sigma^{(\emptyset)}(t, \boldsymbol{\sigma}, \mathbf{b}) = \OO(|1-t|),\quad \sum_{\mathbf{b}\setminus \{b_1\}} \big|\Sigma^{(\emptyset)}(t, \boldsymbol{\sigma}, \mathbf{b})\big| = \OO(\ilambda^2+|1-t|).
\end{equation}
The first estimate was established in \cite[Lemma 3.10]{YY_25} for 1D random band matrices and in \cite[Lemma 4.29]{RBSO1D} for 1D and 2D block Anderson models, while the second estimate was proved in \cite[Claim 4.30]{RBSO1D}. Importantly, these proofs are dimension-independent: they rely only on the pure loop estimate (\Cref{lem_pureloop}), the short-edge bound \eqref{prop:ThfadC_short}, the $M$-edge bound \eqref{Mbound_AO} or \eqref{Mbound_AO2}, and Ward’s identity \eqref{WI_calK} for $\cal K$-loops.

We now decompose the long external edges into three parts as follows:
\begin{align*}
	\tilde{\Theta}^{(+,-)}_{t,a_jb_j} = f_0(a_j, s_j) + f_1(a_j, s_j) + f_2(a_j, s_j), \quad \tilde{\Theta}_{t}^{(+,-)}\in \big\{\Theta^{(+,-)}_{t},\ tS^{\LK}\Theta^{(+,-)}_{t}\big\}  ,
\end{align*}
where we denote $f(a_j, s_j)= \tilde{\Theta}^{(+,-)}_{t}\p{a_j,b_1+s_j}$ with $s_j = b_j - b_1$, and 
\begin{align*}
	f_0(a_j, s_j) &= f(a_j, 0),\quad
	f_1(a_j, s_j) = \frac12 f(a_j, s_j) - \frac12 f(a_j, -s_j),\\
	f_2(a_j, s_j) &= \frac12 f(a_j, s_j) + \frac12 f(a_j, -s_j) - f(a_j, 0).
\end{align*}
By Lemma \ref{lem_propTH}, we have the following bounds under the condition $|s_j|\prec 1$ for $j\in\llbracket2,n\rrbracket$:
\begin{align}\label{eq:f12}
	\left|f_0(a_j, s_j)\right| &\lesssim B_{t,|a_j - b_1|} \le B_{t, 0},\quad \left|f_1(a_j, s_j)\right| \prec \frac{(\ilambda^2+|1-t|)^{-1}}{|a_j - b_1|^{d-1}+1},\quad 
	\left|f_2(a_j, s_j)\right| \prec \frac{(\ilambda^2+|1-t|)^{-1}}{|a_j - b_1|^{d}+1}.
\end{align}
We view $\Sigma^{(\emptyset)}(t, \boldsymbol{\sigma}, \mathbf{b}) = g(\mathbf{s})$ as a function of the shifts $\mathbf{s} = (s_2, \ldots, s_n)$. Then, we can write 
\begin{equation*}
	\sum_{\mathbf{b}\setminus \{b_1\}} \Sigma^{(\emptyset)}(t, \boldsymbol{\sigma}, \mathbf{b}) \prod_{j=2}^n \tilde{\Theta}_{t,a_j b_j} = \sum_{\mathbf{s}} g(\mathbf{s}) \prod_{j=2}^n \sum_{\xi_j\in\{0,1,2\}} f_{\xi_j}(a_j, s_j).
\end{equation*}
We split the sum above into several parts.
\begin{enumerate}
	
	\item Consider the terms where $\xi_j = 0$ for all $j\in\llbracket 2,n\rrbracket$. Then, we use the sum-zero property \eqref{eq:Sigma-empty-sum-zero} to get 
	\begin{align*}
		\sum_{b_1}\bigg|\sum_{\mathbf{s}} g(\mathbf{s}) \prod_{j=2}^n f_{0}(a_j, s_j)\bigg| &\lesssim (1-t)\sum_{b_1}\prod_{j=2}^n \left|\tilde{\Theta}_{t,a_jb_1}\right|\lesssim (1-t) B_{t, 0}^{n-2} \sum_{b_1} \left|\tilde{\Theta}_{t,a_2b_1}\right| \lesssim B_{t, 0}^{n-2},
	\end{align*}
	where in the second step, we use \eqref{eq:pointwise_Theta} for all but one $\tilde{\Theta}_{t}$ factor, and in the last step, we use \eqref{eq:THETAinftinf}. 
	
	\item Consider the case where exactly one $\xi_j$ equals $1$ while all other $\xi_j$’s are zero. In this situation, the sum vanishes by the skew symmetry of $f_1$ (namely, $f_1(a_j, s_j) = -f_1(a_j, -s_j)$) together with the symmetry of $\Sigma^{(\emptyset)}$ (i.e., $g(\mathbf{s}) = g(-\mathbf{s})$). The latter symmetry follows from the translation invariance and symmetry of the $\Theta$-propagators (\Cref{lem_propTH}) and of the $M$-loops (\Cref{lem:propM}).
	
	\item Suppose there is at least one $i\in\llbracket2,n\rrbracket$ such that $\xi_i = 2$. Then, using \eqref{eq:f12} and \eqref{eq:Sigma-empty-sum-zero}, we get 
	\begin{equation*}
		\qquad   \sum_{b_1}\bigg|\sum_{\mathbf{s}} g(\mathbf{s}) f_2(a_i, s_i) \prod_{j\notin \{1,i\}} f_{\xi_j}(a_j, s_j)\bigg| \prec \frac{B_{t, 0 }^{n-2}}{\ilambda^2+|1-t|} \sum_{b_1} \frac{1}{|a_i-b_1|^{d}+1} \sum_{\mathbf{b}\setminus \{b_1\}} \big|\Sigma^{(\emptyset)}(t, \boldsymbol{\sigma}, \mathbf{b})\big| \prec B_{t, 0 }^{n-2}.
	\end{equation*}
	
	\item Suppose there are at least two $i,k\in \llbracket2,n\rrbracket$ such that $i\neq k$ and $\xi_i=\xi_k = 1$. In this case, using \eqref{eq:f12} and \eqref{eq:Sigma-empty-sum-zero}, we get 
	\begin{align*}
		\quad   \sum_{b_1}\bigg|\sum_{\mathbf{s}} g(\mathbf{s}) f_1(a_i, s_i) f_1(a_k, s_k)\prod_{j\notin \{1,i,k\}} f_{\xi_j}(a_j, s_j)\bigg| \prec \frac{B_{t, 0 }^{n-3}}{\ilambda^2+|1-t|} \sum_{b_1} \frac{1}{|a_i-b_1|^{d-1}+1}\frac{1}{|a_k-b_1|^{d-1}+1} \\
		\lesssim  B_{t, 0 }^{n-2}  \sum_{b_1} \left[\frac{1}{|a_i-b_1|^{d}+1}\frac{1}{|a_k-b_1|^{d-2}+1}+\frac{1}{|a_i-b_1|^{d-2}+1}\frac{1}{|a_k-b_1|^{d}+1}\right] \lesssim B_{t, 0 }^{n-2},
	\end{align*}
	where, in the second step, we also use $(\ilambda^2+|1-t|)^{-1}\lesssim B_{t,0}$. 
	
\end{enumerate}
This concludes the proof of the claim \eqref{eq:ind-step-bound}.

Finally, we establish \eqref{eq:K-pi-bound} by induction on the number of molecules $r$. 
Assume that \eqref{eq:K-pi-bound} holds for every $\pi$ with at most $n$ external vertices and at most $(r-1)$ molecules. 
Now, consider a configuration $\pi$ consisting of $r$ molecules. In the molecule-level quotient graph---where each molecule is represented as a single vertex---the molecules, together with the long internal edges, form a tree. Therefore, there exists a leaf in this tree. Without loss of generality, assume that the molecule indexed by $k=1$ in \eqref{eq:molecule-Kpi} is such a leaf, and that it is connected to $ a_1, \ldots, a_{l}$ by external edges. 
Then, we can decompose \smash{$\tilde{\mathcal{K}}^{(\pi)}$} in \eqref{eq:wtKpi} as  
\begin{align}
	\left|\tilde{\mathcal{K}}^{(\pi)}(t, \boldsymbol{\sigma}, \mathbf{a})\right| &= \bigg|\sum_{\mathbf{b}^{(1)}} \prod_{i=1}^l \tilde{\Theta}_{t,a_i b_i} \cdot \Sigma^{(\emptyset)}(t, \boldsymbol{\sigma}^{(1)}, \mathbf{b}^{(1)}) \cdot\tilde{\mathcal{K}}^{(\pi')}(t, \boldsymbol{\sigma}', \mathbf{a}')\bigg| \nonumber\\
	&\le \sum_{c_1}\bigg|\sum_{\mathbf{b}^{(1)}\setminus \{c_1\}} \prod_{i=1}^l \tilde{\Theta}_{t,a_i b_i} \cdot \Sigma^{(\emptyset)}(t, \boldsymbol{\sigma}^{(1)}, \mathbf{b}^{(1)})\bigg| \cdot\left|\tilde{\mathcal{K}}^{(\pi')}(t, \boldsymbol{\sigma}', \mathbf{a}')\right|, \label{eq:Kpipi}
\end{align}
where $\pi'$ consists of the remaining $(r-1)$ molecules, $\mathbf{b}^{(1)} = (b_1,\ldots, b_l,c_1)$ with $c_1$ denoting the vertex in the first molecule that connects to other molecules through a long internal edge, $\boldsymbol{\sigma}^{(1)}=(\sig_1,\ldots,\sig_l,\sig_{l+1})$, $\mathbf{a}' = (c_1, a_{l+1}, \ldots, a_{n})$, and $\boldsymbol{\sigma}' = (\sigma_{l+1}, \ldots, \sigma_n,\sig_1)$.  
Applying the induction hypothesis to bound \smash{$\tilde{\mathcal{K}}^{(\pi')}$}, and using \eqref{eq:ind-step-bound} to estimate the remaining part of the product (noting that \eqref{eq:ind-step-bound} is applicable because $\sigma_1 \neq \sigma_{l+1}$ by the definition of a molecule), we obtain that 
\begin{equation*}
	\left|\tilde{\mathcal{K}}^{(\pi)}(t, \boldsymbol{\sigma}, \mathbf{a})\right| \prec B_{t, 0}^{l-1} \cdot B_{t, 0}^{n-l} = B_{t, 0}^{n-1}.
\end{equation*}
This concludes the proof of Lemma \ref{ML:Kbound}.
\end{proof}

Finally, \Cref{lem_wardineq_K} follows from the bound \eqref{eq:ind-step-bound}, together with an induction argument analogous to the one used above in the proof of Lemma~\ref{ML:Kbound}.

\begin{proof}[\bf Proof of \Cref{lem_wardineq_K}]
By \eqref{eq_K-Kpi}, it suffices to prove that for any $\pi \subset \Z^{\rm{off}}_n$, 
\begin{equation}\label{eq:K-pi-bound_partial}
	\sum_{a_n}   \left|\mathcal{K}^{(\pi)}(t, \boldsymbol{\sigma}, \mathbf{a})\right| \prec \eta_t^{-1}B_{t,0}^{n-2}.
\end{equation}
In the case $\pi=\emptyset$, using \eqref{eq:THETAinftinf} and \eqref{eq:ind-step-bound}, we can bound the LHS of \eqref{eq:K-pi-bound_partial} by 
\begin{equation}\nonumber
	\sum_{a_n} \Theta^{(+,-)}_{t,a_n b_n} \sum_{b_n} \bigg|\sum_{\mathbf{b}\setminus \{b_n\}} \Sigma^{(\pi)}(t, \boldsymbol{\sigma}, \mathbf{b}) \prod_{i=1}^{n-1} \tilde{\Theta}_{t,a_i b_i}\bigg| \le \frac{1}{1-t}\sum_{b_n} \bigg|\sum_{\mathbf{b}\setminus \{b_n\}} \Sigma^{(\pi)}(t, \boldsymbol{\sigma}, \mathbf{b}) \prod_{i=1}^{n-1} \tilde{\Theta}_{t,a_i b_i}\bigg| \prec \eta_t^{-1}B_{t,0}^{n-2}.
\end{equation} 
For $\pi\ne \emptyset$, we argue by induction as in the proof of Lemma~\ref{ML:Kbound}.
Assume that \eqref{eq:K-pi-bound_partial} holds for any $\pi$ with at most $n$ external vertices and at most $(r-1)$ molecules. Using the notation of \eqref{eq:Kpipi}, and applying \eqref{eq:ind-step-bound} together with the induction hypothesis, we obtain
\begin{align*}
	\sum_{a_n} \left|\tilde{\mathcal{K}}^{(\pi)}(t, \boldsymbol{\sigma}, \mathbf{a})\right| &\le \sum_{c_1}\bigg|\sum_{\mathbf{b}^{(1)}\setminus \{c_1\}} \prod_{i=1}^l \tilde{\Theta}_{t,a_i b_i} \cdot \Sigma^{(\emptyset)}(t, \boldsymbol{\sigma}^{(1)}, \mathbf{b}^{(1)})\bigg| \cdot \sum_{a_n}\left|\tilde{\mathcal{K}}^{(\pi')}(t, \boldsymbol{\sigma}', \mathbf{a}')\right| \\
	&\prec B_{t, 0}^{l-1} \cdot \eta_t^{-1}B_{t, 0}^{n-l-1} =\eta_t^{-1} B_{t, 0}^{n-2}. 
\end{align*}
This concludes the proof of \eqref{eq:K-pi-bound_partial}.
\end{proof}


\begin{thebibliography}{10}
	
	\bibitem{Bethe-Anderson}
	A.~Aggarwal and P.~Lopatto.
	\newblock Mobility edge for the {A}nderson model on the {B}ethe lattice.
	\newblock {\em arXiv:2503.08949}, 2025.
	
	\bibitem{Aizenman1994}
	M.~Aizenman.
	\newblock Localization at weak disorder: Some elementary bounds.
	\newblock {\em Reviews in Mathematical Physics}, 06(05a):1163--1182, 1994.
	
	\bibitem{Aizenman1993}
	M.~Aizenman and S.~Molchanov.
	\newblock {Localization at large disorder and at extreme energies: an
		elementary derivation}.
	\newblock {\em Communications in Mathematical Physics}, 157(2):245--278, 1993.
	
	\bibitem{ASFH2001}
	M.~Aizenman, J.~H. Schenker, R.~M. Friedrich, and D.~Hundertmark.
	\newblock Finite-volume fractional-moment criteria for {Anderson} localization.
	\newblock {\em Communications in Mathematical Physics}, 224(1):219--253, 2001.
	
	\bibitem{Bethe_PRL}
	M.~Aizenman and S.~Warzel.
	\newblock Extended states in a {L}ifshitz tail regime for random
	{S}chr\"odinger operators on trees.
	\newblock {\em Physical Review Letters}, 106:136804, 2011.
	
	\bibitem{aizenman2013resonant}
	M.~Aizenman and S.~Warzel.
	\newblock Resonant delocalization for random {S}chr{\"o}dinger operators on
	tree graphs.
	\newblock {\em Journal of the European Mathematical Society}, 15(4):1167--1222,
	2013.
	
	\bibitem{Aizenman_book}
	M.~Aizenman and S.~Warzel.
	\newblock {\em Random operators: disorder effects on quantum spectra and
		dynamics}, volume 168 of {\em Graduate Studies in Mathematics}.
	\newblock American Mathematical Society, Providence, 2015.
	
	\bibitem{AEK_PTRF}
	O.~H. Ajanki, L.~Erd{\H o}s, and T.~Kr{\"u}ger.
	\newblock Stability of the matrix {Dyson} equation and random matrices with
	correlations.
	\newblock {\em Probability Theory and Related Fields}, 173(1):293--373, 2019.
	
	\bibitem{Anderson}
	P.~W. Anderson.
	\newblock Absence of diffusion in certain random lattices.
	\newblock {\em Physical Review}, 109:1492--1505, 1958.
	
	\bibitem{BaoErd2015}
	Z.~Bao and L.~Erd{\H{o}}s.
	\newblock Delocalization for a class of random block band matrices.
	\newblock {\em Probability Theory and Related Fields}, 167(3):673--776, 2017.
	
	\bibitem{Biane}
	P.~Biane.
	\newblock On the free convolution with a semi-circular distribution.
	\newblock {\em Indiana University Mathematics Journal}, 46(3):705--718, 1997.
	
	\bibitem{PB_review}
	P.~Bourgade.
	\newblock Random band matrices.
	\newblock In {\em Proceedings of the International Congress of Mathematicians:
		Rio de Janeiro 2018}, pages 2759--2783. World Scientific, 2018.
	
	\bibitem{bourgade2017universality}
	P.~Bourgade, L.~Erdos, H.-T. Yau, and J.~Yin.
	\newblock Universality for a class of random band matrices.
	\newblock {\em Advances in Theoretical and Mathematical Physics},
	21(3):739--800, 2017.
	
	\bibitem{bourgade2019random}
	P.~Bourgade, F.~Yang, H.-T. Yau, and J.~Yin.
	\newblock Random band matrices in the delocalized phase, {II}: Generalized
	resolvent estimates.
	\newblock {\em Journal of Statistical Physics}, 174(6):1189--1221, 2019.
	
	\bibitem{bourgade2020random}
	P.~Bourgade, H.-T. Yau, and J.~Yin.
	\newblock Random band matrices in the delocalized phase, {I}: Quantum unique
	ergodicity and universality.
	\newblock {\em Communications on Pure and Applied Mathematics},
	73(7):1526--1596, 2020.
	
	\bibitem{Bourgain2005}
	J.~Bourgain and C.~Kenig.
	\newblock On localization in the continuous {A}nderson-{B}ernoulli model in
	higher dimension.
	\newblock {\em Inventiones mathematicae}, 161(2):389--426, 2005.
	
	\bibitem{Carmona1982_Duke}
	R.~Carmona.
	\newblock {Exponential localization in one dimensional disordered systems}.
	\newblock {\em Duke Mathematical Journal}, 49(1):191--213, 1982.
	
	\bibitem{Carmona1987}
	R.~Carmona, A.~Klein, and F.~Martinelli.
	\newblock Anderson localization for {B}ernoulli and other singular potentials.
	\newblock {\em Communications in Mathematical Physics}, 108(1):41--66, 1987.
	
	\bibitem{PhysRevLett.64.5}
	G.~Casati, I.~Guarneri, F.~Izrailev, and R.~Scharf.
	\newblock Scaling behavior of localization in quantum chaos.
	\newblock {\em Physical Review Letters}, 64:5--8, 1990.
	
	\bibitem{PhysRevLett.64.1851}
	G.~Casati, L.~Molinari, and F.~Izrailev.
	\newblock Scaling properties of band random matrices.
	\newblock {\em Physical Review Letters}, 64:1851--1854, 1990.
	
	\bibitem{Chen2022}
	N.~Chen and C.~K. Smart.
	\newblock Random band matrix localization by scalar fluctuations.
	\newblock {\em arXiv:2206.06439}, 2022.
	
	\bibitem{Cipolloni2024}
	G.~Cipolloni, R.~Peled, J.~Schenker, and J.~Shapiro.
	\newblock Dynamical localization for random band matrices up to {$W\ll
		N^{1/4}$}.
	\newblock {\em Communications in Mathematical Physics}, 405(3):82, 2024.
	
	\bibitem{Damanik2002}
	D.~Damanik, R.~Sims, and G.~Stolz.
	\newblock {Localization for one-dimensional, continuum, {B}ernoulli-{A}nderson
		models}.
	\newblock {\em Duke Mathematical Journal}, 114(1):59--100, 2002.
	
	\bibitem{DingSmart2020}
	J.~Ding and C.~Smart.
	\newblock Localization near the edge for the {A}nderson {B}ernoulli model on
	the two dimensional lattice.
	\newblock {\em Inventiones mathematicae}, 219(2):467--506, 2020.
	
	\bibitem{DisPinSpe2002}
	M.~Disertori, L.~Pinson, and T.~Spencer.
	\newblock Density of states for random band matrices.
	\newblock {\em Communications in Mathematical Physics}, 232:83--124, 2002.
	
	\bibitem{Localization1_2}
	R.~Drogin.
	\newblock Localization of one-dimensional random band matrices.
	\newblock {\em arXiv:2508.05802}, 2025.
	
	\bibitem{DY}
	S.~Dubova and K.~Yang.
	\newblock {Quantum diffusion and delocalization in one-dimensional band
		matrices via the flow method}.
	\newblock {\em arXiv:2412.15207}, 2024.
	
	\bibitem{DYYY25}
	S.~Dubova, K.~Yang, H.-T. Yau, and J.~Yin.
	\newblock Delocalization of two-dimensional random band matrices.
	\newblock {\em arXiv:2503.07606}, 2025.
	
	\bibitem{EK_band1}
	L.~Erd{\H{o}}s and A.~Knowles.
	\newblock Quantum diffusion and delocalization for band matrices with general
	distribution.
	\newblock {\em Annales Henri Poincar{\'e}}, 12(7):1227, 2011.
	
	\bibitem{ErdKno2011}
	L.~Erd{\H{o}}s and A.~Knowles.
	\newblock Quantum diffusion and eigenfunction delocalization in a random band
	matrix model.
	\newblock {\em Communications in Mathematical Physics}, 303(2):509--554, 2011.
	
	\bibitem{Average_fluc}
	L.~Erd{\H o}s, A.~Knowles, and H.-T. Yau.
	\newblock Averaging fluctuations in resolvents of random band matrices.
	\newblock {\em Annales Henri Poincar{\'e}}, 14:1837--1926, 2013.
	
	\bibitem{erdos2013delocalization}
	L.~Erdos, A.~Knowles, H.-T. Yau, and J.~Yin.
	\newblock Delocalization and diffusion profile for random band matrices.
	\newblock {\em Communications in Mathematical Physics}, 323(1):367--416, 2013.
	
	\bibitem{erdHos2013local}
	L.~Erd{\H{o}}s, A.~Knowles, H.-T. Yau, and J.~Yin.
	\newblock The local semicircle law for a general class of random matrices.
	\newblock {\em Electronic Journal of Probability}, 18:1--58, 2013.
	
	\bibitem{EKS_Forum}
	L.~Erd{\H o}s, T.~Kr{\"u}ger, and D.~Schr{\"o}der.
	\newblock Random matrices with slow correlation decay.
	\newblock {\em Forum of Mathematics, Sigma}, 7:e8, 2019.
	
	\bibitem{erdHos2025zigzag}
	L.~Erd{\H{o}}s and V.~Riabov.
	\newblock The zigzag strategy for random band matrices.
	\newblock {\em arXiv:2506.06441}, 2025.
	
	\bibitem{erdHos2012rigidity}
	L.~Erd{\H{o}}s, H.-T. Yau, and J.~Yin.
	\newblock Rigidity of eigenvalues of generalized wigner matrices.
	\newblock {\em Advances in Mathematics}, 229(3):1435--1515, 2012.
	
	\bibitem{Block_reduction}
	J.~Fan, F.~Yang, and J.~Yin.
	\newblock A block reduction method for random band matrices with general
	variance profiles.
	\newblock {\em arXiv:2507.11945}, 2025.
	
	\bibitem{PhysRevLett.66.986}
	M.~Feingold, D.~M. Leitner, and M.~Wilkinson.
	\newblock Spectral statistics in semiclassical random-matrix ensembles.
	\newblock {\em Physical Review Letters}, 66:986--989, 1991.
	
	\bibitem{FroSpen_1985}
	J.~Fr{\"o}hlich, F.~Martinelli, E.~Scoppola, and T.~Spencer.
	\newblock {Constructive proof of localization in the Anderson tight binding
		model}.
	\newblock {\em Communications in Mathematical Physics}, 101(1):21--46, 1985.
	
	\bibitem{FroSpen_1983}
	J.~Fr{\"o}hlich and T.~Spencer.
	\newblock {Absence of diffusion in the Anderson tight binding model for large
		disorder or low energy}.
	\newblock {\em Communications in Mathematical Physics}, 88(2):151--184, 1983.
	
	\bibitem{PhysRevLett.67.2405}
	Y.~V. Fyodorov and A.~D. Mirlin.
	\newblock Scaling properties of localization in random band matrices: A
	\ensuremath{\sigma}-model approach.
	\newblock {\em Physical Review Letters}, 67:2405--2409, 1991.
	
	\bibitem{Germinet2013}
	F.~Germinet and A.~Klein.
	\newblock {A comprehensive proof of localization for continuous Anderson models
		with singular random potentials}.
	\newblock {\em Journal of the European Mathematical Society}, 15(1):53--143,
	2013.
	
	\bibitem{GMP}
	I.~Y. Gol'dshtein, S.~A. Molchanov, and L.~A. Pastur.
	\newblock A pure point spectrum of the stochastic one-dimensional
	{S}chr{\"o}dinger operator.
	\newblock {\em Functional Analysis and Its Applications}, 11(1):1--8, 1977.
	
	\bibitem{He2018}
	Y.~He, A.~Knowles, and R.~Rosenthal.
	\newblock Isotropic self-consistent equations for mean-field random matrices.
	\newblock {\em Probability Theory and Related Fields}, 171(1):203--249, 2018.
	
	\bibitem{HeMa2018}
	Y.~He and M.~Marcozzi.
	\newblock Diffusion profile for random band matrices: A short proof.
	\newblock {\em Journal of Statistical Physics}, 177(4):666--716, 2019.
	
	\bibitem{knowles2017anisotropic}
	A.~Knowles and J.~Yin.
	\newblock Anisotropic local laws for random matrices.
	\newblock {\em Probability Theory and Related Fields}, 169(1-2):pp--257, 2017.
	
	\bibitem{KunzSou}
	H.~Kunz and B.~Souillard.
	\newblock Sur le spectre des op{\'e}rateurs aux diff{\'e}rences finies
	al{\'e}atoires.
	\newblock {\em Communications in Mathematical Physics}, 78(2):201--246, 1980.
	
	\bibitem{Lawler_book}
	G.~F. Lawler and V.~Limic.
	\newblock {\em Random Walk: A Modern Introduction}.
	\newblock Cambridge Studies in Advanced Mathematics. Cambridge University
	Press, 2010.
	
	\bibitem{LeeSchSteYau2015}
	J.-O. Lee, K.~Schnelli, B.~Stetler, and H.-T. Yau.
	\newblock Bulk universality for deformed {Wigner} matrices.
	\newblock {\em Annals of Probability}, 44(3):2349--2425, 2016.
	
	\bibitem{LiZhang2019}
	L.~Li and L.~Zhang.
	\newblock {Anderson–Bernoulli localization on the three-dimensional lattice
		and discrete unique continuation principle}.
	\newblock {\em Duke Mathematical Journal}, 171(2):327--415, 2022.
	
	\bibitem{Wegner3}
	R.~Oppermann and F.~Wegner.
	\newblock Disordered system withn orbitals per site: 1/n expansion.
	\newblock {\em Zeitschrift f{\"u}r Physik B Condensed Matter}, 34(4):327--348,
	1979.
	
	\bibitem{PelSchShaSod}
	R.~Peled, J.~Schenker, M.~Shamis, and S.~Sodin.
	\newblock On the {Wegner} orbital model.
	\newblock {\em International Mathematics Research Notices}, 2019(4):1030--1058,
	2017.
	
	\bibitem{Wegner2}
	L.~Sch{\"a}fer and F.~J. Wegner.
	\newblock Disordered system with $n$ orbitals per site: Lagrange formulation,
	hyperbolic symmetry, and goldstone modes.
	\newblock {\em Zeitschrift f{\"u}r Physik B Condensed Matter}, 38:113--126,
	1980.
	
	\bibitem{Sch2009}
	J.~Schenker.
	\newblock Eigenvector localization for random band matrices with power law band
	width.
	\newblock {\em Communications in Mathematical Physics}, 290:1065--1097, 2009.
	
	\bibitem{SimonWolff}
	B.~Simon and T.~Wolff.
	\newblock Singular continuous spectrum under rank one perturbations and
	localization for random hamiltonians.
	\newblock {\em Communications on Pure and Applied Mathematics}, 39(1):75--90,
	1986.
	
	\bibitem{Spencer2}
	T.~Spencer.
	\newblock Random banded and sparse matrices.
	\newblock In G.~Akemann, J.~Baik, and P.~D. Francesco, editors, {\em Oxford
		Handbook of Random Matrix Theory}, chapter~23. Oxford University Press, New
	York, 2011.
	
	\bibitem{Spencer3}
	T.~Spencer.
	\newblock Duality, statistical mechanics and random matrices.
	\newblock {\em Current Developments in Mathematics}, 2012:229--260, 2012.
	
	\bibitem{Spencer1}
	T.~Spencer.
	\newblock {SUSY} statistical mechanics and random band matrices.
	\newblock In {\em Quantum Many Body Systems}, Lecture Notes in Mathematics, vol
	2051. Springer, Berlin, Heidelberg, 2012.
	
	\bibitem{RBSO1D}
	S.~K. Truong, F.~Yang, and J.~Yin.
	\newblock On the localization length of finite-volume random block
	{S}chr\"odinger operators.
	\newblock {\em arxiv:2503.11382}, 2025.
	
	\bibitem{Sooster2019}
	P.~von Soosten and S.~Warzel.
	\newblock Non-ergodic delocalization in the {R}osenzweig--{P}orter model.
	\newblock {\em Letters in Mathematical Physics}, 109(4):905--922, 2019.
	
	\bibitem{10.1214/19-ECP278}
	P.~von Soosten and S.~Warzel.
	\newblock {Random characteristics for Wigner matrices}.
	\newblock {\em Electronic Communications in Probability}, 24(none):1--12, 2019.
	
	\bibitem{Wegner1}
	F.~J. Wegner.
	\newblock Disordered system with $n$ orbitals per site: $n=\ensuremath{\infty}$
	limit.
	\newblock {\em Physical Review B}, 19:783--792, Jan 1979.
	
	\bibitem{Wigner}
	E.~P. Wigner.
	\newblock Characteristic vectors of bordered matrices with infinite dimensions.
	\newblock {\em Annals of Mathematics}, 62(3):548--564, 1955.
	
	\bibitem{Xu:2024aa}
	C.~Xu, F.~Yang, H.-T. Yau, and J.~Yin.
	\newblock Bulk universality and quantum unique ergodicity for random band
	matrices in high dimensions.
	\newblock {\em Annals of Probability}, 52(3):765--837, 5 2024.
	
	\bibitem{yang2021delocalization}
	F.~Yang, H.-T. Yau, and J.~Yin.
	\newblock Delocalization and quantum diffusion of random band matrices in high
	dimensions {I}: Self-energy renormalization.
	\newblock {\em arXiv:2104.12048}, 2021.
	
	\bibitem{yang2022delocalization}
	F.~Yang, H.-T. Yau, and J.~Yin.
	\newblock Delocalization and quantum diffusion of random band matrices in high
	dimensions {II}: {$T$}-expansion.
	\newblock {\em Communications in Mathematical Physics}, pages 1--96, 2022.
	
	\bibitem{yang2021random}
	F.~Yang and J.~Yin.
	\newblock Random band matrices in the delocalized phase, {III}: Averaging
	fluctuations.
	\newblock {\em Probability Theory and Related Fields}, 179(1):451--540, 2021.
	
	\bibitem{yang2024Del}
	F.~Yang and J.~Yin.
	\newblock Delocalization of a general class of random block {S}chr{\"o}dinger
	operators.
	\newblock {\em arXiv:2501.08608}, 2025.
	
	\bibitem{YY_25}
	H.-T. Yau and J.~Yin.
	\newblock Delocalization of one-dimensional random band matrices.
	\newblock {\em arXiv:2501.01718}, 2025.
	
\end{thebibliography}
\end{document}